\documentclass[12pt, reqno]{amsart}

\usepackage[margin=2cm]{geometry}
\usepackage{amsmath, amssymb, amsfonts, amstext, verbatim, amsthm, mathrsfs}
\usepackage{microtype}
\usepackage[all,arc,knot,poly,cmtip]{xy}
\usepackage{mathtools}
\usepackage[colorlinks=true,linkcolor=blue,citecolor=blue,urlcolor=blue,citebordercolor={0 0 1},urlbordercolor={0 0 1},linkbordercolor={0 0 1}]{hyperref} 
\usepackage{enumerate}
\usepackage{stmaryrd}
\usepackage{subcaption}
\usepackage{tikz}
\usepackage{setspace}
\usepackage{graphicx}
\setstretch{1.4}

\theoremstyle{plain}
\newtheorem{theorem}{Theorem}[section]
\newtheorem{proposition}[theorem]{Proposition}
\newtheorem{lemma}[theorem]{Lemma}
\newtheorem{corollary}[theorem]{Corollary}

\theoremstyle{definition}
\newtheorem{definition}[theorem]{Definition}

\DeclareMathOperator{\Hom}{\operatorname{Hom}}
\DeclareMathOperator{\Tr}{\operatorname{Tr}}
\DeclareMathOperator{\Sk}{\operatorname{Sk}}
\DeclareMathOperator{\Spec}{\operatorname{Spec}}
\DeclareMathOperator{\Db}{\operatorname{D^b}}
\DeclareMathOperator{\Coh}{\operatorname{Coh}}
\DeclareMathOperator{\gr}{\operatorname{gr}}
\DeclareMathOperator{\Gr}{\operatorname{Gr}}
\DeclareMathOperator{\twocolim}{\operatorname{2-colim}}
\DeclareMathOperator{\wt}{\operatorname{wt}}
\DeclareMathOperator{\supp}{\operatorname{supp}}
\DeclareMathOperator{\tw}{\operatorname{tw}}
\DeclareMathOperator{\Stab}{\operatorname{Stab}}

\captionsetup[subfloat]{labelfont=normalfont}

\begin{document}

\title{Monoidal categorification of genus zero skein algebras}
\author{Dylan G.L. Allegretti, Hyun Kyu Kim, and Peng Shan}
\address{Yau Mathematical Sciences Center, Department of Mathematical Sciences, Tsinghua~University, Beijing 100084, China}
\email{dylanallegretti@tsinghua.edu.cn, pengshan@tsinghua.edu.cn}
\address{School of Mathematics, Korea Institute for Advanced Study, 85 Hoegi-ro, Dongdaemun-gu, Seoul 02455, Republic of Korea}
\email{hkim@kias.re.kr}

\date{}

\maketitle

\begin{abstract}
We prove a conjecture of the first and third named authors relating the Kauffman~bracket skein~algebra of a genus zero surface with boundary to a quantized $K$-theoretic Coulomb~branch. As a consequence, we see that our skein algebra arises as the Grothendieck ring of the bounded derived category of equivariant coherent sheaves on the Braverman--Finkelberg--Nakajima variety of triples with monoidal structure defined by the convolution product. We thus give a monoidal categorification of the skein algebra, partially answering a question posed by D.~Thurston.
\end{abstract}

\setcounter{tocdepth}{1}
\tableofcontents

\section{Introduction}

Skein algebras of surfaces and skein modules of three-manifolds are basic objects of study in quantum topology. The most famous example of a skein algebra is the Kauffman~bracket skein~algebra, whose definition goes back to Przytycki~\cite{P91} and Turaev~\cite{T91} in the early 1990s. Given an oriented surface~$S$, the Kauffman bracket skein algebra of~$S$ is an algebra generated by isotopy classes of framed links in the three-manifold $S\times[0,1]$, modulo the same skein relations used to compute the Kauffman bracket invariant in knot theory. This algebra can also be seen as a quantization of the $\mathrm{SL}_2$-character variety of the surface~$S$.

In an influential 2014 paper~\cite{T14}, D.~Thurston showed that a classical limit of the Kauffman~bracket skein algebra possesses a canonical basis such that the structure constants for multiplication of basis elements are positive integers. In the same paper, Thurston asked whether the skein algebra has a natural categorification, saying ``the existence of a positive basis suggests the presence of a `nice' categorification, where product becomes a monoidal tensor product and sum becomes direct sum, or possibly a composition series.''

Several authors have approached the categorification of skein algebras~\cite{APS04,QW21} using constructions similar to Khovanov's categorification of the Jones~polynomial~\cite{K00}, and these methods have led to important results on canonical bases and their positivity properties~\cite{Q22}. In the present paper, we take a different approach, using tools from mathematical physics and geometric representation theory. Our results show that there exists a monoidal category whose monoidal product categorifies the multiplication in the Kauffman bracket skein algebra of a family of surfaces.

In previous work~\cite{AS24a}, inspired by the physical ideas of Gaiotto, Moore, and Neitzke~\cite{G12,GMN13a,GMN13b}, the first and third named authors described a relationship between skein algebras of surfaces and quantized $K$-theoretic Coulomb branches. The latter are noncommutative algebras introduced by Braverman, Finkelberg, and Nakajima~\cite{BFN18}. One of the main conjectures from~\cite{AS24a} states that the skein~algebra of a genus zero surface with boundary is isomorphic to such a quantized Coulomb~branch. The main result of the present paper is a proof of this conjecture.

The quantized Coulomb branch is, essentially by definition, the Grothendieck ring of the bounded derived category of equivariant coherent sheaves on an infinite-dimensional space, equipped with a monoidal product called the convolution product~\cite{BFN18}. Thus, for a genus zero surface with boundary, we obtain a categorification of the skein algebra by a triangulated monoidal category. This categorification connects two seemingly different mathematical constructions, and we believe it will provide valuable new tools and perspectives for studying the positivity phenomena that originally motivated Thurston's proposal.

\subsection{Skein algebras and quantized Coulomb branches}

Recall that if $S$ is a compact oriented surface with boundary, then the $\mathrm{SL}_2$-character variety of~$S$ is a space parametrizing principal $\mathrm{SL}_2$-bundles with flat connection on~$S$. More~concretely, it can be defined by considering representations $\pi_1(S)\rightarrow\mathrm{SL}_2$ of the fundamental group into~$\mathrm{SL}_2$. The set $\Hom(\pi_1(S),\mathrm{SL}_2)$ of all such representations has the structure of an affine scheme, and the $\mathrm{SL}_2$-character variety is the affine GIT~quotient 
\[
\mathcal{M}_{\mathrm{flat}}(S,\mathrm{SL}_2)=\Hom(\pi_1(S),\mathrm{SL}_2)\sslash\mathrm{SL}_2
\]
of this scheme by the conjugation action of~$\mathrm{SL}_2$. Note that any loop $\gamma\subset S$ defines a regular function $\Tr_\gamma$ on $\mathcal{M}_{\mathrm{flat}}(S,\mathrm{SL}_2)$ which maps the conjugacy class of a representation $\rho:\pi_1(S)\rightarrow\mathrm{SL}_2$ to the trace $\Tr_\gamma(\rho)=\Tr\rho(\gamma)$.

When the surface $S$ has nonempty boundary and $\gamma_1,\dots,\gamma_n\subset S$ are loops freely homotopic to the boundary components, it is quite natural to consider the relative character variety, which is a subspace of $\mathcal{M}_{\mathrm{flat}}(S,\mathrm{SL}_2)$ parametrizing representations $\rho:\pi_1(S)\rightarrow\mathrm{SL}_2$ that map $\gamma_i$ to a fixed conjugacy class for each~$i$. Since the conjugacy class of a semisimple complex $2\times2$ matrix of determinant one is determined by its trace, we may choose an $n$-tuple $\boldsymbol{\lambda}=(\lambda_1,\dots,\lambda_n)\in(\mathbb{C}^*)^n$ and define the relative character variety to be the closed subscheme $\mathcal{M}_{\mathrm{flat}}^{\boldsymbol{\lambda}}(S,\mathrm{SL}_2)\subset\mathcal{M}_{\mathrm{flat}}(S,\mathrm{SL}_2)$ cut out by the equations $\Tr_{\gamma_i}=\lambda_i+\lambda_i^{-1}$.

The Kauffman~bracket skein~algebra of~$S$ is a noncommutative deformation of the algebra of regular functions on~$\mathcal{M}_{\mathrm{flat}}(S,\mathrm{SL}_2)$. More~precisely, it is a noncommutative algebra $\Sk_A(S)$ over the ring~$\mathbb{C}[A^{\pm1}]$ such that we recover the algebra of regular functions by specializing~$A$ to~$-1$. We recall its definition in Section~\ref{sec:GenusZeroSkeinAlgebrasAndTheirGenerators}. We also define the relative skein algebra $\Sk_{A,\boldsymbol{\lambda}}(S)$. It is a noncommutative algebra over $\mathbb{C}[A^{\pm1},\lambda_1^{\pm1},\dots,\lambda_n^{\pm1}]$ such that we recover the algebra of regular functions on the relative character variety $\mathcal{M}_{\mathrm{flat}}^{\boldsymbol{\lambda}}(S,\mathrm{SL}_2)$ by specializing~$A$ to~$-1$ and specializing the~$\lambda_i$ to nonzero complex numbers. We refer the reader to~\cite{AS24a} for more details about the relationship between these skein algebras and the character varieties.

Our main goal in the present paper is to relate the skein algebra of a genus zero surface to an associated quantized Coulomb branch. In general, a Coulomb branch is a space that physicists associate to a compact Lie group~$G_{\mathrm{cpt}}$, called the gauge group, and a quaternionic representation~$M$ of~$G_{\mathrm{cpt}}$. In their pioneering work~\cite{BFN18}, Braverman, Finkelberg, and Nakajima replaced the group~$G_\mathrm{cpt}$ by its complexification~$G$ and gave a precise mathematical definition of this space under the assumption that the representation $M$ is of cotangent type, that is, $M\cong N\oplus N^*$ for some complex representation~$N$ of~$G$.

Their definition involves a remarkable space $\mathcal{R}_{G,N}$ associated to $G$ and~$N$ called the variety of triples. As we review in Section~\ref{sec:QuantizedCoulombBranchesAndCategorification}, it is an ind-scheme generalizing the affine Grassmannian of~$G$. In the examples relevant for the present paper, the action of~$G$ on~$N$ extends to an action of~$\widetilde{G}=G\times F$ where $F\cong(\mathbb{C}^*)^n$ is an algebraic torus called a flavor symmetry group. The variety of triples admits an action of the group $\widetilde{G}_\mathcal{O}$ of $\mathcal{O}$-points of~$\widetilde{G}$ where $\mathcal{O}=\mathbb{C}\llbracket z\rrbracket$. One can therefore define the $\widetilde{G}_{\mathcal{O}}$-equivariant $K$-theory $K^{\widetilde{G}_{\mathcal{O}}}(\mathcal{R}_{G,N})$ of the variety of triples. Here we complexify all $K$-groups so that the latter is a complex vector space. Braverman, Finkelberg, and Nakajima~\cite{BFN18} showed that this $K$-theory has a natural convolution product which is commutative, and they defined the $K$-theoretic Coulomb branch to be the spectrum 
\[
\mathcal{M}_C(G,F,N)=\Spec K^{\widetilde{G}_{\mathcal{O}}}(\mathcal{R}_{G,N})
\]
of the resulting commutative $\mathbb{C}$-algebra.

There is a canonical noncommutative deformation of this algebra, whose underlying vector space is the $\widetilde{G}_{\mathcal{O}}\rtimes\mathbb{C}^*$-equivariant $K$-theory $K^{\widetilde{G}_{\mathcal{O}}\rtimes\mathbb{C}^*}(\mathcal{R}_{G,N})$ where the $\mathbb{C}^*$-factor acts on the variety of triples by loop rotation. It was shown in~\cite{BFN18} that this vector space admits a convolution product, making it into a noncommutative algebra over $K^{\mathbb{C}^*\times F}(\mathrm{pt})\cong\mathbb{C}[q^{\pm1},t_1^{\pm1},\dots,t_n^{\pm1}]$. We will replace $\mathbb{C}^*$ by its double cover and thereby extend scalars to~$\mathbb{C}[q^{\pm\frac{1}{2}},t_1^{\pm1},\dots,t_n^{\pm1}]$. The resulting noncommutative algebra is called a quantized $K$-theoretic Coulomb branch.

Below we consider the surface $S=S_{0,n+2}$ obtained from the 2-sphere by removing $n+2$ open disks with pairwise disjoint closures where $n\geq1$. In the previous paper~\cite{AS24a}, the first and third named authors described a group $\widetilde{G}=G\times F$ and representation~$N$ of~$\widetilde{G}$ associated to such a surface~$S$. We recall the definitions of~$\widetilde{G}$ and~$N$ in Section~\ref{sec:TheGaugeAndFlavorGroups}. We prove the following result.

\begin{theorem}
\label{thm:intromain}
Let $\widetilde{G}=G\times F$ and~$N$ be the group and representation associated to a surface~$S$ of genus zero, and identify $\Bbbk\coloneqq\mathbb{C}[A^{\pm1},\lambda_i^{\pm1}]\cong\mathbb{C}[q^{\pm\frac{1}{2}},t_i^{\pm1}]$ by mapping $A\mapsto q^{-\frac{1}{2}}$ and $\lambda_i\mapsto t_i$ for all~$i$. Then there is a $\Bbbk$-algebra isomorphism 
\[
\Sk_{A,\boldsymbol{\lambda}}(S)\cong K^{\widetilde{G}_{\mathcal{O}}\rtimes\mathbb{C}^*}(\mathcal{R}_{G,N})
\]
from the skein algebra to the quantized Coulomb branch.
\end{theorem}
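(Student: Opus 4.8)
The plan is to prove Theorem~\ref{thm:intromain} by comparing explicit generators-and-relations presentations of the two $\Bbbk$-algebras: I would define the homomorphism on generators and then prove it bijective. On the skein side I would use the presentation of $\Sk_{A,\boldsymbol{\lambda}}(S)$ from Section~\ref{sec:GenusZeroSkeinAlgebrasAndTheirGenerators}. Fixing a linear pants decomposition of $S=S_{0,n+2}$, the algebra is generated over $\Bbbk$ by an explicit finite family of loops --- the $n+2$ boundary loops, the $n-1$ interior curves $c_1,\dots,c_{n-1}$ of the decomposition, and a further collection of loops enclosing subsets of the boundary components --- subject to the relations pinning the boundary traces to $\lambda_i+\lambda_i^{-1}$ together with quantized $\mathrm{SL}_2$ trace identities (quantum Cayley--Hamilton relations that rewrite a product of two intersecting loops in terms of loops intersecting in fewer points). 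On the Coulomb side I would use the description recalled in Section~\ref{sec:QuantizedCoulombBranchesAndCategorification}: by~\cite{BFN18}, $K^{\widetilde{G}_{\mathcal{O}}\rtimes\mathbb{C}^*}(\mathcal{R}_{G,N})$ is generated over $\Bbbk$ by the Weyl-invariant equivariant parameters and by the minimal monopole operators of the rank-one factors of the gauge group $G$ of Section~\ref{sec:TheGaugeAndFlavorGroups}, with products of monopole operators governed by the explicit formulas of~\cite{BFN18}. Since $G$ is a product of rank-one groups, I would carry out these computations inside the abelianized Coulomb branch $K^{\widetilde{T}_{\mathcal{O}}\rtimes\mathbb{C}^*}(\mathcal{R}_{T,N})_{\mathrm{loc}}$, a quantum torus over $\Bbbk$ in which the relevant products are monomial and inside which the honest Coulomb branch sits as the subalgebra of Weyl-invariant elements with no poles along the relevant hyperplanes.

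With both presentations in hand, I would define the candidate map $\Phi\colon\Sk_{A,\boldsymbol{\lambda}}(S)\to K^{\widetilde{G}_{\mathcal{O}}\rtimes\mathbb{C}^*}(\mathcal{R}_{G,N})$ on generators along the correspondence anticipated in~\cite{AS24a}: a boundary loop goes to $t_i+t_i^{-1}$; an interior curve $c_j$ goes to the Cartan class $u_j+u_j^{-1}$ built from the equivariant parameter $u_j$ of the $j$-th rank-one factor; and each remaining loop goes to an explicit $\Bbbk$-linear combination of a single minimal monopole operator of an adjacent factor together with Cartan classes. The substantive step is to check that $\Phi$ respects the skein relations, which I would do by pushing the images of the two sides of each relation into the abelianized quantum torus and verifying the resulting monomial $q$-commutation and quadratic identities directly. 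Here one must match the powers of $A$ produced by resolving crossings in the Kauffman bracket with the powers of $q^{\frac{1}{2}}$ produced by loop rotation under $A\mapsto q^{-\frac{1}{2}}$, $\lambda_i\mapsto t_i$; this bookkeeping is precisely what accounts for the half-integral powers of $q$ and forces the passage to the double cover of the loop-rotation~$\mathbb{C}^*$.

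It then remains to prove $\Phi$ bijective. For surjectivity it is enough to reach the generators of~\cite{BFN18}: the Weyl-invariant equivariant parameters are images of the interior curves, while each minimal monopole operator is recovered from the image of one of the remaining loops up to an invertible element of $\Bbbk$ and a combination of terms of strictly smaller monopole charge, so an induction on monopole charge produces all of $K^{\widetilde{G}_{\mathcal{O}}\rtimes\mathbb{C}^*}(\mathcal{R}_{G,N})$. For injectivity I would equip both algebras with compatible filtrations --- by geometric intersection number with $c_1,\dots,c_{n-1}$ on the skein side, by monopole charge on the Coulomb side --- check that $\Phi$ is filtered, and identify its associated graded with an isomorphism between two quantum-torus-type ``monomial'' algebras, which forces $\Phi$ to be injective. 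Alternatively, once surjectivity is known, injectivity reduces to the classical statement at $A=-1$, that $\mathbb{C}[\mathcal{M}_{\mathrm{flat}}^{\boldsymbol{\lambda}}(S,\mathrm{SL}_2)]$ and $\mathbb{C}[\mathcal{M}_C(G,F,N)]$ are isomorphic coordinate rings, combined with a torsion-freeness argument over the domain~$\Bbbk$.

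The step I expect to be the main obstacle is the passage from the intrinsic, $\widetilde{G}_{\mathcal{O}}$-equivariant definition of $K^{\widetilde{G}_{\mathcal{O}}\rtimes\mathbb{C}^*}(\mathcal{R}_{G,N})$ to a presentation explicit enough to verify the quantized skein relations term by term: one must simultaneously control the integrality conditions (Weyl-invariance and absence of poles) that carve the true Coulomb branch out of the abelianized quantum torus, the rational structure constants of the monopole products in the presence of the loop-rotation parameter, and their compatibility with the $A$-power bookkeeping of the Kauffman bracket. Once this dictionary is established, the construction of $\Phi$, surjectivity, and the identification of associated gradeds should be comparatively routine, and the whole argument can be organized as an induction on $n$ in which $S_{0,n+2}$ is obtained from $S_{0,n+1}$ by gluing on a pair of pants, matched on the Coulomb side by adjoining one more rank-one factor with bifundamental matter.
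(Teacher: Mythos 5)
Your overall architecture (define the map on a small set of generators, filter by intersection number versus monopole charge, compare associated gradeds) overlaps with the paper's strategy, but there are two genuine gaps. The first and most serious: you assume that both \emph{integral} forms are generated (indeed presented) over $\Bbbk$ by the small generating sets you name. This is exactly what fails, and it is the crux of the theorem. On the skein side, expressing the curves $\sigma_{i,j}$ (and hence all of Chen's generators $\alpha_I$) in terms of $\sigma_{i,i+1}$ and $\gamma_i$ requires inverting a $4\times4$ matrix whose inverse has entries in $\mathbb{C}(A,\lambda_0,\dots,\lambda_{n+1})$, not in $\Bbbk$ (Proposition~\ref{prop:generators}); on the Coulomb side, recovering the undressed monopole class $r_{\alpha_i}$ from the image of a loop likewise costs denominators such as $q^{2}-q^{-2}$, and the analogous matrix inversion appears in Lemma~\ref{lem:generateassociatedgraded}. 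So your claim that each minimal monopole operator is recovered ``up to an invertible element of $\Bbbk$'' is false, and a generators-and-relations map can only be constructed between the \emph{localized} algebras $\mathscr{S}_{A,\boldsymbol{\lambda}}(S)$ and $\mathscr{A}(G,F,N)$. Descending to the integral forms is a separate, substantial step: in the paper it is done by showing the localized isomorphism is filtered and that the induced map on associated gradeds carries the multicurve basis of $\gr^\bullet\Sk_{A,\boldsymbol{\lambda}}(S)$ (controlled via Dehn--Thurston coordinates and the twist operators of Lemma~\ref{lem:twisting}) onto the basis $fr_\lambda$ of dressed monopole classes (Lemmas~\ref{lem:Ibarcalculation} and~\ref{lem:surjectiveassociatedgraded}, then Lemma~\ref{lem:conditionforiso}). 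Nothing in your proposal supplies this descent; your filtration remark is aimed only at injectivity of an already-defined $\Bbbk$-linear map, which is not the issue.

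Second, your plan to verify the full relation set of a presentation of $\Sk_{A,\boldsymbol{\lambda}}(S_{0,n+2})$ inside the abelianized Coulomb branch is both a very heavy computation (Chen's presentation has many intricate families of relations) and unnecessary: the paper sidesteps relation-checking entirely by embedding \emph{both} algebras faithfully into one localized quantum torus --- the skein algebra via a normalized version of the Detcherry--Santharoubane quantum trace (the ``polynomial representation'' $\Phi$), the Coulomb branch via the localization embedding and the Finkelberg--Tsymbaliuk monopole formulas ($\Psi$) --- and then only matching generator images (Lemma~\ref{lem:compareembeddings}). If you insist on a presentation-based argument you must either produce and verify all relations or find such a pair of faithful representations anyway, at which point you have rediscovered the paper's route. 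Finally, your fallback for injectivity (specialize at $A=-1$ and cite the classical identification of the relative character variety with the Coulomb branch) is not available: that classical statement is not independently established in the literature you can cite and is essentially part of what is being proved; moreover a surjection of $\Bbbk$-algebras that is an isomorphism at one specialization need not be injective without additional flatness or finiteness input.
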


Theorem~\ref{thm:intromain} was first conjectured in~\cite{AS24a} where the cases $S=S_{0,3}$ and $S=S_{0,4}$ were proved. The algebra $\Sk_{A,\boldsymbol{\lambda}}(S_{0,4})$ is closely related to the spherical double affine Hecke~algebra~(DAHA) of type~$(C_1^\vee,C_1)$. The spherical DAHA has a faithful representation, the so called polynomial representation, and there is an analogous representation of the quantized Coulomb branch defined using the so called dressed minuscule monopole operators. In~\cite{AS24a} the $S=S_{0,4}$ case of Theorem~\ref{thm:intromain} was proved by comparing these representations.

A key ingredient in the proof of Theorem~\ref{thm:intromain} in the present paper is the construction of a ``polynomial representation'' for the skein algebra of a general surface. This is obtained by modifying a construction of Detcherry and Santharoubane~\cite{DS25}. This representation is explicitly computable using the fusion rules from~\cite{MV94}. Another key ingredient is the construction of special generators for the algebra $\mathscr{S}_{A,\boldsymbol{\lambda}}(S)$ obtained from~$\Sk_{A,\boldsymbol{\lambda}}(S)$ by extending scalars to rational functions in~$A$ and~$\lambda_i$. This is nontrivial and is established thanks to a crucial computation in Proposition~\ref{prop:generators}. We construct analogous generators for the algebra $\mathscr{A}(G,F,N)$ obtained from $K^{\widetilde{G}_{\mathcal{O}}\rtimes\mathbb{C}^*}(\mathcal{R}_{G,N})$ by extending scalars to rational functions in~$q^{\frac{1}{2}}$ and~$t_i$. By comparing the polynomial representation with the representation defined by monopole operators, we construct a $\mathbb{C}$-algebra isomorphism $\mathscr{S}_{A,\boldsymbol{\lambda}}(S)\cong\mathscr{A}(G,F,N)$. We define natural filtrations of these algebras, and by studying the induced map on associated graded algebras, we argue that this isomorphism restricts to an isomorphism of the integral forms as in Theorem~\ref{thm:intromain}.

\subsection{Categorification of skein algebras}

To categorify the skein algebra of a genus zero surface, we consider the bounded derived category $\Db\Coh^{\widetilde{G}_{\mathcal{O}}\rtimes\mathbb{C}^*}(\mathcal{R}_{G,N})$ of $\widetilde{G}_{\mathcal{O}}\rtimes\mathbb{C}^*$-equivariant coherent sheaves on the variety of triples~$\mathcal{R}_{G,N}$. We review the definition of this category in Section~\ref{sec:QuantizedCoulombBranchesAndCategorification} following the approach of Varagnolo and Vasserot~\cite{VV10}. Essentially by definition, the equivariant $K$-theory $K^{\widetilde{G}_{\mathcal{O}}\rtimes\mathbb{C}^*}(\mathcal{R}_{G,N})$ is the Grothendieck group of $\Db\Coh^{\widetilde{G}_{\mathcal{O}}\rtimes\mathbb{C}^*}(\mathcal{R}_{G,N})$. One can view the convolution product as a monoidal structure on this category, and so we obtain the following immediate consequence of Theorem~\ref{thm:intromain}.

\begin{corollary}
\label{cor:categorification}
Let $\widetilde{G}=G\times F$ and~$N$ be the group and representation associated to a surface~$S$ of genus zero. Then there is a $\mathbb{C}$-algebra isomorphism 
\[
\Sk_{A,\boldsymbol{\lambda}}(S)\cong K_0\left(\Db\Coh^{\widetilde{G}_{\mathcal{O}}\rtimes\mathbb{C}^*}(\mathcal{R}_{G,N})\right)
\]
from the skein algebra to the Grothendieck ring of the equivariant derived category.
\end{corollary}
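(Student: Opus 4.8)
The plan is to deduce this corollary directly from Theorem~\ref{thm:intromain} by unwinding the definition of the equivariant $K$-theory that appears there. Following the formalism of Varagnolo and Vasserot reviewed in Section~\ref{sec:QuantizedCoulombBranchesAndCategorification}, the group $K^{\widetilde{G}_{\mathcal{O}}\rtimes\mathbb{C}^*}(\mathcal{R}_{G,N})$ is defined to be the Grothendieck group of the triangulated category $\Db\Coh^{\widetilde{G}_{\mathcal{O}}\rtimes\mathbb{C}^*}(\mathcal{R}_{G,N})$. Concretely one presents $\mathcal{R}_{G,N}$ as an increasing union of $\widetilde{G}_{\mathcal{O}}\rtimes\mathbb{C}^*$-stable closed subschemes of finite type, replaces $\widetilde{G}_{\mathcal{O}}\rtimes\mathbb{C}^*$ by a finite-dimensional quotient acting on each piece, forms the $2$-colimit of the resulting equivariant bounded derived categories, and takes $K_0$. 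Since $K_0$ of a bounded derived category coincides with $K_0$ of the underlying abelian category, and since $K_0$ commutes with the relevant filtered $2$-colimit, this yields a canonical isomorphism of abelian groups
\[
K_0\!\left(\Db\Coh^{\widetilde{G}_{\mathcal{O}}\rtimes\mathbb{C}^*}(\mathcal{R}_{G,N})\right)\;\cong\;K^{\widetilde{G}_{\mathcal{O}}\rtimes\mathbb{C}^*}(\mathcal{R}_{G,N}).
\]

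It remains to check that this identification is multiplicative for the convolution products on the two sides. The convolution product on $\Db\Coh^{\widetilde{G}_{\mathcal{O}}\rtimes\mathbb{C}^*}(\mathcal{R}_{G,N})$ is built from derived pullback and pushforward along the correspondence defining the variety of triples; on the finite-type pieces the relevant pushforward is along a proper morphism, so the product preserves coherence and boundedness and is exact in each variable, hence passes to a biadditive pairing on $K_0$. By the very construction of the convolution product on equivariant $K$-theory in~\cite{BFN18}, this induced pairing is the convolution product of Theorem~\ref{thm:intromain}, and it is unital with unit the class of the structure sheaf of the fundamental point. Therefore the displayed map is an isomorphism of $\mathbb{C}$-algebras, and composing it with the isomorphism of Theorem~\ref{thm:intromain} gives the claimed isomorphism $\Sk_{A,\boldsymbol{\lambda}}(S)\cong K_0\!\left(\Db\Coh^{\widetilde{G}_{\mathcal{O}}\rtimes\mathbb{C}^*}(\mathcal{R}_{G,N})\right)$.

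There is no substantive obstacle here: once the derived category and its convolution monoidal structure have been set up in Section~\ref{sec:QuantizedCoulombBranchesAndCategorification}, the corollary is a formal consequence of Theorem~\ref{thm:intromain}. The only points that require any attention are the bookkeeping just mentioned --- biexactness of the convolution bifunctor so that it descends to $K_0$, properness of the convolution correspondences on the finite-type strata, and compatibility of the colimit defining $K^{\widetilde{G}_{\mathcal{O}}\rtimes\mathbb{C}^*}(\mathcal{R}_{G,N})$ with the $2$-colimit defining the derived category --- all of which are already part of the Varagnolo--Vasserot framework recalled in that section. Accordingly, I would present the proof of Corollary~\ref{cor:categorification} in a few lines, citing Theorem~\ref{thm:intromain} together with the identifications established in Section~\ref{sec:QuantizedCoulombBranchesAndCategorification}.
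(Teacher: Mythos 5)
Your proposal is correct and matches the paper's own treatment: the paper defines $K^{\widetilde{G}_{\mathcal{O}}\rtimes\mathbb{C}^*}(\mathcal{R}_{G,N})$ as the Grothendieck group of the $2$-colimit category $\Db\Coh^{\widetilde{G}_{\mathcal{O}}\rtimes\mathbb{C}^*}(\mathcal{R}_{G,N})$ (Proposition~\ref{prop:abeliancategory}) with product induced by the convolution monoidal structure, and then deduces Corollary~\ref{cor:categorification} as an immediate consequence of Theorem~\ref{thm:intromain}. Your extra remarks on biexactness, ind-properness of $m$, and compatibility of $K_0$ with the colimit are exactly the bookkeeping already built into the Varagnolo--Vasserot/BFN framework recalled in Section~\ref{sec:QuantizedCoulombBranchesAndCategorification}.
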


It is worth noting that Cautis and Williams~\cite{CW23a,CW23b,CW23c} have developed a sophisticated framework for studying equivariant coherent sheaves on ind-schemes. In~\cite{CW23a} they constructed a t-structure on the equivariant derived category of the variety of triples. The heart of this t-structure is an abelian monoidal category called the category of Koszul-perverse coherent sheaves. It gives a mathematical realization of the category of half-BPS line defects of a four-dimensional $\mathcal{N}=2$ gauge theory~\cite{GMN13b}. Our results imply that the skein algebra is the Grothendieck~ring of this abelian monoidal category. The classes of simple objects in the category give a canonical basis, which we expect to be related to the bases studied by Thurston~\cite{T14}.

\subsection{Further directions}

We conclude this introduction by mentioning some further problems and directions for future research.

\subsubsection{Higher genus surfaces}

One obvious problem is to generalize our results to higher genus surfaces. However, one encounters new phenomena already in genus~one. Though it is possible to associate a group $\widetilde{G}=G\times F$ and representation~$N$ to any compact oriented surface~$S$ of genus~one with boundary, Conjecture~1.4 of~\cite{AS24a} states that the associated quantized Coulomb~branch $K^{\widetilde{G}_{\mathcal{O}}\rtimes\mathbb{C}^*}(\mathcal{R}_{G,N})$ is identified not with the full skein algebra $\Sk_{A,\boldsymbol{\lambda}}(S)$ but with a $\mathbb{Z}_2$-invariant subalgebra. It is therefore not immediately clear how to categorify the full skein algebra in this case, but we believe it should be possible to do this after enlarging the flavor symmetry group (see the remark following Proposition~5.5 in~\cite{AS24b}). We will revisit the genus~one case in a future publication.

For surfaces of genus $>1$, the situation is more mysterious. We believe that for many of these surfaces the representation of the gauge group is not of cotangent type, and so the technology of~\cite{BFN18} may not be sufficient to define the associated Coulomb branches. Recently, Teleman~\cite{T22} has proposed a generalization of the $K$-theoretic Coulomb branch and its categorification for theories of noncotangent type, but we have not studied whether this approach is applicable in our situation. Another possible approach to the higher genus case would be to develop a $K$-theoretic version of the gluing formula in Proposition~5.23 of~\cite{BFN19b}.

\subsubsection{Canonical bases and positivity}

As we have discussed, the Kauffman bracket skein algebra admits interesting canonical bases. Thanks to the work of many authors~\cite{B23,FG06,FG00,MQ23,Q22,T14}, these bases are now known to be positive, meaning that the product of two basis elements can be expanded as a $\mathbb{Z}_{\geq0}[A^{\pm1}]$-linear combination of basis elements. It was this positivity property that led to Thurston's question about categorification of the skein algebra~\cite{T14}.

We believe that our categorification will provide a valuable tool for understanding this positivity. Indeed, Cautis and Williams~\cite{CW23a} have used the derived category of equivariant coherent sheaves on the variety of triples to construct a canonical basis for the quantized Coulomb branch which is manifestly positive. We expect that this canonical basis coincides with one of the known canonical bases of the skein algebra under the isomorphism of Theorem~\ref{thm:intromain}. We hope to return to this in a future publication.

\subsubsection{Relation to other categorifications}

We believe that the results of the present paper should be related to the problem of categorifying cluster algebras. Indeed, there is a well known class of cluster algebras associated to surfaces with boundary~\cite{FST08}. The cluster algebra associated to such a surface is closely related to a space of twisted local systems on the surface~\cite{FG06}, and its quantization is closely related to the Kauffman bracket skein algebra~\cite{M16}. It would be interesting to use our methods to define monoidal categorifications in the sense of~\cite{HL10,KKKO18} for this class of cluster algebras. We refer to~\cite{CW23a,SS19} for more on the relationship between Coulomb branches and cluster algebras.

There should also be connections between our results and the work of Queffelec and Wedrich~\cite{QW21}. These authors considered a skein algebra called the $\mathfrak{gl}_2$-skein algebra, which is related to the Kauffman bracket skein algebra by a natural surjective algebra homomorphism. They used the theory of foams to construct a category whose Grothendieck group is naturally isomorphic to the underlying module of the $\mathfrak{gl}_2$-skein algebra. It would be interesting to relate their category to the one considered here.

\subsection*{Acknowledgements.}
We thank Haimiao~Chen and Renaud~Detcherry for answering questions and Kazuhiro~Hikami for generously sharing his unpublished work with us. HK was supported by KIAS Individual Grant MG047204 at the Korea Institute for Advanced Study. PS was supported by NSFC Grant 12225108 and by the New Cornerstone Science Foundation through the Xplorer~Prize.

\section{Genus zero skein algebras and their generators}
\label{sec:GenusZeroSkeinAlgebrasAndTheirGenerators}

In this section, we define the Kauffman bracket skein~algebra and prove a result about generation of the skein algebra of a genus zero surface with boundary.

\subsection{The skein module}
\label{sec:TheSkeinModule}

Let $M$ be any three-dimensional smooth manifold with (possibly empty) boundary. By a \emph{tangle} in $M$, we mean a compact unoriented one-dimensional submanifold $L\subset M$ with boundary such that $\partial L=L\cap\partial M$. A \emph{link} is a tangle $L$ with $\partial L=\emptyset$. A \emph{framing} for a tangle~$L\subset M$ is a continuous map $L\rightarrow TM$ that assigns a tangent vector $v_p\in T_pM\setminus T_pL$ to each point $p\in L$. A tangle in~$M$ equipped with a framing is said to be \emph{framed}, and an \emph{isotopy} of two framed tangles in~$M$ is an ambient isotopy of the underlying tangles in~$M$ that fixes the boundary pointwise and preserves the framings.

Let us write $\widehat{\mathcal{L}}_A(M)$ for the $\mathbb{C}[A^{\pm1}]$-module freely generated by isotopy classes of framed tangles in~$M$ and write $\mathcal{L}_A(M)$ for the submodule generated by isotopy classes of framed links. We will consider the relations in these modules depicted in Figure~\ref{fig:skeinrelations}. Each of the pictures appearing in these relations represents a framed tangle in~$M$. We depict only the portion of the tangle that lies in a small ball $B\subset M$ shaded in gray. For every point $p\in L\cap B$, the associated framing vector $v_p$ is assumed to be pointing toward the reader. The framed tangles appearing in a given relation are assumed to be identical in the complement of~$B$.

\begin{figure}[ht]
\begin{subfigure}{\textwidth}
\[
\begin{tikzpicture}[baseline=(current bounding box.center)]
\clip(-1.125,-1) rectangle (1.125,1);
\draw[fill=gray,opacity=0.2] (0,0) circle (1);
\draw[black, thick] (-0.707,-0.707) -- (0.707,0.707);
\draw[black, thick] (-0.707,0.707) -- (-0.1,0.1);
\draw[black, thick] (0.1,-0.1) -- (0.707,-0.707);
\draw[gray, opacity=0.2, thick] (0,0) circle (1);
\end{tikzpicture}
= A
\begin{tikzpicture}[baseline=(current bounding box.center)]
\clip(-1.125,-1) rectangle (1.125,1);
\draw[fill=gray,opacity=0.2] (0,0) circle (1);
\draw[black, thick] plot [smooth, tension=1] coordinates { (0.707,0.707) (0.25,0) (0.707,-0.707)};
\draw[black, thick] plot [smooth, tension=1] coordinates { (-0.707,0.707) (-0.25,0) (-0.707,-0.707)};
\draw[gray, opacity=0.2, thick] (0,0) circle (1);
\end{tikzpicture}
+ A^{-1}
\begin{tikzpicture}[baseline=(current bounding box.center)]
\clip(-1.125,-1) rectangle (1.125,1);
\draw[fill=gray,opacity=0.2] (0,0) circle (1);
\draw[black, thick] plot [smooth, tension=1] coordinates { (-0.707,0.707) (0,0.25) (0.707,0.707)};
\draw[black, thick] plot [smooth, tension=1] coordinates { (-0.707,-0.707) (0,-0.25) (0.707,-0.707)};
\draw[gray, opacity=0.2, thick] (0,0) circle (1);
\end{tikzpicture}
\]
\caption{\label{subfig:resolution}}
\end{subfigure}
\begin{subfigure}{\textwidth}
\[
\begin{tikzpicture}[baseline=(current bounding box.center)]
\clip(-1.125,-1) rectangle (1.125,1);
\draw[fill=gray,opacity=0.2] (0,0) circle (1);
\draw[black, thick] (0,0) circle (0.5);
\draw[gray, opacity=0.2, thick] (0,0) circle (1);
\end{tikzpicture}
= -(A^2+A^{-2})
\begin{tikzpicture}[baseline=(current bounding box.center)]
\clip(-1.25,-1) rectangle (1.25,1);
\draw[fill=gray,opacity=0.2] (0,0) circle (1);
\draw[gray, opacity=0.2, thick] (0,0) circle (1);
\end{tikzpicture}
\]
\caption{\label{subfig:unknot}}
\end{subfigure}
\caption{The Kauffman bracket skein relations.\label{fig:skeinrelations}}
\end{figure}
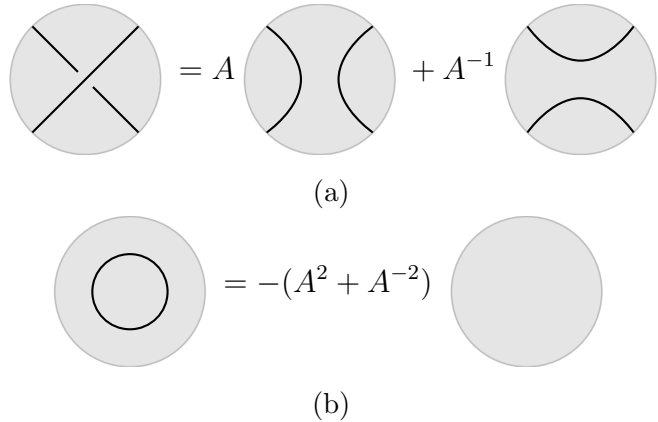

\begin{definition}
The \emph{extended (Kauffman bracket) skein module} $\widehat{\Sk}_A(M)$ is the quotient of~$\widehat{\mathcal{L}}_A(M)$ by the relations depicted in Figure~\ref{fig:skeinrelations}. The \emph{(Kauffman bracket) skein module} $\Sk_A(M)$ is the quotient of~$\mathcal{L}_A(M)$ by these relations.
\end{definition}

\subsection{The skein algebra}

We will be especially interested in the case where the three-manifold has the form $S\times[0,1]$ for an oriented smooth surface~$S$. In this case, we say that a framed~link $L\subset S\times[0,1]$ has the \emph{blackboard framing} if, for each point $p\in L$, the associated framing vector $v_p\in T_p(S\times[0,1])$ is parallel to the $[0,1]$ factor and points towards~1. Given any framed link~$L\subset S\times[0,1]$, we can find an isotopic framed link $L'\subset S\times[0,1]$ that has the blackboard framing. Consequently, we can describe the isotopy class of~$L$ by drawing the projection of~$L'$ to~$S$ and, if two points of~$L'$ project to the same point, indicating their order in the $[0,1]$~direction. In the following, we will always describe isotopy classes of framed links in this way.

The skein module $\Sk_A(S\times[0,1])$ in this case has the natural structure of a $\mathbb{C}[A^{\pm1}]$-algebra. Indeed, if $L_1$,~$L_2\subset S\times[0,1]$ are framed links, we can rescale their $[0,1]$-coordinates so that these links lie in~$S\times[0,\frac{1}{2})$ and~$S\times(\frac{1}{2},1]$, respectively. Then we define $L_2L_1$ to be the isotopy class of the union of the rescaled links in~$S\times[0,1]$, which depends only on the isotopy classes of~$L_1$ and~$L_2$. Extending $\mathbb{C}[A^{\pm1}]$-bilinearly, we get a product on~$\Sk_A(S\times[0,1])$ called the \emph{superposition product}.

\begin{definition}
The \emph{(Kauffman bracket) skein algebra} of~$S$, denoted $\Sk_A(S)$, is the skein module $\Sk_A(S\times[0,1])$ equipped with its superposition product.
\end{definition}

We will also need a modified version of the skein algebra when $S$ is a compact surface with nonempty boundary. Let $S=S_{g,n}$ be the surface obtained from a closed surface of genus~$g$ by removing $n$ open disks with pairwise disjoint closures, and let $\boldsymbol{\lambda}=(\lambda_1,\dots,\lambda_n)$ be an $n$-tuple of formal variables. Then we define the \emph{relative skein algebra} $\Sk_{A,\boldsymbol{\lambda}}(S)$ to be the quotient of $\Sk_A(S)\otimes_{\mathbb{C}[A^{\pm1}]}\mathbb{C}[A^{\pm1},\lambda_1^{\pm1},\dots,\lambda_n^{\pm1}]$ by all relations as in Figure~\ref{fig:puncturerelation}. Each picture in this relation represents a framed link in~$S\times[0,1]$. We depict only the portion of the link that projects to a neighborhood $U\subset S$ of the $i$th boundary component, shaded in gray, and the links on either side of the relation are assumed to be identical outside of~$U$.

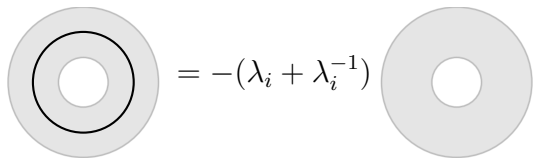
\begin{figure}[ht]
\[
\begin{tikzpicture}[baseline=(current bounding box.center)]
\clip(-1.125,-1) rectangle (1.125,1);
\draw[fill=gray,opacity=0.2,even odd rule] (0,0) circle (1) (0,0) circle (0.33);
\draw[black, thick] (0,0) circle (0.67);
\draw[gray, opacity=0.2, thick] (0,0) circle (1);
\draw[gray, opacity=0.2, thick] (0,0) circle (0.33);
\end{tikzpicture}
= -(\lambda_i+\lambda_i^{-1})
\begin{tikzpicture}[baseline=(current bounding box.center)]
\clip(-1.125,-1) rectangle (1.125,1);
\draw[fill=gray,opacity=0.2,even odd rule] (0,0) circle (1) (0,0) circle (0.33);
\draw[gray, opacity=0.2, thick] (0,0) circle (1);
\draw[gray, opacity=0.2, thick] (0,0) circle (0.33);
\end{tikzpicture}
\]
\caption{Additional relation associated to a boundary component.\label{fig:puncturerelation}}
\end{figure}

The skein algebra $\Sk_A(S)$ defined above becomes the algebra of regular functions on the $\mathrm{SL}_2$-character variety of~$S$ when we specialize to~$A=-1$. Similarly, the relative skein algebra $\Sk_{A,\boldsymbol{\lambda}}(S)$ becomes the algebra of regular functions on the relative $\mathrm{SL}_2$-character variety when we specialize to $A=-1$ and specialize the $\lambda_i$ to nonzero complex numbers~\cite{AS24a}. In the present paper, we will consider variants of these algebras obtained by localizing scalars. Namely, we will write $\mathscr{S}_A(S)$ for the algebra obtained from $\Sk_A(S)$ be extending scalars from $\mathbb{C}[A^{\pm1}]$ to $\mathbb{C}(A)$ and write $\mathscr{S}_{A,\boldsymbol{\lambda}}(S)$ for the algebra obtained from $\Sk_{A,\boldsymbol{\lambda}}(S)$ by extending scalars from $\mathbb{C}[A^{\pm1},\lambda_1^{\pm1},\dots,\lambda_n^{\pm1}]$ to $\mathbb{C}(A,\lambda_1,\dots,\lambda_n)$. Abusing terminology, we refer to $\mathscr{S}_A(S)$ and $\mathscr{S}_{A,\boldsymbol{\lambda}}(S)$ as the skein algebra and relative skein algebra, respectively.

\subsection{Generators for skein algebras}
\label{sec:GeneratorsForSkeinAlgebras}

In this subsection, we will describe a special generating set for the relative skein algebra $\mathscr{S}_{A,\boldsymbol{\lambda}}(S)$ when $S$ has genus zero.

\subsubsection{}

Let us write $S=S_{0,n+2}$ for a surface of genus zero with $n+2$ boundary components and number the boundary components of this surface by the integers $0,1,\dots,n+1$ as shown in Figure~\ref{subfig:puncturedsphere}. Let us also consider a collection of simple, pairwise nonintersecting paths $c_i$ on~$S$ where, for each integer~$i$ considered modulo~$n+2$, the path $c_i$ connects the $i$th and $(i+1)$st boundary components.

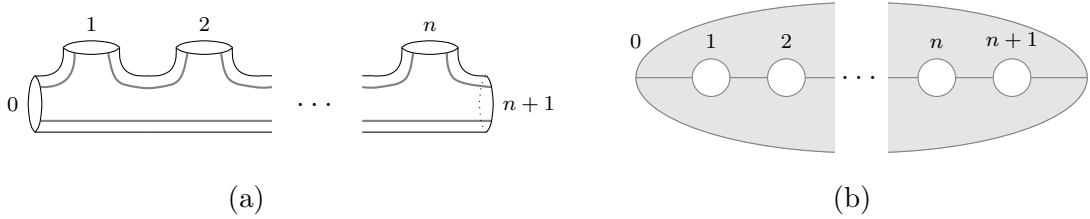
\begin{figure}[ht]
\begin{subfigure}{0.45\textwidth}
\begin{center}
\begin{tikzpicture}[scale=1.5]
\clip(-3.25,-0.5) rectangle (2.25,0.8);
\draw[black, thin] plot [smooth, tension=1] coordinates { (0.5,0.25) (0.7,0.3) (0.75,0.5)};
\draw[black, thin] plot [smooth, tension=1] coordinates { (1.5,0.25) (1.3,0.3) (1.25,0.5)};
\draw[black, thin] plot [smooth, tension=1] coordinates { (-0.5,0.25) (-0.7,0.3) (-0.75,0.5)};
\draw[black, thin] plot [smooth, tension=1] coordinates { (-1.5,0.25) (-1.3,0.3) (-1.25,0.5)};
\draw[black, thin] plot [smooth, tension=1] coordinates { (-1.5,0.25) (-1.7,0.3) (-1.75,0.5)};
\draw[black, thin] plot [smooth, tension=1] coordinates { (-2.5,0.25) (-2.3,0.3) (-2.25,0.5)};
\draw[gray, thick] plot [smooth, tension=1] coordinates { (-2.45,0.15) (-2.22,0.22) (-2.15,0.45)};
\draw[gray, thick] plot [smooth, tension=0.7] coordinates { (-1.85,0.45) (-1.78,0.22) (-1.6,0.15) (-1.4,0.15) (-1.22,0.22) (-1.15,0.45) };
\draw[gray, thick] plot [smooth, tension=0.7] coordinates { (-0.4,0.15) (-0.55,0.15) (-0.78,0.22) (-0.85,0.45)};
\draw[gray, thick] plot [smooth, tension=0.7] coordinates { (0.4,0.15) (0.55,0.15) (0.78,0.22) (0.85,0.45)};
\draw[gray, thick] plot [smooth, tension=0.7] coordinates { (1.55,0.15) (1.22,0.22) (1.15,0.45)};
\draw[gray, thick] (-2.45,-0.15) -- (-0.4,-0.15);
\draw[gray, thick] (0.4,-0.15) -- (1.55,-0.15);
\draw (-1.5,-0.25) -- (-2.5,-0.25);
\draw (-0.4,-0.25) -- (-1.5,-0.25);
\draw (0.4,-0.25) -- (1.5,-0.25);
\draw (-0.4,0.25) -- (-0.5,0.25);
\draw (0.4,0.25) -- (0.5,0.25);
\draw[black, thin] plot [smooth, tension=1] coordinates { (-1.75,0.5) (-1.875,0.45) (-2.125,0.45) (-2.25,0.5)};
\draw[black, thin] plot [smooth, tension=1] coordinates { (-1.75,0.5) (-1.875,0.55) (-2.125,0.55) (-2.25,0.5)};
\draw[black, thin] plot [smooth, tension=1] coordinates { (-0.75,0.5) (-0.875,0.45) (-1.125,0.45) (-1.25,0.5)};
\draw[black, thin] plot [smooth, tension=1] coordinates { (-0.75,0.5) (-0.875,0.55) (-1.125,0.55) (-1.25,0.5)};
\draw[black, thin] plot [smooth, tension=1] coordinates { (0.75,0.5) (0.875,0.45) (1.125,0.45) (1.25,0.5)};
\draw[black, thin] plot [smooth, tension=1] coordinates { (0.75,0.5) (0.875,0.55) (1.125,0.55) (1.25,0.5)};
\draw[black, thin] plot [smooth, tension=1] coordinates { (-2.5,-0.25) (-2.55,-0.125) (-2.55,0.125) (-2.5,0.25)};
\draw[black, thin] plot [smooth, tension=1] coordinates { (-2.5,-0.25) (-2.45,-0.125) (-2.45,0.125) (-2.5,0.25)};
\draw[black, thin, dotted] plot [smooth, tension=1] coordinates { (1.5,-0.25) (1.45,-0.125) (1.45,0.125) (1.5,0.25)};
\draw[black, thin] plot [smooth, tension=1] coordinates { (1.5,-0.25) (1.55,-0.125) (1.55,0.125) (1.5,0.25)};
\node at (0,0) {$\dots$};
\node[left] at (-2.55,0) {\tiny $0$};
\node[above] at (-2,0.55) {\tiny $1$};
\node[above] at (-1,0.55) {\tiny $2$};
\node[above] at (1,0.55) {\tiny $n$};
\node[right] at (1.55,0) {\tiny $n+1$};
\end{tikzpicture}
\end{center}
\caption{\label{subfig:puncturedsphere}}
\end{subfigure}
\begin{subfigure}{0.45\textwidth}
\begin{center}
\begin{tikzpicture}
\draw[fill=gray,opacity=0.2] (0,0) ellipse (3cm and 1cm);
\draw[gray] (0,0) ellipse (3cm and 1cm);
\draw[gray,fill=white] (-2,0) circle (0.25);
\draw[gray,fill=white] (-1,0) circle (0.25);
\draw[gray,fill=white] (1,0) circle (0.25);
\draw[gray,fill=white] (2,0) circle (0.25);
\draw[gray] (-3,0) -- (-2.25,0);
\draw[gray] (-1.75,0) -- (-1.25,0);
\draw[gray] (-0.75,0) -- (0.75,0);
\draw[gray] (1.25,0) -- (1.75,0);
\draw[gray] (2.25,0) -- (3,0);
\fill[white] (-0.35,-1.1) rectangle (0.35,1.1);
\node at (0,0) {$\dots$};
\node[above] at (-3,0.25) {\tiny $0$};
\node[above] at (-2,0.25) {\tiny $1$};
\node[above] at (-1,0.25) {\tiny $2$};
\node[above] at (1,0.25) {\tiny $n$};
\node[above] at (2,0.25) {\tiny $n+1$};
\end{tikzpicture}
\end{center}
\caption{\label{subfig:planarview}}
\end{subfigure}
\caption{A genus zero surface.\label{fig:puncturedsphere}}
\end{figure}

When doing calculations in the skein algebra, it is often useful to think of~$S$ as the surface obtained from a closed disk~$D$ in the plane by removing $n+1$ open disks whose closures are disjoint from each other and from the boundary of~$D$. This is illustrated in Figure~\ref{subfig:planarview}, which also shows the curves~$c_i$ and the numbering of the boundary components.

Note that $S$ is a union of two disks $D_+$ and $D_-$ glued along the curves~$c_i$ where $D_+$ denotes the top half and $D_-$ denotes the bottom half of Figure~\ref{subfig:planarview}. For each $i=1,\dots,n+1$, we write $Z_i^+=D_+\cap Z_i$ where $Z_i$ is the closure of a small tubular neighborhood of the $i$th boundary component of~$S$. For any subset $I=\{i_1,\dots,i_k\}\subset\{1,\dots,n+1\}$, we fix a subsurface $S_I\subset S$ in the interior of~$D$, such that $S_I$ is homeomorphic to~$S_{0,k+1}$, its boundary contains the boundary components of~$S$ numbered $i_1,\dots,i_k$, and its intersection with~$D_+$ is $\bigcup_{i\in I}Z_i^+$. We then consider the curve $\alpha_I=\partial S_I\setminus\partial S$. Examples of $\alpha_I$ include the curves in Figures~\ref{subfig:rho}, \ref{subfig:zeta}, and~\ref{subfig:mu} below. The following result is a consequence of the presentation of the skein algebra obtained in~\cite{C22}.

\begin{theorem}[\cite{C22}, Theorem~5.3]
\label{thm:Chen}
$\Sk_A(S)$ is generated by the curves $\alpha_I$ for $I\subset\{1,\dots,n+1\}$.
\end{theorem}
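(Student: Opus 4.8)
The plan is to derive this from the finite presentation of $\Sk_A(S_{0,n+2})$ established by Chen in~\cite{C22}, so the first step is a translation of conventions. Chen realizes $S=S_{0,n+2}$ as a closed disk with $n+1$ open subdisks removed, which is exactly the model of Figure~\ref{subfig:planarview}, and the generators appearing in his presentation are the simple closed curves (with blackboard framing) bounding a subsurface that contains a prescribed collection of the removed disks. I would check that, for each $I\subset\{1,\dots,n+1\}$, the curve $\alpha_I=\partial S_I\setminus\partial S$ defined above is isotopic in $S$ to the corresponding generator in Chen's list. It is also worth recording the degenerate cases: $\alpha_{\{i\}}$ is isotopic to the $i$th boundary component for $1\le i\le n+1$, and $\alpha_{\{1,\dots,n+1\}}$ is isotopic to the $0$th boundary component, so all boundary curves of $S$ already occur among the $\alpha_I$.

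Once the dictionary is fixed, the theorem is immediate: \cite{C22} asserts that the listed elements generate $\Sk_A(S)$ as a $\mathbb{C}[A^{\pm1}]$-algebra, and we will have matched that list with $\{\alpha_I:I\subset\{1,\dots,n+1\}\}$. For readers who want an argument not quoting the full strength of~\cite{C22}, the skeleton is as follows. Using the skein relations of Figure~\ref{fig:skeinrelations} one reduces any element of $\Sk_A(S)$ to a $\mathbb{C}[A^{\pm1}]$-linear combination of multicurves, i.e.\ isotopy classes of essential embedded $1$-manifolds in $S$, and a multicurve is the superposition product of its pairwise disjoint components. Hence it suffices to show that every essential simple closed curve $\gamma\subset S$ lies in the $\mathbb{C}[A^{\pm1}]$-subalgebra generated by the $\alpha_I$.

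I expect this last point to be the main obstacle. On the planar surface $S_{0,n+2}$ there are infinitely many isotopy classes of essential simple closed curves (already for $n=2$), while the $\alpha_I$ are finite in number, so the skein relations must be used essentially. The mechanism is a ``curve simplification'' step: for a curve $\gamma$ wrapping nontrivially around some subcollection of the holes, one forms a judicious superposition product $\alpha_I\cdot\gamma'$, resolves the crossings via the first relation of Figure~\ref{fig:skeinrelations}, and recovers $\gamma$ up to a $\mathbb{C}[A^{\pm1}]$-combination of curves of strictly smaller complexity, where complexity is measured by something like the total geometric intersection number of $\gamma$ with the arcs $c_i$. An induction on this complexity then writes $\gamma$ in terms of the $\alpha_I$. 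Choosing the right complexity and verifying that the induction closes is precisely the technical heart of~\cite{C22}, which is why I would ultimately just invoke Theorem~5.3 of that paper.
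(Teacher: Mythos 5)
Your proposal matches the paper's treatment: the paper does not reprove this statement but records it as a direct consequence of Chen's presentation, citing Theorem~5.3 of~\cite{C22}, exactly as you do after matching the curves $\alpha_I$ (including the degenerate boundary cases) with Chen's generators on the planar model of Figure~\ref{subfig:planarview}. Your optional sketch of a direct induction on curve complexity is a reasonable outline of what happens inside~\cite{C22}, but it is not needed here and the paper likewise leaves that technical work to Chen.
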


\subsubsection{}

We will now begin to prove a more refined result on generation of skein algebras. In the following, we say that a subset $I\subset\{1,\dots,n+1\}$ is \emph{complete} if it has the form $I=\{i,i+1,i+2\dots,j\}$ for some integers $i$,~$j$.

\begin{lemma}
\label{lem:basecase}
For any complete subset $I\subset\{1,\dots,n+1\}$, the element $\alpha_I$ is contained in the $\mathbb{C}(A)$-subalgebra of~$\mathscr{S}_A(S)$ generated by the curves $\sigma_{i,j}$, $\gamma_i$, and $\delta_i$ depicted in Figure~\ref{fig:generators}.
\end{lemma}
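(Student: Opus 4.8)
The plan is to argue by induction on the cardinality $|I| = j-i+1$ of the complete subset $I = \{i,i+1,\dots,j\}$, peeling off one boundary component of $S$ at a time. For the base cases of small $|I|$ there is nothing to do: inspecting Figure~\ref{fig:generators}, the curves $\alpha_I$ with $|I|$ at most two are, up to isotopy, already among the displayed generators (a $\gamma_i$ or a $\delta_i$, resp.\ $\sigma_{i,j}$), so they lie in the subalgebra tautologically. Throughout I would work in the disk-with-holes model of $S$ from Figure~\ref{subfig:planarview}, in which a complete subset corresponds to a curve enclosing a consecutive block of holes.

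For the inductive step, suppose $|I| \ge 3$ and that the statement holds for every complete subset of smaller cardinality. I would realize $\alpha_I$ as a ``merge'' of two disjoint simple closed curves on $S$: the curve $\alpha_{I'}$ attached to the smaller complete subset $I' = \{i,\dots,j-1\}$ (controlled by the induction hypothesis), together with one of the generators $g$ of Figure~\ref{fig:generators} that shares exactly the $(j-1)$st boundary component with $\alpha_{I'}$, positioned so that $g$ and $\alpha_{I'}$ project to $S$ with precisely two transverse crossings lying in a small disk around the $(j-1)$st and $j$th holes. One then computes the superposition product $\alpha_{I'}\cdot g$ in $\Sk_A(S)$ by applying the Kauffman resolution of Figure~\ref{subfig:resolution} at the two crossings and simplifying any null-homotopic component via Figure~\ref{subfig:unknot}.

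Collecting the four resulting diagrams and isotoping each back to standard position, the target $\alpha_I$ appears with coefficient a power of $A$ (an invertible scalar), while every other term is, after isotopy, a product of generators from Figure~\ref{fig:generators} together with curves $\alpha_{I''}$ on complete subsets $I''$ with $|I''| < |I|$, times scalars in $\mathbb{C}[A^{\pm1}]$; if the product $\alpha_{I'}\cdot g$ itself reappears among these terms, one moves it to the other side, again using that its coefficient is a unit. Solving the resulting identity for $\alpha_I$ and invoking the induction hypothesis on the smaller complete curves then places $\alpha_I$ in the $\mathbb{C}(A)$-subalgebra of $\mathscr{S}_A(S)$ generated by the $\sigma_{i,j}$, $\gamma_i$, and $\delta_i$, which closes the induction.

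The main obstacle is the explicit skein-theoretic bookkeeping in the inductive step: one must choose the auxiliary generator $g$ and arrange the two curves so that \emph{every} resolution term other than $\alpha_I$ genuinely factors through curves on complete subsets of strictly smaller size (so that the induction hypothesis is applicable), and one must verify that $\alpha_I$ occurs with a monomial coefficient. Doing this amounts to carefully tracking over/under strand data and identifying, for each of the resolved diagrams, exactly which holes it encloses after isotopy into the form of Figure~\ref{subfig:planarview}; this is a simpler precursor of the more delicate computation underlying Proposition~\ref{prop:generators}.
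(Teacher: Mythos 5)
There is a genuine gap, and in fact the whole inductive set-up is misdirected. The generating set in Lemma~\ref{lem:basecase} contains \emph{all} the curves $\sigma_{i,j}$ of Figure~\ref{subfig:sigma}, not only the adjacent ones $\sigma_{i,i+1}$, so for a complete subset $I=\{i,\dots,j\}$ the element $\alpha_I$ is already isotopic to one of the listed generators and no skein relation is needed: $\alpha_{\{i\}}=\delta_i$; $\alpha_{\{i,\dots,j\}}=\sigma_{i,j}$ when $j\leq n$; $\alpha_{\{i,\dots,n+1\}}=\gamma_{i-1}$ when $2\le i$; and $\alpha_{\{1,\dots,n+1\}}=\delta_0$. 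This identification is exactly the paper's (one-line) proof. Your plan treats only $|I|\le 2$ as base cases and then tries to manufacture the larger $\alpha_I$ by resolving crossings, which is the kind of work the paper only needs later, when the non-adjacent $\sigma_{i,j}$ are \emph{not} available (Proposition~\ref{prop:generators}) or when $I$ is not complete (Lemma~\ref{lem:induction}).

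Moreover, the inductive step as you describe it would fail. If you superpose $\alpha_{I'}$ (enclosing holes $i,\dots,j-1$) with a generator $g$ enclosing holes $j-1,j$ and resolve the two crossings, the four terms are: the ``symmetric difference'' curve $\alpha_{(I'\setminus\{j-1\})\cup\{j\}}$ with coefficient $A^{\pm2}$, a twisted curve $\beta$ with the opposite power of $A$, and two mixed resolutions, one of which is $\delta_{j-1}\cdot\alpha_{I}$. So the target $\alpha_I$ does \emph{not} appear with a monomial coefficient in $A$; it appears multiplied by the boundary curve $\delta_{j-1}$, which is not invertible in $\mathscr{S}_A(S)$ (only $A$ is inverted here; the $\delta$'s become scalars only in the relative algebra $\mathscr{S}_{A,\boldsymbol{\lambda}}$). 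In addition, $\beta$ has nontrivial Dehn--Thurston twisting, hence is not of the form $\alpha_{I''}$ for a smaller complete subset, so your induction hypothesis does not apply to it; eliminating it requires something like the commutator combination $A^2 xy - A^{-2}yx$ used in Lemma~\ref{lem:induction}, and solving for a curve weighted by $\delta$'s requires the matrix argument of Proposition~\ref{prop:generators} in the relative setting. None of this is available (or needed) for the present lemma; the correct move is simply to observe the isotopies above.
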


\begin{figure}[ht]
\begin{subfigure}{\textwidth}
\begin{center}
\begin{tikzpicture}[scale=1.5]
\clip(-3.25,-0.55) rectangle (3.25,0.8);
\draw[black, thin] plot [smooth, tension=1] coordinates { (0.5,0.25) (0.7,0.3) (0.75,0.5)};
\draw[black, thin] plot [smooth, tension=1] coordinates { (1.5,0.25) (1.3,0.3) (1.25,0.5)};
\draw[black, thin] plot [smooth, tension=1] coordinates { (-0.5,0.25) (-0.7,0.3) (-0.75,0.5)};
\draw[black, thin] plot [smooth, tension=1] coordinates { (-1.5,0.25) (-1.3,0.3) (-1.25,0.5)};
\draw[black, thin] plot [smooth, tension=1] coordinates { (1.5,0.25) (1.7,0.3) (1.75,0.5)};
\draw[black, thin] plot [smooth, tension=1] coordinates { (2.5,0.25) (2.3,0.3) (2.25,0.5)};
\draw[black, thin] plot [smooth, tension=1] coordinates { (-1.5,0.25) (-1.7,0.3) (-1.75,0.5)};
\draw[black, thin] plot [smooth, tension=1] coordinates { (-2.5,0.25) (-2.3,0.3) (-2.25,0.5)};
\draw (-1.5,-0.25) -- (-2.6,-0.25);
\draw (-0.4,-0.25) -- (-1.5,-0.25);
\draw (0.4,-0.25) -- (1.5,-0.25);
\draw (-2.5,0.25) -- (-2.6,0.25);
\draw (-0.4,0.25) -- (-0.5,0.25);
\draw (0.4,0.25) -- (0.5,0.25);
\draw (2.5,0.25) -- (2.6,0.25);
\draw (1.5,-0.25) -- (2.6,-0.25);
\draw[black, thin] plot [smooth, tension=1] coordinates { (-1.75,0.5) (-1.875,0.45) (-2.125,0.45) (-2.25,0.5)};
\draw[black, thin] plot [smooth, tension=1] coordinates { (-1.75,0.5) (-1.875,0.55) (-2.125,0.55) (-2.25,0.5)};
\draw[black, thin] plot [smooth, tension=1] coordinates { (-0.75,0.5) (-0.875,0.45) (-1.125,0.45) (-1.25,0.5)};
\draw[black, thin] plot [smooth, tension=1] coordinates { (-0.75,0.5) (-0.875,0.55) (-1.125,0.55) (-1.25,0.5)};
\draw[black, thin] plot [smooth, tension=1] coordinates { (0.75,0.5) (0.875,0.45) (1.125,0.45) (1.25,0.5)};
\draw[black, thin] plot [smooth, tension=1] coordinates { (0.75,0.5) (0.875,0.55) (1.125,0.55) (1.25,0.5)};
\draw[black, thin] plot [smooth, tension=1] coordinates { (1.75,0.5) (1.875,0.45) (2.125,0.45) (2.25,0.5)};
\draw[black, thin] plot [smooth, tension=1] coordinates { (1.75,0.5) (1.875,0.55) (2.125,0.55) (2.25,0.5)};
\draw[black, thick] plot [smooth, tension=1] coordinates { (-1.5,0.25) (-1.25,0) (-0.4,-0.12)};
\draw[black, thick, dotted] plot [smooth, tension=1] coordinates { (-1.5,0.25) (-1,0.1) (-0.4,0.05)};
\draw[black, thick] plot [smooth, tension=1] coordinates { (1.5,0.25) (1.25,0) (0.4,-0.12)};
\draw[black, thick, dotted] plot [smooth, tension=1] coordinates { (1.5,0.25) (1,0.1) (0.4,0.05)};
\node[below left] at (-1.5,0.25) {\tiny $\sigma_{i,j}$};
\node at (-3,0) {$\dots$};
\node at (0,0) {$\dots$};
\node at (3,0) {$\dots$};
\node[above] at (-2,0.55) {\tiny $i-1$};
\node[above] at (-1,0.55) {\tiny $i$};
\node[above] at (1,0.55) {\tiny $j$};
\node[above] at (2,0.55) {\tiny $j+1$};
\end{tikzpicture}
\end{center}
\caption{\label{subfig:sigma}}
\end{subfigure}
\begin{subfigure}{\textwidth}
\begin{center}
\begin{tikzpicture}[scale=1.5]
\clip(-3.25,-0.55) rectangle (2.25,0.8);
\draw[black, thin] plot [smooth, tension=1] coordinates { (0.5,0.25) (0.7,0.3) (0.75,0.5)};
\draw[black, thin] plot [smooth, tension=1] coordinates { (1.5,0.25) (1.3,0.3) (1.25,0.5)};
\draw[black, thin] plot [smooth, tension=1] coordinates { (-0.5,0.25) (-0.7,0.3) (-0.75,0.5)};
\draw[black, thin] plot [smooth, tension=1] coordinates { (-1.5,0.25) (-1.3,0.3) (-1.25,0.5)};
\draw[black, thin] plot [smooth, tension=1] coordinates { (-1.5,0.25) (-1.7,0.3) (-1.75,0.5)};
\draw[black, thin] plot [smooth, tension=1] coordinates { (-2.5,0.25) (-2.3,0.3) (-2.25,0.5)};
\draw (-1.5,-0.25) -- (-2.5,-0.25);
\draw (-0.4,-0.25) -- (-1.5,-0.25);
\draw (0.4,-0.25) -- (1.5,-0.25);
\draw (-0.4,0.25) -- (-0.5,0.25);
\draw (0.4,0.25) -- (0.5,0.25);
\draw[black, thin] plot [smooth, tension=1] coordinates { (-1.75,0.5) (-1.875,0.45) (-2.125,0.45) (-2.25,0.5)};
\draw[black, thin] plot [smooth, tension=1] coordinates { (-1.75,0.5) (-1.875,0.55) (-2.125,0.55) (-2.25,0.5)};
\draw[black, thin] plot [smooth, tension=1] coordinates { (-0.75,0.5) (-0.875,0.45) (-1.125,0.45) (-1.25,0.5)};
\draw[black, thin] plot [smooth, tension=1] coordinates { (-0.75,0.5) (-0.875,0.55) (-1.125,0.55) (-1.25,0.5)};
\draw[black, thin] plot [smooth, tension=1] coordinates { (0.75,0.5) (0.875,0.45) (1.125,0.45) (1.25,0.5)};
\draw[black, thin] plot [smooth, tension=1] coordinates { (0.75,0.5) (0.875,0.55) (1.125,0.55) (1.25,0.5)};
\draw[black, thin] plot [smooth, tension=1] coordinates { (-2.5,-0.25) (-2.55,-0.125) (-2.55,0.125) (-2.5,0.25)};
\draw[black, thin] plot [smooth, tension=1] coordinates { (-2.5,-0.25) (-2.45,-0.125) (-2.45,0.125) (-2.5,0.25)};
\draw[black, thick, dotted] plot [smooth, tension=1] coordinates { (-1.5,-0.25) (-1.55,-0.125) (-1.55,0.125) (-1.5,0.25)};
\draw[black, thick] plot [smooth, tension=1] coordinates { (-1.5,-0.25) (-1.45,-0.125) (-1.45,0.125) (-1.5,0.25)};
\draw[black, thick, dotted] plot [smooth, tension=1] coordinates { (-0.5,-0.25) (-0.55,-0.125) (-0.55,0.125) (-0.5,0.25)};
\draw[black, thick] plot [smooth, tension=1] coordinates { (-0.5,-0.25) (-0.45,-0.125) (-0.45,0.125) (-0.5,0.25)};
\draw[black, thick, dotted] plot [smooth, tension=1] coordinates { (0.5,-0.25) (0.45,-0.125) (0.45,0.125) (0.5,0.25)};
\draw[black, thick] plot [smooth, tension=1] coordinates { (0.5,-0.25) (0.55,-0.125) (0.55,0.125) (0.5,0.25)};
\draw[black, thin, dotted] plot [smooth, tension=1] coordinates { (1.5,-0.25) (1.45,-0.125) (1.45,0.125) (1.5,0.25)};
\draw[black, thin] plot [smooth, tension=1] coordinates { (1.5,-0.25) (1.55,-0.125) (1.55,0.125) (1.5,0.25)};
\node at (0,0) {$\dots$};
\node[below] at (-1.5,-0.25) {\tiny $\gamma_1$};
\node[below] at (-0.5,-0.25) {\tiny $\gamma_2$};
\node[below] at (0.5,-0.25) {\tiny $\gamma_{n-1}$};
\end{tikzpicture}
\end{center}
\caption{\label{subfig:gamma}}
\end{subfigure}
\begin{subfigure}{\textwidth}
\begin{center}
\begin{tikzpicture}[scale=1.5]
\clip(-3.25,-0.55) rectangle (2.25,0.8);
\draw[black, thin] plot [smooth, tension=1] coordinates { (0.5,0.25) (0.7,0.3) (0.75,0.5)};
\draw[black, thin] plot [smooth, tension=1] coordinates { (1.5,0.25) (1.3,0.3) (1.25,0.5)};
\draw[black, thin] plot [smooth, tension=1] coordinates { (-0.5,0.25) (-0.7,0.3) (-0.75,0.5)};
\draw[black, thin] plot [smooth, tension=1] coordinates { (-1.5,0.25) (-1.3,0.3) (-1.25,0.5)};
\draw[black, thin] plot [smooth, tension=1] coordinates { (-1.5,0.25) (-1.7,0.3) (-1.75,0.5)};
\draw[black, thin] plot [smooth, tension=1] coordinates { (-2.5,0.25) (-2.3,0.3) (-2.25,0.5)};
\draw (-1.5,-0.25) -- (-2.5,-0.25);
\draw (-0.4,-0.25) -- (-1.5,-0.25);
\draw (0.4,-0.25) -- (1.5,-0.25);
\draw (-0.4,0.25) -- (-0.5,0.25);
\draw (0.4,0.25) -- (0.5,0.25);
\draw[black, thin] plot [smooth, tension=1] coordinates { (-1.75,0.5) (-1.875,0.45) (-2.125,0.45) (-2.25,0.5)};
\draw[black, thin] plot [smooth, tension=1] coordinates { (-1.75,0.5) (-1.875,0.55) (-2.125,0.55) (-2.25,0.5)};
\draw[black, thick] plot [smooth, tension=1] coordinates { (-1.75,0.35) (-1.875,0.3) (-2.125,0.3) (-2.25,0.35)};
\draw[black, thick, dotted] plot [smooth, tension=1] coordinates { (-1.75,0.35) (-1.875,0.4) (-2.125,0.4) (-2.25,0.35)};
\draw[black, thin] plot [smooth, tension=1] coordinates { (-0.75,0.5) (-0.875,0.45) (-1.125,0.45) (-1.25,0.5)};
\draw[black, thin] plot [smooth, tension=1] coordinates { (-0.75,0.5) (-0.875,0.55) (-1.125,0.55) (-1.25,0.5)};
\draw[black, thick] plot [smooth, tension=1] coordinates { (-0.75,0.35) (-0.875,0.3) (-1.125,0.3) (-1.25,0.35)};
\draw[black, thick, dotted] plot [smooth, tension=1] coordinates { (-0.75,0.35) (-0.875,0.4) (-1.125,0.4) (-1.25,0.35)};
\draw[black, thin] plot [smooth, tension=1] coordinates { (0.75,0.5) (0.875,0.45) (1.125,0.45) (1.25,0.5)};
\draw[black, thin] plot [smooth, tension=1] coordinates { (0.75,0.5) (0.875,0.55) (1.125,0.55) (1.25,0.5)};
\draw[black, thick] plot [smooth, tension=1] coordinates { (0.75,0.35) (0.875,0.3) (1.125,0.3) (1.25,0.35)};
\draw[black, thick, dotted] plot [smooth, tension=1] coordinates { (0.75,0.35) (0.875,0.4) (1.125,0.4) (1.25,0.35)};
\draw[black, thin] plot [smooth, tension=1] coordinates { (-2.5,-0.25) (-2.55,-0.125) (-2.55,0.125) (-2.5,0.25)};
\draw[black, thin] plot [smooth, tension=1] coordinates { (-2.5,-0.25) (-2.45,-0.125) (-2.45,0.125) (-2.5,0.25)};
\draw[black, thin, dotted] plot [smooth, tension=1] coordinates { (1.5,-0.25) (1.45,-0.125) (1.45,0.125) (1.5,0.25)};
\draw[black, thin] plot [smooth, tension=1] coordinates { (1.5,-0.25) (1.55,-0.125) (1.55,0.125) (1.5,0.25)};
\draw[black, thick] plot [smooth, tension=1] coordinates { (1.35,-0.25) (1.4,-0.126) (1.4,0.127) (1.35,0.258)};
\draw[black, thick, dotted] plot [smooth, tension=1] coordinates { (1.35,-0.25) (1.3,-0.126) (1.3,0.127) (1.35,0.258)};
\draw[black, thick] plot [smooth, tension=1] coordinates { (-2.35,-0.25) (-2.3,-0.126) (-2.3,0.127) (-2.35,0.258)};
\draw[black, thick, dotted] plot [smooth, tension=1] coordinates { (-2.35,-0.25) (-2.4,-0.126) (-2.4,0.127) (-2.35,0.258)};
\node at (0,0) {$\dots$};
\node[below] at (-2.35,-0.25) {\tiny $\delta_0$};
\node[below] at (-2,0.35) {\tiny $\delta_1$};
\node[below] at (-1,0.35) {\tiny $\delta_2$};
\node[below] at (1,0.35) {\tiny $\delta_n$};
\node[below] at (1.35,-0.25) {\tiny $\delta_{n+1}$};
\end{tikzpicture}
\end{center}
\caption{\label{subfig:delta}}
\end{subfigure}
\caption{Curves on a genus zero surface, $1\leq i, j\leq n$.\label{fig:generators}}
\end{figure}
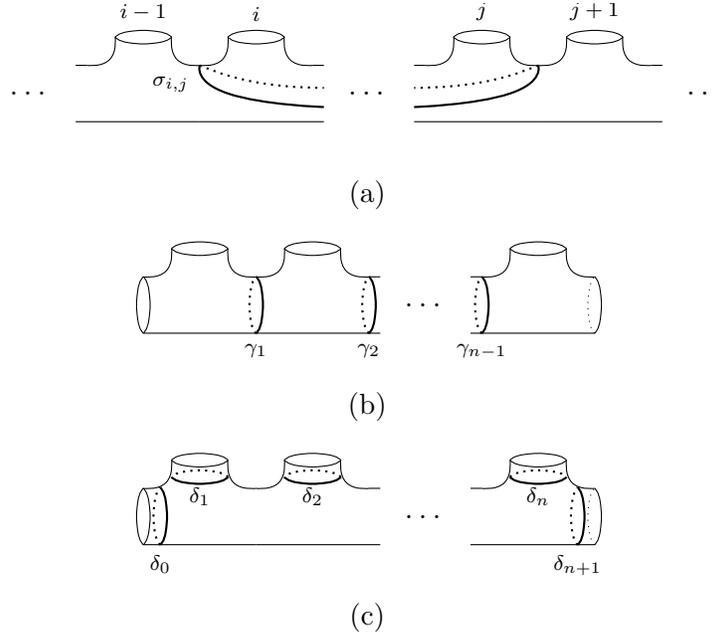

\begin{proof}
For any $i=1,\dots,n+1$, we have $\alpha_{\{i\}}=\delta_i$, and therefore the lemma is true when $|I|=1$. On the other hand, for any $2\leq r\leq n+1$, we have 
\begin{align*}
\alpha_{\{1,2,\dots,r\}} &= \sigma_{1,r}, \\
\alpha_{\{2,3,\dots,r+1\}} &= \sigma_{2,r+1}, \\
&\vdots \\
\alpha_{\{n+2-r,\dots,n+1\}} &= \sigma_{n+2-r,n+1}.
\end{align*}
(We note that the last of these elements can also be identified with $\gamma_{n+1-r}$ when $r<n+1$ and with $\delta_0$ when $r=n+1$.) Hence the lemma is true when $|I|=r$.
\end{proof}

\begin{lemma}
\label{lem:induction}
If $I_1,\dots,I_r\subset\{1,\dots,n+1\}$ are complete subsets, then $\alpha_{I_1\cup\dots\cup I_r}$ is contained in the $\mathbb{C}(A)$-subalgebra of~$\mathscr{S}_A(S)$ generated by the curves $\sigma_{i,j}$, $\gamma_i$, and~$\delta_i$ in Figure~\ref{fig:generators}.
\end{lemma}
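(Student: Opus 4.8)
The plan is to argue by induction on the number of ``blocks'' of the union. Since a singleton is complete and every subset is a union of singletons, replacing $I_1,\dots,I_r$ by the maximal intervals of $I_1\cup\dots\cup I_r$ — each of which is again complete — we may assume that the $I_k$ are pairwise disjoint and pairwise non-adjacent, say $I_1<I_2<\dots<I_r$ (so that $\max I_k+1<\min I_{k+1}$), and we set $I=I_1\cup\dots\cup I_r$. If $r=1$ then $I$ is complete and the statement is Lemma~\ref{lem:basecase}; so suppose $r\geq 2$ and that the lemma holds whenever the union has fewer than $r$ blocks. Let $G=\{\max I_1+1,\dots,\min I_2-1\}$ be the (nonempty) gap between $I_1$ and $I_2$, and consider
\[
A=I_1\cup G=\{\min I_1,\dots,\min I_2-1\},\qquad B=G\cup I_2\cup I_3\cup\dots\cup I_r.
\]
Then $A$ is complete, so $\alpha_A$ lies in the subalgebra by Lemma~\ref{lem:basecase}, and the maximal intervals of $B$ are $G\cup I_2,\ I_3,\dots,I_r$, so $B$ is a union of $r-1$ blocks and $\alpha_B$ lies in the subalgebra by the inductive hypothesis. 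Crucially, $A\cap B=G$ and $A\triangle B=I$.

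Next I would carry out a skein computation with the product $\alpha_A\alpha_B$. One chooses isotopy representatives for which the parts of $\alpha_A$ and $\alpha_B$ that encircle the holes of $G$ are pushed into the lower half $D_-$ and are otherwise disjoint; a short check (using the criterion for when two separating curves on a planar surface can be made disjoint) shows that $\alpha_A$ and $\alpha_B$ are then in minimal position with geometric intersection number exactly two, the two intersection points lying near the holes of $G$ — locally the picture of two ``coordinate curves'' on a four-holed sphere whose inner holes are the clusters $I_1$, $G$, and $I_2\cup\dots\cup I_r$. Resolving the two crossings by the Kauffman bracket skein relation writes $\alpha_A\alpha_B$ as a $\mathbb{C}[A^{\pm 1}]$-linear combination of the four smoothings. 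Because the smoothings take place entirely in $D_-$, the $D_+$-part of each resulting framed link is contained in $\bigcup_{i\in A\cup B}Z_i^+=\bigcup_{i\in I\cup G}Z_i^+$, so each component is a curve of the form $\alpha_{I'}$ with $I'\subseteq I\cup G$ carrying the standard band routing, and a $\mathbb{Z}/2$-homology count then pins down the possibilities. One finds that one smoothing is $\alpha_I$, and that every other smoothing is, up to disjoint union, built from curves $\alpha_{I_1}$, $\alpha_{I_2\cup\dots\cup I_r}$, $\alpha_G$, and $\alpha_{I\cup G}$ — each of which is either a curve for a complete subset (Lemma~\ref{lem:basecase}) or a curve for a union of fewer than $r$ blocks (inductive hypothesis), hence already in the subalgebra.

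To isolate $\alpha_I$ one forms a suitable $\mathbb{C}(A)$-linear combination of $\alpha_A\alpha_B$ and $\alpha_B\alpha_A$: reversing the order of the superposition interchanges the two coherent smoothings in the resolution while fixing the mixed ones, so for an appropriate choice of the combination the coefficient of $\alpha_I$ becomes a unit in $\mathbb{C}(A)$ (a product of cyclotomic factors such as $A^4-A^{-4}$), while every other term is one of the smoothings identified above or is $\alpha_A\alpha_B$ or $\alpha_B\alpha_A$ — in all cases an element of the subalgebra. Dividing out the unit shows $\alpha_I$ belongs to the subalgebra, completing the induction. The step I expect to be the main obstacle is the precise identification of the four smoothings in the second paragraph: one must choose the isotopy representatives of $\alpha_A$ and $\alpha_B$ carefully and work out an explicit local model near $G$ in order to confirm both that the geometric intersection number is exactly two and that the curves produced are genuinely the $\alpha_{I'}$ for the relevant smaller index sets (with bands routed in $D_-$) rather than Dehn-twisted representatives realizing the same partitions of the boundary components.
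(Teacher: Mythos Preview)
Your approach is essentially the same as the paper's: induct on the number of blocks, fill one extreme gap to produce two overlapping $\alpha$-curves meeting in exactly two points, and take a $\mathbb{C}(A)$-linear combination of the two orderings of their product to isolate the target. The paper peels off the \emph{last} block (using the gap $J$ between $I_k$ and $I_{k+1}$) while you peel off the first, but this is a trivial symmetry.

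One point deserves sharpening. You assert that every non-target smoothing is built from $\alpha_{I_1}$, $\alpha_{I_2\cup\dots\cup I_r}$, $\alpha_G$, and $\alpha_{I\cup G}$. In fact only the two \emph{mixed} smoothings are of this form; the other \emph{coherent} smoothing (the paper calls it~$\beta$) need not be an $\alpha_{I'}$ at all---it can be a Dehn-twisted representative, exactly the possibility you flag at the end. The paper does not attempt to identify~$\beta$: it simply observes that $\beta$ appears with coefficients $A^{-2}$ and $A^2$ in the two orderings, so the combination $A^2(\alpha_A\alpha_B)-A^{-2}(\alpha_B\alpha_A)$ kills it outright while leaving $(A^4-A^{-4})\alpha_I$ plus the mixed terms. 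Your third paragraph contains this mechanism, but your second paragraph over-claims; the fix is to say only that the two mixed smoothings are the products $\alpha_G\cdot\alpha_{I\cup G}$ and $\alpha_{I_1}\cdot\alpha_{I_2\cup\dots\cup I_r}$, and that the remaining coherent smoothing is eliminated by the linear combination rather than identified.
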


\begin{proof}
Let us write $\mathcal{A}\subset\mathscr{S}_A(S)$ for the $\mathbb{C}(A)$-subalgebra of~$\mathscr{S}_A(S)$ generated by the curves illustrated in Figure~\ref{fig:generators}. If $I_1\subset\{1,\dots,n+1\}$ is a complete set, then $\alpha_{I_1}\in\mathcal{A}$ by Lemma~\ref{lem:basecase}. Assume inductively that the lemma is true for all $r\leq k$, and let $I_1,\dots,I_{k+1}\subset\{1,\dots,n+1\}$ be complete subsets. We may assume without loss of generality that the~$I_t$ are pairwise disjoint and $i_1<\dots<i_{k+1}$ when $i_t\in I_t$ for $1\leq t\leq k+1$. For convenience, let us write $I\coloneqq I_1\cup\dots\cup I_k$. If $I_k\cup I_{k+1}$ is complete, then $I\cup I_{k+1}=I_1\cup\dots I_{k-1}\cup(I_k\cup I_{k+1})$ is a union of $k$ complete subsets, and hence, by the inductive assumption, $\alpha_{I_1\cup\dots\cup I_{k+1}}\in\mathcal{A}$. If $I_k\cup I_{k+1}$ is not complete, let 
\[
J=\{j\in\mathbb{Z}:i_k<j<i_{k+1}\text{ for all $i_k\in I_k$ and $i_{k+1}\in I_{k+1}$}\}.
\]
One can check using the skein relation that 
\[
\alpha_{J\cup I_{k+1}}\cdot\alpha_{I\cup J}=A^2\alpha_{I\cup I_{k+1}}+\alpha_J\cdot\alpha_{I\cup J\cup I_{k+1}}+\alpha_I\cdot\alpha_{I_{k+1}}+A^{-2}\beta
\]
for some loop~$\beta$ on~$S$, and similarly 
\[
\alpha_{I\cup J}\cdot\alpha_{J\cup I_{k+1}}=A^2\beta+\alpha_I\cdot\alpha_{I_{k+1}}+\alpha_J\cdot\alpha_{I\cup J\cup I_{k+1}}+A^{-2}\alpha_{I\cup I_{k+1}}.
\]
These relations imply 
\begin{align*}
A^2\alpha_{J\cup I_{k+1}}&\cdot\alpha_{I\cup J} - A^{-2}\alpha_{I\cup J}\cdot\alpha_{J\cup I_{k+1}} \\
&= (A^4-A^{-4})\alpha_{I\cup I_{k+1}}+(A^2-A^{-2})\left(\alpha_J\cdot\alpha_{I\cup J\cup I_{k+1}}+\alpha_I\cdot\alpha_{I_{k+1}}\right).
\end{align*}
Since the subsets $J$, $I_{k+1}$, and $J\cup I_{k+1}$ are complete, we have $\alpha_J$, $\alpha_{I_{k+1}}$, $\alpha_{J\cup I_{k+1}}\in\mathcal{A}$. By the inductive assumption and the fact that the subsets $I_k\cup J$ and $I_k\cup I_{k+1}\cup J$ are complete, we also have $\alpha_{I\cup J}$, \ $\alpha_{I\cup J\cup I_{k+1}}$, \ $\alpha_I\in\mathcal{A}$. Hence $\alpha_{I_1\cup\dots\cup I_{k+1}}=\alpha_{I\cup I_{k+1}}\in\mathcal{A}$ and the lemma follows by induction.
\end{proof}

\begin{proposition}
\label{prop:firstgenerators}
$\mathscr{S}_A(S)$ is generated as a $\mathbb{C}(A)$-algebra by the curves $\sigma_{i,j}$, $\gamma_i$, and~$\delta_i$ in Figure~\ref{fig:generators}.
\end{proposition}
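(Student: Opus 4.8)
The plan is to deduce the proposition directly from Theorem~\ref{thm:Chen} together with Lemma~\ref{lem:induction}. Write $\mathcal{A}\subset\mathscr{S}_A(S)$ for the $\mathbb{C}(A)$-subalgebra generated by the curves $\sigma_{i,j}$, $\gamma_i$, and $\delta_i$ of Figure~\ref{fig:generators}; these are isotopy classes of framed links, hence genuine elements of $\mathscr{S}_A(S)$. By Theorem~\ref{thm:Chen}, $\Sk_A(S)$ is generated over $\mathbb{C}[A^{\pm1}]$ by the curves $\alpha_I$ for $I\subset\{1,\dots,n+1\}$. Since $\mathscr{S}_A(S)=\Sk_A(S)\otimes_{\mathbb{C}[A^{\pm1}]}\mathbb{C}(A)$, the same curves generate $\mathscr{S}_A(S)$ as a $\mathbb{C}(A)$-algebra. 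Therefore it suffices to show that $\alpha_I\in\mathcal{A}$ for every $I\subset\{1,\dots,n+1\}$.

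First I would decompose an arbitrary subset into intervals. Given $I\subset\{1,\dots,n+1\}$, let $I=I_1\sqcup\dots\sqcup I_r$ be its decomposition into maximal runs of consecutive integers, so that each $I_t$ has the form $\{a_t,a_t+1,\dots,b_t\}$ and is therefore a complete subset, and the $I_t$ are pairwise disjoint. Then $\alpha_I=\alpha_{I_1\cup\dots\cup I_r}$, which lies in $\mathcal{A}$ by Lemma~\ref{lem:induction}. Combined with the previous paragraph, this shows $\mathcal{A}=\mathscr{S}_A(S)$, proving the proposition.

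The substantive work has already been carried out in Lemma~\ref{lem:basecase} and in the skein-relation computation at the heart of Lemma~\ref{lem:induction}; the present step is essentially just an assembly of those inputs. The only points to keep an eye on are bookkeeping ones: that ``complete'' subsets of $\{1,\dots,n+1\}$ are honest intervals with no cyclic wrap-around, so that the maximal-run decomposition genuinely produces complete subsets, and that passing from the integral form $\Sk_A(S)$ to the localization $\mathscr{S}_A(S)$ does not affect generation. Neither presents any real difficulty, so I do not expect an obstacle here beyond what has already been resolved in the preceding lemmas.
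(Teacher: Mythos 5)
Your proposal is correct and matches the paper's proof: the paper likewise observes that any subset of $\{1,\dots,n+1\}$ is a union of complete subsets and then cites Lemma~\ref{lem:induction} together with Theorem~\ref{thm:Chen}. Your extra remarks (maximal-run decomposition, no cyclic wrap-around in the definition of complete, and generation being preserved under extension of scalars to $\mathbb{C}(A)$) are exactly the bookkeeping the paper leaves implicit.
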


\begin{proof}
Since any subset of $\{1,\dots,n+1\}$ is a union of complete subsets, this follows from Lemma~\ref{lem:induction} and Theorem~\ref{thm:Chen}.
\end{proof}

\subsubsection{}

The following is the main result we need about generation of skein algebras. The proof uses a method that we learned from an unpublished preprint of Hikami~\cite{H24}, in which he considered the case $S=S_{0,5}$.

\begin{proposition}
\label{prop:generators}
$\mathscr{S}_{A,\boldsymbol{\lambda}}(S)$ is generated as a $\mathbb{C}(A,\lambda_0,\dots,\lambda_{n+1})$-algebra by the curves $\sigma_{i,i+1}$ and~$\gamma_i$ in Figure~\ref{fig:generators}.
\end{proposition}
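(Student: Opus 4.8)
The plan is to start from the generating set produced by Proposition~\ref{prop:firstgenerators}, namely the curves $\sigma_{i,j}$ for $1\le i\le j\le n$, together with the $\gamma_i$ and the $\delta_i$, and show that in the localized relative skein algebra $\mathscr{S}_{A,\boldsymbol{\lambda}}(S)$ every one of these can be expressed as a noncommutative polynomial in the $\sigma_{i,i+1}$ and $\gamma_i$ alone. There are two families to eliminate: the $\delta_i$ (the small loops encircling a single boundary component, or pairs/the outer boundary) and the ``long'' arcs $\sigma_{i,j}$ with $j>i+1$. For the $\delta_i$ with $0\le i\le n+1$: the boundary loop $\delta_i$ for $i=1,\dots,n$ is, up to sign, precisely the element killed down to the scalar $-(\lambda_i+\lambda_i^{-1})$ by the relation of Figure~\ref{fig:puncturerelation}, once one checks that the curve denoted $\delta_i$ really is isotopic to the loop around the $i$th boundary component appearing in that figure; so $\delta_i$ becomes a central scalar in $\mathscr{S}_{A,\boldsymbol\lambda}(S)$. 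For $\delta_0$ and $\delta_{n+1}$, the curve is isotopic to a loop around the union of all the other boundary components, hence equals $\alpha_{\{1,\dots,n+1\}}$ (resp.\ its complement), which by Lemma~\ref{lem:basecase} is $\sigma_{1,n+1}$, so it suffices to handle the long $\sigma$'s.

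The core of the argument is therefore the recursive elimination of $\sigma_{i,j}$ for $j-i\ge 2$. Here I expect to reuse, almost verbatim, the computation already carried out in the proof of Lemma~\ref{lem:induction}: applying the Kauffman bracket skein relation to the superposition $\sigma_{i,i+1}\cdot\sigma_{i+1,j}$ (viewing both as arcs $\alpha_{I}$ with $I$ complete, meeting in a single crossing region near boundary component $i+1$) yields, after resolving the crossing, an identity of the schematic form
\[
A^2\,\sigma_{i,i+1}\cdot\sigma_{i+1,j}-A^{-2}\,\sigma_{i+1,j}\cdot\sigma_{i,i+1}
=(A^4-A^{-4})\,\sigma_{i,j}+(A^2-A^{-2})\bigl(\delta_{i+1}\cdot\sigma_{\widehat{i+1}}+\text{(shorter terms)}\bigr),
\]
where the ``shorter terms'' involve arcs $\sigma_{i',j'}$ with strictly smaller $j'-i'$ and $\delta_{i+1}$ is the (now scalar) boundary loop. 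Solving for $\sigma_{i,j}$ — legitimate because $A^4-A^{-4}$ is invertible in $\mathbb{C}(A,\boldsymbol\lambda)$, which is exactly why we must pass to the localization — expresses $\sigma_{i,j}$ in terms of $\sigma_{i,i+1}$, $\sigma_{i+1,j}$, the scalar $\delta_{i+1}$, and arcs strictly shorter than $\sigma_{i,j}$. Induction on $j-i$ then shows every $\sigma_{i,j}$ lies in the subalgebra $\mathcal B\subseteq\mathscr{S}_{A,\boldsymbol\lambda}(S)$ generated by $\{\sigma_{i,i+1},\gamma_i\}$; combined with the previous paragraph this gives $\mathscr{S}_{A,\boldsymbol\lambda}(S)=\mathcal B$.

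The step I expect to be the genuine obstacle is pinning down the exact skein identity in the displayed line above — that is, verifying by an explicit picture computation (of the kind Hikami did for $S_{0,5}$) that resolving the single crossing in $\sigma_{i,i+1}\cdot\sigma_{i+1,j}$ and in the reversed product produces, on the right-hand side, precisely $\sigma_{i,j}$ with a nonzero coefficient of the form $A^{\pm 2}-A^{\mp 2}$ (so that the difference of the two products has the invertible coefficient $A^4-A^{-4}$), with all remaining terms being products of strictly shorter arcs and the boundary-parallel loops $\delta_{i+1}$. One has to be careful that no ``uncontrolled'' loop $\beta$ survives: in Lemma~\ref{lem:induction} the stray $\beta$ cancels in the antisymmetrized combination, and I expect the same cancellation here, but it must be checked that $\beta$ is genuinely the same framed link in both products and that the bookkeeping of which boundary components are enclosed is consistent. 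Once this local identity is established, the rest is a routine double induction (on $j-i$, and on the number of complete pieces) and the reduction of the $\delta_i$ via the puncture relation.
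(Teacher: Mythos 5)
Your reduction to eliminating the long arcs is the right frame, and the treatment of the $\delta_i$ via the relation of Figure~\ref{fig:puncturerelation} is fine, but the key displayed identity is wrong, and the failure is exactly at the point you flagged. The curves $\sigma_{i,i+1}$ and $\sigma_{i+1,j}$ meet in two points, not one, so the product resolves into four terms; more importantly, the desired curve $\sigma_{i,j}$ is never the $A^{\pm2}$-weighted extreme resolution. Writing $U=\{i+1,\dots,j\}$ and $V=\{i,i+1\}$, the extreme resolutions of $\alpha_U\cdot\alpha_V$ are the curve around the symmetric difference $\{i\}\cup\{i+2,\dots,j\}$ and a twisted partner of it, while $\sigma_{i,j}=\alpha_{U\cup V}$ appears only in a middle term multiplied by $\delta_{i+1}=\alpha_{U\cap V}$ (the other middle term being $\delta_i\cdot\sigma_{i+2,j}$). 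So your formula has the roles reversed: the antisymmetrized combination $A^2\alpha_U\alpha_V-A^{-2}\alpha_V\alpha_U$ isolates the non-interval curve $\alpha_{\{i,i+2,\dots,j\}}$, not $\sigma_{i,j}$ --- that is precisely the content of Lemma~\ref{lem:induction}, which you would be re-deriving rather than exploiting.

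One could instead try to solve for the middle term, since $\delta_{i+1}$ is an invertible scalar after localization, but then you need the symmetric-difference curve to lie in the subalgebra generated by the $\sigma_{k,k+1}$ and $\gamma_k$, and the only expression available for it (Lemma~\ref{lem:induction}) involves $\sigma_{i,j}$ itself, so the argument becomes circular. Counting unknowns makes the obstruction plain: the two orders of the single product give two equations in the three unknown curves ($\sigma_{i,j}$, the symmetric-difference curve $\zeta$, and its twist $\eta$); subtracting determines only $\zeta-\eta$, never $\sigma_{i,j}$. This is why the paper, following Hikami, introduces the auxiliary curves $\rho,\zeta,\eta,\mu,\nu,\tau$ of Figure~\ref{fig:skeinsplanarview}: the additional products $\mu\nu$ and $\nu\mu$ contribute two further equations involving the same unknowns plus one more, yielding the $4\times4$ system~\eqref{eqn:skeinmatrix} whose coefficient matrix has generically nonzero determinant, and inverting it (together with checking $\mu,\nu$ lie in the subalgebra, via Lemma~\ref{lem:induction} and the $180^\circ$ rotation) is what finally expresses $\sigma_{1,K+1}$ in terms of the adjacent generators. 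Some new input of this kind is required; the ``routine double induction'' you describe does not close the argument.
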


\begin{proof}
By Proposition~\ref{prop:firstgenerators} and the relation in Figure~\ref{fig:puncturerelation}, we know that $\mathscr{S}_{A,\boldsymbol{\lambda}}(S)$ is generated as a $\mathbb{C}(A,\lambda_0,\dots,\lambda_{n+1})$-algebra by the curves $\sigma_{i,j}$ and~$\gamma_i$. Thus we only need to show that each curve $\sigma_{i,j}$ is contained in the $\mathbb{C}(A,\lambda_0,\dots,\lambda_{n+1})$-subalgebra generated by curves of the form $\sigma_{i,i+1}$ and~$\gamma_i$. Given a curve~$\sigma_{i,j}$ with $j-i=k-1$, we can cut~$S$ along the loops $\gamma_{i-1}$ and $\gamma_j$ to get a subsurface of~$S$ homeomorphic to~$S_{0,k+2}$, and the curve $\sigma_{i,j}$ is identified with the curve~$\sigma_{1,k}$ on this subsurface. Thus it suffices to show that, for any $k\geq1$, the curve $\sigma_{1,k}\in\Sk_{A,\boldsymbol{\lambda}}(S_{0,k+2})$ is contained in the $\mathbb{C}(A,\lambda_1,\dots,\lambda_k)$-subalgebra generated by the~$\sigma_{i,i+1}$ and~$\gamma_i$.

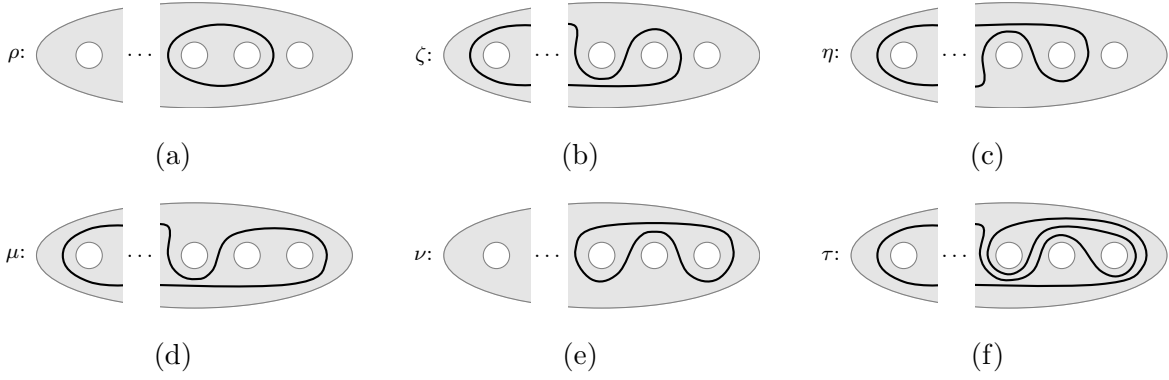
\begin{figure}[ht]
\begin{subfigure}{0.3\textwidth}
\begin{center}
\begin{tikzpicture}[scale=0.7]
\clip(-3.75,-1) rectangle (3,1.5);
\draw[fill=gray,opacity=0.2] (0,0) ellipse (3cm and 1cm);
\draw[gray] (0,0) ellipse (3cm and 1cm);
\draw[gray,fill=white] (-2,0) circle (0.25);
\draw[gray,fill=white] (0,0) circle (0.25);
\draw[gray,fill=white] (1,0) circle (0.25);
\draw[gray,fill=white] (2,0) circle (0.25);
\draw[black, thick] plot [smooth cycle, tension=0.8] coordinates { (0,0.5) (-0.5,0) (0,-0.5) (1,-0.5) (1.5,0) (1,0.5) };
\fill[white] (-1.35,-1.1) rectangle (-0.65,1.1);
\node at (-1,0) {\tiny $\dots$};
\node[left] at (-3,0) {\tiny $\rho$:};
\end{tikzpicture}
\end{center}
\caption{\label{subfig:rho}}
\end{subfigure}
\begin{subfigure}{0.3\textwidth}
\begin{center}
\begin{tikzpicture}[scale=0.7]
\clip(-3.75,-1) rectangle (3,1.5);
\draw[fill=gray,opacity=0.2] (0,0) ellipse (3cm and 1cm);
\draw[gray] (0,0) ellipse (3cm and 1cm);
\draw[gray,fill=white] (-2,0) circle (0.25);
\draw[gray,fill=white] (0,0) circle (0.25);
\draw[gray,fill=white] (1,0) circle (0.25);
\draw[gray,fill=white] (2,0) circle (0.25);
\draw[black, thick] plot [smooth, tension=0.8] coordinates { (-1,-0.55) (-2,-0.5) (-2.5,0) (-2,0.5) (-1,0.55) };
\draw[black, thick] plot [smooth, tension=0.7] coordinates { (-1,0.55) (-0.5,0.55) (-0.5,0) (-0.33,-0.33) (0,-0.45) (0.33,-0.3) (0.65,0.35) (1.2,0.45) (1.5,-0.1) (1,-0.55) (-1,-0.55) };
\fill[white] (-1.35,-1.1) rectangle (-0.65,1.1);
\node at (-1,0) {\tiny $\dots$};
\node[left] at (-3,0) {\tiny $\zeta$:};
\end{tikzpicture}
\end{center}
\caption{\label{subfig:zeta}}
\end{subfigure}
\begin{subfigure}{0.3\textwidth}
\begin{center}
\begin{tikzpicture}[scale=0.7]
\clip(-3.75,-1) rectangle (3,1.5);
\draw[fill=gray,opacity=0.2] (0,0) ellipse (3cm and 1cm);
\draw[gray] (0,0) ellipse (3cm and 1cm);
\draw[gray,fill=white] (-2,0) circle (0.25);
\draw[gray,fill=white] (0,0) circle (0.25);
\draw[gray,fill=white] (1,0) circle (0.25);
\draw[gray,fill=white] (2,0) circle (0.25);
\draw[black, thick] plot [smooth, tension=0.8] coordinates { (-1,-0.55) (-2,-0.5) (-2.5,0) (-2,0.5) (-1,0.55) };
\draw[black, thick] plot [smooth, tension=0.7] coordinates { (-1,-0.55) (-0.5,-0.55) (-0.5,0) (-0.33,0.33) (0,0.45) (0.33,0.3) (0.65,-0.35) (1.2,-0.45) (1.5,0.1) (1,0.55) (-1,0.55) };
\fill[white] (-1.35,-1.1) rectangle (-0.65,1.1);
\node at (-1,0) {\tiny $\dots$};
\node[left] at (-3,0) {\tiny $\eta$:};
\end{tikzpicture}
\end{center}
\caption{\label{subfig:eta}}
\end{subfigure}
\begin{subfigure}{0.3\textwidth}
\begin{center}
\begin{tikzpicture}[scale=0.7]
\clip(-3.75,-1) rectangle (3,1.5);
\draw[fill=gray,opacity=0.2] (0,0) ellipse (3cm and 1cm);
\draw[gray] (0,0) ellipse (3cm and 1cm);
\draw[gray,fill=white] (-2,0) circle (0.25);
\draw[gray,fill=white] (0,0) circle (0.25);
\draw[gray,fill=white] (1,0) circle (0.25);
\draw[gray,fill=white] (2,0) circle (0.25);
\draw[black, thick] plot [smooth, tension=0.8] coordinates { (-1,-0.55) (-2,-0.5) (-2.5,0) (-2,0.5) (-1,0.55) };
\draw[black, thick] plot [smooth, tension=0.7] coordinates { (-1,0.55) (-0.5,0.55) (-0.5,0) (-0.33,-0.33) (0,-0.45) (0.33,-0.3) (0.75,0.4) (2.1,0.45) (2.5,-0.1) (1.8,-0.55) (-1,-0.55) };
\fill[white] (-1.35,-1.1) rectangle (-0.65,1.1);
\node at (-1,0) {\tiny $\dots$};
\node[left] at (-3,0) {\tiny $\mu$:};
\end{tikzpicture}
\end{center}
\caption{\label{subfig:mu}}
\end{subfigure}
\begin{subfigure}{0.3\textwidth}
\begin{center}
\begin{tikzpicture}[scale=0.7]
\clip(-3.75,-1) rectangle (3,1.5);
\draw[fill=gray,opacity=0.2] (0,0) ellipse (3cm and 1cm);
\draw[gray] (0,0) ellipse (3cm and 1cm);
\draw[gray,fill=white] (-2,0) circle (0.25);
\draw[gray,fill=white] (0,0) circle (0.25);
\draw[gray,fill=white] (1,0) circle (0.25);
\draw[gray,fill=white] (2,0) circle (0.25);
\draw[black, thick] plot [smooth cycle, tension=0.7] coordinates { (-0.5,0.1) (-0.2,-0.45) (0.3,-0.35) (0.67,0.3) (1,0.45) (1.33,0.3) (1.65,-0.35) (2.2,-0.45) (2.5,0.1) (2,0.55) (0,0.55) };
\fill[white] (-1.35,-1.1) rectangle (-0.65,1.1);
\node at (-1,0) {\tiny $\dots$};
\node[left] at (-3,0) {\tiny $\nu$:};
\end{tikzpicture}
\end{center}
\caption{\label{subfig:nu}}
\end{subfigure}
\begin{subfigure}{0.3\textwidth}
\begin{center}
\begin{tikzpicture}[scale=0.7]
\clip(-3.75,-1) rectangle (3,1.5);
\draw[fill=gray,opacity=0.2] (0,0) ellipse (3cm and 1cm);
\draw[gray] (0,0) ellipse (3cm and 1cm);
\draw[gray,fill=white] (-2,0) circle (0.25);
\draw[gray,fill=white] (0,0) circle (0.25);
\draw[gray,fill=white] (1,0) circle (0.25);
\draw[gray,fill=white] (2,0) circle (0.25);
\draw[black, thick] plot [smooth, tension=0.8] coordinates { (-1,-0.55) (-2,-0.5) (-2.5,0) (-2,0.5) (-1,0.55) };
\draw[black, thick] plot [smooth, tension=0.7] coordinates { (-1,0.55) (-0.5,0.55) (-0.55,0) (-0.35,-0.35) (0,-0.47) (0.45,-0.3) (0.75,0.3) (1.25,0.3) (1.75,-0.3) (2.2,-0.35) (2.4,0.05) (2,0.4) (0.75,0.5) (0.25,-0.25) (-0.2,-0.3) (-0.4,0.1) (0,0.55) (1.25,0.7) (2.35,0.45) (2.55,-0.2) (1.7,-0.55) (-1,-0.55) };
\fill[white] (-1.35,-1.1) rectangle (-0.65,1.1);
\node at (-1,0) {\tiny $\dots$};
\node[left] at (-3,0) {\tiny $\tau$:};
\end{tikzpicture}
\end{center}
\caption{\label{subfig:tau}}
\end{subfigure}
\caption{Curves in the proof of Proposition~\ref{prop:generators}.\label{fig:skeinsplanarview}}
\end{figure}

This is true if $k=1$, since in that case $\sigma_{1,k}=-\lambda_1-\lambda_1^{-1}$ by the relation in Figure~\ref{fig:puncturerelation}, and it is obviously true if $k=2$. Assume inductively that the statement is true for all $k\leq K$. Let us write $\sigma\coloneqq\sigma_{1,K+1}$, $\sigma'=\sigma_{1,K}$, $\sigma''=\sigma_{1,K-1}\in\mathscr{S}_{A,\boldsymbol{\lambda}}(S_{0,K+3})$ and consider the curves on~$S_{0,K+3}$ depicted in Figure~\ref{fig:skeinsplanarview}. By the relation in Figure~\ref{subfig:resolution}, we have the identities 
\begin{align*}
\sigma'\rho &= A^{-2}\zeta+\delta_K\sigma+\delta_{K+1}\sigma''+A^2\eta, \\
\mu\nu &= A^{-2}\tau+\delta_0\delta_{K+2}+\delta_K\zeta+A^2\sigma, 
\end{align*}
and similarly, 
\begin{align*}
\rho\sigma' &= A^{-2}\eta+\delta_{K+1}\sigma''+\delta_K\sigma+A^2\zeta, \\
\nu\mu &= A^{-2}\sigma+\delta_K\zeta+\delta_0\delta_{K+2}+A^2\tau.
\end{align*}
We can combine these into a single matrix equation 
\begin{equation}
\label{eqn:skeinmatrix}
\begin{pmatrix}
\sigma'\rho-\delta_{K+1}\sigma'' \\
\rho\sigma'-\delta_{K+1}\sigma'' \\
\mu\nu-\delta_0\delta_{K+2} \\
\nu\mu-\delta_0\delta_{K+2}
\end{pmatrix}
=
\underbrace{
\begin{pmatrix}
A^{-2} & A^2 & \delta_K & 0 \\
A^2 & A^{-2} & \delta_K & 0 \\
\delta_K & 0 & A^2 & A^{-2} \\
\delta_K & 0 & A^{-2} & A^2
\end{pmatrix}}_P
\begin{pmatrix}
\zeta \\
\eta \\
\sigma \\
\tau
\end{pmatrix}.
\end{equation}
The determinant of~$P$ is not identically zero, so there is an inverse~$P^{-1}$ with coefficients in $\mathbb{C}(A,\lambda_1,\dots,\lambda_{k+1})$. In particular, we can express $\sigma$ as a polynomial in~$\mu$, $\nu$, $\sigma'$, $\sigma''$, $\rho$, and the~$\delta_i$. The elements $\rho$, and~$\delta_i$ are obviously contained in the subalgebra $\mathcal{A}\subset\mathscr{S}_{A,\boldsymbol{\lambda}}(S_{0,K+3})$ generated by the $\sigma_{i,i+1}$ and~$\gamma_i$. The inductive assumption implies $\sigma'$,~$\sigma''\in\mathcal{A}$. Lemma~\ref{lem:induction} implies $\mu\in\mathcal{A}$, since it has the form $\alpha_I$ for a subset $I\subset\{1,\dots,K+1\}$. Finally, by applying the diffeomorphism of~$S_{0,K+3}$ given by rotation through 180~degrees and arguing as in Lemma~\ref{lem:induction}, we see that $\nu\in\mathcal{A}$. Hence $\sigma=\sigma_{1,K+1}\in\mathcal{A}$, and the proposition follows by induction.
\end{proof}

\subsection{The associated graded algebra}
\label{sec:TheAssociatedGradedAlgebraSkein}

In the proof of our main result, we will consider a filtration on the skein algebra of $S=S_{0,n+2}$. To define it, let $[L]$ be the isotopy class of a framed link $L\subset S\times[0,1]$, and let $d_i=\min|\{L'\cap(\gamma_i\times[0,1]):L'\in[L]\}|$ where $\gamma_i$ is the curve in Figure~\ref{fig:generators}. Then we define the \emph{degree} of~$[L]$ to be the vector $\deg([L])=(d_i)_{i=1}^{n-1}\in2\mathbb{Z}_{\geq0}^{n-1}$.

Given elements $\mathbf{d}=(d_i)$ and $\mathbf{e}=(e_i)$ of $2\mathbb{Z}_{\geq0}^{n-1}$, we will write $\mathbf{d}\leq\mathbf{e}$ if we have $d_i\leq e_i$ for every~$i$. We will write $\mathbf{d}<\mathbf{e}$ if $\mathbf{d}\leq\mathbf{e}$ and $\mathbf{d}\neq\mathbf{e}$. The relation $\leq$ defines a partial order on the set~$2\mathbb{Z}_{\geq0}^{n-1}$. For any $\mathbf{d}\in2\mathbb{Z}_{\geq0}^{n-1}$, we will write $\Sk_{A,\boldsymbol{\lambda}}(S)_{\leq\mathbf{d}}\subset\Sk_{A,\boldsymbol{\lambda}}(S)$ for the $\mathbb{C}[A^{\pm1},\lambda_0^{\pm1},\dots,\lambda_{n+1}^{\pm1}]$-submodule spanned by isotopy classes of framed links of degree~$\leq\mathbf{d}$. We will write $\Sk_{A,\boldsymbol{\lambda}}(S)_{<\mathbf{d}}$ for the $\mathbb{C}[A^{\pm1},\lambda_0^{\pm1},\dots,\lambda_{n+1}^{\pm1}]$-submodule spanned by isotopy classes of framed links of degree $<\mathbf{d}$. The submodules $\Sk_{A,\boldsymbol{\lambda}}(S)_{\leq\mathbf{d}}$ define a filtration of~$\Sk_{A,\boldsymbol{\lambda}}(S)$, and the superposition product maps 
\[
\Sk_{A,\boldsymbol{\lambda}}(S)_{\leq\mathbf{d}}\times\Sk_{A,\boldsymbol{\lambda}}(S)_{\leq\mathbf{e}}\rightarrow\Sk_{A,\boldsymbol{\lambda}}(S)_{\leq\mathbf{d}+\mathbf{e}}
\]
so that $\Sk_{A,\boldsymbol{\lambda}}(S)$ becomes a filtered algebra. We will write 
\[
\gr^\bullet\Sk_{A,\boldsymbol{\lambda}}(S)\coloneqq\bigoplus_{\mathbf{d}\in2\mathbb{Z}_{\geq0}^{n-1}}\Sk_{A,\boldsymbol{\lambda}}(S)_{\leq\mathbf{d}}/\Sk_{A,\boldsymbol{\lambda}}(S)_{<\mathbf{d}}
\]
for the associated graded algebra.

By a \emph{multicurve} $C$ on a surface~$S$, we mean the image of a smooth embedding $(S^1)^{\sqcup k}\rightarrow S$ such that no component of~$C$ is nullhomotopic or homotopic to a boundary component. Note that we can view a multicurve on~$S$ as an element of~$\Sk_{A,\boldsymbol{\lambda}}(S)$. Moreover, by applying skein relations, we can write any element of~$\Sk_{A,\boldsymbol{\lambda}}(S)$ as a $\mathbb{C}[A^{\pm1},\lambda_0^{\pm1},\dots,\lambda_{n+1}^{\pm1}]$-linear combination of multicurves. From this we see that the degree~$\mathbf{d}$ homogeneous component $\gr^{\mathbf{d}}\Sk_{A,\boldsymbol{\lambda}}(S)\subset\gr^\bullet\Sk_{A,\boldsymbol{\lambda}}(S)$ has a $\mathbb{C}[A^{\pm1},\lambda_0^{\pm1},\dots,\lambda_{n+1}^{\pm1}]$-module basis consisting of multicurves of degree~$\mathbf{d}$ in~$\Sk_{A,\boldsymbol{\lambda}}(S)$.

Let us write $\mathcal{C}(S)$ for the set of isotopy classes of multicurves on a surface~$S$. Recall that a \emph{pants decomposition} of~$S$ is a collection of disjoint simple closed curves on~$S$ such that any component of the complement of these curves in~$S\setminus\partial S$ is homeomorphic to the interior of~$S_{0,3}$. After choosing a pants decomposition~$\mathcal{P}$ of~$S$ and some additional data described in Section~1.2 of~\cite{PH92}, a result known as Dehn's theorem provides an injective map 
\begin{equation}
\label{eqn:DehnThurston}
I:\mathcal{C}(S)\hookrightarrow(\mathbb{Z}_{\geq0}\times\mathbb{Z})^{\mathcal{P}}.
\end{equation}
Let $C\in\mathcal{C}(S)$ be an isotopy class of multicurves on~$S$, and write $I(C)=(l_\gamma,t_\gamma)_{\gamma\in\mathcal{P}}$ for its image under this embedding. Then the integers $l_\gamma$ and $t_\gamma$ are called the \emph{Dehn--Thurston coordinates} of~$C$. By definition, $l_\gamma$ is the geometric intersection number of~$C$ with~$\gamma$, while $t_\gamma$ measures how much the multicurve ``twists'' around~$\gamma$.

\begin{lemma}
\label{lem:specialmulticurve}
Take notation as in Figure~\ref{fig:generators}, and consider the pants decomposition $\mathcal{P}=\{\gamma_1,\dots,\gamma_{n-1}\}$ of $S_{0,n+2}$. Then for any $\mathbf{d}=(d_i)\in2\mathbb{Z}_{\geq0}^{n-1}$, the homogeneous component $\gr^{\mathbf{d}}\Sk_{A,\boldsymbol{\lambda}}(S)$ has a basis consisting of multicurves $C$ such that $I(C)=(d_i,t_i)_{i=1}^{n-1}$ for some $t_i\in\mathbb{Z}$. It contains an element which is a union of the curves~$\sigma_{i,j}$.
\end{lemma}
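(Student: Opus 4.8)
The first assertion will follow immediately from the discussion preceding the lemma together with one identification. It was noted there that $\gr^{\mathbf d}\Sk_{A,\boldsymbol\lambda}(S)$ has a basis given by the classes of multicurves of degree $\mathbf d$. It remains to observe that, for a multicurve $C$, the integer $d_i$ appearing in $\deg(C)$ equals the geometric intersection number $i(C,\gamma_i)$: a link in $S\times[0,1]$ isotopic to $C$ meets the vertical annulus $\gamma_i\times[0,1]$ in at least $i(C,\gamma_i)$ points, and a minimal-position representative of $C$ attains this bound. Since $i(C,\gamma_i)$ is, by definition, the first Dehn--Thurston coordinate $l_{\gamma_i}$ of $C$, the condition $\deg(C)=\mathbf d$ is equivalent to $I(C)=(d_i,t_i)_{i=1}^{n-1}$ for some $t_i\in\mathbb Z$, and the first assertion follows.

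For the second assertion, the plan is to exhibit an explicit multicurve of degree $\mathbf d$ that is a disjoint union of curves of the form $\sigma_{i,j}$; by the paragraph above, such a multicurve automatically represents one of the basis elements of $\gr^{\mathbf d}\Sk_{A,\boldsymbol\lambda}(S)$. Two elementary inputs are needed. Recall from the proof of Lemma~\ref{lem:basecase} that $\gamma_l=\sigma_{l+1,n+1}$; thus, in the planar model of Figure~\ref{subfig:planarview}, both $\gamma_l$ and every $\sigma_{i,j}$ are separating simple closed curves, encoded by the partitions $\{0,\dots,l\}\sqcup\{l+1,\dots,n+1\}$ and $\{i,\dots,j\}\sqcup(\text{complement})$ of the set of boundary components. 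Each such curve can be drawn as a round circle, so two of them are disjoint when the corresponding partitions are compatible (one of the four ``quadrants'' is empty) and otherwise meet minimally in exactly two points. Inspecting the quadrants then gives: (a) $i(\sigma_{i,j},\gamma_l)=2$ if $i\le l\le j-1$ and $i(\sigma_{i,j},\gamma_l)=0$ otherwise; and (b) $\sigma_{i,j}$ and $\sigma_{i',j'}$ have disjoint representatives exactly when the integer intervals $[i,j]$ and $[i',j']$ are nested or disjoint.

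It then remains to choose the curves. Writing $\mathbf d=(2m_l)_{l=1}^{n-1}$, I would first prove the combinatorial claim that there are intervals $[p_k,q_k]\subseteq\{1,\dots,n-1\}$ with $\sum_k\mathbf{1}_{[p_k,q_k]}=(m_l)_l$ whose ``closed-up'' intervals $[p_k,q_k+1]$ form a laminar family. This follows by induction on $\max_l m_l$: subtract from $(m_l)$ the indicator of its support $T=\{l:m_l\ge1\}$, adjoining to the family the maximal subintervals of $T$; these are separated from each other by integers not in $T$, so their closed-ups remain pairwise disjoint, while every interval produced by the inductive step lies in $T$ and hence in a single maximal subinterval, so the closed-up family stays laminar. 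Granting the claim, set $C=\bigsqcup_k\sigma_{p_k,q_k+1}$. Since $1\le p_k\le q_k\le n-1$, each curve $\sigma_{p_k,q_k+1}$ has at least three boundary circles on each side and so is neither nullhomotopic nor boundary-parallel; by (b) and laminarity the corresponding round circles can be drawn pairwise disjointly, so $C$ is a multicurve; and since geometric intersection number with a fixed curve is additive over the components of a multicurve, (a) yields $i(C,\gamma_l)=\sum_k 2\,\mathbf{1}_{[p_k,q_k]}(l)=2m_l=d_l$ for every $l$. Thus $\deg(C)=\mathbf d$, proving the second assertion.

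I expect the main obstacle to be the bookkeeping around the off-by-one discrepancy: the ``footprint'' of $\sigma_{i,j}$ on the pants curves is the interval $[i,j-1]$, whereas disjointness of $\sigma_{i,j}$ and $\sigma_{i',j'}$ is governed by the closed interval $[i,j]$, so the laminar decomposition of $(m_l)$ must be arranged to be compatible with this shift --- which is exactly why one decomposes $(m_l)$ into footprints whose closed-ups are laminar, rather than into laminar footprints. Once the planar dictionary between curves and partitions of the boundary components is in place, the remaining ingredients --- the identification $\gamma_l=\sigma_{l+1,n+1}$, the fact that two ``linked'' round circles cross exactly twice, and the additivity of $i(-,\gamma_l)$ over a multicurve --- are all standard.
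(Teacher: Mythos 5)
Your proof is correct, and while the first assertion is handled exactly as in the paper (it really is just the identification of the degree $d_i$ with the geometric intersection number $i(C,\gamma_i)$, i.e.\ with the Dehn--Thurston length coordinate $l_{\gamma_i}$), your treatment of the second assertion takes a genuinely different route. The paper disposes of it in one line by invoking the standard models underlying Dehn's theorem: gluing the curves of Figure~1.2.2 of \cite{PH92} with suitable twist parameters $t_i$ produces a multicurve with length coordinates $\mathbf{d}$ that is visibly a union of the $\sigma_{i,j}$. You instead build the multicurve by hand: encode each $\sigma_{i,j}$ and each $\gamma_l$ by the partition of boundary components it induces in the planar model of Figure~\ref{subfig:planarview} (using $\gamma_l=\sigma_{l+1,n+1}$ from the proof of Lemma~\ref{lem:basecase}), read off that $i(\sigma_{i,j},\gamma_l)=2$ precisely for $i\le l\le j-1$ and that two such curves are realizable disjointly exactly when their hole-intervals are nested or disjoint, and then decompose $\mathbf{d}/2$ as a sum of interval indicators whose closed-up intervals form a laminar family, by the induction on $\max_l m_l$ you describe (which works: each interval appearing at a later stage is pointwise dominated by the remaining vector, hence sits inside a single maximal run of the current support, so laminarity propagates, and repeated intervals just give parallel components, which the paper's definition of multicurve allows). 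What the paper's approach buys is brevity and a direct link to the Dehn--Thurston machinery already being used; what yours buys is a self-contained, purely combinatorial construction with an explicit recipe for which $\sigma_{i,j}$ occur, at the cost of the off-by-one bookkeeping you flag and of implicitly using the standard facts that geometric intersection with $\gamma_l$ is additive over components, is even and positive for linked separating curves, and agrees with the minimal intersection of an isotopic link with the annulus $\gamma_l\times[0,1]$ --- the same level of implicit appeal the paper itself makes in calling the first statement immediate. The only nitpick is your phrase ``at least three boundary circles on each side,'' which is accurate only if the curve itself is counted among the boundary circles of each complementary piece (for $\sigma_{i,i+1}$ one side contains just two boundary components of $S$); the intended conclusion, that no component is nullhomotopic or boundary-parallel, is nevertheless correct since $1\le p_k\le q_k\le n-1$ forces at least two holes inside and the components $0$ and $n+1$ outside.
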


\begin{proof}
The first statement is immediate from the above discussion. The second statement holds because we can get a union of the $\sigma_{i,j}$ by gluing the curves in Figure~1.2.2 of~\cite{PH92} with suitable~$t_i$.
\end{proof}

\section{The quantum trace map}

In this section, we define a normalized version of the quantum trace map introduced by Detcherry and Santharoubane~\cite{DS25} and compute it in some special cases.

\subsection{Skeins from ribbon graphs}
\label{sec:SkeinsFromRibbonGraphs}

We will start by describing how a ribbon graph encodes elements in the skein module of a three-manifold.

\subsubsection{}
\label{sec:CurvesOnSurfaces}

Consider a compact oriented two-dimensional smooth manifold $\Sigma$ with corners and a finite set $\Pi\subset\partial\Sigma$ of marked points on the boundary. By a \emph{simple arc} on~$(\Sigma,\Pi)$, we mean the image of a smooth embedding $[0,1]\rightarrow\Sigma$ such that 0 and~1 map into~$\Pi$ and no point of $(0,1)$ maps to~$\partial\Sigma$. The condition that the map is an embedding means in particular that the images of~0 and~1 are distinct. By a \emph{simple loop} on~$\Sigma$, we mean the image of a smooth embedding $S^1\rightarrow\Sigma\setminus\partial\Sigma$. By a \emph{diagram} on~$(\Sigma,\Pi)$, we mean a union of pairwise nonintersecting simple arcs and simple loops on~$(\Sigma,\Pi)$, such that each point of~$\Pi$ is an endpoint of exactly one of the arcs. We say that two diagrams on~$(\Sigma,\Pi)$ are isotopic if they are related by an ambient isotopy of~$\Sigma$ that fixes~$\partial\Sigma$ pointwise.

To any pair $(\Sigma,\Pi)$ as above, we can associate the $\mathbb{C}(A)$-vector space $\mathcal{D}_A(\Sigma,\Pi)$ spanned by isotopy classes of diagrams on~$(\Sigma,\Pi)$. This construction is well behaved with respect to cutting and gluing of surfaces. Indeed, suppose $\Sigma$ is obtained by gluing surfaces~$\Sigma_1$ and~$\Sigma_2$ along a segment $I\subset\Sigma$ which lies on the boundary of both~$\Sigma_1$ and~$\Sigma_2$ and whose endpoints are not in~$\Pi$. Let $\Omega\subset I$ be a finite set of points in the interior of~$I$, and define 
\[
\Pi_k = (\Pi\cap\Sigma_k)\cup\Omega, \qquad k=1,2.
\]
If $\delta_k$ is a diagram on~$(\Sigma_k,\Pi_k)$ for $k=1,2$, then we get a diagram on~$(\Sigma,\Pi)$ by gluing~$\delta_1$ and~$\delta_2$. Extending $\mathbb{C}(A)$-bilinearly, we get a map 
\begin{equation}
\label{eqn:gluingmap}
\mathcal{D}_A(\Sigma_1,\Pi_1)\otimes\mathcal{D}_A(\Sigma_2,\Pi_2)\longrightarrow\mathcal{D}_A(\Sigma,\Pi).
\end{equation}
One can define a similar gluing map in the case where $\Sigma$ is obtained by identifying two segments on the boundary of a single surface.

\subsubsection{}
\label{sec:rectanglecase}

As an example, we can consider the case $\Sigma=[0,1]\times[0,1]$. Given an integer $c\geq0$, let $\Pi\subset\partial\Sigma$ be a set consisting of $c$ distinct points on $(0,1)\times\{0\}$ and $c$ distinct points on $(0,1)\times\{1\}$. Then the $\mathbb{C}(A)$-vector subspace of $\mathcal{D}_A(\Sigma,\Pi)$ spanned by diagrams without simple loops is the underlying vector space for the Temperley-Lieb algebra (see~\cite{MV94} and references given there). The Temperley-Lieb algebra contains a canonical idempotent element $\mathrm{JW}_c$ known as the Jones-Wenzl idempotent, given by an explicit recursion relation~\cite{MV94}. For example, when $c=1,2,3$, the Jones-Wenzl idempotents are illustrated in Figure~\ref{fig:JonesWenzlexamples}. In this figure, the gray shaded regions represent the rectangle $\Sigma$, and we use the notation 
\begin{equation}
\label{eqn:quantuminteger}
[k]\coloneqq\frac{A^{2k}-A^{-2k}}{A^2-A^{-2}}\in\mathbb{C}(A)
\end{equation}
for the $k$th \emph{quantum integer}.

\begin{figure}[ht]
\begin{align*}
&\mathrm{JW}_1=
\begin{tikzpicture}[baseline=(current bounding box.center)]
\clip(-1,-1) rectangle (1,1);
\draw[fill=gray,opacity=0.2] (-0.75,-0.75) rectangle (0.75,0.75);
\draw[black, thick] (0,-0.75) -- (0,0.75);
\end{tikzpicture} \\
&\mathrm{JW}_2=
\begin{tikzpicture}[baseline=(current bounding box.center)]
\clip(-1,-1) rectangle (1,1);
\draw[fill=gray,opacity=0.2] (-0.75,-0.75) rectangle (0.75,0.75);
\draw[black, thick] (-0.25,-0.75) -- (-0.25,0.75);
\draw[black, thick] (0.25,-0.75) -- (0.25,0.75);
\end{tikzpicture}
+\frac{[1]}{[2]}
\begin{tikzpicture}[baseline=(current bounding box.center)]
\clip(-1,-1) rectangle (1,1);
\draw[fill=gray,opacity=0.2] (-0.75,-0.75) rectangle (0.75,0.75);
\draw[black, thick] plot [smooth, tension=2] coordinates { (-0.25,0.75) (0,0.25) (0.25,0.75)};
\draw[black, thick] plot [smooth, tension=2] coordinates { (-0.25,-0.75) (0,-0.25) (0.25,-0.75)};
\end{tikzpicture} \\
&\mathrm{JW}_3=
\begin{tikzpicture}[baseline=(current bounding box.center)]
\clip(-1,-1) rectangle (1,1);
\draw[fill=gray,opacity=0.2] (-0.75,-0.75) rectangle (0.75,0.75);
\draw[black, thick] (-0.5,-0.75) -- (-0.5,0.75);
\draw[black, thick] (0,-0.75) -- (0,0.75);
\draw[black, thick] (0.5,-0.75) -- (0.5,0.75);
\end{tikzpicture}
+\frac{[2]}{[3]}
\begin{tikzpicture}[baseline=(current bounding box.center)]
\clip(-1,-1) rectangle (1,1);
\draw[fill=gray,opacity=0.2] (-0.75,-0.75) rectangle (0.75,0.75);
\draw[black, thick] plot [smooth, tension=2] coordinates { (-0.5,0.75) (-0.25,0.25) (0,0.75)};
\draw[black, thick] plot [smooth, tension=2] coordinates { (-0.5,-0.75) (-0.25,-0.25) (0,-0.75)};
\draw[black, thick] (0.5,-0.75) -- (0.5,0.75);
\end{tikzpicture}
+\frac{[2]}{[3]}
\begin{tikzpicture}[baseline=(current bounding box.center)]
\clip(-1,-1) rectangle (1,1);
\draw[fill=gray,opacity=0.2] (-0.75,-0.75) rectangle (0.75,0.75);
\draw[black, thick] (-0.5,-0.75) -- (-0.5,0.75);
\draw[black, thick] plot [smooth, tension=2] coordinates { (0,0.75) (0.25,0.25) (0.5,0.75)};
\draw[black, thick] plot [smooth, tension=2] coordinates { (0,-0.75) (0.25,-0.25) (0.5,-0.75)};
\end{tikzpicture}
+\frac{[1]}{[3]}
\begin{tikzpicture}[baseline=(current bounding box.center)]
\clip(-1,-1) rectangle (1,1);
\draw[fill=gray,opacity=0.2] (-0.75,-0.75) rectangle (0.75,0.75);
\draw[black, thick] plot [smooth, tension=2] coordinates { (-0.5,0.75) (-0.25,0.25) (0,0.75)};
\draw[black, thick] plot [smooth, tension=2] coordinates { (0,-0.75) (0.25,-0.25) (0.5,-0.75)};
\draw[black, thick] plot [smooth, tension=1] coordinates { (-0.5,-0.75) (-0.3,-0.125) (0.3,0.125) (0.5,0.75)};
\end{tikzpicture}
+\frac{[1]}{[3]}
\begin{tikzpicture}[baseline=(current bounding box.center)]
\clip(-1,-1) rectangle (1,1);
\draw[fill=gray,opacity=0.2] (-0.75,-0.75) rectangle (0.75,0.75);
\draw[black, thick] plot [smooth, tension=2] coordinates { (0,0.75) (0.25,0.25) (0.5,0.75)};
\draw[black, thick] plot [smooth, tension=2] coordinates { (-0.5,-0.75) (-0.25,-0.25) (0,-0.75)};
\draw[black, thick] plot [smooth, tension=1] coordinates { (-0.5,0.75) (-0.3,0.125) (0.3,-0.125) (0.5,-0.75)};
\end{tikzpicture}
\end{align*}
\caption{Some Jones--Wenzl idempotents.\label{fig:JonesWenzlexamples}}
\end{figure}
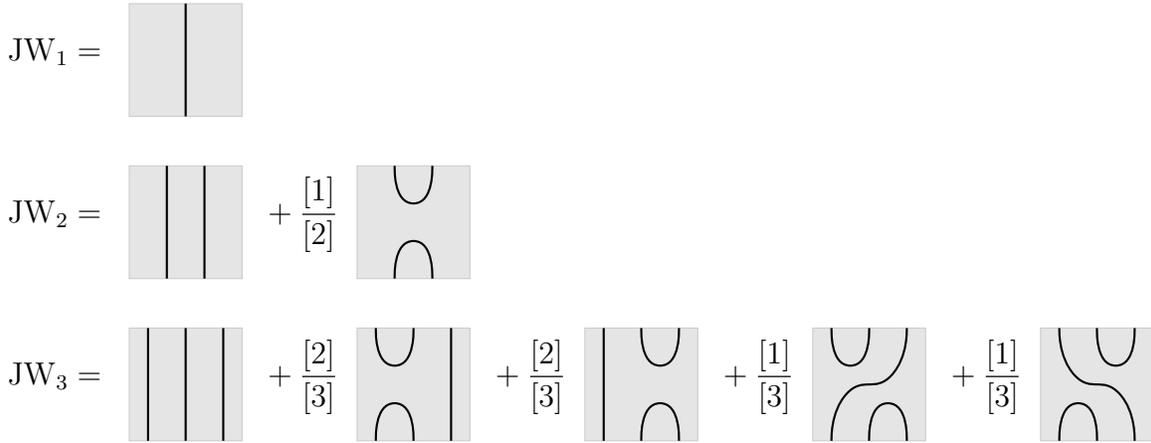

\subsubsection{}
\label{sec:trianglecase}

We can also consider a closed triangle $\Sigma$  with sides $e$, $f$, and~$g$. Suppose we are given integers $c(e)$, $c(f)$,~$c(g)\geq0$ such that the sum $c(e)+c(f)+c(g)$ is even and such that the triangle inequality $c(r)\leq c(s)+c(t)$ is satisfied whenever $\{r,s,t\}=\{e,f,g\}$. It follows from these conditions that the numbers 
\[
i=\frac{c(f)+c(g)-c(e)}{2}, \quad j=\frac{c(e)+c(g)-c(f)}{2}, \quad k=\frac{c(e)+c(f)-c(g)}{2}
\]
are nonnegative integers. Let $\Pi\subset\partial\Sigma$ be a set consisting of $c(s)$ distinct points in the interior of each side~$s$ of~$\Sigma$. For convenience, let us represent a collection of $m$ parallel curves on~$\Sigma$ by a single curve labeled by the integer~$m$. Then up to isotopy there is a unique diagram on~$(\Sigma,\Pi)$ consisting of the nonintersecting arcs in Figure~\ref{fig:trianglearcs} connecting points of~$\Pi$ and no other curves.

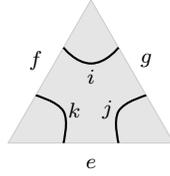
\begin{figure}[ht]
\begin{tikzpicture}[scale=1.1]
\draw[fill=gray,opacity=0.2] (-1,0) -- (0,1.732) -- (1,0) -- cycle;
\draw[black, thick] plot [smooth, tension=1] coordinates { (-0.333,1.155) (0,0.962) (0.333,1.155)};
\draw[black, thick] plot [smooth, tension=0.75] coordinates { (-0.667,0.577) (-0.333,0.385) (-0.333,0)};
\draw[black, thick] plot [smooth, tension=0.75] coordinates { (0.667,0.577) (0.333,0.385) (0.333,0)};
\node at (0,-0.25) {\tiny $e$};
\node at (-0.667,1) {\tiny $f$};
\node at (0.667,1) {\tiny $g$};
\node at (0,0.8) {\tiny $i$};
\node at (0.2,0.4) {\tiny $j$};
\node at (-0.2,0.4) {\tiny $k$};
\end{tikzpicture}
\caption{Arcs on a triangle.\label{fig:trianglearcs}}
\end{figure}

\subsubsection{}

Recall that a \emph{ribbon graph} is a graph with a cyclic ordering of the edges incident to each vertex. We let $\Gamma$ be a ribbon graph with finitely many vertices and edges such that every vertex is either trivalent or univalent. We will write $\Gamma_0$ and $\Gamma_1$ for the set of vertices and set of edges, respectively, of~$\Gamma$. By a \emph{coloring} of~$\Gamma$, we will mean a function $c:\Gamma_1\rightarrow\mathbb{Z}_{\geq0}$. A coloring $c:\Gamma_1\rightarrow\mathbb{Z}_{\geq0}$ is said to  be \emph{admissible} if, for every triple of distinct edges $e$, $f$,~$g\in\Gamma_1$ incident to a common vertex, 
\begin{enumerate}
\item $c(e)+c(f)+c(g)$ is even.
\item $c(e)\leq c(f)+c(g)$.
\end{enumerate}

Suppose $c$ is an admissible coloring of~$\Gamma$. If $v$ is a trivalent vertex of~$\Gamma$, let $\Sigma_v$ be an oriented triangle with sides labeled by the edges incident to $v$ in such a way that the cyclic order defined by the ribbon graph agrees with the cyclic order induced by the orientation of~$\Sigma_v$. Let $\Pi_{v,c}\subset\partial\Sigma_v$ be a set consisting of $c(e)$ distinct points in the interior of the side with label~$e$, for every label~$e$. Since we assume $c$ is admissible, we get a diagram $\delta_{v,c}\in\mathcal{D}_A(\Sigma_v,\Pi_{v,c})$ by the construction of~\ref{sec:trianglecase}. On the other hand, for every edge~$e$ of~$\Gamma$, let $\Sigma_e=[0,1]\times[0,1]$ be an oriented rectangle. Let us label the sides $[0,1]\times\{0\}$ and $[0,1]\times\{1\}$ of~$\Sigma_e$ by the two vertices of~$\Gamma$ that are incident to $e$. Let $\Pi_{e,c}\subset\Sigma_e$ be a set consisting of $c(e)$ distinct points in the interior of each of these sides. Then the Jones-Wenzl idempotent defines an element $\delta_{e,c}\in\mathcal{D}_A(\Sigma_e,\Pi_{e,c})$ as in~\ref{sec:rectanglecase}.

Now if an edge~$e$ is incident to a trivalent vertex~$v$, we can glue the side of $\Sigma_e$ with label~$v$ to the side of~$\Sigma_v$ with label~$e$ by an orientation reversing homeomorphism that induces a bijection of the marked points on these sides. In this way, we obtain an oriented surface~$\Sigma_\Gamma$ well defined up to orientation preserving homeomorphism. The sets $\Pi_{e,c}$ give rise to a subset $\Pi_{\Gamma,c}\subset\partial\Sigma_\Gamma$. Using the gluing maps~\eqref{eqn:gluingmap}, we can glue the $\delta_v$ and $\delta_e$ to get an element $\delta_{\Gamma,c}\in\mathcal{D}_A(\Sigma_\Gamma,\Pi_{\Gamma,c})$.

\subsubsection{}

Let $\Gamma$ be a ribbon graph with trivalent and univalent vertices as above. We suppose that $\Gamma$ is embedded in a three-manifold~$M$ in such a way that $\Gamma\cap\partial M$ is the set of all univalent vertices of~$\Gamma$. Noting that $\Gamma$ naturally embeds in the surface~$\Sigma_\Gamma$ as a deformation retract, we can extend the embedding of~$\Gamma$ in~$M$ to an embedding of~$\Sigma_\Gamma$ in~$M$ such that $\Sigma_\Gamma\cap\partial M\subset\partial\Sigma_\Gamma$.

Given an admissible coloring~$c$ of~$\Gamma$, we can get a finite set $\Pi_{\Gamma,c}\subset\Sigma_\Gamma\cap\partial M$ of marked points and an element $\delta_{\Gamma,c}\in\mathcal{D}_A(\Sigma_\Gamma,\Pi_{\Gamma,c})$. If $\delta\in\mathcal{D}_A(\Sigma_\Gamma,\Pi_{\Gamma,c})$ is a diagram on~$(\Sigma_\Gamma,\Pi_{\Gamma,c})$, then we get an element of the extended skein module of~$M$ by regarding $\delta$ as a tangle in~$M$ and choosing a framing vector at each point of~$\delta$ normal to~$\Sigma_\Gamma$ in the direction determined by the orientation of this surface. By extending $\mathbb{C}(A)$-linearly, we get a map 
\[
\mathcal{D}_A(\Sigma_\Gamma,\Pi_{\Gamma,c})\longrightarrow\widehat{\Sk}_A(M).
\]
We will write $\varphi_c=\varphi_{\Gamma,c}$ for the image of~$\delta_{\Gamma,c}$ under this map.

\subsection{Fusion rules}

Let $\Gamma$ be a ribbon graph with trivalent and univalent vertices as above, and assume $\Gamma$ is embedded in a three-manifold~$M$ in such a way that~$\Gamma\cap\partial M$ is the set of all univalent vertices of~$\Gamma$. Then for any admissible coloring~$c$ of~$\Gamma$, we get an associated element~$\varphi_{\Gamma,c}\in\widehat{\Sk}_A(M)$ of the extended skein module by the construction of Section~\ref{sec:SkeinsFromRibbonGraphs}. In the following, we will often represent this element~$\varphi_{\Gamma,c}$ by drawing the graph~$\Gamma$ with each edge $e\in\Gamma_1$ labeled by the integer~$c(e)$.

Figures~\ref{fig:parallelstrands}, \ref{fig:halftwist}, \ref{fig:biangle}, and~\ref{fig:triangle} show a number of relations, called \emph{fusion rules}, between the elements $\varphi_{\Gamma,c}$ for different ribbon graphs~$\Gamma$. In each picture, the cyclic ordering of a vertex of the graph is assumed to be the one induced by the standard orientation of the plane in which it is drawn, and we write $[k]$ for the $k$th quantum integer defined by~\eqref{eqn:quantuminteger}. The fusion rules are proved in~\cite{MV94}.

\begin{figure}[ht]
\[
\begin{tikzpicture}[scale=0.9, baseline=(current bounding box.center)]
\draw[black, thick] (-0.25,-1) -- (-0.25,1) node[below left] {\tiny $c$};
\draw[black, thick] (0.25,-1) -- (0.25,1) node[below right] {\tiny $1$};
\end{tikzpicture}
=
\begin{tikzpicture}[scale=0.9, baseline=(current bounding box.center)]
\draw[black, thick] (0,-0.75) -- (-0.25,-1) node[above left] {\tiny $c$};
\draw[black, thick] (0,-0.75) -- (0.25,-1) node[above right] {\tiny $1$};
\draw[black, thick] (0,0.75) -- (0,-0.75);
\node[left] at (0,0) {\tiny $c+1$};
\draw[black, thick] (0,0.75) -- (-0.25,1) node[below left] {\tiny $c$};
\draw[black, thick] (0,0.75) -- (0.25,1) node[below right] {\tiny $1$};
\end{tikzpicture}
-\frac{[c]}{[c+1]}
\begin{tikzpicture}[scale=0.9, baseline=(current bounding box.center)]
\draw[black, thick] (0,-0.75) -- (-0.25,-1) node[above left] {\tiny $c$};
\draw[black, thick] (0,-0.75) -- (0.25,-1) node[above right] {\tiny $1$};
\draw[black, thick] (0,0.75) -- (0,-0.75);
\node[left] at (0,0) {\tiny $c-1$};
\draw[black, thick] (0,0.75) -- (-0.25,1) node[below left] {\tiny $c$};
\draw[black, thick] (0,0.75) -- (0.25,1) node[below right] {\tiny $1$};
\end{tikzpicture}
\]
\caption{Parallel strands rule.\label{fig:parallelstrands}}
\end{figure}

\begin{figure}[ht]
\begin{subfigure}{0.4\textwidth}
\[
\begin{tikzpicture}[baseline=(current bounding box.center)]
\draw[black, thick] (0,0) -- (0,-0.75) node[above left] {\tiny $c+1$};
\draw[black, thick] plot [smooth, tension=1] coordinates { (0,0) (0.25,0.5) (-0.5,1)} node[below left] {\tiny $1$};
\fill[white] (0,0.75) circle (0.1);
\draw[black, thick] plot [smooth, tension=1] coordinates { (0,0) (-0.25,0.5) (0.5,1)} node[below right] {\tiny $c$};
\end{tikzpicture}
= A^c
\begin{tikzpicture}[baseline=(current bounding box.center)]
\draw[black, thick] (0,0) -- (0,-0.75) node[above left] {\tiny $c+1$};
\draw[black, thick] (0,0) -- (-0.5,1) node[below left] {\tiny $1$};
\draw[black, thick] (0,0) -- (0.5,1) node[below right] {\tiny $c$};
\end{tikzpicture}
\]
\caption{\label{subfig:halftwist1}}
\end{subfigure}
\begin{subfigure}{0.4\textwidth}
\[
\begin{tikzpicture}[baseline=(current bounding box.center)]
\draw[black, thick] (0,0) -- (0,-0.75) node[above left] {\tiny $c-1$};
\draw[black, thick] plot [smooth, tension=1] coordinates { (0,0) (0.25,0.5) (-0.5,1)} node[below left] {\tiny $1$};
\fill[white] (0,0.75) circle (0.1);
\draw[black, thick] plot [smooth, tension=1] coordinates { (0,0) (-0.25,0.5) (0.5,1)} node[below right] {\tiny $c$};
\end{tikzpicture}
= -A^{-(c+2)}
\begin{tikzpicture}[baseline=(current bounding box.center)]
\draw[black, thick] (0,0) -- (0,-0.75) node[above left] {\tiny $c-1$};
\draw[black, thick] (0,0) -- (-0.5,1) node[below left] {\tiny $1$};
\draw[black, thick] (0,0) -- (0.5,1) node[below right] {\tiny $c$};
\end{tikzpicture}
\]
\caption{\label{subfig:halftwist2}}
\end{subfigure}
\caption{Half twist rules.\label{fig:halftwist}}
\end{figure}

\begin{figure}[ht]
\begin{subfigure}{0.4\textwidth}
\[
\begin{tikzpicture}[baseline=(current bounding box.center)]
\draw[black, thick] (0,-0.5) -- (0,-1) node[above left] {\tiny $c$};
\draw[black, thick] plot [smooth, tension=1] coordinates { (0,-0.5) (-0.25,0) (0,0.5)};
\node[left] at (-0.25,0) {\tiny $c+1$};
\draw[black, thick] plot [smooth, tension=1] coordinates { (0,-0.5) (0.25,0) (0,0.5)};
\node[right] at (0.25,0) {\tiny $1$};
\draw[black, thick] (0,0.5) -- (0,1) node[below left] {\tiny $c$};
\end{tikzpicture}
= -\frac{[c+2]}{[c+1]}
\begin{tikzpicture}[baseline=(current bounding box.center)]
\draw[black, thick] (0,-1) -- (0,1) node[below left] {\tiny $c$};
\end{tikzpicture}
\]
\caption{\label{subfig:biangle1}}
\end{subfigure}
\begin{subfigure}{0.4\textwidth}
\[
\begin{tikzpicture}[baseline=(current bounding box.center)]
\draw[black, thick] (0,-0.5) -- (0,-1) node[above left] {\tiny $c$};
\draw[black, thick] plot [smooth, tension=1] coordinates { (0,-0.5) (-0.25,0) (0,0.5)};
\node[left] at (-0.25,0) {\tiny $c-1$};
\draw[black, thick] plot [smooth, tension=1] coordinates { (0,-0.5) (0.25,0) (0,0.5)};
\node[right] at (0.25,0) {\tiny $1$};
\draw[black, thick] (0,0.5) -- (0,1) node[below left] {\tiny $c$};
\end{tikzpicture}
=
\begin{tikzpicture}[baseline=(current bounding box.center)]
\draw[black, thick] (0,-1) -- (0,1) node[below left] {\tiny $c$};
\end{tikzpicture}
\]
\caption{\label{subfig:biangle2}}
\end{subfigure}
\caption{Biangle reducing rules.\label{fig:biangle}}
\end{figure}

\begin{figure}[ht]
\begin{subfigure}{0.4\textwidth}
\[
\begin{tikzpicture}[scale=1.25, baseline=(current bounding box.center)]
\draw[black, thick] (0,0) -- (0,0.5) node[below left] {\tiny $a$};
\draw[black, thick] (0,0) -- (-0.25,-0.5) node[above left] {\tiny $b$};
\draw[black, thick] (0,0) -- (0.25,-0.5) node[above right] {\tiny $c$};
\draw[black, thick] (-0.25,-0.5) -- (-0.5,-0.75) node[below] {\tiny $b+1$};
\draw[black, thick] (0.25,-0.5) -- (0.5,-0.75) node[below] {\tiny $c+1$};
\draw[black, thick] (-0.25,-0.5) -- (0.25,-0.5) node[below left] {\tiny $1$};
\end{tikzpicture}
=
\begin{tikzpicture}[scale=1.25, baseline=(current bounding box.center)]
\draw[black, thick] (0,-0.25) -- (0,0.5) node[below left] {\tiny $a$};
\draw[black, thick] (0,-0.25) -- (-0.5,-0.75) node[below] {\tiny $b+1$};
\draw[black, thick] (0,-0.25) -- (0.5,-0.75) node[below] {\tiny $c+1$};
\end{tikzpicture}
\]
\caption{\label{subfig:triangle1}}
\end{subfigure}
\begin{subfigure}{0.4\textwidth}
\[
\begin{tikzpicture}[scale=1.25, baseline=(current bounding box.center)]
\draw[black, thick] (0,0) -- (0,0.5) node[below left] {\tiny $a$};
\draw[black, thick] (0,0) -- (-0.25,-0.5) node[above left] {\tiny $b$};
\draw[black, thick] (0,0) -- (0.25,-0.5) node[above right] {\tiny $c$};
\draw[black, thick] (-0.25,-0.5) -- (-0.5,-0.75) node[below] {\tiny $b+1$};
\draw[black, thick] (0.25,-0.5) -- (0.5,-0.75) node[below] {\tiny $c-1$};
\draw[black, thick] (-0.25,-0.5) -- (0.25,-0.5) node[below left] {\tiny $1$};
\end{tikzpicture}
=\frac{[\frac{a-b+c}{2}]}{[c]}
\begin{tikzpicture}[scale=1.25, baseline=(current bounding box.center)]
\draw[black, thick] (0,-0.25) -- (0,0.5) node[below left] {\tiny $a$};
\draw[black, thick] (0,-0.25) -- (-0.5,-0.75) node[below] {\tiny $b+1$};
\draw[black, thick] (0,-0.25) -- (0.5,-0.75) node[below] {\tiny $c-1$};
\end{tikzpicture}
\]
\caption{\label{subfig:triangle2}}
\end{subfigure}
\begin{subfigure}{0.5\textwidth}
\[
\begin{tikzpicture}[scale=1.25, baseline=(current bounding box.center)]
\draw[black, thick] (0,0) -- (0,0.5) node[below left] {\tiny $a$};
\draw[black, thick] (0,0) -- (-0.25,-0.5) node[above left] {\tiny $b$};
\draw[black, thick] (0,0) -- (0.25,-0.5) node[above right] {\tiny $c$};
\draw[black, thick] (-0.25,-0.5) -- (-0.5,-0.75) node[below] {\tiny $b-1$};
\draw[black, thick] (0.25,-0.5) -- (0.5,-0.75) node[below] {\tiny $c-1$};
\draw[black, thick] (-0.25,-0.5) -- (0.25,-0.5) node[below left] {\tiny $1$};
\end{tikzpicture}
=-\frac{[\frac{a+b+c}{2}+1][\frac{b+c-a}{2}]}{[b][c]}
\begin{tikzpicture}[scale=1.25, baseline=(current bounding box.center)]
\draw[black, thick] (0,-0.25) -- (0,0.5) node[below left] {\tiny $a$};
\draw[black, thick] (0,-0.25) -- (-0.5,-0.75) node[below] {\tiny $b-1$};
\draw[black, thick] (0,-0.25) -- (0.5,-0.75) node[below] {\tiny $c-1$};
\end{tikzpicture}
\]
\caption{\label{subfig:triangle3}}
\end{subfigure}
\caption{Triangle reducing rules.\label{fig:triangle}}
\end{figure}

\subsection{Definition of the quantum trace}
\label{sec:DefinitionOfTheQuantumTrace}

We now use the ideas of the previous sections to construct an algebra homomorphism from the skein algebra of a surface into a localized quantum torus. Let~$\Gamma$ be a finite ribbon graph in~$\mathbb{R}^3$ such that every vertex of $\Gamma$ is either trivalent or univalent. Then we can let~$M\subset\mathbb{R}^3$ be the handlebody defined as the closure of a small tubular neighborhood of~$\Gamma$ in~$\mathbb{R}^3$. The boundary of $M$ is a closed surface $\bar{S}$, and we define $S$ to be the surface with boundary obtained by removing a small open disk $D_v\subset\bar{S}$ around each of the univalent vertices~$v$ of~$\Gamma$.

If $c$ is any admissible coloring of~$\Gamma$, we get a finite set $\Pi_{\Gamma,c}\subset\bar{S}$ of marked points by the construction of Section~\ref{sec:SkeinsFromRibbonGraphs}, and we may assume that the $c(e)$ points of~$\Pi_{\Gamma,c}$ associated with an edge~$e\in\Gamma_1$ that ends at a univalent vertex~$v\in\Gamma_0$ are all contained in the disk~$D_v\subset\bar{S}$. We also get an element $\varphi_c\in\widehat{\Sk}_A(M)$, which is a $\mathbb{C}(A)$-linear combination of framed tangles ending at points of~$\Pi_{\Gamma,c}$. Suppose we are given a map $d:\Gamma_1^*\rightarrow\mathbb{Z}_{\geq0}$ where $\Gamma_1^*\subset\Gamma_1$ is the set of all $e\in\Gamma_1$ such that $e$ is incident to some univalent vertex. If $c,c':\Gamma_1\rightarrow\mathbb{Z}_{\geq0}$ are admissible colorings such that $c|_{\Gamma_1^*}=c'|_{\Gamma_1^*}=d$, then we can assume $\Pi_{\Gamma,c}=\Pi_{\Gamma,c'}$. We define 
\[
\Sk_A(M,d)\subset\widehat{\Sk}_A(M)
\]
to be the $\mathbb{C}(A)$-submodule spanned by $\varphi_c$ for all admissible colorings $c$ with $c|_{\Gamma_1^*}=d$.

There is a natural action of the skein algebra $\Sk_A(S)$ on~$\Sk_A(M,d)$. Indeed, suppose $L\in\Sk_A(S)$ is a framed link and $K\in\Sk_A(M,d)$ a framed tangle. We can glue $S\times[0,1]$ to~$M$ by identifying $S\times\{0\}\cong S$ with a subset of~$\partial M$. The union of $K$ and $L$ in the glued three-manifold is isotopic to a framed tangle~$L\cdot K$ in~$M$. Extending $\mathbb{C}(A)$-linearly gives the desired action.

To study the endomorphism of $\Sk_A(M,d)$ corresponding to an element $\gamma\in\Sk_A(S)$, we will consider the matrix elements of this endomorphism with respect to a basis. We work with elements $\widetilde{\varphi}_c=\kappa_c\,\varphi_c$ where $\kappa_c\in\mathbb{C}(A)$ is a chosen nonzero scalar for each admissible coloring~$c$. These elements form a basis by Lemma~2.1 of~\cite{DS25}. We will write $\Delta_\Gamma\subset\mathbb{Z}_{\geq0}^{\Gamma_1}$ for the set of admissible colorings of~$\Gamma$ and write $\Lambda_\Gamma\subset\mathbb{Z}_{\geq0}^{\Gamma_1}$ for the lattice of colorings $c:\Gamma_1\rightarrow\mathbb{Z}_{\geq0}$ such that $c(e)+c(f)+c(g)$ is even whenever $e$,~$f$,~$g\in\Gamma_1$ are the edges incident to a trivalent vertex. We say a set $U\subset\mathbb{Z}_{\geq0}^{\Gamma_1}$ is \emph{good} if there exist finitely many colorings $c_1,\dots,c_m\in\Lambda_\Gamma$ such that $U=\bigcap_{i=1}^m(\Delta_\Gamma+c_i)$. (We note that such a set is called ``large'' in the terminology of~\cite{DS25}.)

\begin{proposition}
\label{prop:skeinalgebraaction}
If $\gamma$ is any loop on~$S$, then there exists a good set $U_\gamma\subset\mathbb{Z}_{\geq0}^{\Gamma_1}$ and rational functions $F_k^\gamma\in\mathbb{C}(x_i:i\in\Gamma_1)$ such that, for any $c\in U_\gamma$, 
\[
\gamma\cdot\widetilde{\varphi}_c=\sum_{k\in\mathbb{Z}_{\geq0}^{\Gamma_1},k|_{\Gamma_1^*}=0}F_k^\gamma(A^{c(i)})\,\widetilde{\varphi}_{c+k}, 
\]
where we write $f(A^{c(i)})$ for the expression obtained from $f\in\mathbb{C}(x_i:i\in\Gamma_1)$ by making the substitution $x_i=A^{c(i)}$.
\end{proposition}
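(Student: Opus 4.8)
The plan is to reduce the action of an arbitrary loop $\gamma$ on $S$ to a sequence of elementary local moves, each of which is governed by the fusion rules of Section~2.2, and to track how the coefficients behave as the coloring $c$ varies. First I would isotope $\gamma$ so that it lies on the surface $\Sigma_\Gamma\subset M$ and is transverse to the spine~$\Gamma$; equivalently, I would push $\gamma$ onto the ribbon graph so that it runs parallel to the edges, crossing over a small number of edges and vertices. Superposing $\gamma$ with the colored ribbon graph element $\varphi_c$ then gives a tangle in $M$ that differs from $\varphi_c$ only inside a union of rectangular and triangular patches. Concretely, each segment of $\gamma$ running alongside an edge $e$ of color $c(e)$ produces, after applying the Jones--Wenzl idempotent absorbing rules, a single strand of color $1$ running parallel to a strand of color $c(e)$; the parallel strands rule (Figure~\ref{fig:parallelstrands}) rewrites this as a sum of two terms in which the internal edge has color $c(e)+1$ or $c(e)-1$, with coefficients $1$ and $-[c(e)]/[c(e)+1]$. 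Each place where $\gamma$ passes a trivalent vertex contributes a factor governed by the triangle reducing rules (Figure~\ref{fig:triangle}), whose coefficients are ratios of quantum integers in the colors meeting at that vertex, and each over- or under-crossing of $\gamma$ with an edge contributes a power of $A$ determined by the half-twist rules (Figure~\ref{fig:halftwist}). Composing these local moves along $\gamma$, one resolves the superposition $\gamma\cdot\varphi_c$ into a finite sum $\sum_k G_k^\gamma \varphi_{c+k}$ where $k$ ranges over a finite subset of $\mathbb{Z}^{\Gamma_1}$, and the shift $c+k$ reflects the net change of color on each edge. Since $\gamma$ is a loop lying in the interior of $S$, it does not meet any disk $D_v$, so the colors on edges incident to univalent vertices are unchanged, i.e.\ $k|_{\Gamma_1^*}=0$. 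This gives the desired formula once we pass from $\varphi_c$ to $\widetilde\varphi_c=\kappa_c\varphi_c$, absorbing the ratio $\kappa_c/\kappa_{c+k}$ into $G_k^\gamma$ to define $F_k^\gamma$.

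The crucial structural point is that every coefficient produced by these local moves is a \emph{rational function of the quantum integers} $[c(i)\pm a]$ for small integers $a$, and each quantum integer $[m]$ is itself, by \eqref{eqn:quantuminteger}, a fixed Laurent-polynomial-over-Laurent-polynomial expression in $A^m$. Writing $x_i$ for $A^{c(i)}$, we have $[c(i)]=(x_i^2-x_i^{-2})/(A^2-A^{-2})$, and more generally $[c(i)+a]=(A^{2a}x_i^2-A^{-2a}x_i^{-2})/(A^2-A^{-2})$, which is a rational function of $x_i$ with coefficients in $\mathbb{C}(A)$. Therefore each coefficient $G_k^\gamma$, being a product and quotient of finitely many such expressions, is of the form $F_k^\gamma(A^{c(i)})$ for a single rational function $F_k^\gamma\in\mathbb{C}(x_i:i\in\Gamma_1)$ that does not depend on $c$. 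The ratio $\kappa_c/\kappa_{c+k}$ is handled the same way provided $\kappa_c$ is chosen (as in \cite{DS25}) to be a product over edges and vertices of such quantum-integer expressions — which is exactly the standard normalization by theta-symbols, so $\kappa_c/\kappa_{c+k}$ is again of the form (rational function)$(A^{c(i)})$. This uniformity in $c$ is what the statement is really asserting.

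The remaining subtlety, and the reason the good set $U_\gamma$ enters, is that the local fusion moves and the normalization $\kappa_c$ are only \emph{valid} — and the denominators appearing in the coefficients are only \emph{nonzero} — when the colorings involved are admissible, i.e.\ satisfy the triangle inequalities at every vertex, and when the various $[c(i)+a]$ do not vanish. For a fixed loop $\gamma$ there is a uniform bound $N$ on how far any color is shifted during the resolution, so I would take $U_\gamma$ to be the set of admissible colorings $c$ such that $c+k'$ remains admissible for every intermediate shift $k'$ occurring in the resolution; this is precisely an intersection of finitely many translates $\Delta_\Gamma+c_i$ of the admissibility cone, hence a good set in the sense of the text, and it is nonempty (indeed cofinal) because admissibility is an open cone condition. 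On this good set all intermediate steps go through and all denominators are units in $\mathbb{C}(A)[x_i^{\pm1}]$ evaluated at admissible points, so the identity holds as stated. \textbf{The main obstacle} I anticipate is the bookkeeping in the first paragraph: making precise, for an arbitrary loop $\gamma$ in a handlebody of arbitrary genus, that $\gamma\cdot\varphi_c$ can be resolved by a \emph{finite} and \emph{$c$-independent} sequence of the moves in Figures~\ref{fig:parallelstrands}--\ref{fig:triangle}, with the shift pattern $k$ independent of $c$. This requires choosing a good position for $\gamma$ relative to $\Gamma$ and checking that absorbing Jones--Wenzl idempotents across the crossings of $\gamma$ with $\Gamma$ produces only the listed local configurations; once that combinatorial normal form is set up, the rationality and the good-set claims follow by the routine algebra sketched above.
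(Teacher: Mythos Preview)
Your sketch is correct and follows the standard fusion-rule argument, which is precisely what Detcherry--Santharoubane carry out in \cite{DS25}. The paper itself does not reproduce this argument: its entire proof is the citation ``This statement appears in Proposition~2.3 of~\cite{DS25} in the special case where $\kappa_c=1$ for all~$c$. The general case is obtained by rescaling the coefficient in~\cite{DS25} by $\kappa_c/\kappa_{c+k}$.'' So you have reconstructed the content of the cited reference rather than found an alternative route.

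One point worth flagging: you correctly note that for $F_k^\gamma$ to be a rational function of the $x_i$, the ratio $\kappa_c/\kappa_{c+k}$ must itself be of the form (rational function)$(A^{c(i)})$. The paper's statement allows $\kappa_c\in\mathbb{C}(A)$ to be an \emph{arbitrary} nonzero scalar for each $c$, under which hypothesis the conclusion would fail (take e.g.\ $\kappa_c$ to be an ordinary factorial in $c(e)$). The paper's one-line proof glosses over this; in practice the only $\kappa_c$ actually used (defined just before Proposition~\ref{prop:tracegammadelta}) is a ratio of quantum factorials, for which your argument goes through. Your instinct to impose this structural condition on $\kappa_c$ is the right one, and the ``obstacle'' you anticipate---the $c$-independent combinatorial normal form for $\gamma$ relative to $\Gamma$---is exactly the technical core of \cite{DS25}.
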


\begin{proof}
This statement appears in Proposition~2.3 of~\cite{DS25} in the special case where $\kappa_c=1$ for all~$c$. The general case is obtained by rescaling the coefficient in~\cite{DS25} by $\kappa_c/\kappa_{c+k}$.
\end{proof}

\begin{proposition}
\label{prop:uniquerationalfunction}
Let $\gamma$ be any loop on~$S$, and let $F_k^\gamma$ be the rational functions from Proposition~\ref{prop:skeinalgebraaction}. If $V_\gamma\subset\mathbb{Z}_{\geq0}^{\Gamma_1}$ is a good set and $G_k^\gamma\in\mathbb{C}(x_i:i\in\Gamma_1)$ are rational functions such that 
\[
\gamma\cdot\widetilde{\varphi}_c=\sum_{k\in\mathbb{Z}_{\geq0}^{\Gamma_1},k|_{\Gamma_1^*}=0}G_k^\gamma(A^{c(i)})\,\widetilde{\varphi}_{c+k}
\]
for any $c\in V_\gamma$, then we have $F_k^\gamma=G_k^\gamma$ for all~$k$.
\end{proposition}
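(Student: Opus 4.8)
The plan is to compare the two expansions of $\gamma\cdot\widetilde{\varphi}_c$ on the overlap of the two good sets and then exploit the fact that a rational function is pinned down by its values on a good set. First I would observe that $W:=U_\gamma\cap V_\gamma$ is again good: writing $U_\gamma=\bigcap_i(\Delta_\Gamma+c_i)$ and $V_\gamma=\bigcap_j(\Delta_\Gamma+c_j')$ with $c_i,c_j'\in\Lambda_\Gamma$, the set $W$ is the intersection of all the translates $\Delta_\Gamma+c_i$ and $\Delta_\Gamma+c_j'$, so it has the required form. For each $c\in W$ both formulas of Proposition~\ref{prop:skeinalgebraaction} hold, and since the $\widetilde{\varphi}_{c'}$ form a basis (Lemma~2.1 of~\cite{DS25}) the expansion of $\gamma\cdot\widetilde{\varphi}_c$ in it is unique; comparing the coefficient of $\widetilde{\varphi}_{c+k}$ then gives $F_k^\gamma(A^{c(i)})=G_k^\gamma(A^{c(i)})$ in $\mathbb{C}(A)$ for every $c\in W$ and every index $k$ with $k|_{\Gamma_1^*}=0$. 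Thus the whole statement reduces to a density claim: \emph{if a polynomial $P\in\mathbb{C}[x_i:i\in\Gamma_1]$ satisfies $P(A^{c(i)})=0$ for all $c$ in a good set $W$, then $P=0$} — applying this to the numerator of the reduced fraction $F_k^\gamma-G_k^\gamma$ (whose denominator does not vanish at the points $(A^{c(i)})$ with $c\in W$, since $F_k^\gamma$ and $G_k^\gamma$ are each defined there) yields $F_k^\gamma=G_k^\gamma$.

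For the density claim I would first show that every good set $W$ of the above form contains a translate $c^{**}+\Delta_\Gamma$ of $\Delta_\Gamma$. The key observation is that $\Delta_\Gamma$ is a submonoid of $\Lambda_\Gamma$ — closed under addition, and containing $0$ — and that adding a sufficiently large even multiple of the constant coloring $\mathbf{1}$ to an arbitrary element of $\Lambda_\Gamma$ produces an element of $\Delta_\Gamma$, because the triangle inequalities can fail only by a bounded amount which the constant absorbs. Choosing $c^{**}:=2K\mathbf{1}+\sum_i c_i+\sum_j c_j'$ with $K$ large, one checks directly that $c^{**}+d\in(\Delta_\Gamma+c_i)\cap(\Delta_\Gamma+c_j')$ for every $d\in\Delta_\Gamma$, so $c^{**}+\Delta_\Gamma\subseteq W$; in particular $W$ is nonempty. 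Rescaling each variable $x_i$ by the unit $A^{c^{**}(i)}\in\mathbb{C}(A)$, which does not change whether $P$ vanishes, reduces the claim to: $P(A^{d(i)})=0$ for all $d\in\Delta_\Gamma$ forces $P=0$.

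To finish, suppose $P=\sum_m a_m x^m$ has nonempty finite support $M$. I would invoke the combinatorial fact that $\Delta_\Gamma$ is not contained in any finite union of affine hyperplanes of $\mathbb{R}^{\Gamma_1}$: already the submonoid generated by the all-twos coloring together with, for each edge $e$, the coloring equal to $4$ on $e$ and $2$ on every other edge (all of which lie in $\Delta_\Gamma$) contains, for every $M$, all lattice points $2v$ with $v$ ranging over a box of side proportional to $M$, and no such family can be covered by boundedly many hyperplanes. Hence I may pick $d\in\Delta_\Gamma$ for which the integers $\sum_i m(i)\,d(i)$ are pairwise distinct as $m$ runs over $M$; then $0=P(A^{d(i)})=\sum_{m\in M}a_m A^{\sum_i m(i)d(i)}$ would be a nontrivial $\mathbb{C}$-linear dependence among distinct powers of the transcendental element $A\in\mathbb{C}(A)$, which is impossible. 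Therefore $P=0$, and the proposition follows.

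The part I expect to be the real work is this last combinatorial input: the two claims that $U_\gamma\cap V_\gamma$ is nonempty and that $\Delta_\Gamma$ escapes every finite union of hyperplanes. Both hinge on the explicit description of $\Delta_\Gamma$ as a finitely generated polyhedral submonoid of $\Lambda_\Gamma$, and it is in verifying these (rather than in the surrounding formal manipulations) that the attention has to go.
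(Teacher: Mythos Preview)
Your argument is the natural one and matches what Lemma~2.4 of~\cite{DS25} (to which the paper simply defers) does: intersect the two good sets, compare coefficients using the basis property, and then show that a rational function vanishing on a good set must vanish identically by exhibiting enough evaluation points inside a translate of~$\Delta_\Gamma$. Your verifications that $U_\gamma\cap V_\gamma$ is again good, that it contains some $c^{**}+\Delta_\Gamma$, and that~$\Delta_\Gamma$ escapes any finite union of hyperplanes are all correct.

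One subtlety deserves attention. Your final step writes $P=\sum_m a_m x^m$ with $a_m\in\mathbb{C}$ and concludes from $\sum_m a_m A^{\langle m,d\rangle}=0$ via linear independence of distinct powers of~$A$ over~$\mathbb{C}$. This is valid for the proposition exactly as stated, but in practice the~$F_k^\gamma$ carry coefficients in~$\mathbb{C}(A)$, not~$\mathbb{C}$: already $F_0^{\gamma_i}=-A^2x_{e_i}^2-A^{-2}x_{e_i}^{-2}$ in Proposition~\ref{prop:tracegammadelta}, and the fusion rules introduce quantum integers throughout. The paper's ``$\mathbb{C}(x_i:i\in\Gamma_1)$'' should really be read as~$\mathbb{C}(A)(x_i)$. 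With $a_m\in\mathbb{C}(A)$ a single generic~$d$ no longer gives a contradiction. The fix is immediate from your own combinatorics: $\Delta_\Gamma$ contains the product box $2M\mathbf{1}+\{0,2,\dots,2M\}^{\Gamma_1}$ for every~$M$ (the parity and triangle inequalities hold trivially), hence a product grid of evaluation points arbitrarily large in each coordinate, and then the standard one-variable-at-a-time induction over the field~$\mathbb{C}(A)$ forces~$P=0$.
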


\begin{proof}
This follows as in the proof of Lemma~2.4 of~\cite{DS25}.
\end{proof}

Let $\mathbf{C}=(C_l)_{l\in\Gamma_1^*}$ be a tuple of formal variables and write $\mathscr{Z}_{A,\mathbf{C}}^0(\Gamma)$ be the $\mathbb{C}(A,C_l:l\in\Gamma_1^*)$-algebra generated by variables $Q_i^{\pm1}$, $E_i^{\pm1}$ for $i\in\Gamma_1\setminus\Gamma_1^*$ subject to the relations $Q_iE_j=A^{\delta_{ij}}E_jQ_i$ and $[Q_i,Q_j]=[E_i,E_j]=0$ for $i$,~$j\in\Gamma_1\setminus\Gamma_1^*$. Let $\mathscr{Z}_{A,\mathbf{C}}(\Gamma)$ be the localization of $\mathscr{Z}_{A,\mathbf{C}}^0(\Gamma)$ at the multiplicative subset generated by the expressions $A^mQ_i^2-A^{-m}Q_i^{-2}$ for $m\in\mathbb{Z}$. By Proposition~\ref{prop:uniquerationalfunction}, a loop $\gamma$ on~$S$ determines a well defined collection of rational functions~$F_k^\gamma$.

\begin{proposition}
\label{prop:quantumtraceexists}
There is an injective $\mathbb{C}[A^{\pm1}]$-algebra homomorphism 
\begin{equation}
\label{eqn:quantumtrace}
\Upsilon:\Sk_A(S)\longrightarrow\mathscr{Z}_{A,\mathbf{C}}(\Gamma)
\end{equation}
such that for any loop $\gamma$ on~$S$, we have 
\[
\Upsilon(\gamma)=\sum_{k\in\mathbb{Z}_{\geq0}^{\Gamma_1},k|_{\Gamma_1^*}=0}E^kF_k^\gamma(Q_i,C_l)
\]
where $E^k\coloneqq\prod_{i\in\Gamma_1}E_i^{k(i)}$ and $F_k^\gamma(Q_i,C_l)$ is obtained from the rational function $F_k^\gamma\in\mathbb{C}(x_i:i\in\Gamma)$ of Proposition~\ref{prop:skeinalgebraaction} by making the substitutions $x_i=Q_i$ for $i\in\Gamma_1\setminus\Gamma_1^*$ and the substitutions $x_l=C_l$ for $l\in\Gamma_1^*$.
\end{proposition}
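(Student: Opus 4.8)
The idea is to realise $\Upsilon$ as the action of $\Sk_A(S)$ on the modules $\Sk_A(M,d)$, repackaged through the quantum torus; Propositions~\ref{prop:skeinalgebraaction} and~\ref{prop:uniquerationalfunction} already contain the analytic heart, so what remains is largely organisational. I would first check that the formula produces a genuine element of $\mathscr{Z}_{A,\mathbf C}(\Gamma)$. By Proposition~\ref{prop:skeinalgebraaction} only finitely many $F_k^{\gamma}$ are nonzero, and inspecting the fusion rules of Figures~\ref{fig:parallelstrands}--\ref{fig:triangle} shows that every denominator appearing in the recursion that computes them is a quantum integer $[n]$ whose argument $n$ is the colour of a single edge plus an integer constant; hence $[n]$ equals a unit of $\mathbb{C}(A)$ times $A^{m}x_i^{2}-A^{-m}x_i^{-2}$ for one $i\in\Gamma_1$ and some $m\in\mathbb{Z}$. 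Composing the recursion steps, the denominator of each $F_k^{\gamma}$ is a product of factors of this shape: those with $i\in\Gamma_1^{*}$ are already units of the base field $\mathbb{C}(A,C_l:l\in\Gamma_1^{*})$, and those with $i\in\Gamma_1\setminus\Gamma_1^{*}$ are exactly the elements inverted in $\mathscr{Z}_{A,\mathbf C}(\Gamma)$. Since the $Q_i$ commute with one another and the $E_i$ commute with one another, the substitution $x_i\mapsto Q_i$, $x_l\mapsto C_l$ is unambiguous, so $\Upsilon(\gamma):=\sum_{k}E^{k}F_k^{\gamma}(Q_i,C_l)$ is a well-defined element of $\mathscr{Z}_{A,\mathbf C}(\Gamma)$, depending only on $\gamma$ by Proposition~\ref{prop:uniquerationalfunction}.

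To extend $\Upsilon$ to an algebra homomorphism, note that modulo skein relations $\Sk_A(S)$ is spanned by multicurves, and a multicurve is the superposition of its pairwise disjoint simple-loop components, which commute in $\Sk_A(S)$; thus $\Sk_A(S)$ is generated as a $\mathbb{C}[A^{\pm1}]$-algebra by its simple loops. I define $\widehat\Upsilon$ on the free $\mathbb{C}[A^{\pm1}]$-algebra on simple loops by the formula above, extended multiplicatively, and prove by induction on the length of a word $w$ in simple loops the following comparison statement: there is a good set $U_w\subset\mathbb{Z}_{\geq0}^{\Gamma_1}$ such that for every $d$ and every admissible $c\in U_w$ with $c|_{\Gamma_1^{*}}=d$, letting $Q_i$ act on $\widetilde\varphi_c$ by $A^{c(i)}$, $E_i$ by $c\mapsto c+e_i$ and $C_l$ by $A^{d(l)}$ turns $\widehat\Upsilon(w)$ into an operator sending $\widetilde\varphi_c$ to $\bar w\cdot\widetilde\varphi_c$, where $\bar w\in\Sk_A(S)$ is the image of $w$. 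The base case is Proposition~\ref{prop:skeinalgebraaction} together with the definition of $\Upsilon(\gamma)$; the inductive step applies Proposition~\ref{prop:skeinalgebraaction} twice and intersects the two good sets (finite intersections of good sets are good). Consequently, if $\bar w=0$ in $\Sk_A(S)$ then, writing $\widehat\Upsilon(w)=\sum_{k}E^{k}P_k(Q_i,C_l)$ and using linear independence of the $\widetilde\varphi_{c+k}$, each $P_k$ must vanish at all points $\bigl((A^{c(i)})_{i\notin\Gamma_1^{*}},(A^{d(l)})_{l\in\Gamma_1^{*}}\bigr)$; these points form a Zariski-dense set, because a good set contains a coset of a finite-index subsemigroup of $\mathbb{Z}_{\geq0}^{\Gamma_1\setminus\Gamma_1^{*}}$ and $d$ ranges over all of $\mathbb{Z}_{\geq0}^{\Gamma_1^{*}}$, so $P_k=0$ for all $k$ and $\widehat\Upsilon(w)=0$. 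Hence $\widehat\Upsilon$ descends to a $\mathbb{C}[A^{\pm1}]$-algebra homomorphism $\Upsilon\colon\Sk_A(S)\to\mathscr{Z}_{A,\mathbf C}(\Gamma)$ with the asserted formula on loops.

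For injectivity, the comparison statement shows that any $x\in\ker\Upsilon$ acts as zero on $\widetilde\varphi_c$ for every $d$ and every $c$ in a good set, i.e. $x$ annihilates the ``deep'' part of $\bigoplus_d\Sk_A(M,d)$. I would conclude $x=0$ from faithfulness of this family of actions: our $\widetilde\varphi_c$ differ from the vectors used in~\cite{DS25} only by the invertible scalars $\kappa_c$, so the faithfulness underlying their quantum trace transfers; alternatively it can be extracted from the Dehn--Thurston filtration of Section~\ref{sec:TheAssociatedGradedAlgebraSkein} by tracking the leading term of the action of a multicurve. I expect the two genuinely delicate points to be (i) the denominator bookkeeping that confines the image to this precise localized quantum torus rather than to a larger localization — this is exactly where the explicit shape of the fusion rules, and the special-case computations filling the remainder of this section, are used — and (ii) the faithfulness input behind injectivity, which I would import from~\cite{DS25}.
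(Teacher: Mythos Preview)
Your proposal is correct and follows essentially the same approach as the paper, which simply cites Lemmas~2.6 and~2.8 of~\cite{DS25}; you have reconstructed in outline the argument those lemmas contain (denominator control from the fusion rules, well-definedness and the homomorphism property via the action on the basis $\widetilde\varphi_c$ combined with Zariski density of good sets, and injectivity via faithfulness of that action), and you correctly flag that the two delicate inputs are precisely the ones imported from~\cite{DS25}.
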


\begin{proof}
This follows as in Lemmas~2.6 and~2.8 of~\cite{DS25}.
\end{proof}

The map~\eqref{eqn:quantumtrace} was constructed by Detcherry and Santharoubane~\cite{DS25} in the special case where $\kappa_c=1$ for all~$c$. It can be considered as an analog of the well known quantum trace map of Bonahon and Wong~\cite{BW11}. We will thus refer to~\eqref{eqn:quantumtrace} also as a \emph{quantum trace} map.

\subsection{Computations of the quantum trace}

Let us now compute the quantum trace in some important special cases. We suppose $\Gamma$ is the ribbon graph illustrated in Figure~\ref{fig:ribbongraph} where the cyclic ordering of the edges incident to a vertex is induced by the orientation of the plane in which it is drawn. We call the edges $e_1,\dots,e_{n-1}$ and $f_0,\dots,f_{n+1}$ as in the figure.

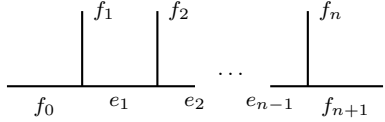
\begin{figure}[ht]
\begin{tikzpicture}
\draw[black, thick] (-2.5,0) -- (0,0);
\draw[black, thick] (-1.5,0) -- (-1.5,1);
\draw[black, thick] (-0.5,0) -- (-0.5,1);
\draw[black, thick] (1.5,0) -- (1.5,1);
\node[below] at (-2,0) {\tiny $f_0$};
\node[below] at (-1,0) {\tiny $e_1$};
\node[below] at (0,0) {\tiny $e_2$};
\node[above] at (0.5,0) {\tiny $\dots$};
\node[below] at (1,0) {\tiny $e_{n-1}$};
\node[below] at (2,0) {\tiny $f_{n+1}$};
\node[right] at (-1.5,1) {\tiny $f_1$};
\node[right] at (-0.5,1) {\tiny $f_2$};
\node[right] at (1.5,1) {\tiny $f_n$};
\draw[black, thick] (1,0) -- (2.5,0);
\end{tikzpicture}
\caption{A ribbon graph.\label{fig:ribbongraph}}
\end{figure}

Let us assume that~$\Gamma$ is embedded in~$\mathbb{R}^3$. As in Section~\ref{sec:DefinitionOfTheQuantumTrace}, we let $M\subset\mathbb{R}^3$ be the handlebody defined as the closure of a small tubular neighborhood of~$\Gamma$, and we let~$S$ be the surface obtained from $\bar{S}=\partial M$ by removing a small disk~$D_v\subset\bar{S}$ around each univalent vertex~$v$ of~$\Gamma$. This surface~$S$ is homeomorphic to~$S_{0,n+2}$, and we can identify it with the surface in Figure~\ref{subfig:puncturedsphere} in such a way that if $v$ is the univalent vertex incident to the edge~$f_i$ of~$\Gamma$, then $\partial\bar{D}_v$ is identified with the $i$th boundary component in Figure~\ref{subfig:puncturedsphere}.

Let us fix a map $d:\Gamma_1^*\rightarrow\mathbb{Z}_{\geq0}$ and write $d_i\coloneqq d(f_i)$. Given any admissible coloring $c:\Gamma_1\rightarrow\mathbb{Z}_{\geq0}$, we write $c_i\coloneqq c(e_i)$. If this coloring satisfies $c|_{\Gamma_1^*}=d$, then we get the basis element $\varphi_c\in\Sk_A(M,d)$ as in Section~\ref{sec:DefinitionOfTheQuantumTrace}. Recall that in the definition of the quantum trace map, we consider the action of the skein algebra $\Sk_A(S)$ on the normalized basis elements $\widetilde{\varphi}_c=\kappa_c\,\varphi_c$. Here we will take the normalizing factor to be 
\[
\kappa_c\coloneqq\frac{[c_1]![c_2]!\dots[c_{n-1}]!}{[\frac{d_0+c_1-d_1}{2}]![\frac{c_1+c_2-d_2}{2}]!\dots[\frac{c_{n-1}+d_{n+1}-d_n}{2}]!}
\]
where $[k]!\coloneqq\prod_{j=1}^k[j]$ and $[0]!\coloneqq1$ are the \emph{quantum factorials}. In the following two propositions, we compute the values of the quantum trace map on generators of the skein algebra using this normalization. To simplify expressions in the following, we will write $C_j\coloneqq C_{f_j}$, $Q_j\coloneqq Q_{e_j}$, and $E_j\coloneqq E_{e_j}$.

\begin{proposition}
\label{prop:tracegammadelta}
Let $\gamma_i$ and $\delta_j$ be the curves defined in Figure~\ref{fig:generators}. Then the quantum trace map~$\Upsilon$ takes the values 
\begin{align*}
\Upsilon(\gamma_i) &= -A^2Q_i^2-A^{-2}Q_i^{-2}, \\
\Upsilon(\delta_j) &= -A^2C_j^2-A^{-2}C_j^{-2}.
\end{align*}
\end{proposition}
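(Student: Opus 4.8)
The plan is to compute the action of the loops $\gamma_i$ and $\delta_j$ on the normalized basis $\{\widetilde{\varphi}_c\}$ directly, using the fusion rules from Figures~\ref{fig:parallelstrands}--\ref{fig:triangle}, and then to invoke Proposition~\ref{prop:uniquerationalfunction} together with Proposition~\ref{prop:quantumtraceexists} to read off $\Upsilon(\gamma_i)$ and $\Upsilon(\delta_j)$ from the resulting rational functions $F_k^\gamma$. The key point is that $\gamma_i$ is (isotopic to) a small loop encircling the edge $e_i$ of the ribbon graph $\Gamma$, and $\delta_j$ is a small loop encircling the edge $f_j$; in the surface $S=\partial M \setminus \bigcup D_v$ the loop $\gamma_i$ is the meridian of the handlebody $M$ dual to $e_i$, and $\delta_j$ is the boundary-parallel curve around the $j$th puncture. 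So superposing $\gamma_i$ onto the skein $\varphi_c$ amounts to wrapping a single strand (color $1$) around the color-$c_i$ edge $e_i$.

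Concretely, I would carry out the following steps. \textbf{Step 1.} Fix an admissible coloring $c$ with $c|_{\Gamma_1^*}=d$ and isotope $\gamma_i$ so that it forms a small circle around the portion of $\varphi_c$ running along $e_i$. \textbf{Step 2.} Use the parallel-strands rule (Figure~\ref{fig:parallelstrands}) to fuse the color-$1$ loop with the color-$c_i$ strand: this expresses the loop-plus-strand as a linear combination of a color-$(c_i+1)$ strand and a color-$(c_i-1)$ strand with coefficients $1$ and $-[c_i]/[c_i+1]$. Since the color-$1$ strand is a closed loop encircling $e_i$, after fusing we are left with an unknotted color-$(c_i\pm 1)$ loop running parallel to the color-$c_i$ Jones--Wenzl strand and then closing up; using the biangle reducing rules (Figure~\ref{fig:biangle}) to collapse the bigon formed, together with the evaluation of the relevant theta/closed-loop coefficients, this reduces $\gamma_i\cdot\varphi_c$ to a scalar multiple of $\varphi_c$ itself (no shift, i.e. $k=0$), with the scalar being a Laurent expression in $A^{c_i}$. \textbf{Step 3.} Collect the coefficient; I expect it to be exactly $-A^2 A^{2c_i} - A^{-2} A^{-2c_i}$ after simplification (this is the standard ``eigenvalue'' of a meridian acting on a colored strand: $-A^{2(c_i+1)}-A^{-2(c_i+1)}$, matching $\gamma_i$ being the loop around a cable of $c_i+1$... one must be careful with the precise normalization, but the structure is forced). \textbf{Step 4.} Since the action is diagonal, $F_k^{\gamma_i}=0$ for $k\neq 0$ and $F_0^{\gamma_i}(x_i) = -A^2 x_i^2 - A^{-2}x_i^{-2}$; by Proposition~\ref{prop:uniquerationalfunction} this determines $F^{\gamma_i}$ uniquely regardless of the good set used, and then Proposition~\ref{prop:quantumtraceexists} gives $\Upsilon(\gamma_i) = E^0 F_0^{\gamma_i}(Q_i) = -A^2 Q_i^2 - A^{-2}Q_i^{-2}$. \textbf{Step 5.} For $\delta_j$, the argument is identical except that $f_j \in \Gamma_1^*$, so the relevant color is $c(f_j)=d_j$, which is frozen; the loop $\delta_j$ acts by the scalar $-A^2 C_j^2 - A^{-2}C_j^{-2}$ and again is diagonal, giving $\Upsilon(\delta_j) = -A^2 C_j^2 - A^{-2}C_j^{-2}$.

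The main obstacle will be \textbf{Step 2--3}: keeping track of the precise scalar in the reduction, since it involves the interplay of the parallel-strands coefficient $-[c_i]/[c_i+1]$, the biangle coefficient $-[c_i+2]/[c_i+1]$, the chosen normalization $\kappa_c$, and the value of a closed colored loop $-A^{2(m+1)}-A^{-2(m+1)}=-[2]\cdot$(something), and one must check that all the quantum-integer denominators cancel to leave the clean answer $-A^{2}Q_i^{2}-A^{-2}Q_i^{-2}$ with no shift. I would verify the normalization does not interfere by noting that since the action is diagonal ($k=0$), the factor $\kappa_c/\kappa_{c+k}=\kappa_c/\kappa_c=1$, so the normalization is irrelevant here; this is a useful sanity check that isolates the computation to the plain fusion-rule manipulation. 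A secondary subtlety is confirming that $\gamma_i$ and $\delta_j$ are genuinely isotopic in $M$ to the meridians of $e_i$ and $f_j$ respectively, which follows from the identification of $S$ with $S_{0,n+2}$ described just before the proposition and the explicit pictures in Figure~\ref{fig:generators}.
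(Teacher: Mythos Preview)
Your proposal is correct and follows essentially the same route as the paper: compute $\gamma_i\cdot\varphi_c$ by fusion rules, observe the action is diagonal (so the normalization $\kappa_c$ drops out), and read off $\Upsilon$ via Proposition~\ref{prop:quantumtraceexists}. The only point where your outline is slightly imprecise is Step~2: after applying the parallel-strands rule, the encircling loop still crosses the strand once, so you cannot go directly to the biangle rules; the paper's computation inserts two applications of the half-twist rules (Figure~\ref{fig:halftwist}) to straighten this out, picking up factors $A^{\pm c}$ and $-A^{\mp(c+2)}$, before the biangle reductions finish the job and yield the clean scalar $-(A^{2c_i+2}+A^{-2c_i-2})$.
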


\begin{proof}
Let $c:\Gamma_1\rightarrow\mathbb{Z}_{\geq0}$ be an admissible coloring. Using the fusion rules, we can compute the action of $\gamma_i$ on~$\varphi_c$ as in Figure~\ref{fig:fusioncalculation}. (By abuse of notation, we are writing $c_i=c(e_i)$ simply as $c$ in this figure.) We find that $\gamma_i\cdot\varphi_c=-(A^{2c_i+2}+A^{-2c_i-2})\varphi_c$, or equivalently, 
\[
\gamma_i\cdot\widetilde{\varphi}_c=-(A^{2c_i+2}+A^{-2c_i-2})\widetilde{\varphi}_c.
\]
Together with Proposition~\ref{prop:quantumtraceexists}, this implies the formula for $\Upsilon(\gamma_i)$. The formula for $\Upsilon(\delta_j)$ is proved in the same way, replacing~$e_i$ by~$f_j$ and replacing~$\gamma_i$ by~$\delta_j$.
\end{proof}

\begin{figure}[ht]
\begin{align*}
\begin{tikzpicture}[scale=1.5, baseline=(current bounding box.center)]
\clip(-1.25,-0.75) rectangle (1.25,0.55);
\draw[black, thick] (0.4,0) -- (-1,0) node[above right] {\tiny $c$};
\draw [black,thick,domain=-165:165] plot ({0.5*cos(\x)}, {0.5*sin(\x)});
\draw[black, thick] (0.6,0) -- (1,0);
\end{tikzpicture}
&=
\begin{tikzpicture}[scale=1.5, baseline=(current bounding box.center)]
\clip(-1.25,-0.75) rectangle (1.25,0.55);
\draw[black, thick] (0.4,0) -- (-1,0) node[above right] {\tiny $c$};
\draw [black,thick,domain=-170:0] plot ({0.5*cos(\x)}, {0.5*sin(\x)});
\node[below] at (0,-0.5) {\tiny $1$};
\draw [black,thick,domain=0:180] plot ({0.375+0.125*cos(\x)}, {0.125*sin(\x)});
\draw [black,thick,domain=30:180] plot ({-0.375-0.125*cos(\x)}, {0.125*sin(\x)});
\draw[black, thick] (0.6,0) -- (1,0) node[above left] {\tiny $c$};
\node[above] at (0,0) {\tiny $c+1$};
\end{tikzpicture}
-\frac{[c]}{[c+1]}
\begin{tikzpicture}[scale=1.5, baseline=(current bounding box.center)]
\clip(-1.25,-0.75) rectangle (1.25,0.55);
\draw[black, thick] (0.4,0) -- (-1,0) node[above right] {\tiny $c$};
\draw [black,thick,domain=-170:0] plot ({0.5*cos(\x)}, {0.5*sin(\x)});
\node[below] at (0,-0.5) {\tiny $1$};
\draw [black,thick,domain=0:180] plot ({0.375+0.125*cos(\x)}, {0.125*sin(\x)});
\draw [black,thick,domain=30:180] plot ({-0.375-0.125*cos(\x)}, {0.125*sin(\x)});
\draw[black, thick] (0.6,0) -- (1,0) node[above left] {\tiny $c$};
\node[above] at (0,0) {\tiny $c-1$};
\end{tikzpicture} \\
&=A^c
\begin{tikzpicture}[scale=1.5, baseline=(current bounding box.center)]
\clip(-1.25,-0.625) rectangle (1.25,0.625);
\node[below] at (0.125,-0.375) {\tiny $1$};
\draw[black, thick] (0.4,0) -- (-1,0) node[above right] {\tiny $c$};
\draw [black,thick,domain=-180:0] plot ({0.125+0.375*cos(\x)}, {0.375*sin(\x)});
\draw [black,thick,domain=0:180] plot ({0.375+0.125*cos(\x)}, {0.125*sin(\x)});
\draw[black, thick] (0.6,0) -- (1,0) node[above left] {\tiny $c$};
\node[above] at (0,0) {\tiny $c+1$};
\end{tikzpicture}
+\frac{[c]}{[c+1]}A^{-(c+2)}
\begin{tikzpicture}[scale=1.5, baseline=(current bounding box.center)]
\clip(-1.25,-0.625) rectangle (1.25,0.625);
\node[below] at (0.125,-0.375) {\tiny $1$};
\draw[black, thick] (0.4,0) -- (-1,0) node[above right] {\tiny $c$};
\draw [black,thick,domain=-180:0] plot ({0.125+0.375*cos(\x)}, {0.375*sin(\x)});
\draw [black,thick,domain=0:180] plot ({0.375+0.125*cos(\x)}, {0.125*sin(\x)});
\draw[black, thick] (0.6,0) -- (1,0) node[above left] {\tiny $c$};
\node[above] at (0,0) {\tiny $c-1$};
\end{tikzpicture} \\
&=A^{2c}
\begin{tikzpicture}[scale=1.5, baseline=(current bounding box.center)]
\clip(-1.25,-0.5) rectangle (1.25,0.5);
\node[below] at (0,-0.25) {\tiny $1$};
\draw[black, thick] (0,0) -- (-1,0) node[above right] {\tiny $c$};
\draw [black,thick,domain=-180:0] plot ({0.25*cos(\x)}, {0.25*sin(\x)});
\draw[black, thick] (0,0) -- (1,0) node[above left] {\tiny $c$};
\node[above] at (0,0) {\tiny $c+1$};
\end{tikzpicture}
-\frac{[c]}{[c+1]}A^{-2(c+2)}
\begin{tikzpicture}[scale=1.5, baseline=(current bounding box.center)]
\clip(-1.25,-0.5) rectangle (1.25,0.5);
\node[below] at (0,-0.25) {\tiny $1$};
\draw[black, thick] (0,0) -- (-1,0) node[above right] {\tiny $c$};
\draw [black,thick,domain=-180:0] plot ({0.25*cos(\x)}, {0.25*sin(\x)});
\draw[black, thick] (0,0) -- (1,0) node[above left] {\tiny $c$};
\node[above] at (0,0) {\tiny $c-1$};
\end{tikzpicture} \\
&=\left(-\frac{[c+2]}{[c+1]}A^{2c}-\frac{[c]}{[c+1]}A^{-2(c+2)}\right)
\begin{tikzpicture}[scale=1.5, baseline=(current bounding box.center)]
\clip(-1.25,-0.5) rectangle (1.25,0.5);
\draw[black, thick] (-1,0) -- (1,0);
\node[above] at (0,0) {\tiny $c$};
\end{tikzpicture} \\
&=-(A^{2c+2}+A^{-2c-2})
\begin{tikzpicture}[scale=1.5, baseline=(current bounding box.center)]
\clip(-1.25,-0.5) rectangle (1.25,0.5);
\draw[black, thick] (-1,0) -- (1,0);
\node[above] at (0,0) {\tiny $c$};
\end{tikzpicture}
\end{align*}
\caption{A calculation with fusion rules.\label{fig:fusioncalculation}}
\end{figure}

It follows from Proposition~\ref{prop:tracegammadelta} that the quantum trace map induces a well defined injective $\mathbb{C}[A^{\pm1}]$-algebra homomorphism $\Sk_{A,\boldsymbol{\lambda}}(S)\rightarrow\mathscr{Z}_{A,\boldsymbol{C}}(\Gamma)$ mapping $\lambda_j\mapsto A^{-2}C_j^{-2}$. Abusing notation, we denote this homomorphism also by~$\Upsilon$.

\begin{proposition}
\label{prop:tracesigma}
Let $\sigma_{i,i+1}$ be defined as in Figure~\ref{fig:generators}. Then the quantum trace map~$\Upsilon$ takes values 
\[
\Upsilon(\sigma_{i,i+1})=E_i^2H_2+H_0+E_i^{-2}H_{-2}.
\]
If $1<i<n-1$ then the coefficients are given by 
\begin{align*}
H_2 &= -\frac{u(A^2C_i^{-1}Q_{i-1}Q_i)\,u(A^2C_{i+1}^{-1}Q_{i+1}Q_i)\,u(C_iQ_{i-1}Q_i^{-1})\,u(C_{i+1}Q_{i+1}Q_i^{-1})}{u(A^2Q_i^2)\,u(A^4Q_i^2)}, \\
H_{-2} &= -\frac{u(A^2C_iQ_{i-1}Q_i)\,u(C_iQ_{i-1}^{-1}Q_i)\,u(A^2C_{i+1}Q_{i+1}Q_i)\,u(C_{i+1}Q_{i+1}^{-1}Q_i)}{u(Q_i^2)\,u(A^2Q_i^2)}, \\
H_0 &= -H_2-H_{-2}-A^2C_i^2C_{i+1}^2-A^{-2}C_i^{-2}C_{i+1}^{-2}
\end{align*}
where $u(x)=x-x^{-1}$. In the cases $i=1$ and $i=n-1$, the coefficients are given by the same expressions with $Q_0$ replaced by $C_0$ and with $Q_n$ replaced by $C_{n+1}$, respectively.
\end{proposition}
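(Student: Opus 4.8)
The plan is to compute the action of $\sigma_{i,i+1}$ on the normalized basis $\{\widetilde{\varphi}_c\}$ of $\Sk_A(M,d)$ directly, using the fusion rules of Figures~\ref{fig:parallelstrands}--\ref{fig:triangle}, and then to read off $\Upsilon(\sigma_{i,i+1})$ from Propositions~\ref{prop:skeinalgebraaction}, \ref{prop:uniquerationalfunction} and~\ref{prop:quantumtraceexists}. Write $v_j$ for the trivalent vertex of $\Gamma$ carrying the leg $f_j$, so that the edges at $v_j$ are $e_{j-1},f_j,e_j$ for $1<j<n$, with the conventions $e_0=f_0$ and $e_n=f_{n+1}$ at the two ends. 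The first observation is that the computation is \emph{local}: after isotoping $\sigma_{i,i+1}$ onto $\Sigma_\Gamma$ we may take it to wrap around the legs $f_i$ and $f_{i+1}$, running as an arc from the $e_i$-side back to the $e_i$-side inside each of the triangles $\Sigma_{v_i}$ and $\Sigma_{v_{i+1}}$ and crossing the rectangle $\Sigma_{e_i}$ twice, without ever entering $\Sigma_{e_{i-1}}$ or $\Sigma_{e_{i+1}}$. Hence $\sigma_{i,i+1}\cup\delta_{\Gamma,c}$ is supported near $e_i$: the colour of every edge other than $e_i$ is unchanged in the re-expansion, the new colours $c'_i$ lie in $\{c_i-2,\,c_i,\,c_i+2\}$, and the coefficients depend only on the five colours $c_{i-1},c_i,c_{i+1}$ and $d_i,d_{i+1}$. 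It therefore suffices to carry out the calculation once in the generic range $1<i<n-1$, treating $c_{i-1}$ and $c_{i+1}$ as free colours; the cases $i=1$ and $i=n-1$ are obtained from it by specializing the third side of $\Sigma_{v_i}$ (resp.\ $\Sigma_{v_{i+1}}$) to the fixed leg $f_0$ (resp.\ $f_{n+1}$), which under $\Upsilon$ replaces $Q_{i-1}$ by $C_0$ (resp.\ $Q_n$ by $C_{n+1}$), exactly as in the statement.

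Next I would carry out the recoupling. After fixing an admissible colouring $c$, one reduces $\sigma_{i,i+1}\cup\delta_{\Gamma,c}$ to a $\mathbb{C}(A)$-combination of the $\delta_{\Gamma,c'}$ (with $c'$ equal to $c$ away from $e_i$) as follows: apply the parallel strands rule (Figure~\ref{fig:parallelstrands}) to fuse the colour-$1$ strand of $\sigma_{i,i+1}$ into the edge $e_i$, introducing internal colours $c_i\pm1$ on the two passages through $\Sigma_{e_i}$; use the half twist rules (Figure~\ref{fig:halftwist}) to slide the strand past the legs $f_i$ and $f_{i+1}$; and then apply the triangle reducing rules (Figure~\ref{fig:triangle}) at $v_i$ and $v_{i+1}$ together with the biangle reducing rules (Figure~\ref{fig:biangle}) to collapse the bigons that result. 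This is the same pattern as the model computation of Figure~\ref{fig:fusioncalculation}, but with two trivalent vertices and one spine edge rather than a single loop. The outcome is an identity $\sigma_{i,i+1}\cdot\varphi_c=h_2\,\varphi_{c+2e_i}+h_0\,\varphi_c+h_{-2}\,\varphi_{c-2e_i}$ in which $h_{\pm2}$ and $h_0$ are explicit products and quotients of quantum integers $[\cdot]$ in the five colours.

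Now pass to $\widetilde{\varphi}_c=\kappa_c\varphi_c$: the coefficient of $\widetilde{\varphi}_{c+ke_i}$ becomes $h_k\,\kappa_c/\kappa_{c+ke_i}$, and a short computation gives $\kappa_{c+2e_i}/\kappa_c=\frac{[c_i+1]\,[c_i+2]}{[\tfrac{c_{i-1}+c_i-d_i}{2}+1]\,[\tfrac{c_i+c_{i+1}-d_{i+1}}{2}+1]}$ (and the reciprocal for $c-2e_i$). Thus the quantum factorials in $\kappa_c$ exactly cancel the $[\cdot]!$'s produced by the triangle and biangle rules, leaving four ``vertex'' quantum integers in the numerator of each off-diagonal coefficient and the appropriate ``edge'' quantum integers in the denominator. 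Invoking Proposition~\ref{prop:quantumtraceexists} and substituting $A^{c_{i-1}}\mapsto Q_{i-1}$, $A^{c_i}\mapsto Q_i$, $A^{c_{i+1}}\mapsto Q_{i+1}$, $A^{d_i}\mapsto C_i$, $A^{d_{i+1}}\mapsto C_{i+1}$, and rewriting each quantum integer $[m]$ in the form $u(A^{2m})/u(A^2)$ (so that, e.g., $[c_i+1]\mapsto u(A^2Q_i^2)/u(A^2)$), the common factors of $u(A^2)$ cancel and these coefficients become precisely $H_2$ and $H_{-2}$. For the diagonal coefficient I would record the output $h_0$ of the same reduction and then verify the stated closed form $H_0=-H_2-H_{-2}-A^2C_i^2C_{i+1}^2-A^{-2}C_i^{-2}C_{i+1}^{-2}$; since all three $H_k$ are genuine elements of the localization $\mathscr{Z}_{A,\mathbf{C}}(\Gamma)$, this is an identity of rational functions and can be checked after clearing denominators. (Alternatively one can pin $H_0$ down by evaluating $\sigma_{i,i+1}\cdot\widetilde{\varphi}_c$ on colourings with $c_{i-1}$ and $c_{i+1}$ large, where the triangle inequalities at $v_i$ and $v_{i+1}$ hold automatically and part of the reduction becomes trivial.) The boundary cases $i=1$ and $i=n-1$ then follow by the substitution explained in the first paragraph.

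The main obstacle is the recoupling bookkeeping in the middle step. No single fusion move is hard, but one must choose the order of moves so that every intermediate diagram is literally of a form appearing in Figures~\ref{fig:parallelstrands}--\ref{fig:triangle}, propagate the internal colours $c_i\pm1$ correctly through several steps, and --- above all --- recognize the resulting product of roughly eight quantum integers, after the $\kappa_c$ rescaling, as the compact quotient of $u(\cdot)$'s in the statement, and verify the $H_0$ identity. (For $n=2$ this amounts to the quantum trace computation underlying the $S_{0,4}$ case of~\cite{AS24a}.) A secondary point deserving care is making the locality claim of the first paragraph precise, i.e.\ confirming that $\sigma_{i,i+1}$ really admits a representative on $\Sigma_\Gamma$ of the stated shape, so that only $c_i$ shifts and only the five colours enter.
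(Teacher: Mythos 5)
Your strategy is viable but it is a genuinely different route from the paper's, and its hardest step is announced rather than carried out. The paper does not redo the recoupling computation at all: it first quotes equation~(5) of Section~3 of~\cite{DS25} for the \emph{unnormalized} trace $\Upsilon_{\mathrm{DS}}(\sigma_{i,i+1})=E_i^2G_2+G_0+E_i^{-2}G_{-2}$, then observes that passing to $\widetilde{\varphi}_c=\kappa_c\varphi_c$ rescales the coefficient of the shift $k$ by $\kappa_c/\kappa_{c+k}$, which converts $G_{\pm2}$ into the stated $H_{\pm2}$, and finally obtains $H_0$ by translating Proposition~3.4 of~\cite{MP15}, noting that the different normalization used there is immaterial for the constant term. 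Your bookkeeping is consistent with this: the ratio $\kappa_{c+2e_i}/\kappa_c=\frac{[c_i+1][c_i+2]}{[\frac{c_{i-1}+c_i-d_i}{2}+1][\frac{c_i+c_{i+1}-d_{i+1}}{2}+1]}$ you compute is exactly the inverse of the rescaling factor $\frac{u(A^2C_i^{-1}Q_{i-1}Q_i)\,u(A^2C_{i+1}^{-1}Q_{i+1}Q_i)}{u(A^2Q_i^2)\,u(A^4Q_i^2)}$ appearing in the paper under the substitution $Q_j=A^{c_j}$, $C_l=A^{d_l}$; your locality argument (only the color of $e_i$ shifts, and only by $0,\pm2$, with coefficients depending on $c_{i-1},c_i,c_{i+1},d_i,d_{i+1}$), and your treatment of $i=1$, $i=n-1$ by substituting $C_0$, $C_{n+1}$, also match the structure of the result. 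What your approach would buy is a self-contained proof not resting on the formulas of~\cite{DS25} and~\cite{MP15}; what the paper's approach buys is avoiding precisely the two-vertex recoupling computation you describe.

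The gap is that this computation is the entire content of the proposition, and you do not perform it. You never exhibit $h_{\pm2}$ or $h_0$, so the claimed cancellation of quantum factorials against $\kappa_c/\kappa_{c\pm2e_i}$ and the match with $H_{\pm2}$ are asserted, not checked; and your plan for the diagonal term (``record the output $h_0$ of the same reduction and then verify the stated closed form'') presupposes exactly the computation you have deferred as ``bookkeeping''. The parenthetical alternative of evaluating on colorings with $c_{i\pm1}$ large does not shorten anything, since by Proposition~\ref{prop:uniquerationalfunction} the rational functions are already determined by their values on any good set, and the identity $H_0=-H_2-H_{-2}-A^2C_i^2C_{i+1}^2-A^{-2}C_i^{-2}C_{i+1}^{-2}$ is not a formal consequence of the off-diagonal terms but a nontrivial output of the reduction (in the paper it is exactly the part delegated to~\cite{MP15}). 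So, to turn your proposal into a proof, you must either carry out the fusion-rule reduction explicitly — in effect reproving equation~(5) of~\cite{DS25} and Proposition~3.4 of~\cite{MP15} — or cite those results and only supply the normalization argument, as the paper does.
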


\begin{proof}
We will prove the proposition assuming $1<i<n-1$. The cases $i=1$ and $i=n-1$ follow by the same argument, replacing the $Q$ variables by $C$ variables where appropriate. To simplify notation in the following, we will also write $\sigma=\sigma_{i,i+1}$ for the curve defined by Figure~\ref{fig:generators}.

Using the fusion rules, we can find rational functions $F_k^\sigma$ such that there is an expansion 
\begin{equation}
\label{eqn:sigmaexpansion}
\sigma\cdot\varphi_c=\sum_kF_k^\sigma(A^{c(i)})\varphi_{c+k}
\end{equation}
for $c$ in a good set as in Proposition~\ref{prop:skeinalgebraaction}. Note that here we consider the action on the \emph{unnormalized} basis elements $\varphi_c$. By Proposition~\ref{prop:quantumtraceexists}, we get an algebra homomorphism $\Upsilon_{\mathrm{DS}}:\Sk_A(S)\rightarrow\mathscr{Z}_{A,\mathbf{C}}(\Gamma)$ given by 
\[
\Upsilon_{\mathrm{DS}}(\sigma)=\sum_kE^kF_k^\sigma(Q_i,C_l).
\]
Here the subscript $\mathrm{DS}$ emphasizes that this unnormalized case was considered by Detcherry and Santharoubane in~\cite{DS25}. By equation~(5) in Section~3 of~\cite{DS25}, one has 
\[
\Upsilon_{\mathrm{DS}}(\sigma)=E_i^2G_2+G_0+E_i^{-2}G_{-2}
\]
where 
\begin{align*}
G_2 &= -u(C_iQ_{i-1}Q_i^{-1})\,u(C_{i+1}Q_{i+1}Q_i^{-1}), \\
G_{-2} &= -u(A^2C_iQ_{i-1}Q_i)\,u(C_iQ_{i-1}^{-1}Q_i)\,u(C_i^{-1}Q_{i-1}Q_i) \\
& \qquad\cdot u(A^2C_{i+1}Q_{i+1}Q_i)\,u(C_{i+1}Q_{i+1}^{-1}Q_i)\,u(C_{i+1}^{-1}Q_{i+1}Q_i) \\
& \qquad\cdot u(A^{-2}Q_i^2)^{-1}\,u(Q_i^2)^{-2}\,u(A^2Q_i^2)^{-1},
\end{align*}
and $G_0$ is a rational expression in the~$C_j$ and~$Q_j$.

Now to compute the quantum trace $\Upsilon(\sigma)$, we must replace the basis elements~$\varphi_c$ and~$\varphi_{c+k}$ in~\eqref{eqn:sigmaexpansion} by their normalized versions~$\widetilde{\varphi}_c=\kappa_c\,\varphi_c$ and~$\widetilde{\varphi}_{c+k}=\kappa_{c+k}\,\varphi_{c+k}$, respectively. We must therefore rescale the coefficient $F_k^\sigma(A^{c(i)})$ in~\eqref{eqn:sigmaexpansion} by the factor $\kappa_c/\kappa_{c+k}$. It then follows from Proposition~\ref{prop:quantumtraceexists} that the quantum trace in our normalization is 
\[
\Upsilon(\sigma)=E_i^2H_2+H_0+E_i^{-2}H_{-2}
\]
where 
\begin{align*}
H_2 &= \frac{u(A^2C_i^{-1}Q_{i-1}Q_i)\,u(A^2C_{i+1}^{-1}Q_{i+1}Q_i)}{u(A^2Q_i^2)\,u(A^4Q_i^2)}G_2, \\
H_{-2} &= \frac{u(Q_i^2)\,u(A^{-2}Q_i^2)}{u(C_i^{-1}Q_{i-1}Q_i)\,u(C_{i+1}^{-1}Q_{i+1}Q_i)}G_{-2}.
\end{align*}
These agree with the formulas in the proposition. For the constant term $H_0$, we refer to Proposition~3.4 of~\cite{MP15}. There the authors compute the quantum trace using different values for the normalizing factor~$\kappa_c$, but this is immaterial for computing the constant term. They also apply a shift to the coloring. If we translate the result of~\cite{MP15} into our notation, we get exactly the formula for~$H_0$ in the proposition.
\end{proof}

\subsection{The polynomial representation}
\label{sec:ThePolynomialRepresentation}

We now describe a change of variables that we will use to relate the relative skein algebra of~$S_{0,n+2}$ to a quantized Coulomb branch. Let $\mathbf{t}=(t_0^{\frac{1}{2}},\dots,t_{n+1}^{\frac{1}{2}})$ be a tuple of formal variables and write $\mathscr{X}_{q,\mathbf{t}}^0$ for the $\mathbb{C}(q^{\frac{1}{2}},t_0^{\frac{1}{2}},\dots,t_{n+1}^{\frac{1}{2}})$-algebra generated by variables $X_i^{\pm\frac{1}{2}}$, $\varpi_i^{\pm1}$ for $i=1,\dots,n-1$ subject to the relations $\varpi_iX_j^{\frac{1}{2}}=q^{\delta_{ij}/2}X_j^{\frac{1}{2}}\varpi_i$ and $[X_i^{\frac{1}{2}},X_j^{\frac{1}{2}}]=[\varpi_i,\varpi_j]=0$ for all~$i$,~$j$. We let $\mathscr{X}_{q,\mathbf{t}}$ be the localization of~$\mathscr{X}_{q,\mathbf{t}}^0$ at the multiplicative subset generated by the expressions $q^{m/2}X_i-q^{-m/2}X_i^{-1}$ for $m\in\mathbb{Z}$.

\begin{lemma}
\label{lem:antihomoms}
\leavevmode
\begin{enumerate}
\item There is a $\mathbb{C}$-algebra anti-isomorphism $*:\mathscr{Z}_{A,\mathbf{C}}(\Gamma)\rightarrow\mathscr{X}_{q,\mathbf{t}}$ mapping 
\[
A\mapsto q^{\frac{1}{2}}, \quad C_i\mapsto q^{-\frac{1}{2}}t_i^{-\frac{1}{2}}, \quad Q_i\mapsto q^{-\frac{1}{2}}X_i^{\frac{1}{2}}, \quad E_i\mapsto q^{-\frac{1}{2}}X_i^{-1}\varpi_i.
\]
\item There is a $\mathbb{C}[\lambda_0^{\pm1},\dots,\lambda_{n+1}^{\pm1}]$-algebra anti-automorphism $\dagger:\Sk_{A,\boldsymbol{\lambda}}(S)\rightarrow\Sk_{A,\boldsymbol{\lambda}}(S)$ that fixes the isotopy class of any framed link in~$S\times[0,1]$ whose projection to~$S$ has no self-intersections and maps $A\mapsto A^{-1}$.
\end{enumerate}
\end{lemma}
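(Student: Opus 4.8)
We treat the two parts separately. For part~(1), which is a direct computation with quantum tori, I would check that the listed assignments define an anti-homomorphism $*\colon\mathscr{Z}_{A,\mathbf{C}}^0(\Gamma)\to\mathscr{X}_{q,\mathbf{t}}^0$, then pass to the localizations, then exhibit an explicit inverse. Since $A$ and the $C_l$ are central and are sent to central elements, the only defining relations to verify are $Q_iE_j=A^{\delta_{ij}}E_jQ_i$ together with the commutativity of the $Q$'s and of the $E$'s; because $*$ reverses products, the first of these becomes the requirement $*(E_j)\,*(Q_i)=q^{\delta_{ij}/2}\,*(Q_i)\,*(E_j)$, which follows from $\varpi_jX_i^{1/2}=q^{\delta_{ij}/2}X_i^{1/2}\varpi_j$ and the commutativity of the $X_i$, and the remaining relations are immediate. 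The map descends to the localizations because $*\bigl(A^mQ_i^2-A^{-m}Q_i^{-2}\bigr)=q^{(m-2)/2}X_i-q^{-(m-2)/2}X_i^{-1}$ is again one of the distinguished elements inverted in $\mathscr{X}_{q,\mathbf{t}}$. Finally, the assignments $q^{1/2}\mapsto A$, $t_i^{1/2}\mapsto A^{-1}C_i^{-1}$, $X_i^{1/2}\mapsto AQ_i$, $\varpi_i\mapsto A^{3}E_iQ_i^{2}$ define, by the same sort of check, an anti-homomorphism in the reverse direction that is inverse to $*$ on generators; hence $*$ is an anti-isomorphism.

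For part~(2), the plan is to realize $\dagger$ as the classical ``bar'' (mirror) involution, induced by the orientation-reversing self-diffeomorphism $\iota\colon S\times[0,1]\to S\times[0,1]$, $(x,t)\mapsto(x,1-t)$, which flips the stacking direction and therefore can only produce an anti-automorphism. The cleanest way to make this precise uses the fact (Przytycki) that the isotopy classes of multicurves, together with the empty link, form a $\mathbb{C}[A^{\pm1},\lambda_0^{\pm1},\dots,\lambda_{n+1}^{\pm1}]$-module basis of $\Sk_{A,\boldsymbol{\lambda}}(S)$: one declares $\dagger$ to fix each such basis element and to act on scalars by the ring automorphism fixing every $\lambda_j$ and sending $A\mapsto A^{-1}$. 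This is manifestly a well-defined map that is $\mathbb{C}[\lambda_j^{\pm1}]$-linear, satisfies $\dagger^2=\mathrm{id}$, and fixes the isotopy class of every framed link whose projection to $S$ has no self-intersections (any such link reduces to a bar-invariant scalar multiple of a product of multicurves); so only anti-multiplicativity remains. For that, if $D$ is the diagram obtained by superimposing a multicurve $C_2$ above a multicurve $C_1$, then superimposing $C_1$ above $C_2$ yields the diagram obtained from $D$ by switching every crossing; switching a crossing interchanges the two Kauffman smoothings of Figure~\ref{subfig:resolution}, while the loop values $-(A^2+A^{-2})$ and $-(\lambda_i+\lambda_i^{-1})$ of Figures~\ref{subfig:unknot} and~\ref{fig:puncturerelation} are themselves invariant under $A\mapsto A^{-1}$; hence the resolution of the switched diagram at parameter $A$ equals the resolution of $D$ with $A$ replaced by $A^{-1}$, i.e.\ $\dagger(C_2C_1)=C_1C_2=\dagger(C_1)\dagger(C_2)$. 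Since $\dagger$ is additive and intertwines multiplication by $f\in\mathbb{C}[A^{\pm1}]$ with multiplication by its image $\bar f$, this extends to all of $\Sk_{A,\boldsymbol{\lambda}}(S)$, so $\dagger$ is an anti-automorphism.

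The lemma is essentially standard and I do not expect a serious obstacle. The point deserving the most care is the one just used: that the crossing-switch operation is exactly absorbed by the substitution $A\mapsto A^{-1}$, \emph{together with} the invariance of the loop-value coefficients under that substitution --- this is what lets the relative skein relations (in particular the puncture relation of Figure~\ref{fig:puncturerelation}) survive $\dagger$, and it is also why no framing correction is needed once one works in the multicurve basis rather than with arbitrary link diagrams. For part~(1) the analogous caveat is purely formal: one must consistently reverse every product and never confuse a homomorphism with an anti-homomorphism.
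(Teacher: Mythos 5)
Your proposal is correct and follows essentially the same route as the paper: part~(1) is the same direct verification on generators and localized elements that the paper leaves as ``follows from the definitions,'' and part~(2) is the standard bar involution (crossing-switch plus $A\mapsto A^{-1}$), which the paper simply imports from Muller's Section~3.4 and transports to the relative skein algebra. The only difference is that you reprove the cited fact directly via the multicurve basis and the invariance of the loop and puncture coefficients under $A\mapsto A^{-1}$, which is a sound, self-contained substitute for the citation.
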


\begin{proof}
The statement in part~(1) follows easily from the definitions of these algebras. It is well known (see Section~3.4 of~\cite{M16}) that there exists an anti-automorphism $\Sk_A(S)\rightarrow\Sk_A(S)$ that fixes the class of a framed link in $S\times[0,1]$ whose projection to~$S$ has no self-intersections and maps $A\mapsto A^{-1}$. This induces a map $\Sk_{A,\boldsymbol{\lambda}}(S)\rightarrow\Sk_{A,\boldsymbol{\lambda}}(S)$ as in part~(2).
\end{proof}

By Lemma~\ref{lem:antihomoms}, there is an injective $\mathbb{C}$-algebra homomorphism $\Sk_{A,\boldsymbol{\lambda}}(S_{0,n+2})\rightarrow\mathscr{X}_{q,\mathbf{t}}$ given by 
\[
\Phi\coloneqq*\circ\Upsilon\circ\dagger.
\]
We call this homomorphism the \emph{polynomial representation} of~$\Sk_{A,\boldsymbol{\lambda}}(S_{0,n+2})$ since it is analogous to the polynomial representation of the spherical double affine Hecke algebra of type~$(C_1^\vee,C_1)$ (see Section~2.4 of~\cite{AS24a} for details).

\begin{proposition}
\label{prop:skeinoperatorvalues}
The polynomial representation~$\Phi$ maps $A\mapsto q^{-\frac{1}{2}}$ and $\lambda_i\mapsto t_i$ for each~$i$. Let $\gamma_i$ and~$\sigma_{i,i+1}$ be the curves defined in Figure~\ref{fig:generators}. Then for $1<i<n-1$, the polynomial representation maps 
\begin{align*}
\gamma_i &\mapsto -X_i-X_i^{-1}, \\
\sigma_{i,i+1} &\mapsto T_i(X_i)(\tau_i-1)+T_i(X_i^{-1})(\tau_i^{-1}-1)-qt_it_{i+1}-q^{-1}t_i^{-1}t_{i+1}^{-1}
\end{align*}
where 
\[
T_i(x)=-q^{-1}t_i^{-1}t_{i+1}^{-1}\frac{(1-qt_iX_{i-1}x)(1-qt_iX_{i-1}^{-1}x)(1-qt_{i+1}X_{i+1}x)(1-qt_{i+1}X_{i+1}^{-1}x)}{(1-x^2)(1-q^2x^2)}
\]
and $\tau_i=q^{-2}X_i^{-2}\varpi_i^2$. In the cases $i=1$ and $i=n-1$, the image of~$\sigma_{i,i+1}$ is given by the same expression with $X_0$ replaced by $t_0$ and with $X_n$ replaced by $t_{n+1}$, respectively.
\end{proposition}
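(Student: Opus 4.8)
The plan is to compute directly through the factorization $\Phi={*}\circ\Upsilon\circ\dagger$. The key simplification is that $\gamma_i$ and $\sigma_{i,i+1}$ are isotopy classes of simple closed curves on $S$: their blackboard-framed lifts to $S\times[0,1]$ project without self-intersections, so by Lemma~\ref{lem:antihomoms}(2) the anti-automorphism $\dagger$ fixes each of them, and hence $\Phi(\gamma_i)={*}(\Upsilon(\gamma_i))$ and $\Phi(\sigma_{i,i+1})={*}(\Upsilon(\sigma_{i,i+1}))$. This reduces the computation to applying the anti-isomorphism $*$ of Lemma~\ref{lem:antihomoms}(1) to the explicit formulas of Propositions~\ref{prop:tracegammadelta} and~\ref{prop:tracesigma}. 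For the scalars, $\dagger(A)=A^{-1}$, $\dagger(\lambda_i)=\lambda_i$, $\Upsilon$ is $\mathbb{C}[A^{\pm1}]$-linear, and $\Upsilon(\lambda_i)=A^{-2}C_i^{-2}$ by the remark after Proposition~\ref{prop:tracegammadelta}; since ${*}(A)=q^{1/2}$ and ${*}(C_i)=q^{-1/2}t_i^{-1/2}$, this gives $\Phi(A)={*}(A^{-1})=q^{-1/2}$ and $\Phi(\lambda_i)={*}(A^{-2}C_i^{-2})=t_i$. Likewise $\Upsilon(\gamma_i)=-A^2Q_i^2-A^{-2}Q_i^{-2}$ and ${*}(A^2)=q$, ${*}(Q_i^2)=q^{-1}X_i$, so $\Phi(\gamma_i)=-X_i-X_i^{-1}$.

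For $\sigma_{i,i+1}$ with $1<i<n-1$, Proposition~\ref{prop:tracesigma} gives $\Upsilon(\sigma_{i,i+1})=E_i^2H_2+H_0+E_i^{-2}H_{-2}$ with $H_0=-H_2-H_{-2}-A^2C_i^2C_{i+1}^2-A^{-2}C_i^{-2}C_{i+1}^{-2}$, and $H_{\pm2}$ as displayed there. The elements $H_{\pm2},H_0$ lie in the commutative subalgebra generated by the $A^{\pm1},C_j^{\pm1},Q_j^{\pm1}$, so since $*$ is an anti-homomorphism,
\[
\Phi(\sigma_{i,i+1})={*}(H_2)\,({*}E_i)^2+{*}(H_0)+{*}(H_{-2})\,({*}E_i)^{-2}.
\]
From ${*}(E_i)=q^{-1/2}X_i^{-1}\varpi_i$ and $\varpi_iX_i^{-1}=q^{-1}X_i^{-1}\varpi_i$ one gets $({*}E_i)^2=q^{-2}X_i^{-2}\varpi_i^2=\tau_i$. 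Next, ${*}(A^2C_i^2C_{i+1}^2)=q^{-1}t_i^{-1}t_{i+1}^{-1}$ and ${*}(A^{-2}C_i^{-2}C_{i+1}^{-2})=qt_it_{i+1}$, so once the identities ${*}(H_2)=T_i(X_i)$ and ${*}(H_{-2})=T_i(X_i^{-1})$ are established, it follows that ${*}(H_0)=-T_i(X_i)-T_i(X_i^{-1})-q^{-1}t_i^{-1}t_{i+1}^{-1}-qt_it_{i+1}$, and collecting the three summands yields exactly $T_i(X_i)(\tau_i-1)+T_i(X_i^{-1})(\tau_i^{-1}-1)-qt_it_{i+1}-q^{-1}t_i^{-1}t_{i+1}^{-1}$.

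To prove ${*}(H_{\pm2})=T_i(X_i^{\pm1})$ I would apply $*$ factor by factor. Writing $u(x)=x-x^{-1}=-x^{-1}(1-x^2)$, each factor $u(m)$ appearing in the numerators of $H_{\pm2}$, with $m$ a Laurent monomial in $A,C_j,Q_j$, maps under $*$ to $u(m')$ where $m'$ is the corresponding monomial in $q^{1/2},t_j^{1/2},X_j^{1/2}$; the identity $u(m')=-m'^{-1}(1-m'^2)$ exhibits exactly one of the linear factors $(1-qt_\bullet X_\bullet^{\pm1}X_i^{\pm1})$ occurring in $T_i(X_i^{\pm1})$. In the denominators, $u(A^2Q_i^2)\mapsto u(X_i)$, $u(A^4Q_i^2)\mapsto u(qX_i)$, and $u(Q_i^2)\mapsto u(q^{-1}X_i)$, whose products reproduce $(1-X_i^2)(1-q^2X_i^2)$ and $(1-X_i^{-2})(1-q^2X_i^{-2})$ up to explicit monomials. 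One then checks that all the leftover monomial prefactors from numerator and denominator cancel, leaving precisely $T_i(X_i)$ and $T_i(X_i^{-1})$, respectively. For the boundary cases $i=1$ and $i=n-1$, the last sentence of Proposition~\ref{prop:tracesigma} says the $H$'s are obtained from the generic formulas by replacing $Q_0$ with $C_0$ (resp. $Q_n$ with $C_{n+1}$); under $*$ this replaces the monomial $X_0^{1/2}$ by ${*}(C_0)=q^{-1/2}t_0^{-1/2}$, i.e. replaces $X_0$ by $t_0^{-1}$, and since $T_i$ depends on $X_{i-1}$ only through the symmetric product $(1-qt_iX_{i-1}x)(1-qt_iX_{i-1}^{-1}x)$, this coincides with the substitution $X_0\mapsto t_0$ used in the statement.

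The one genuinely laborious step is the factor-by-factor verification that ${*}(H_{\pm2})=T_i(X_i^{\pm1})$: there is no conceptual difficulty, but tracking the half-integer exponents across all eight numerator factors and confirming that the spurious monomials cancel against the three denominator factors requires care. Everything else is formal manipulation with the anti-homomorphism $*$ and the commutation relation $\varpi_iX_i=qX_i\varpi_i$.
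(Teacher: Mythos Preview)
Your proposal is correct and follows exactly the approach the paper intends: the paper's proof reads in full ``This follows from Propositions~\ref{prop:tracegammadelta} and~\ref{prop:tracesigma}, the definitions of~$*$ and~$\dagger$, and a straightforward calculation,'' and you have carried out that calculation in detail. Your observation that $\dagger$ fixes the simple curves $\gamma_i$ and $\sigma_{i,i+1}$, your verification that $({*}E_i)^2=\tau_i$, and your handling of the boundary cases via the $X_{i-1}\leftrightarrow X_{i-1}^{-1}$ symmetry of $T_i$ are all sound.
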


\begin{proof}
This follows from Propositions~\ref{prop:tracegammadelta} and~\ref{prop:tracesigma}, the definitions of~$*$ and~$\dagger$, and a straightforward calculation.
\end{proof}

\section{Quantized Coulomb branches and categorification}
\label{sec:QuantizedCoulombBranchesAndCategorification}

In this section, we review some general facts about the quantized $K$-theoretic Coulomb branches of Braverman, Finkelberg, and Nakajima. We explain how such a quantized Coulomb branch arises as the Grothendieck ring of a monoidal category.

\subsection{The variety of triples}
\label{sec:TheVarietyOfTriples}

In the following, we consider a complex, connected, reductive algebraic group~$G$, which we refer to as the \emph{gauge group} of our theory. We write~$G_{\mathcal{K}}$ and~$G_{\mathcal{O}}$ for the $\mathcal{K}$- and $\mathcal{O}$-valued points of~$G$, respectively, where $\mathcal{K}=\mathbb{C}(\!( z)\!)$ and $\mathcal{O}=\mathbb{C}\llbracket z\rrbracket$. Then the \emph{affine Grassmannian} of~$G$ is defined as 
\[
\Gr_G\coloneqq G_{\mathcal{K}}/G_{\mathcal{O}}.
\]
This object can be understood as an ind-scheme representing an explicit functor of points (see Section~2 of~\cite{BFN18}), and in the following, we will take the reduced ind-scheme structure.

Suppose now that we are given a complex representation~$N$ of~$G$. We will write $N_{\mathcal{K}}=N\otimes_{\mathbb{C}}\mathcal{K}$ and $N_{\mathcal{O}}=N\otimes_{\mathbb{C}}\mathcal{O}$. Following Braverman, Finkelberg, and Nakajima~\cite{BFN18}, we consider the space 
\[
\mathcal{T}_{G,N}\coloneqq G_{\mathcal{K}}\times_{G_{\mathcal{O}}}N_{\mathcal{O}}
\]
where the right hand side denotes the quotient of $G_{\mathcal{K}}\times N_{\mathcal{O}}$ by the equivalence relation~$\sim$ defined by $(g,n)\sim(gb^{-1},bn)$ for every $b\in G_{\mathcal{O}}$. We write $[g,n]$ for the equivalence class of a pair $(g,n)\in G_{\mathcal{K}}\times N_{\mathcal{O}}$ in this space~$\mathcal{T}_{G,N}$. Given such a pair, one in general has $gn\in N_{\mathcal{K}}$, and we define the \emph{variety of triples} to be the subspace of $\mathcal{T}_{G,N}$ given by 
\[
\mathcal{R}_{G,N}\coloneqq\{[g,n]\in\mathcal{T}_{G,N}:gn\in N_{\mathcal{O}}\}.
\]
The spaces $\mathcal{T}_{G,N}$ and $\mathcal{R}_{G,N}$ can be understood as ind-schemes with reduced ind-scheme structure. Using the functor of points description in Section~2 of~\cite{BFN18}, one can view $\mathcal{R}_{G,N}$ as a moduli space parametrizing triples of gauge theoretic data on a formal disk, but we will not need this fact here. When there is no possibility of confusion, we will simply write $\mathcal{T}=\mathcal{T}_{G,N}$ and $\mathcal{R}=\mathcal{R}_{G,N}$.

There is an action of $G_{\mathcal{K}}$ on~$\mathcal{T}_{G,N}$ by left multiplication, and the subgroup $G_{\mathcal{O}}$ preserves the variety of triples~$\mathcal{R}_{G,N}$. In our examples, the action of $G$ on~$N$ will extend to an action of~$\widetilde{G}=G\times F$ where $F$ is an algebraic group called the \emph{flavor symmetry group}. We can then extend the action of~$G_{\mathcal{O}}$ on~$\mathcal{R}_{G,N}$ to an action of~$\widetilde{G}_{\mathcal{O}}$. We will also consider the natural action of $\mathbb{C}^*$ on~$\mathcal{R}_{G,N}$ where a complex number $t^\frac{1}{2}\in\mathbb{C}^*$ maps a point $[g(z),n(z)]\in\mathcal{R}_{G,N}$ to $[g(tz),t^{\frac{1}{2}}n(tz)]$. The $t^{\frac{1}{2}}$ factor in this formula comes from the convention in Section~2(i) of~\cite{BFN18}.

Note that there is a natural action of $G_{\mathcal{O}}$ on the affine Grassmannian~$\Gr_G$. It is well known that $\Gr_G$ is a union 
\[
\Gr_G=\bigsqcup_{\lambda\in\Lambda^+}\Gr_G^{\lambda}
\]
of $G_{\mathcal{O}}$-orbits $\Gr_G^\lambda$ indexed by the set $\Lambda^+$ of dominant coweights of~$G$. The closure of a $G_{\mathcal{O}}$-orbit is a union $\overline{\Gr}_G^\lambda=\bigsqcup_{\mu\leq\lambda}\Gr_G^{\mu}$ where $\leq$ is the standard partial order on~$\Lambda^+$. Let us write $\mathcal{R}_{\leq\lambda}\coloneqq\pi^{-1}(\overline{\Gr}_G^\lambda)$ and $\mathcal{R}_\lambda\coloneqq\pi^{-1}(\Gr_G^\lambda)$ where $\pi:\mathcal{R}\rightarrow\Gr_G$ is the projection $[g,n]\mapsto gG_{\mathcal{O}}$. Then $\mathcal{R}_\lambda$ is open in~$\mathcal{R}_{\leq\lambda}$ and we write $\mathcal{R}_{<\lambda}\coloneqq\mathcal{R}_{\leq\lambda}\setminus\mathcal{R}_\lambda$ for the complement. The spaces $\mathcal{R}_{\leq\lambda}$, $\mathcal{R}_{\lambda}$, and $\mathcal{R}_{<\lambda}$ are $\widetilde{G}_{\mathcal{O}}\rtimes\mathbb{C}^*$-invariant, and we recover the variety of triples as the direct limit $\mathcal{R}=\displaystyle\lim_{\longrightarrow}\mathcal{R}_{\leq\lambda}$.

\subsection{The equivariant derived category}
\label{sec:TheEquivariantDerivedCategory}

Let $(I,\leq)$ be directed set. Suppose $\mathcal{X}=\displaystyle\lim_{\longrightarrow}\mathcal{X}_\lambda$ is an ind-scheme written as a direct limit of schemes $\mathcal{X}_\lambda$ for~$\lambda\in I$ with compatible closed embeddings $i_{\lambda\mu}:\mathcal{X}_\lambda\hookrightarrow\mathcal{X}_\mu$ for $\lambda\leq\mu$. Suppose $\Gamma$ is a pro-linear algebraic group \cite[Definition~1.4.2]{VV10} acting on each~$\mathcal{X}_\lambda$ so that the maps $i_{\lambda\mu}$ are $\Gamma$-equivariant. We will assume that $\mathcal{X}$ is ind-coherent in the sense of~\cite[Definition~1.3.4(b)]{VV10} and an admissible ind-$\Gamma$-scheme in the sense of~\cite[Definition~1.4.5(c)]{VV10}. The ind-coherence condition implies that the embedding $i_{\lambda\mu}$ induces an exact functor 
\begin{equation}
\label{eqn:inducedfunctor}
(i_{\lambda\mu})_*:\Db\Coh^{\Gamma}(\mathcal{X}_{\lambda})\rightarrow \Db\Coh^{\Gamma}(\mathcal{X}_{\mu})
\end{equation}
where we write $\Db\Coh^{\Gamma}(\mathcal{Z})$ for the bounded derived category of $\Gamma$-equivariant coherent sheaves on a $\Gamma$-scheme~$\mathcal{Z}$.

\begin{definition}[\cite{VV10}, Section~1.1.2]
\label{def:2colimit}
Let $(I,\leq)$ be a directed set. Suppose we have a category $\mathcal{C}_\lambda$ for each $\lambda\in I$ and a functor $j_{\lambda\mu}:\mathcal{C}_\lambda\rightarrow\mathcal{C}_\mu$ whenever $\lambda\leq\mu$. Then the \emph{2-colimit} of this family of categories is the category $\mathcal{C}=\twocolim_\lambda\mathcal{C}_\lambda$ where 
\begin{enumerate}
\item The objects of $\mathcal{C}$ are pairs $(\lambda,\mathcal{F}_\lambda)$ where $\lambda\in I$ and $\mathcal{F}_\lambda$ is an object of~$\mathcal{C}_\lambda$.
\item The set of morphisms from $(\lambda,\mathcal{F}_\lambda)$ to~$(\mu,\mathcal{F}_\mu)$ in~$\mathcal{C}$ is the colimit 
\[
\Hom_\mathcal{C}\left((\lambda,\mathcal{F}_\lambda),(\mu,\mathcal{F}_\mu)\right)=\displaystyle\lim_{\substack{\longrightarrow \\ \nu\geq\lambda,\mu}}\Hom_{\mathcal{C}_\nu}\left(j_{\lambda\nu}(\mathcal{F}_\lambda),j_{\mu\nu}(\mathcal{F}_\mu)\right)
\]
in the category of sets.
\end{enumerate}
\end{definition}

In particular, we can apply the construction of Definition~\ref{def:2colimit} to the categories $\Db\Coh^{\Gamma}(\mathcal{X}_\lambda)$ related by the functors~\eqref{eqn:inducedfunctor}. We thus define the bounded derived category of $\Gamma$-equivariant coherent sheaves on~$\mathcal{X}$ as the 2-colimit 
\[
\Db\Coh^{\Gamma}(\mathcal{X})\coloneqq\twocolim_\lambda \Db\Coh^{\Gamma}(\mathcal{X}_{\lambda}).
\]

\begin{proposition}[\cite{VV10}, Proposition~1.1.4]
\label{prop:abeliancategory}
Let $(I,\leq)$ be a directed set. If $\mathcal{C}_{\lambda}$ is a triangulated category for each $\lambda\in I$ and $j_{\lambda\mu}:\mathcal{C}_\lambda\rightarrow\mathcal{C}_\mu$ is an exact functor whenever $\lambda\leq\mu$, then the category $\mathcal{C}=\twocolim_\lambda\mathcal{C}_\lambda$ in Definition~\ref{def:2colimit} is triangulated with Grothendieck group 
\[
K_0(\mathcal{C})=\lim_{\displaystyle\longrightarrow} K_0(\mathcal{C}_\lambda).
\]
\end{proposition}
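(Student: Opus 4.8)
The plan is to follow Varagnolo and Vasserot by transporting the triangulated structure from the individual categories $\mathcal{C}_\lambda$ to the 2-colimit $\mathcal{C}=\twocolim_\lambda\mathcal{C}_\lambda$. First I would define the shift functor on $\mathcal{C}$ by $[1](\lambda,\mathcal{F}_\lambda)=(\lambda,\mathcal{F}_\lambda[1])$; this is well defined, including on the colimit Hom-sets, because every exact functor $j_{\lambda\mu}$ comes equipped with a natural isomorphism $j_{\lambda\mu}\circ[1]\xrightarrow{\sim}[1]\circ j_{\lambda\mu}$, and these are compatible with the transition functors. Then I would call a triangle $(\lambda,X)\to(\mu,Y)\to(\nu,Z)\to(\lambda,X)[1]$ in $\mathcal{C}$ \emph{distinguished} if, after choosing $\kappa\in I$ with $\kappa\geq\lambda,\mu,\nu$ large enough that the three morphisms are images of morphisms of $\mathcal{C}_\kappa$, the resulting triangle is isomorphic in $\mathcal{C}$ to the image of a distinguished triangle of $\mathcal{C}_\kappa$.

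The technical core is a \emph{uniformization} lemma: any finite diagram in $\mathcal{C}$ — finitely many objects, finitely many morphisms, and finitely many prescribed equalities among composites — is the image under the canonical functor $\mathcal{C}_\kappa\to\mathcal{C}$ of a diagram in a single category $\mathcal{C}_\kappa$, with isomorphisms lifting to isomorphisms. This follows from directedness of $I$ together with the definition of the colimit Hom-sets: every object already lives at a finite level, every morphism is represented at a finite level, and an equality of two morphisms in a colimit of sets along a directed system already holds at a finite level; taking an upper bound of the finitely many indices involved yields $\kappa$. Granting this lemma, each axiom (TR1)--(TR4), including the octahedron, is verified by lifting the finite data it involves to a common $\mathcal{C}_\kappa$, invoking the corresponding axiom there, and pushing the conclusion back to $\mathcal{C}$; stability of distinguished triangles under isomorphism and rotation is immediate from the definition, and the morphisms needed to complete the octahedron come for free once a common lift is fixed.

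For the Grothendieck group, the exact functors $p_\lambda\colon\mathcal{C}_\lambda\to\mathcal{C}$, $\mathcal{F}\mapsto(\lambda,\mathcal{F})$, satisfy $p_\mu\circ j_{\lambda\mu}\cong p_\lambda$, so they assemble into a cone over the directed system $\{K_0(\mathcal{C}_\lambda)\}$ and induce $\Theta\colon\varinjlim_\lambda K_0(\mathcal{C}_\lambda)\to K_0(\mathcal{C})$. Surjectivity is clear because every object of $\mathcal{C}$ is of the form $p_\lambda(\mathcal{F})$. For injectivity, suppose a class is represented at level $\lambda$ by $\xi$ with $\Theta$-image $0$; then in the free abelian group on isomorphism classes of $\mathcal{C}$ the element $p_\lambda(\xi)$ is a finite integer combination of Euler relations attached to distinguished triangles of $\mathcal{C}$. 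All these triangles — together with the isomorphisms exhibiting them as images of distinguished triangles, and all the isomorphism-class identifications used — involve only finitely many objects and morphisms, so by the uniformization lemma they lift to a single $\mathcal{C}_\kappa$ with $\kappa\geq\lambda$; there the lifted triangles remain distinguished, whence $j_{\lambda\kappa}(\xi)=0$ in $K_0(\mathcal{C}_\kappa)$ and $\xi$ already vanishes in $\varinjlim_\lambda K_0(\mathcal{C}_\lambda)$.

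I expect the main obstacle to be the coherence bookkeeping: one must keep track of the natural isomorphisms relating $j_{\mu\nu}\circ j_{\lambda\mu}$ to $j_{\lambda\nu}$ and $j_{\lambda\mu}\circ[1]$ to $[1]\circ j_{\lambda\mu}$, and argue that passing to a sufficiently large index makes all the relevant squares strictly commute so that the axioms can be checked level by level. In the case of interest here — the categories $\Db\Coh^{\Gamma}(\mathcal{X}_\lambda)$ with transition functors $(i_{\lambda\mu})_*$ — this is essentially automatic, since these pushforwards compose strictly and commute strictly with the shift, so the coherence data is trivial and only the uniformization lemma does real work.
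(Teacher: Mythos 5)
The paper does not give its own argument for this proposition; it is quoted directly from \cite{VV10}, Proposition~1.1.4, and your proof follows the same standard route that the cited reference relies on: declare a triangle distinguished when it is isomorphic in $\mathcal{C}$ to the image of a distinguished triangle from some finite level, use directedness to lift any finite diagram (objects, morphisms, equalities, isomorphisms) to a single $\mathcal{C}_\kappa$ where the axioms and the $K_0$ Euler relations can be checked, and push back. Your sketch is correct, including the observation that the only delicate point is coherence of the shift and composition isomorphisms, which is vacuous in the intended application since the transition functors $(i_{\lambda\mu})_*$ compose strictly and commute strictly with the shift.
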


In this paper, we complexify so that all equivariant $K$-groups are complex vector spaces. From Proposition~\ref{prop:abeliancategory}, we see that the category $\Db\Coh^{\Gamma}(\mathcal{X})$ is triangulated. The $\Gamma$-equivariant $K$-theory of~$\mathcal{X}$ can be defined as the Grothendieck group 
\[
K^{\Gamma}(\mathcal{X})=K_0\left(\Db\Coh^{\Gamma}(\mathcal{X})\right)
\]
of this category. In particular, taking $I=\Lambda^+$ to be the set of dominant coweights with its standard partial order~$\leq$, we can define the derived category $\Db\Coh^{\widetilde{G}_{\mathcal{O}}\rtimes\mathbb{C}^*}(\mathcal{R}_{G,N})$ and equivariant $K$-theory $K^{\widetilde{G}_{\mathcal{O}}\rtimes\mathbb{C}^*}(\mathcal{R}_{G,N})$ of the variety of triples.

\subsection{Monoidal structure}

Following Braverman, Finkelberg, and Nakajima~\cite{BFN18}, we will describe a monoidal product on $\Db\Coh^{\widetilde{G}_{\mathcal{O}}\rtimes\mathbb{C}^*}(\mathcal{R}_{G,N})$ called the convolution product. Let $\widetilde{G}_{\mathcal{K}}^{\mathcal{O}}=G_{\mathcal{K}}\times F_{\mathcal{O}}$. Then we have the identifications $\mathcal{T}=\widetilde{G}_{\mathcal{K}}^{\mathcal{O}}\times_{\widetilde{G}_{\mathcal{O}}}N_{\mathcal{O}}$ and $\mathcal{R}=\{[g,n]\in\widetilde{G}_{\mathcal{K}}^{\mathcal{O}}\times_{\widetilde{G}_{\mathcal{O}}}N_{\mathcal{O}}:gn\in N_{\mathcal{O}}\}$, and we have the following commutative diagram: 
\[
\xymatrix{
\mathcal{R}\times\mathcal{R} \ar[d] & p^{-1}(\mathcal{R}\times\mathcal{R}) \ar[l] \ar[d] \ar[r] & q(p^{-1}(\mathcal{R}\times\mathcal{R})) \ar[d] \ar[r] & \mathcal{R} \ar[d] \\
\mathcal{T}\times\mathcal{R} & \widetilde{G}_{\mathcal{K}}^{\mathcal{O}}\times\mathcal{R} \ar[l]_-{p} \ar[r]^-{q} & \widetilde{G}_{\mathcal{K}}^{\mathcal{O}}\times_{\widetilde{G}_{\mathcal{O}}}\mathcal{R} \ar[r]^-{m} & \mathcal{T}.
}
\]
In this diagram, the vertical arrows are inclusions of closed subspaces, while the horizontal arrows on the bottom row are given by 
\[
\left([g_1,g_2n],[g_2,n]\right)\stackrel{p}{\longmapsfrom}\left(g_1,[g_2,n]\right)\stackrel{q}{\longmapsto}\left[g_1,[g_2,n]\right]\stackrel{m}{\longmapsto} [g_1g_2,n].
\]
We will consider the following group actions on the spaces appearing in the bottom row of this diagram:
\begin{enumerate}
\item $\widetilde{G}_{\mathcal{O}}\times\widetilde{G}_{\mathcal{O}}$ acts on $\mathcal{T}\times\mathcal{R}$ by 
\[
(g,h)\cdot([g_1,n_1],[g_2,n_2])=([gg_1,n_1],[hg_2,n_2]).
\]
\item $\widetilde{G}_{\mathcal{O}}\times\widetilde{G}_{\mathcal{O}}$ acts on $\widetilde{G}_{\mathcal{K}}^{\mathcal{O}}\times\mathcal{R}$ by 
\[
(g,h)\cdot(g_1,[g_2,n])=(gg_1h^{-1},[hg_2,n]).
\]
\item $\widetilde{G}_{\mathcal{O}}$ acts on $\widetilde{G}_{\mathcal{K}}^{\mathcal{O}}\times_{\widetilde{G}_{\mathcal{O}}}\mathcal{R}$ by 
\[
g\cdot[g_1,[g_2,n]]=[gg_1,[g_2,n]].
\]
\item $\widetilde{G}_{\mathcal{O}}$ acts on $\mathcal{T}$ by 
\[
g\cdot[g_0,n]=[gg_0,n].
\]
\end{enumerate}
If we view $\widetilde{G}_{\mathcal{O}}$ in~(3),~(4) as the first factor of $\widetilde{G}_{\mathcal{O}}\times\widetilde{G}_{\mathcal{O}}$, then $p$, $q$,~$m$ are $\widetilde{G}_{\mathcal{O}}\times\widetilde{G}_{\mathcal{O}}$-equivariant.

The spaces appearing in the above diagram are ind-schemes, and we can define their equivariant derived categories using the methods of Section~\ref{sec:TheEquivariantDerivedCategory}. We can also define induced functors between these categories. By the construction in Remark~3.9(3) of~\cite{BFN18}, the map $p$ in the top row of the diagram induces a functor 
\[
p^*:\Db\Coh^{G_{\mathcal{O}}\rtimes\mathbb{C}^*}(\mathcal{R})\times \Db\Coh^{G_{\mathcal{O}}\rtimes\mathbb{C}^*}(\mathcal{R})\longrightarrow \Db\Coh^{G_{\mathcal{O}}\times G_{\mathcal{O}}\rtimes\mathbb{C}^*}(p^{-1}(\mathcal{R}\times\mathcal{R})).
\]
Since we have $\widetilde{G}_{\mathcal{K}}^{\mathcal{O}}\times_{\widetilde{G}_{\mathcal{O}}}\mathcal{R}=(\widetilde{G}_{\mathcal{K}}^{\mathcal{O}}\times\mathcal{R})/\widetilde{G}_{\mathcal{O}}$, the pullback by the quotient map~$q$ is an equivalence $q^*:\Db\Coh^{\widetilde{G}_{\mathcal{O}}\rtimes\mathbb{C}^*}(\widetilde{G}_{\mathcal{K}}^{\mathcal{O}}\times_{\widetilde{G}_{\mathcal{O}}}\mathcal{R})\stackrel{\sim}{\rightarrow}\Db\Coh^{\widetilde{G}_{\mathcal{O}}\times\widetilde{G}_{\mathcal{O}}\rtimes\mathbb{C}^*}(\widetilde{G}_{\mathcal{K}}^{\mathcal{O}}\times\mathcal{R})$, which provides a functor 
\[
(q^*)^{-1}:\Db\Coh^{\widetilde{G}_{\mathcal{O}}\times\widetilde{G}_{\mathcal{O}}\rtimes\mathbb{C}^*}(p^{-1}(\mathcal{R}\times\mathcal{R}))\longrightarrow\Db\Coh^{\widetilde{G}_{\mathcal{O}}\rtimes\mathbb{C}^*}(q(p^{-1}(\mathcal{R}\times\mathcal{R}))).
\]
Finally, the multiplication map $m$ is ind-proper~\cite{BFN18}, and the pushforward along~$m$ is a functor 
\[
m_*:\Db\Coh^{\widetilde{G}_{\mathcal{O}}\rtimes\mathbb{C}^*}(q(p^{-1}(\mathcal{R}\times\mathcal{R})))\longrightarrow\Db\Coh^{\widetilde{G}_{\mathcal{O}}\rtimes\mathbb{C}^*}(\mathcal{R}).
\]
The \emph{convolution product} is defined as the composition 
\[
*\coloneqq m_*\circ(q^*)^{-1}\circ p^*:\Db\Coh^{\widetilde{G}_{\mathcal{O}}\rtimes\mathbb{C}^*}(\mathcal{R})\times \Db\Coh^{\widetilde{G}_{\mathcal{O}}\rtimes\mathbb{C}^*}(\mathcal{R})\longrightarrow\Db\Coh^{\widetilde{G}_{\mathcal{O}}\rtimes\mathbb{C}^*}(\mathcal{R})
\]
of the above functors.

\subsection{The quantized Coulomb branch}

We can now give the definition of the quantized Coulomb~branch. As discussed in Section~3 of~\cite{BFN18}, the convolution product on the equivariant derived category induces an associative multiplication on the vector space $K^{\widetilde{G}_{\mathcal{O}}\rtimes\mathbb{C}^*}(\mathcal{R}_{G,N})$, also denoted~$*$, making it into a noncommutative algebra over the ring $K^{\mathbb{C}^*}(\mathrm{pt})\cong\mathbb{C}[q^{\pm1}]$ by Section~5.2.1 of~\cite{CG97}. In the present paper, we will replace the loop rotation group $\mathbb{C}^*$ by its standard double cover and thereby extend scalars to the ring~$\mathbb{C}[q^{\pm\frac{1}{2}}]$.

\begin{definition}[\cite{BFN18}]
The \emph{quantized ($K$-theoretic) Coulomb branch} associated to the groups~$G$,~$F$, and representation~$N$ is the algebra $(K^{\widetilde{G}_{\mathcal{O}}\rtimes\mathbb{C}^*}(\mathcal{R}_{G,N}),*)$ with underlying vector space $K^{\widetilde{G}_{\mathcal{O}}\rtimes\mathbb{C}^*}(\mathcal{R}_{G,N})$ and product~$*$.
\end{definition}

As its name suggests, the quantized Coulomb branch is a deformation quantization of a Poisson~algebra with deformation parameter~$q$. This commutative algebra is given by the $\widetilde{G}_{\mathcal{O}}$-equivariant $K$-theory $K^{\widetilde{G}_{\mathcal{O}}}(\mathcal{R}_{G,N})$ with a convolution product~$*$ defined as above. Its spectrum is called the \emph{($K$-theoretic) Coulomb branch} (see Remark~3.14 in~\cite{BFN18}).

\subsection{The associated graded algebra}
\label{sec:TheAssociatedGradedAlgebra}

We now define a filtration analogous to the one in Section~\ref{sec:TheAssociatedGradedAlgebraSkein}. We consider the quantized Coulomb branch $\mathcal{A}=K^{\widetilde{G}_{\mathcal{O}}\rtimes\mathbb{C}^*}(\mathcal{R}_{G,N})$ associated to a gauge group~$G$, flavor symmetry group~$F$, and representation~$N$. For each dominant coweight $\lambda\in\Lambda^+$ of~$G$, we have subspaces $\mathcal{A}_{\leq\lambda}\coloneqq K^{\widetilde{G}_{\mathcal{O}}\rtimes\mathbb{C}^*}(\mathcal{R}_{\leq\lambda})$ and $\mathcal{A}_{<\lambda}\coloneqq K^{\widetilde{G}_{\mathcal{O}}\rtimes\mathbb{C}^*}(\mathcal{R}_{<\lambda})$ of this quantized Coulomb~branch. As in Section~6(i) of~\cite{BFN18}, the convolution product on~$\mathcal{A}$ maps 
\[
*:\mathcal{A}_{\leq\lambda}\times\mathcal{A}_{\leq\mu}\rightarrow\mathcal{A}_{\leq\lambda+\mu}
\]
so that $\mathcal{A}$ becomes a filtered algebra. We write 
\[
\gr^\bullet\mathcal{A}\coloneqq\bigoplus_{\lambda\in\Lambda^+}\mathcal{A}_{\leq\lambda}/\mathcal{A}_{<\lambda}
\]
for the associated graded algebra.

For any $\lambda\in\Lambda^+$, let us write $i:\mathcal{R}_{<\lambda}\hookrightarrow\mathcal{R}_{\leq\lambda}$ and $j:\mathcal{R}_\lambda\hookrightarrow\mathcal{R}_{\leq\lambda}$ for the inclusion maps. The fact that $\mathcal{R}_\lambda$ is a vector bundle over~$\Gr_G^\lambda$ implies that $K_1^{G_{\mathcal{O}}\rtimes\mathbb{C}^*}(\mathcal{R}_\lambda)=0$. From the long exact sequence in equivariant $K$-theory, we therefore obtain the short exact sequence 
\[
\xymatrix{ 
0 \ar[r] & K^{\widetilde{G}_{\mathcal{O}}\rtimes\mathbb{C}^*}(\mathcal{R}_{<\lambda}) \ar[r]^{i_*} & K^{\widetilde{G}_{\mathcal{O}}\rtimes\mathbb{C}^*}(\mathcal{R}_{\leq\lambda}) \ar[r]^{j^*} & K^{\widetilde{G}_{\mathcal{O}}\rtimes\mathbb{C}^*}(\mathcal{R}_{\lambda}) \ar[r] & 0,
}
\]
and hence $\mathcal{A}_{\leq\lambda}/\mathcal{A}_{<\lambda}\cong K^{\widetilde{G}_{\mathcal{O}}\rtimes\mathbb{C}^*}(\mathcal{R}_{\lambda})$. Let $T\subset G$ and $T_F\subset F$ be maximal tori, let $W$ be the Weyl~group of~$G$, let $W_\lambda\subset W$ be the stabilizer of~$\lambda$, and let us write $r_\lambda=[\mathcal{O}_{\mathcal{R}_\lambda}]$ for the class in $K^{\widetilde{G}_{\mathcal{O}}\rtimes\mathbb{C}^*}(\mathcal{R}_{\lambda})$ of the structure sheaf of~$\mathcal{R}_\lambda$.

\begin{lemma}
\label{lem:KtheoryRlambda}
There is an isomorphism $K^{\widetilde{G}_{\mathcal{O}}\rtimes\mathbb{C}^*}(\mathcal{R}_\lambda)\cong\mathbb{C}[T\times T_F\times\mathbb{C}^*]^{W_\lambda}\cdot r_\lambda$.
\end{lemma}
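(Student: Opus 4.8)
The plan is to reduce the computation along the fibration $\pi\colon\mathcal{R}_\lambda\to\Gr_G^\lambda$ to the base by a Thom isomorphism, and then to evaluate the equivariant $K$-theory of the homogeneous space $\Gr_G^\lambda$ as a representation ring. First I would make precise the (already used) assertion that $\pi$ is a $\widetilde{G}_{\mathcal{O}}\rtimes\mathbb{C}^*$-equivariant vector bundle: over the base point $z^\lambda G_{\mathcal{O}}$ its fibre is $\{n\in N_{\mathcal{O}}\colon z^\lambda n\in N_{\mathcal{O}}\}$, which in each $T$-weight space $N_\chi$ of $N$ equals $N_\chi\otimes z^{\max(0,\,-\langle\chi,\lambda\rangle)}\mathcal{O}$, a subspace of $N_\chi\otimes\mathcal{O}$ of finite codimension; so $\mathcal{R}_\lambda$ is a finite-codimensional subbundle of the pro-vector bundle over $\Gr_G^\lambda$ with fibre $N_{\mathcal{O}}$, in particular an inverse limit of genuine finite-rank equivariant vector bundles on $\Gr_G^\lambda$ along smooth affine projections. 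By homotopy invariance of equivariant $K$-theory (the Thom isomorphism), applied in the pro-scheme framework of~\cite{VV10}, the pullback $\pi^*\colon K^{\widetilde{G}_{\mathcal{O}}\rtimes\mathbb{C}^*}(\Gr_G^\lambda)\to K^{\widetilde{G}_{\mathcal{O}}\rtimes\mathbb{C}^*}(\mathcal{R}_\lambda)$ is then a ring isomorphism, and it carries $[\mathcal{O}_{\Gr_G^\lambda}]$ to $\pi^*[\mathcal{O}_{\Gr_G^\lambda}]=[\mathcal{O}_{\mathcal{R}_\lambda}]=r_\lambda$.

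It then remains to identify $K^{\widetilde{G}_{\mathcal{O}}\rtimes\mathbb{C}^*}(\Gr_G^\lambda)$ together with the class of its structure sheaf. Since $\Gr_G^\lambda$ is a single $G_{\mathcal{O}}$-orbit, while the flavour factor $F_{\mathcal{O}}$ acts trivially on $\Gr_G$ and the loop rotation fixes $z^\lambda G_{\mathcal{O}}$ (as $(tz)^\lambda=\lambda(t)z^\lambda$ with $\lambda(t)\in G_{\mathcal{O}}$, so $(tz)^\lambda G_{\mathcal{O}}=z^\lambda G_{\mathcal{O}}$), the space $\Gr_G^\lambda$ is a homogeneous space for $\widetilde{G}_{\mathcal{O}}\rtimes\mathbb{C}^*$ with point stabilizer $H_\lambda=\bigl((G_{\mathcal{O}}\cap z^\lambda G_{\mathcal{O}}z^{-\lambda})\times F_{\mathcal{O}}\bigr)\rtimes\mathbb{C}^*$, whence $K^{\widetilde{G}_{\mathcal{O}}\rtimes\mathbb{C}^*}(\Gr_G^\lambda)\cong R(H_\lambda)$ with $[\mathcal{O}_{\Gr_G^\lambda}]\mapsto 1$. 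Since representations of a pro-algebraic group factor through its reductive quotient, this is the representation ring of the reductive quotient of $H_\lambda$; evaluation at $z=0$ shows that the reductive quotient of $G_{\mathcal{O}}\cap z^\lambda G_{\mathcal{O}}z^{-\lambda}$ is the Levi $L_\lambda=C_G(\lambda)$, and since loop rotation acts on $L_\lambda$ by conjugation by $\lambda(t)\in T\subset L_\lambda$, i.e.\ by inner automorphisms, the reductive quotient of $H_\lambda$ is $L_\lambda\times T_F\times\mathbb{C}^*$. Therefore $K^{\widetilde{G}_{\mathcal{O}}\rtimes\mathbb{C}^*}(\Gr_G^\lambda)\cong R(L_\lambda)\otimes R(T_F)\otimes R(\mathbb{C}^*)$. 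Finally $R(L_\lambda)=\mathbb{C}[T]^{W(L_\lambda)}=\mathbb{C}[T]^{W_\lambda}$, the equality $W(L_\lambda)=\mathrm{Stab}_W(\lambda)=W_\lambda$ being Steinberg's theorem that the stabilizer of a coweight under $W$ is generated by the reflections fixing it; and since $W_\lambda$ acts trivially on $T_F$ and $\mathbb{C}^*$ we get $K^{\widetilde{G}_{\mathcal{O}}\rtimes\mathbb{C}^*}(\Gr_G^\lambda)\cong\mathbb{C}[T\times T_F\times\mathbb{C}^*]^{W_\lambda}$ with $1$ corresponding to the structure sheaf. Composing with the Thom isomorphism of the first paragraph gives the asserted isomorphism $K^{\widetilde{G}_{\mathcal{O}}\rtimes\mathbb{C}^*}(\mathcal{R}_\lambda)\cong\mathbb{C}[T\times T_F\times\mathbb{C}^*]^{W_\lambda}\cdot r_\lambda$.

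I expect the main difficulty to be purely technical: making the notions of ``vector bundle'', Thom isomorphism, and homogeneous space rigorous in the infinite-dimensional equivariant setting. Concretely, one must present $\mathcal{R}_\lambda$ as an inverse limit of finite-rank equivariant vector bundles over $\Gr_G^\lambda$ and check that the equivariant $K$-theory is insensitive to this truncation, and one must reduce the homogeneous-space computation to an honest finite-type quotient by a congruence subgroup of $\widetilde{G}_{\mathcal{O}}$ acting on $\Gr_G^\lambda$ with the same orbit and a finite-dimensional point stabilizer whose reductive quotient is still $L_\lambda\times T_F\times\mathbb{C}^*$. Once these reductions are set up within the frameworks of~\cite{VV10} and~\cite{BFN18}, the remaining ingredients — the description of the representation ring of a connected reductive group as the Weyl-invariant functions on a maximal torus, and the identification $W_\lambda=W(L_\lambda)$ — are standard.
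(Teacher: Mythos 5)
Your proposal is correct and follows essentially the same route as the paper: the Thom isomorphism for the vector bundle $\mathcal{R}_\lambda\rightarrow\Gr_G^\lambda$ carrying $[\mathcal{O}_{\Gr_G^\lambda}]$ to $r_\lambda$, reduction to the point stabilizer of the orbit (the paper's induction formula, where it keeps $F\rtimes\mathbb{C}^*$ as a spectator rather than folding the loop rotation into the stabilizer as you do), and identification of the stabilizer's representation ring through its reductive quotient $L_\lambda$, whose Weyl group is $W_\lambda$. The paper simply cites \cite{CG97} (Theorem~5.4.17, Sections~5.2.16, 5.2.18, Theorem~6.1.4) for the steps you re-derive and flag as technical.
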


\begin{proof}
The space $\mathcal{R}_\lambda$ is a vector bundle over $\Gr_G^\lambda$, so by the Thom~isomorphism~theorem \cite[Theorem~5.4.17]{CG97}, there is an isomorphism $K^{\widetilde{G}_{\mathcal{O}}\rtimes\mathbb{C}^*}(\Gr_G^\lambda)\stackrel{\sim}{\rightarrow}K^{\widetilde{G}_{\mathcal{O}}\rtimes\mathbb{C}^*}(\mathcal{R}_\lambda)$ mapping $[\mathcal{O}_{\Gr_G^\lambda}]$ to~$r_\lambda$. We~have $\Gr_G^\lambda=G_{\mathcal{O}}\cdot z^\lambda$ where $z^\lambda$ is the image of~$z$ under the cocharacter~$\lambda$, and we write $P_\lambda\coloneqq G_{\mathcal{O}}\cap z^\lambda G_{\mathcal{O}}z^{-\lambda}\subset G_{\mathcal{O}}$ for the stabilizer of $z^\lambda$ in~$\Gr_G^\lambda$. By the induction formula~\cite[Section~5.2.16]{CG97}, there is an isomorphism $K^{\widetilde{G}_{\mathcal{O}}\rtimes\mathbb{C}^*}(\Gr_G^\lambda)\stackrel{\sim}{\rightarrow}K^{P_\lambda\times F\rtimes\mathbb{C}^*}(\{z^\lambda\})$ mapping $[\mathcal{O}_{\Gr_G^\lambda}]$ to~1. The group $P_\lambda$ has a decomposition $P_\lambda=L_\lambda\ltimes U_\lambda$ where $U_\lambda$ is pro-unipotent and $L_\lambda\coloneqq G\cap z^\lambda Gz^{-\lambda}$. The Weyl group of~$L_\lambda$ is~$W_\lambda$. By Section~5.2.18 and Theorem~6.1.4 of~\cite{CG97}, we therefore have 
\begin{align*}
K^{P_\lambda\times F\rtimes\mathbb{C}^*}(\mathrm{pt}) &\cong K^{L_\lambda\times F\rtimes\mathbb{C}^*}(\mathrm{pt}) \\
&\cong K^{F\rtimes\mathbb{C}^*}(\mathrm{pt})\otimes K^{L_\lambda}(\mathrm{pt}) \\
&\cong \mathbb{C}[T_F\times\mathbb{C}^*]\otimes\mathbb{C}[T]^{W_\lambda}.
\end{align*}
The lemma follows by composing the above isomorphisms.
\end{proof}

\begin{proposition}
\label{prop:associatedgradedbasis}
$\gr^\bullet\mathcal{A}$ has a basis consisting of elements of the form $fr_\lambda$ for $\lambda\in\Lambda^+$ and $f\in\mathbb{C}[T\times T_F\times \mathbb{C}^*]^{W_\lambda}$.
\end{proposition}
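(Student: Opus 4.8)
The plan is to deduce this directly from the filtration structure set up just above, together with Lemma~\ref{lem:KtheoryRlambda}. Recall we have the identification $\mathcal{A}_{\leq\lambda}/\mathcal{A}_{<\lambda}\cong K^{\widetilde{G}_{\mathcal{O}}\rtimes\mathbb{C}^*}(\mathcal{R}_\lambda)$ coming from the short exact sequence produced by the long exact sequence in equivariant $K$-theory (using $K_1^{G_{\mathcal{O}}\rtimes\mathbb{C}^*}(\mathcal{R}_\lambda)=0$). By definition $\gr^\bullet\mathcal{A}=\bigoplus_{\lambda\in\Lambda^+}\mathcal{A}_{\leq\lambda}/\mathcal{A}_{<\lambda}$, so as a vector space $\gr^\bullet\mathcal{A}\cong\bigoplus_{\lambda\in\Lambda^+}K^{\widetilde{G}_{\mathcal{O}}\rtimes\mathbb{C}^*}(\mathcal{R}_\lambda)$. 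Now I invoke Lemma~\ref{lem:KtheoryRlambda}, which gives $K^{\widetilde{G}_{\mathcal{O}}\rtimes\mathbb{C}^*}(\mathcal{R}_\lambda)\cong\mathbb{C}[T\times T_F\times\mathbb{C}^*]^{W_\lambda}\cdot r_\lambda$. Choosing, for each $\lambda$, a $\mathbb{C}$-basis of the ring $\mathbb{C}[T\times T_F\times\mathbb{C}^*]^{W_\lambda}$ of Weyl-invariants, the corresponding elements $f\,r_\lambda$ form a basis of the $\lambda$-graded piece; taking the union over all $\lambda\in\Lambda^+$ yields a basis of $\gr^\bullet\mathcal{A}$ of the asserted form.

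The only point requiring a word of care is the passage from the abstract isomorphism $\mathcal{A}_{\leq\lambda}/\mathcal{A}_{<\lambda}\cong K^{\widetilde{G}_{\mathcal{O}}\rtimes\mathbb{C}^*}(\mathcal{R}_\lambda)$ to a statement phrased in terms of the element $r_\lambda$ and its multiples inside $\gr^\bullet\mathcal{A}$ itself. Here I would trace through the maps: the isomorphism is induced by the restriction map $j^*:K^{\widetilde{G}_{\mathcal{O}}\rtimes\mathbb{C}^*}(\mathcal{R}_{\leq\lambda})\to K^{\widetilde{G}_{\mathcal{O}}\rtimes\mathbb{C}^*}(\mathcal{R}_\lambda)$, which sends $[\mathcal{O}_{\mathcal{R}_{\leq\lambda}}]$ to $r_\lambda=[\mathcal{O}_{\mathcal{R}_\lambda}]$ and is $K^{\widetilde{G}_{\mathcal{O}}\rtimes\mathbb{C}^*}(\mathrm{pt})$-linear (in particular $\mathbb{C}[T\times T_F\times\mathbb{C}^*]^W$-linear, and a fortiori $W_\lambda$-invariant-linear). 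So a preimage under $j^*$ of $f\,r_\lambda$ can be taken to be $f\cdot[\mathcal{O}_{\mathcal{R}_{\leq\lambda}}]$ for any lift $f$ of the invariant polynomial, and its image in $\mathcal{A}_{\leq\lambda}/\mathcal{A}_{<\lambda}\subset\gr^\bullet\mathcal{A}$ is what we are calling $f\,r_\lambda$. Thus the basis elements are genuinely of the form ``$f$ times $r_\lambda$'' as claimed, with the equivariant-parameter action furnishing the scalars.

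I do not expect any serious obstacle: the statement is essentially a bookkeeping consequence of the filtration being exhaustive and the subquotients being computed in Lemma~\ref{lem:KtheoryRlambda}. The mildest subtlety is making sure the sum over $\Lambda^+$ is a genuine direct sum decomposition of $\gr^\bullet\mathcal{A}$ as a vector space (immediate from the definition of the associated graded of a filtration indexed by the directed poset $(\Lambda^+,\leq)$, since each graded piece is the cokernel of the appropriate inclusion and the $K$-theory of the ind-scheme $\mathcal{R}$ is the colimit of the $K^{\widetilde G_\mathcal{O}\rtimes\mathbb{C}^*}(\mathcal{R}_{\leq\lambda})$ by Proposition~\ref{prop:abeliancategory}). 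Once that is noted, choosing bases piece by piece and assembling them completes the proof.
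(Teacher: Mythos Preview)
Your proposal is correct and follows exactly the same approach as the paper: the paper's proof is the one-liner ``This follows from Lemma~\ref{lem:KtheoryRlambda} and the preceding discussion,'' and you have simply unpacked that discussion (the short exact sequence giving $\mathcal{A}_{\leq\lambda}/\mathcal{A}_{<\lambda}\cong K^{\widetilde{G}_{\mathcal{O}}\rtimes\mathbb{C}^*}(\mathcal{R}_\lambda)$, then Lemma~\ref{lem:KtheoryRlambda}) and added the routine check that $j^*$ identifies $f\cdot[\mathcal{O}_{\mathcal{R}_{\leq\lambda}}]$ with $f\,r_\lambda$.
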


\begin{proof}
This follows from Lemma~\ref{lem:KtheoryRlambda} and the preceding discussion.
\end{proof}

We can also give a formula for multiplication in the algebra $\gr^\bullet\mathcal{A}$ following Section~4(ii) of~\cite{BFN18}. Write $N=\bigoplus_iN_{(\epsilon_i,\zeta_i)}$ where $N_{(\epsilon_i,\zeta_i)}$ is a weight space for $T\times T_F\times\mathbb{C}^*$ of $T$-weight~$\epsilon_i$ and $T_F$-weight~$\zeta_i$.  For any $\lambda$,~$\mu\in\Lambda^+$ and any index~$i$, define $A_{\lambda,\mu}^i\in\mathbb{C}[T\times T_F\times\mathbb{C}^*]^{W_{\lambda+\mu}}$ by the formula 
\[
A_{\lambda,\mu}^i=
\begin{cases}
\displaystyle\prod_{j=1}^{d(\epsilon_i(\lambda),\epsilon_i(\mu))}\left(1-e^{-\epsilon_i-\zeta_i}q^{-2(\epsilon_i(\lambda)-j+\frac{1}{2})}\right) &\text{if $\epsilon_i(\lambda)\geq0\geq\epsilon_i(\mu)$} \\
\displaystyle\prod_{j=1}^{d(\epsilon_i(\lambda),\epsilon_i(\mu))}\left(1-e^{-\epsilon_i-\zeta_i}q^{-2(\epsilon_i(\lambda)+j-\frac{1}{2})}\right) &\text{if $\epsilon_i(\lambda)\leq0\leq\epsilon_i(\mu)$} \\
1 &\text{otherwise}
\end{cases}
\]
where 
\[
d(a,b)=
\begin{cases}
0 &\text{if $a$,~$b$ have the same sign} \\
\min(|a|,|b|) &\text{if $a$,~$b$ have different signs}
\end{cases}
\]
for $a,b\in\mathbb{Z}$ and $e^\chi\in\mathbb{C}[T\times T_F\times\mathbb{C}^*]$ is the regular function corresponding to a character $\chi\in X^*(T\times T_F\times\mathbb{C}^*)$.

\begin{proposition}
\label{prop:AssociatedGradedMultiplication}
For any $\lambda$,~$\mu\in\Lambda^+$ and $f\in\mathbb{C}[T\times T_F\times\mathbb{C}^*]^{W_\lambda}$, $g\in\mathbb{C}[T\times T_F\times\mathbb{C}^*]^{W_\mu}$, we have 
\[
fr_\lambda*gr_\mu=\left(\prod_iA_{\lambda,\mu}^i\right)fg\,r_{\lambda+\mu}
\]
in the associated graded $\gr^\bullet\mathcal{A}$.
\end{proposition}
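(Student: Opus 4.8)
The plan is to reduce the statement to the analogous computation for the abelianized theory, where $G$ is replaced by its maximal torus $T$ and $N$ by $N$ regarded as a $T$-representation. First I would recall from Section~4(ii) and Section~6(i) of~\cite{BFN18} that $\gr^\bullet\mathcal{A}$ admits an injective algebra homomorphism into the corresponding associated graded of the abelianized Coulomb branch $\mathcal{A}^{\mathrm{ab}}=K^{T_{\mathcal{O}}\rtimes\mathbb{C}^*}(\mathcal{R}_{T,N})$; on the graded level this map sends $fr_\lambda$ (with $f$ a $W_\lambda$-invariant function) to a sum over the $W$-orbit of $\lambda$ of the corresponding torus-fixed-point classes, and it is compatible with convolution. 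Thus it suffices to prove the multiplication formula after passing to the abelian case and checking it on a single $T$-fixed point $z^\lambda$, i.e.\ to compute $r_\lambda * r_\mu$ in $\gr^\bullet\mathcal{A}^{\mathrm{ab}}$ and identify the coefficient with $\prod_i A_{\lambda,\mu}^i$.

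Next, for the abelian theory the relevant stratum $\mathcal{R}_{T,\lambda}$ is a vector bundle over the point $z^\lambda\in\Gr_T$, with fiber $N_{\mathcal{O}}\cap z^\lambda N_{\mathcal{O}}$, and the convolution diagram degenerates because $T_{\mathcal{K}}$ is commutative: the map $m$ in the convolution diagram, restricted to the relevant strata, becomes the addition $z^\lambda\cdot z^\mu = z^{\lambda+\mu}$ on $\Gr_T$, and the only subtlety is the discrepancy between the fiber $N_{\mathcal{O}}\cap z^{\lambda+\mu}N_{\mathcal{O}}$ of $\mathcal{R}_{T,\lambda+\mu}$ and the image of the product of the fibers over $z^\lambda$ and $z^\mu$. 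I would compute this discrepancy weight space by weight space: for the $i$th weight space, of $T$-weight $\epsilon_i$ and $T_F$-weight $\zeta_i$, the relevant subspaces of $N_{(\epsilon_i,\zeta_i)}\otimes\mathcal{K}$ are $z^{-\epsilon_i(\lambda)}\mathcal{O}\cap\mathcal{O}$, etc., and the pushforward $m_*$ of the structure sheaf contributes a $K$-theoretic Euler class of the quotient bundle
\[
\left(N_{\mathcal{O}}\cap z^{\lambda}N_{\mathcal{O}}\right)\big/\left(N_{\mathcal{O}}\cap z^{\lambda}N_{\mathcal{O}}\cap z^{\lambda+\mu}N_{\mathcal{O}}\right),
\]
summed over $i$. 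The rank of the $i$th piece of this quotient is exactly $d(\epsilon_i(\lambda),\epsilon_i(\mu))$ as defined in the statement, and tracking the weights of the loop-rotation $\mathbb{C}^*$ (with the $t^{1/2}$ convention recorded in Section~\ref{sec:TheVarietyOfTriples}, which accounts for the half-integer shifts $j-\tfrac12$) and of $T\times T_F$ through the $z$-adic filtration gives precisely the factors $1-e^{-\epsilon_i-\zeta_i}q^{-2(\cdots)}$ in the two nontrivial cases of the definition of $A_{\lambda,\mu}^i$, with the case distinction governed by the signs of $\epsilon_i(\lambda)$ and $\epsilon_i(\mu)$. Since $r_\lambda$ multiplies functions $f\in\mathbb{C}[T\times T_F\times\mathbb{C}^*]^{W_\lambda}$ by the module structure of Lemma~\ref{lem:KtheoryRlambda}, and these functions are simply transported along $m$, the general formula $fr_\lambda * gr_\mu = \left(\prod_i A_{\lambda,\mu}^i\right)fg\,r_{\lambda+\mu}$ follows by $\mathbb{C}[T\times T_F\times\mathbb{C}^*]$-linearity of the convolution, together with the observation that $\prod_i A_{\lambda,\mu}^i$ is $W_{\lambda+\mu}$-invariant so that the right-hand side indeed lies in the correct graded piece.

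The main obstacle I anticipate is the bookkeeping in the weight-by-weight Euler class computation: one must correctly match the $z$-adic filtration degrees (which range over the $d(\epsilon_i(\lambda),\epsilon_i(\mu))$ integers strictly between $0$ and the relevant endpoint) against the exponents $\epsilon_i(\lambda)-j+\tfrac12$ versus $\epsilon_i(\lambda)+j-\tfrac12$, and in particular get the correct endpoint and direction depending on whether $\epsilon_i(\lambda)\geq 0\geq\epsilon_i(\mu)$ or $\epsilon_i(\lambda)\leq 0\leq\epsilon_i(\mu)$; the asymmetry here is exactly what produces the two different products in the definition of $A_{\lambda,\mu}^i$. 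This is a finite-dimensional linear-algebra computation once one fixes bases of the $\mathcal{O}$-lattices by monomials $z^k$, so while delicate it is entirely routine, and everything else (the reduction to the abelian case, the module structure, the invariance of the product) is formal given the cited results of~\cite{BFN18} and~\cite{CG97}.
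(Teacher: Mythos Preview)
Your proposal is correct and follows essentially the same route as the paper. The paper's proof is very terse: it simply cites \cite[Proposition~6.2]{BFN18} for the Borel--Moore homology version (where the coefficient is the ordinary Euler class of a vector bundle defined in the proof of \cite[Theorem~4.1]{BFN18}) and then asserts that the $K$-theoretic analogue goes through with the $K$-theoretic Euler class in its place, which a case-by-case check shows equals $A_{\lambda,\mu}^i$. Your sketch unpacks exactly this---the reduction to the torus case and the weight-by-weight computation of the $K$-theoretic Euler class of the quotient lattice is precisely the content of \cite[Theorem~4.1]{BFN18} transported to $K$-theory, and the bookkeeping you flag (matching $z$-adic degrees to the exponents $\epsilon_i(\lambda)\mp j \pm \tfrac{1}{2}$ according to the sign pattern) is the ``case-by-case analysis'' the paper alludes to but does not write out.
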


\begin{proof}
This is a $K$-theoretic version of a result of Braverman, Finkelberg, and Nakajima. In~\cite{BFN18} they consider the equivariant Borel--Moore homology $\mathcal{H}=H_\bullet^{\widetilde{G}\rtimes\mathbb{C}^*}(\mathcal{R})$ equipped with a natural convolution product~$*$. The algebra obtained in this way has a filtration by the subspaces $H_\bullet^{\widetilde{G}\rtimes\mathbb{C}^*}(\mathcal{R}_{\leq\lambda})$. In \cite[Proposition~6.2]{BFN18}, Braverman, Finkelberg, and Nakajima consider the homology classes $h_\lambda=[\mathcal{R}_{\leq\lambda}]$ as elements of the associated graded $\gr^\bullet\mathcal{H}$ and prove a multiplication formula $fh_\lambda*gh_\mu=(\prod_iC_{\lambda,\mu}^i)fgh_{\lambda+\mu}$ for coefficients $f$ and~$g$ where $C_{\lambda,\mu}^i$ is given explicitly. This factor $C_{\lambda,\mu}^i$ is the Euler class of a vector bundle defined in the proof of \cite[Theorem~4.1]{BFN18}. Since we consider $K$-theory rather than homology, we replace this Euler class by the $K$-theoretic Euler class, which can be shown by a case-by-case analysis to equal $A_{\lambda,\mu}^i$.
\end{proof}

\begin{corollary}
\label{cor:easymultiplication}
For all integers $k$,~$l\geq0$ and all~$\lambda\in\Lambda^+$, we have 
\[
r_{k\lambda}*r_{l\lambda}=r_{(k+l)\lambda}.
\]
\end{corollary}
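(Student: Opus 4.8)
The plan is to deduce this directly from the multiplication formula in Proposition~\ref{prop:AssociatedGradedMultiplication}. First I would note that since $\lambda\in\Lambda^+$ and $k,l\geq 0$, the coweights $k\lambda$, $l\lambda$, and $(k+l)\lambda$ all lie in the dominant cone $\Lambda^+$, so the classes $r_{k\lambda}$, $r_{l\lambda}$, $r_{(k+l)\lambda}$ are defined, and moreover $r_\mu = 1\cdot r_\mu$ with $1\in\mathbb{C}[T\times T_F\times\mathbb{C}^*]^{W_\mu}$ for any such $\mu$. Applying Proposition~\ref{prop:AssociatedGradedMultiplication} to the pair of coweights $(k\lambda,l\lambda)$ with $f=g=1$ then gives
\[
r_{k\lambda}*r_{l\lambda}=\left(\prod_i A_{k\lambda,l\lambda}^i\right)r_{(k+l)\lambda}
\]
in $\gr^\bullet\mathcal{A}$, so it remains only to show that each factor $A_{k\lambda,l\lambda}^i$ equals~$1$.

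For this I would examine the definitions of $A_{\lambda,\mu}^i$ and of the integer $d(a,b)$. Since $\epsilon_i$ is a character of $T$ and coweights pair linearly with characters, we have $\epsilon_i(k\lambda)=k\,\epsilon_i(\lambda)$ and $\epsilon_i(l\lambda)=l\,\epsilon_i(\lambda)$. Because $k$ and $l$ are nonnegative, these two integers are nonnegative multiples of the single integer $\epsilon_i(\lambda)$; in particular they are either both $\geq 0$ or both $\leq 0$, and at least one of them is $0$ whenever $\epsilon_i(\lambda)$, $k$, or $l$ vanishes. In every case $d(\epsilon_i(k\lambda),\epsilon_i(l\lambda))=0$: if the two integers have the same strict sign this is immediate from the definition of $d$, and if at least one of them is $0$ then $\min(|\epsilon_i(k\lambda)|,|\epsilon_i(l\lambda)|)=0$. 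Hence the product defining $A_{k\lambda,l\lambda}^i$ is empty and the $i$th factor is $1$; therefore $\prod_i A_{k\lambda,l\lambda}^i=1$ and the corollary follows.

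I do not expect any genuine obstacle here. The only point requiring a moment's care is checking that the trivial (``otherwise'', i.e.\ $d=0$) branch always applies, which reduces to the elementary observation that $k\,\epsilon_i(\lambda)$ and $l\,\epsilon_i(\lambda)$ cannot have opposite strict signs when $k,l\geq 0$. One could alternatively argue geometrically, observing that the convolution $\mathcal{R}_{k\lambda}*\mathcal{R}_{l\lambda}$ lands in $\mathcal{R}_{(k+l)\lambda}$ and that for coweights proportional to a fixed dominant~$\lambda$ there is no excess intersection, but the computational route via Proposition~\ref{prop:AssociatedGradedMultiplication} is cleaner and self-contained given what we have already established.
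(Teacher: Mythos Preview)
Your proof is correct and follows the same approach as the paper: apply Proposition~\ref{prop:AssociatedGradedMultiplication} with $f=g=1$ and observe that $\epsilon_i(k\lambda)$ and $\epsilon_i(l\lambda)$, being nonnegative multiples of $\epsilon_i(\lambda)$, never have opposite signs, so each $A_{k\lambda,l\lambda}^i=1$. The paper states this in one line; your version is simply more explicit about the sign/zero case analysis.
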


\begin{proof}
If $\epsilon\in X^*(T\times T_F\times\mathbb{C}^*)$ is any character, then $\epsilon(k\lambda)$ and $\epsilon(l\lambda)$ have the same sign. In this case, the formula above implies $A_{k\lambda,l\lambda}=1$.
\end{proof}

\section{Genus zero Coulomb branches and their generators}
\label{sec:GenusZeroCoulombBranchesAndTheirGenerators}

In this section, we describe the quantized $K$-theoretic Coulomb branch that we associate to a compact oriented surface of genus zero with boundary, and we prove a result about generation of this algebra.

\subsection{The gauge and flavor groups}
\label{sec:TheGaugeAndFlavorGroups}

Let us consider the gauge group $G=G_1\times\dots\times G_{n-1}$ where $G_i=\mathrm{SL}_2$ for $i=1,\dots,n-1$ and the torus $T=T_1\times\dots\times T_{n-1}\subset G$ where $T_i\subset G_i$ is the diagonal subgroup. We write 
\[
\Lambda\coloneqq X_*(T)=\bigoplus_{i=1}^{n-1}\mathbb{Z}\alpha_i
\]
for the cocharacter lattice of~$T\subset G$ where $X_*(T_i)=\mathbb{Z}\alpha_i$. Thus any cocharacter $\lambda\in\Lambda$ can be written $\lambda=\sum_i\lambda_i\alpha_i$ for some~$\lambda_i\in\mathbb{Z}$, and we define the \emph{support} of $\lambda$ to be the set $\supp(\lambda)=\{i:\lambda_i\neq0\}$. If we write $X^*(T_i)=\mathbb{Z}\omega_i$ for the character lattice of~$T_i$, then we have 
\[
\mathbb{C}[T]\cong\mathbb{C}[X_1^{\pm1},\dots,X_{n-1}^{\pm1}]
\]
where $X_i=e^{\omega_i}$ is the function corresponding to $\omega_i$. Similarly, we take our flavor symmetry group to be a product $F=F_0\times\dots\times F_{n+1}$ where $F_i=\mathbb{C}^*$ for $i=0,\dots,{n+1}$. If we write $X^*(F_i)=\mathbb{Z}\eta_i$, then we have 
\[
\mathbb{C}[F]\cong\mathbb{C}[t_0^{\pm1},\dots,t_{n+1}^{\pm1}]
\]
where $t_i=e^{\eta_i}$ is the function corresponding to~$\eta_i$.

Let us write $N_j=\mathbb{C}^2$ for $j=0,\dots,n$. Then there is an action of $G_i$ on $N_j\otimes N_{j+1}$ where an element of $G_i$ acts by matrix multiplication on a tensor factor whose index is equal to~$i$ and trivially on a tensor factor whose index is not equal to~$i$. By taking the direct sum, we get an action of $G_i$ on the vector space 
\[
N=\bigoplus_{j=0}^{n-1}N_j\otimes N_{j+1}.
\]
Thus we get an action of~$G$ on this vector space. Similarly, for $i=1,\dots,n$, we let $F_i$ act on $N_j\otimes N_{j+1}$ by scalar multiplication if $j=i-1$ and trivially if $j\neq i-1$. We let $F_0\cong\left\{\left(\begin{smallmatrix}t & 0 \\ 0 & t^{-1}\end{smallmatrix}\right):t\in\mathbb{C}^*\right\}$ act on $N_j\otimes N_{j+1}$ by matrix multiplication on the first tensor factor if $j=0$ and trivially if $j\neq0$, and we let $F_{n+1}\cong\left\{\left(\begin{smallmatrix}t & 0 \\ 0 & t^{-1}\end{smallmatrix}\right):t\in\mathbb{C}^*\right\}$ act on $N_j\otimes N_{j+1}$ by matrix multiplication on the second tensor factor if $j=n-1$ and trivially if $j\neq n-1$. By taking the direct sum, we get an action of $F_i$ on~$N$ and hence an action of~$F$ on~$N$. It follows from these definitions that the set of $T$-weights is 
\[
\wt(N)\coloneqq\{\pm\omega_i\pm\omega_{i+1}:1\leq i\leq n-2\}\cup\{\pm\omega_1,\pm\omega_{n-1}\}
\]
where all signs are independent. We will use this information together with Proposition~\ref{prop:AssociatedGradedMultiplication} to compute products in the associated graded algebra $\gr^\bullet\mathcal{A}$ from Section~\ref{sec:TheAssociatedGradedAlgebra}.

\subsection{Generating the associated graded}
\label{sec:GeneratingTheAssociatedGraded}

Let us continue with the setup of the previous subsection and consider the algebra 
\[
\mathscr{G}\coloneqq\gr^\bullet\mathcal{A}\otimes_{\mathbb{C}[q^{\pm\frac{1}{2}},t_0^{\pm1},\dots,t_{n+1}^{\pm1}]}\mathbb{C}(q^{\frac{1}{2}},t_0,\dots,t_{n+1})
\]
obtained from $\gr^\bullet\mathcal{A}$ by extending scalars to rational functions in $q^{\frac{1}{2}},t_0,\dots,t_{n+1}$. It is the associated graded of $\mathscr{A}\coloneqq\mathcal{A}\otimes_{\mathbb{C}[q^{\pm\frac{1}{2}},t_0^{\pm1},\dots,t_{n+1}^{\pm1}]}\mathbb{C}(q^{\frac{1}{2}},t_0,\dots,t_{n+1})$. We also consider the $\mathbb{C}(q^{\frac{1}{2}},t_0,\dots,t_{n+1})$-subalgebra $\mathscr{B}\subset\mathscr{G}$ generated by the elements $X_i+X_i^{-1}$ and $r_{\alpha_i}=[\mathcal{O}_{\mathcal{R}_{\alpha_i}}]$ for $i=1,\dots,n-1$.

\begin{lemma}
\label{lem:rlambdacommutation}
If $\lambda=\sum_i\lambda_i\alpha_i\in\Lambda^+$ and $j\in\supp(\lambda)$, then 
\[
r_\lambda(X_j+X_j^{-1})=(q^{2\lambda_j}X_j+q^{-2\lambda_j}X_j^{-1})r_\lambda.
\]
\end{lemma}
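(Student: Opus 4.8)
The plan is to reduce the identity to a computation of the $\mathbb{C}^*$-weights appearing in the relevant $K$-theory classes. Recall from Lemma~\ref{lem:KtheoryRlambda} and Proposition~\ref{prop:associatedgradedbasis} that $\gr^\bullet\mathcal{A}$ has a basis of elements $f r_\lambda$ with $f \in \mathbb{C}[T\times T_F\times\mathbb{C}^*]^{W_\lambda}$, and that multiplication is governed by Proposition~\ref{prop:AssociatedGradedMultiplication}. The element $X_j + X_j^{-1}$ lies in $\mathbb{C}[T]$, i.e.\ it is $1\cdot r_0$ in the notation of Proposition~\ref{prop:AssociatedGradedMultiplication} (the coweight $0$ is $W$-invariant, so there is no $W_0$-invariance issue). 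Thus on the right-hand side we are computing $(X_j + X_j^{-1}) r_0 * 1\cdot r_\lambda$ and on the left $1\cdot r_\lambda * (X_j + X_j^{-1})r_0$; by Proposition~\ref{prop:AssociatedGradedMultiplication}, in either order the geometric factor $\prod_i A^i_{0,\lambda} = \prod_i A^i_{\lambda,0}$ equals $1$, because $d(\epsilon_i(0),\epsilon_i(\lambda)) = d(0,\epsilon_i(\lambda)) = 0$ for every character~$\epsilon_i$. So both products are literally $(X_j + X_j^{-1}) r_\lambda$ as \emph{elements of the underlying module}; the content of the lemma is that the $W_\lambda$-invariant function $X_j + X_j^{-1}$, when regarded as a coefficient of $r_\lambda$, is \emph{rewritten} upon commuting past $r_\lambda$.

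The point is that the identification $\mathcal{A}_{\le\lambda}/\mathcal{A}_{<\lambda} \cong K^{\widetilde G_{\mathcal O}\rtimes\mathbb{C}^*}(\mathcal{R}_\lambda)$ from Section~\ref{sec:TheAssociatedGradedAlgebra}, composed with the Thom and induction isomorphisms of Lemma~\ref{lem:KtheoryRlambda}, sends $f r_\lambda$ to the class $f|_{z^\lambda}$, and the restriction of the function $X_j = e^{\omega_j}$ to the point $z^\lambda \in \Gr_G^\lambda$ picks up a power of the loop-rotation parameter: under the cocharacter $\lambda = \sum_i \lambda_i\alpha_i$, the element $z^\lambda$ shifts $X_j$ by $z^{\lambda_j}$, and the $\mathbb{C}^*$-action on $\mathcal{R}$ (which acts on $z$ with weight~$1$, and on which we have passed to the double cover with parameter $q^{1/2}$, so $z \mapsto q^2 z$ in the relevant normalization) converts this into the factor $q^{2\lambda_j}$. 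More precisely, the two sides of the claimed identity differ by the discrepancy between (i) first forming the coefficient function $X_j + X_j^{-1}$ in $\mathbb{C}[T]$ and then multiplying by $r_\lambda$, versus (ii) forming $r_\lambda$ and then multiplying by the coefficient — and the induction isomorphism $K^{\widetilde G_{\mathcal O}\rtimes\mathbb{C}^*}(\Gr_G^\lambda) \xrightarrow{\sim} K^{P_\lambda\times F\rtimes\mathbb{C}^*}(\{z^\lambda\})$ intertwines the $\mathbb{C}[T]$-module structure on the source with the \emph{twisted} $\mathbb{C}[T]$-module structure on the target, where a character $\chi$ of $T$ acts as $\chi$ times $q^{\langle\chi,\lambda\rangle\cdot(\text{loop weight})}$. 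Applying this to $\chi = \pm\omega_j$ gives exactly $q^{\pm2\lambda_j}$, whence $r_\lambda (X_j + X_j^{-1}) = (q^{2\lambda_j}X_j + q^{-2\lambda_j}X_j^{-1}) r_\lambda$.

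The main step requiring care is the correct bookkeeping of the loop-rotation normalization: one must verify that the $\mathbb{C}^*$ that acts by $[g(z),n(z)]\mapsto[g(tz),t^{1/2}n(tz)]$, after passing to the double cover with coordinate $q^{1/2}$, produces the exponent $2\lambda_j$ and not $\lambda_j$ or $4\lambda_j$. I would pin this down by testing against the simplest nontrivial case — $G_i = \mathrm{SL}_2$, $\lambda = \alpha_i$ a fundamental coweight — where $z^{\alpha_i}$ acts on the standard torus character by $z$, and comparing with the explicit monopole-operator commutation relations recorded in Section~3 of~\cite{BFN18} (equivalently, matching against the $q^{\delta_{ij}/2}$ relation in $\mathscr{X}_{q,\mathbf t}$ from Section~\ref{sec:ThePolynomialRepresentation}, since $\varpi_i X_j^{1/2} = q^{\delta_{ij}/2} X_j^{1/2}\varpi_i$ forces the degree-$\alpha_i$ generator to conjugate $X_j$ to $q^{\delta_{ij}} X_j$, hence $X_j \to q^{2\lambda_j}X_j$ after squaring the half-integer normalization). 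Once the normalization is fixed, the general case follows since $\langle \omega_j, \lambda\rangle = \lambda_j$ and the $\mathbb{C}[T]$-module twist is multiplicative in the character. Note finally that the hypothesis $j\in\supp(\lambda)$ is not strictly needed for the displayed formula — when $\lambda_j = 0$ it reduces to the (true) statement that $X_j + X_j^{-1}$ commutes with $r_\lambda$ — but it is the case of interest in the sequel.
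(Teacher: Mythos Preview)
Your approach is genuinely different from the paper's, and in fact the paper explicitly remarks (just after the statement) that the lemma ``can be proved geometrically using the definition of the convolution product, but we will give a short proof in Section~\ref{sec:manylemmas}.'' The paper's actual proof is purely computational: it uses the explicit monopole operator formulas of Section~\ref{sec:MonopoleOperators} together with Lemma~\ref{lem:dressingassociatedgraded} to verify directly that $r_{\alpha_j}(q^{2k}X_j+q^{-2k}X_j^{-1})=(q^{2(k+1)}X_j+q^{-2(k+1)}X_j^{-1})r_{\alpha_j}$, iterates to get the identity for $r_{\alpha_j}^{\lambda_j}$, checks that $r_{\alpha_i}$ commutes with $X_j+X_j^{-1}$ for $i\neq j$, and then factors $r_\lambda = f\prod_i r_{\alpha_i}^{\lambda_i}$ using Proposition~\ref{prop:AssociatedGradedMultiplication} and Corollary~\ref{cor:easymultiplication}. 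No induction isomorphism or loop-rotation bookkeeping enters.

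Your geometric sketch points in the right direction but is not yet a proof. The crucial observation you make in your first paragraph---that a literal application of Proposition~\ref{prop:AssociatedGradedMultiplication} with $\mu=0$ yields $r_\lambda*(X_j+X_j^{-1})=(X_j+X_j^{-1})r_\lambda$ with \emph{no} $q$-shift---is exactly the point: it shows that the coefficient ``$fg$'' in that proposition hides a loop-rotation twist on the second factor, and that twist is the entire content of the lemma. Your second paragraph correctly attributes the shift to the $\mathbb{C}^*$-equivariance, but the mechanism is not the induction isomorphism of Lemma~\ref{lem:KtheoryRlambda} per se (both left and right module structures pass through it); rather it is the convolution diagram itself. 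Concretely, in $r_\lambda * e^\chi$ the second factor is translated by $z^\lambda$ before $m_*$, and since the loop-rotation element $t^{1/2}$ sends $z^\lambda\mapsto t^\lambda z^\lambda$, the descended fiber over $z^\lambda$ acquires an extra $\mathbb{C}^*$-weight of $\langle\chi,\lambda\rangle$ in $t$, i.e.\ $q^{2\langle\chi,\lambda\rangle}$ under the convention $z\mapsto (t^{1/2})^2 z$. Carrying this computation out (rather than proposing to test it on a case) is what would complete your argument; as written, the normalization check and the actual trace through $p^*$, $(q^*)^{-1}$, $m_*$ are deferred. The paper's computational route avoids this analysis entirely, at the cost of depending on the monopole formulas and on Lemma~\ref{lem:dressingassociatedgraded}, which are developed only later.
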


Lemma~\ref{lem:rlambdacommutation} can be proved geometrically using the definition of the convolution product, but we will give a short proof in Section~\ref{sec:manylemmas} below.

\begin{lemma}
\label{lem:conditional}
If $r_\lambda\in\mathscr{B}$ then $fr_\lambda\in\mathscr{B}$ for all $f\in\mathbb{C}[T\times T_F\times\mathbb{C}^*]^{W_\lambda}$.
\end{lemma}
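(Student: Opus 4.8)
The plan is to work inside the localized graded algebra $\mathscr{G}$, whose ground ring is $\mathbb{K}\coloneqq\mathbb{C}(q^{\frac12},t_0,\dots,t_{n+1})$. By Lemma~\ref{lem:KtheoryRlambda}, the degree-$\lambda$ piece of $\mathscr{G}$ is $\mathbb{K}[X_1^{\pm1},\dots,X_{n-1}^{\pm1}]^{W_\lambda}\,r_\lambda$ (the factors $T_F$ and $\mathbb{C}^*$ of that lemma being absorbed into the scalars $\mathbb{K}$), so it suffices to prove that
\[
\mathscr{B}_\lambda\coloneqq\bigl\{f\in\mathbb{K}[X_1^{\pm1},\dots,X_{n-1}^{\pm1}]^{W_\lambda}:fr_\lambda\in\mathscr{B}\bigr\}
\]
is all of $\mathbb{K}[X_1^{\pm1},\dots,X_{n-1}^{\pm1}]^{W_\lambda}$. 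Writing $Z_i\coloneqq X_i+X_i^{-1}$ and $R\coloneqq\mathbb{K}[X_1^{\pm1},\dots,X_{n-1}^{\pm1}]^{W}=\mathbb{K}[Z_1,\dots,Z_{n-1}]$, I would first record that $\mathscr{B}_\lambda$ is an $R$-submodule: the $Z_i$ lie in $\mathscr{B}$, and by Proposition~\ref{prop:AssociatedGradedMultiplication} multiplication of a degree-$\lambda$ element on the left by a degree-zero element $h$ is untwisted, i.e.\ $h\cdot(fr_\lambda)=(hf)r_\lambda$, because $\prod_iA_{0,\lambda}^i=1$ (each factor $A_{0,\lambda}^i$ is an empty product, $\epsilon_i$ vanishing on the zero coweight). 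Also $1\in\mathscr{B}_\lambda$ by the hypothesis $r_\lambda\in\mathscr{B}$.

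The key step is then to exhibit an $R$-basis of $\mathbb{K}[X_1^{\pm1},\dots,X_{n-1}^{\pm1}]^{W_\lambda}$ inside $\mathscr{B}_\lambda$. Set $S\coloneqq\supp(\lambda)$, and for $i\in S$ put $Y_i\coloneqq q^{2\lambda_i}X_i+q^{-2\lambda_i}X_i^{-1}$, so that Lemma~\ref{lem:rlambdacommutation} reads $r_\lambda Z_i=Y_ir_\lambda$. Applying this once for each index in a subset $A\subseteq S$ gives $r_\lambda\cdot\prod_{i\in A}Z_i=\bigl(\prod_{i\in A}Y_i\bigr)r_\lambda$; the left-hand side is a product of the elements $r_\lambda$ and $\prod_{i\in A}Z_i$ of the subalgebra $\mathscr{B}$, so $w_A\coloneqq\prod_{i\in A}Y_i$ (with $w_\emptyset=1$) lies in $\mathscr{B}_\lambda$ for every $A\subseteq S$. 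It then remains a routine commutative-algebra check that $\{w_A:A\subseteq S\}$ is an $R$-basis of $\mathbb{K}[X_1^{\pm1},\dots,X_{n-1}^{\pm1}]^{W_\lambda}$: since $W_\lambda$ is generated by the inversions $X_i\mapsto X_i^{-1}$ for $i\notin S$, one has $\mathbb{K}[X_1^{\pm1},\dots,X_{n-1}^{\pm1}]^{W_\lambda}=\bigotimes_{i\in S}\mathbb{K}[X_i^{\pm1}]\otimes_{\mathbb{K}}\bigotimes_{i\notin S}\mathbb{K}[Z_i]$, each $\mathbb{K}[X_i^{\pm1}]$ is free of rank two over $\mathbb{K}[Z_i]$ on $\{1,X_i\}$, and since $Y_i=(q^{2\lambda_i}-q^{-2\lambda_i})X_i+q^{-2\lambda_i}Z_i$ with $q^{2\lambda_i}-q^{-2\lambda_i}$ a nonzero element of the field $\mathbb{K}$ (using $\lambda_i\neq0$ and the transcendence of $q$), the pair $\{1,Y_i\}$ is also a $\mathbb{K}[Z_i]$-basis; tensoring yields the claim. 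Being an $R$-module containing an $R$-basis of the target, $\mathscr{B}_\lambda$ equals it, which is the assertion of the lemma.

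I expect the only point needing care to be the interplay of the two multiplication rules in play: the twisted commutation $r_\lambda Z_i=Y_ir_\lambda$ from Lemma~\ref{lem:rlambdacommutation} against the untwisted left action $h\cdot(fr_\lambda)=(hf)r_\lambda$ of degree-zero scalars from Proposition~\ref{prop:AssociatedGradedMultiplication}. The naive first attempt --- to produce $X_ir_\lambda$ and $X_i^{-1}r_\lambda$ directly --- stalls because $\mathscr{B}$ only supplies the symmetric combination $Z_ir_\lambda$; the resolution is to pull $r_\lambda$ \emph{past} products of the $Z_i$, which produces the \emph{twisted} monomials $w_A$, and to observe that these already form an $R$-basis precisely because the $q$-twist makes the relevant $2\times2$ change-of-basis matrices over $\mathbb{K}[Z_i]$ unimodular. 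One should also note the (harmless) point that $w_A$ is indeed a legitimate candidate for $\mathscr{B}_\lambda$, i.e.\ it is $W_\lambda$-invariant: this holds because each $Y_i$ involves only $X_i$ with $i\in S$, on which $W_\lambda$ acts trivially. Everything else --- the identification of the graded piece via Lemma~\ref{lem:KtheoryRlambda}, the vanishing of $A_{0,\lambda}^i$, and the rank-two freeness statements --- is standard.
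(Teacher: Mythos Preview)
Your proof is correct and uses the same essential mechanism as the paper's: Lemma~\ref{lem:rlambdacommutation} provides the twisted right action $r_\lambda Z_j=Y_jr_\lambda$, left multiplication by $Z_j$ is untwisted, and the invertibility of $q^{2\lambda_j}-q^{-2\lambda_j}$ in the field $\mathbb{K}$ lets one pass from the symmetric $Z_j$ to the individual $X_j$. The paper packages this as an inductive closure statement --- from $fr_\lambda\in\mathscr{B}$ it extracts $X_jfr_\lambda$ and $X_j^{-1}fr_\lambda$ directly via the commutator $fr_\lambda Z_j-q^{-2\lambda_j}Z_jfr_\lambda=(q^{2\lambda_j}-q^{-2\lambda_j})X_jfr_\lambda$ --- whereas you package it as a single $R$-basis argument; but the underlying algebra is identical.
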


\begin{proof}
Any $\lambda\in\Lambda^+$ can be written $\lambda=\sum_i\lambda_i\alpha_i$ for $\lambda_i\geq0$. Then we have $\mathbb{C}[T\times T_F\times\mathbb{C}^*]^{W_\lambda}=\mathbb{C}[q^{\pm\frac{1}{2}},t_0^{\pm1}\dots,t_{n+1}^{\pm1},X_i+X_i^{-1},X_j^{\pm1}:i\not\in\supp(\lambda),j\in\supp(\lambda)]$. Since $X_i+X_i^{-1}\in\mathscr{B}$ for all~$i$ by definition, it is enough to show that if $fr_\lambda\in\mathscr{B}$ for some $f\in\mathbb{C}[T\times T_F\times\mathbb{C}^*]^{W_\lambda}$ then $X_j^{\pm}fr_\lambda\in\mathscr{B}$ for any $j\in\supp(\lambda)$. By Lemma~\ref{lem:rlambdacommutation}, we have 
\[
fr_\lambda(X_j+X_j^{-1})-q^{-2\lambda_j}(X_j+X_j^{-1})fr_\lambda=(q^{2\lambda_j}-q^{-2\lambda_j})X_jfr_\lambda.
\]
Since $\lambda_j\neq0$ we have 
\[
X_jfr_\lambda=\frac{1}{q^{2\lambda_j}-q^{-2\lambda_j}}\left(fr_\lambda(X_j+X_j^{-1})-q^{-2\lambda_j}(X_j+X_j^{-1})fr_\lambda\right)\in\mathscr{B}
\]
and 
\[
X_j^{-1}fr_\lambda=(X_j+X_j^{-1})fr_\lambda-X_jfr_\lambda\in\mathscr{B}
\]
as desired.
\end{proof}

By Proposition~\ref{prop:AssociatedGradedMultiplication}, we have $r_\lambda*r_\mu=r_{\lambda+\mu}$ if $\epsilon(\lambda)$ and $\epsilon(\mu)$ have the same sign for all $\epsilon\in\wt(N)$. Note that for any cocharacter $\lambda=\sum_i\lambda_i\alpha_i\in\Lambda$ we have 
\[
(\pm\omega_i\pm\omega_{i+1})(\lambda)=\pm\lambda_i\pm\lambda_{i+1}, \quad \pm\omega_1(\lambda)=\pm\lambda_1, \quad \pm\omega_{n-1}(\lambda)=\pm\lambda_{n-1}.
\]
If $\lambda$,~$\mu\in\Lambda^+$ are dominant cocharacters, then it follows that $\epsilon(\lambda)$ and $\epsilon(\mu)$ can have different signs for $\epsilon\in\wt(N)$ only when $\epsilon=\pm(\omega_i-\omega_{i+1})$. In the following, we will use the notation $\xi_i\coloneqq\omega_i-\omega_{i+1}$ and also write $\alpha_{i,j}=\alpha_i+\alpha_{i+1}+\dots+\alpha_j$ for $1\leq i<j\leq n-1$ and $\alpha_{i,i}=\alpha_i$.

\begin{lemma}
We have $r_{\alpha_{i,j}}\in\mathscr{B}$ for $1\leq i\leq j\leq n-1$.
\end{lemma}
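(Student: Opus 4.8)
The plan is to argue by induction on $j-i$. The case $i=j$ is the definition of~$\mathscr{B}$, so fix $i<j$ (hence $2\le j\le n-1$); by the inductive hypothesis $r_{\alpha_{i,j-1}}\in\mathscr{B}$, while $r_{\alpha_j}\in\mathscr{B}$ is one of the generators. Since $\alpha_{i,j-1}+\alpha_j=\alpha_{i,j}$, Proposition~\ref{prop:AssociatedGradedMultiplication} expresses $r_{\alpha_{i,j-1}}*r_{\alpha_j}$ and $r_{\alpha_j}*r_{\alpha_{i,j-1}}$ as coefficient multiples of $r_{\alpha_{i,j}}$. Using the list $\wt(N)$ one checks that the only weight $\epsilon$ of~$N$ for which $\epsilon(\alpha_{i,j-1})$ and $\epsilon(\alpha_j)$ have opposite sign is $\epsilon=\pm\xi_{j-1}$, occurring in the summand $N_{j-1}\otimes N_j$ with $T_F$-weight~$\eta_j$. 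Writing $w\coloneqq X_{j-1}^{-1}X_j$, this gives $r_{\alpha_{i,j-1}}*r_{\alpha_j}=a\,r_{\alpha_{i,j}}$ and $r_{\alpha_j}*r_{\alpha_{i,j-1}}=b\,r_{\alpha_{i,j}}$ with
\[
a=(1-q^{-1}t_j^{-1}w)(1-qt_j^{-1}w^{-1}),\qquad b=(1-qt_j^{-1}w)(1-q^{-1}t_j^{-1}w^{-1}).
\]
Here $a,b\in\mathbb{C}[T\times T_F\times\mathbb{C}^*]^{W_{\alpha_{i,j}}}$, since $w$ only involves $X_{j-1},X_j$ and $j-1,j\in\supp(\alpha_{i,j})$, so these are legitimate coefficients in the basis of Proposition~\ref{prop:associatedgradedbasis}.

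Next I would form the $\mathbb{C}(q^{\frac12},t_0,\dots,t_{n+1})$-linear combinations $q^{-1}a-qb$ and $a-b$, which are nonzero scalar multiples of $c\coloneqq(1+t_j^{-2})-(q+q^{-1})t_j^{-1}w$ and of $w-w^{-1}$ respectively, to conclude $c\,r_{\alpha_{i,j}}\in\mathscr{B}$ and $(w-w^{-1})\,r_{\alpha_{i,j}}\in\mathscr{B}$. Set $R_j\coloneqq\mathbb{C}(q^{\frac12},t_0,\dots,t_{n+1})[X_{j-1}^{\pm1},X_j^{\pm1}]$ and $\mathscr{J}\coloneqq\{f\in R_j: f\,r_{\alpha_{i,j}}\in\mathscr{B}\}$, so that $c,w-w^{-1}\in\mathscr{J}$. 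The crucial claim is that $\mathscr{J}$ is an ideal of~$R_j$. It is clearly a submodule over the base field, and it is stable under left multiplication by $X_k+X_k^{-1}$ for $k=j-1,j$; on the other hand Lemma~\ref{lem:rlambdacommutation} applied to $\lambda=\alpha_{i,j}$ (for which $(\alpha_{i,j})_k=1$) gives $r_{\alpha_{i,j}}(X_k+X_k^{-1})=(q^2X_k+q^{-2}X_k^{-1})r_{\alpha_{i,j}}$, so $\mathscr{J}$ is also stable under multiplication by $q^2X_k+q^{-2}X_k^{-1}$. Subtracting shows $\mathscr{J}$ is stable under multiplication by $X_k-q^{-2}X_k^{-1}$, hence by $X_k^{-1}$, and since $X_k+X_k^{-1}$ and $X_k^{-1}$ generate $\mathbb{C}(q^{\frac12},t_0,\dots,t_{n+1})[X_k^{\pm1}]$, the set $\mathscr{J}$ is stable under multiplication by $X_{j-1}^{\pm1}$ and $X_j^{\pm1}$, i.e.\ it is an ideal.

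To finish I would show $\mathscr{J}=R_j$, equivalently that $c$ and $(X_j-X_{j-1})(X_j+X_{j-1})=X_{j-1}X_j\,(w-w^{-1})$ — both in $\mathscr{J}$ — generate the unit ideal. The factors $X_j-X_{j-1}$ and $X_j+X_{j-1}$ are coprime (their difference $2X_{j-1}$ is a unit of~$R_j$), so the Chinese remainder theorem identifies $R_j/\big((X_j-X_{j-1})(X_j+X_{j-1})\big)$ with the product of $R_j/(X_j-X_{j-1})$ and $R_j/(X_j+X_{j-1})$, each a Laurent polynomial ring in one variable over $\mathbb{C}(q^{\frac12},t_0,\dots,t_{n+1})$. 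On these $w$ specializes to $1$ and $-1$, so $c$ specializes to $(1-qt_j^{-1})(1-q^{-1}t_j^{-1})$ and $(1+qt_j^{-1})(1+q^{-1}t_j^{-1})$; both are nonzero scalars because $q$ and $t_j$ are algebraically independent, hence units. Thus $c$ is a unit modulo $(X_j-X_{j-1})(X_j+X_{j-1})$, so $\mathscr{J}=R_j$; in particular $1\in\mathscr{J}$, which is exactly $r_{\alpha_{i,j}}\in\mathscr{B}$, closing the induction.

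The main obstacle is the step in the second paragraph: any single product $a\,r_{\alpha_{i,j}}\in\mathscr{B}$, and indeed any base-field combination of the two products, has a coefficient that is a genuine non-unit of~$R_j$, so one cannot just divide to isolate $r_{\alpha_{i,j}}$. The point is to exploit the non-commutativity — the $q$-shift of Lemma~\ref{lem:rlambdacommutation} between left and right multiplication by $X_k+X_k^{-1}$ — to promote the two fragments $c\,r_{\alpha_{i,j}}$ and $(w-w^{-1})\,r_{\alpha_{i,j}}$ into a genuine ideal of the commutative ring~$R_j$, after which the coprimality computation is elementary. Checking that the only sign conflicts in Proposition~\ref{prop:AssociatedGradedMultiplication} come from $\pm\xi_{j-1}$, and tracking $W_{\alpha_{i,j}}$-invariance, are routine but should be done with care.
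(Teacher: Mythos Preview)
Your proof is correct and complete; the weight analysis, the two product formulas, the linear combinations yielding $c\,r_{\alpha_{i,j}}$ and $(w-w^{-1})\,r_{\alpha_{i,j}}$, the ideal argument, and the final coprimality check all go through. The one implicit step---that $(fr_{\alpha_{i,j}})*(X_k+X_k^{-1})=((q^2X_k+q^{-2}X_k^{-1})f)\,r_{\alpha_{i,j}}$ for general $f$, not just $f=1$---follows from the $f=1$ case of Lemma~\ref{lem:rlambdacommutation} together with the fact that left and right convolution by degree-zero elements commute; the paper makes the same implicit move in its proof of Lemma~\ref{lem:conditional}.

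The paper's argument is organized differently. It first invokes Lemma~\ref{lem:conditional} to place the dressed elements $X_{j-1}r_{\alpha_{i,j-1}}$ and $X_j^{-1}r_{\alpha_j}$ in $\mathscr{B}$, then computes \emph{four} convolution products and packages them into the $4\times4$ matrix equation~\eqref{eqn:Coulombmatrix}, whose coefficient matrix is invertible over the base field, yielding $r_{\alpha_{i,j}}$ directly. Your route instead uses only the two undressed products and absorbs the role of Lemma~\ref{lem:conditional} into the ideal argument applied to the target $r_{\alpha_{i,j}}$ rather than to the factors. This is more economical. The payoff of the paper's version is that the matrix~\eqref{eqn:Coulombmatrix} is, by design, identical in form to the skein-side matrix~\eqref{eqn:skeinmatrix} from Proposition~\ref{prop:generators}; this exact parallelism is what drives the inductive step in Lemma~\ref{lem:Ibarcalculation}, where the two matrix equations are compared under $\bar{\mathscr{I}}$. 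Your argument proves the lemma cleanly but would not by itself supply that later comparison.
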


\begin{proof}
We argue by induction on $k=j-i$. By definition of~$\mathscr{B}$, we have $r_{\alpha_{i,i}}\in\mathscr{B}$, so the statement in the lemma is true when $k=0$. Let us assume the statement is true when $k=K-1$, and consider indices $i$,~$j$ such that $j-i=K$. Then we have $r_{\alpha_{i,j-1}}\in\mathscr{B}$. One computes $\xi_{j-1}(\alpha_{i,j-1})=1$, $\xi_{j-1}(\alpha_j)=-1$, and 
\[
\xi_l(\alpha_{i,j-1})\xi_l(\alpha_j)=0 \quad\text{for $l\neq j-1$}.
\]
It then follows from Proposition~\ref{prop:AssociatedGradedMultiplication} that 
\begin{align*}
r_{\alpha_{i,j-1}}*r_{\alpha_j} &= \left(1-e^{-\xi_{j-1}-\eta_j}q^{-1}\right)\left(1-e^{\xi_{j-1}-\eta_j}q\right)r_{\alpha_{i,j}} \\
&= \left(1-q^{-1}t_j^{-1}X_{j-1}^{-1}X_j\right)\left(1-qt_j^{-1}X_{j-1}X_j^{-1}\right)r_{\alpha_{i,j}}
\end{align*}
and 
\[
(X_{j-1}r_{\alpha_{i,j-1}})*(X_j^{-1}r_{\alpha_j}) = \left(X_{j-1}X_j^{-1}-q^{-1}t_j^{-1}\right)\left(1-qt_j^{-1}X_{j-1}X_j^{-1}\right)r_{\alpha_{i,j}}.
\]
Similarly, 
\[
r_{\alpha_j}*r_{\alpha_{i,j-1}} = \left(1-q^{-1}t_j^{-1}X_{j-1}X_j^{-1}\right)\left(1-qt_j^{-1}X_{j-1}^{-1}X_j\right)r_{\alpha_{i,j}}
\]
and 
\[
(X_j^{-1}r_{\alpha_j})*(X_{j-1}r_{\alpha_{i,j-1}}) = \left(1-q^{-1}t_j^{-1}X_{j-1}X_j^{-1}\right)\left(X_{j-1}X_j^{-1}-qt_j^{-1}\right)r_{\alpha_{i,j}}.
\]
We can combine these into a single matrix equation 
\begin{equation}
\label{eqn:Coulombmatrix}
\begin{pmatrix}
r_{\alpha_{i,j-1}}*r_{\alpha_j} \\
r_{\alpha_j}*r_{\alpha_{i,j-1}} \\
(X_{j-1}r_{\alpha_{i,j-1}})*(X_j^{-1}r_{\alpha_j}) \\
(X_j^{-1}r_{\alpha_j})*(X_{j-1}r_{\alpha_{i,j-1}})
\end{pmatrix}
=
-t^{-1}
\underbrace{
\begin{pmatrix}
q & q^{-1} & \delta & 0 \\
q^{-1} & q & \delta & 0 \\
\delta & 0 & q^{-1} & q \\
\delta & 0 & q & q^{-1}
\end{pmatrix}}_P
\begin{pmatrix}
X_{j-1}X_j^{-1}r_{\alpha_{i,j}} \\
X_{j-1}^{-1}X_jr_{\alpha_{i,j}} \\
r_{\alpha_{i,j}} \\
X_{j-1}^2X_j^{-2}r_{\alpha_{i,j}}
\end{pmatrix}
\end{equation}
where $t\coloneqq t_j$ and $\delta\coloneqq-t-t^{-1}$. As in the proof of Proposition~\ref{prop:generators}, the determinant of~$P$ is not identically zero, so there is an inverse~$P^{-1}$ with coefficients in $\mathbb{C}(q,t_0,\dots,t_{n+1})$. In particular, we can write $r_{\alpha_{i,j}}$ as a polynomial in $r_{\alpha_{i,j-1}}*r_{\alpha_j}$, \ $r_{\alpha_j}*r_{\alpha_{i,j-1}}$, \ $(X_{j-1}r_{\alpha_{i,j-1}})*(X_j^{-1}r_{\alpha_j})$, and $(X_j^{-1}r_{\alpha_j})*(X_{j-1}r_{\alpha_{i,j-1}})$. From the definition of~$\mathscr{B}$ together with Lemma~\ref{lem:conditional} and our inductive hypothesis, we know that latter are all contained in~$\mathscr{B}$. Hence $r_{\alpha_{i,j}}\in\mathscr{B}$ and the lemma follows by induction.
\end{proof}

\begin{lemma}
\label{lem:generateassociatedgraded}
We have $\mathscr{B}=\mathscr{G}$. That is, $\mathscr{G}$ is generated as a $\mathbb{C}(q^{\frac{1}{2}},t_0,\dots,t_{n+1})$-algebra by the elements $X_i+X_i^{-1}$ and $r_{\alpha_i}$ for $i=1,\dots,n-1$.
\end{lemma}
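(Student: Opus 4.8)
The plan is to reduce everything to showing that the structure-sheaf class $r_\lambda=[\mathcal{O}_{\mathcal{R}_\lambda}]$ lies in $\mathscr{B}$ for every $\lambda\in\Lambda^+$. By Proposition~\ref{prop:associatedgradedbasis}, after extending scalars the elements $f\,r_\lambda$ with $\lambda\in\Lambda^+$ and $f\in\mathbb{C}[T\times T_F\times\mathbb{C}^*]^{W_\lambda}$ span $\mathscr{G}$ over $\mathbb{C}(q^{\frac12},t_0,\dots,t_{n+1})$, and Lemma~\ref{lem:conditional} (together with $\mathbb{C}(q^{\frac12},t_0,\dots,t_{n+1})$-linearity of $\mathscr{B}$) shows that $r_\lambda\in\mathscr{B}$ already forces $f\,r_\lambda\in\mathscr{B}$ for all such $f$. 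So $\mathscr{B}=\mathscr{G}$ follows once we know $r_\lambda\in\mathscr{B}$ for all $\lambda$, and the strategy is to write each $r_\lambda$ as a convolution product of the classes $r_{\alpha_{i,j}}$, which lie in $\mathscr{B}$ by the lemma immediately above.

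The tool for this is the multiplication formula of Proposition~\ref{prop:AssociatedGradedMultiplication}: for $\mu,\nu\in\Lambda^+$ all the coefficients $A^i_{\mu,\nu}$ equal $1$ -- so that $r_\mu*r_\nu=r_{\mu+\nu}$ -- as soon as there is no index $l$ with $1\leq l\leq n-2$ for which $\xi_l(\mu)$ and $\xi_l(\nu)$ are strictly of opposite sign, these being the only weights in $\wt(N)$ that can obstruct. Given $\lambda=\sum_i\lambda_i\alpha_i\in\Lambda^+$, I would first slice it horizontally: set $M=\max_i\lambda_i$, let $S_m=\{i:\lambda_i\geq m\}$, and let $\mu^{(m)}\in\Lambda^+$ be the characteristic function of $S_m$, so that $\lambda=\mu^{(1)}+\dots+\mu^{(M)}$ and the partial sums are the truncations $\lambda^{(\leq m)}$, whose $i$-th entry is $\min(\lambda_i,m)$. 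A two-case check ($\xi_l(\mu^{(m)})=\pm1$) shows $\xi_l(\lambda^{(\leq m-1)})$ is never of the opposite sign, so $r_{\lambda^{(\leq m)}}=r_{\lambda^{(\leq m-1)}}*r_{\mu^{(m)}}$ and hence $r_\lambda=r_{\mu^{(1)}}*\dots*r_{\mu^{(M)}}$. Then I would split each slice: $S_m$ is a disjoint union of pairwise non-adjacent intervals $I_1,\dots,I_r$ with $I_k=\{a_k,\dots,b_k\}$, so $\mu^{(m)}=\sum_k\alpha_{a_k,b_k}$, and another two-case check -- this time using that $I_k$ is adjacent to none of $I_1,\dots,I_{k-1}$ -- shows the partial sums $\alpha_{a_1,b_1}+\dots+\alpha_{a_{k-1},b_{k-1}}$ are compatible with $\alpha_{a_k,b_k}$, so $r_{\mu^{(m)}}=r_{\alpha_{a_1,b_1}}*\dots*r_{\alpha_{a_r,b_r}}\in\mathscr{B}$. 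Assembling these, $r_\lambda\in\mathscr{B}$.

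I do not expect a serious obstacle inside the present lemma: the only thing to watch is the forbidden configuration in which a ``$+1$'' step of the new summand faces a ``$-1$'' step of the partial sum (or vice versa), and in both reductions this is excluded purely by shape -- a truncation of a dominant cocharacter is still a staircase, and distinct components of a level set are never adjacent. The genuinely hard input is not in this lemma but in the preceding one: a single interval $\alpha_{i,j}$ with $j>i$ admits no decomposition into compatible dominant summands, so there one really needs the matrix relation~\eqref{eqn:Coulombmatrix} furnished by Proposition~\ref{prop:AssociatedGradedMultiplication}, together with the invertibility of the matrix $P$ over $\mathbb{C}(q,t_0,\dots,t_{n+1})$. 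Granting that, the proof of the present lemma is bookkeeping.
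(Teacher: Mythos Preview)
Your proof is correct. The reduction to showing $r_\lambda\in\mathscr{B}$ for all $\lambda\in\Lambda^+$ matches the paper exactly, as does the key observation that for dominant $\mu,\nu$ the only obstructions to $r_\mu*r_\nu=r_{\mu+\nu}$ come from the weights $\pm\xi_l$. Where you diverge is in the decomposition of $\lambda$. You slice horizontally: write $\lambda$ as a sum of level-set indicators $\mu^{(m)}$, then split each $\mu^{(m)}$ into its maximal-interval components $\alpha_{a_k,b_k}$. The paper instead argues by induction on $|\supp(\lambda)|$: if the support is disconnected it splits across a gap; if the support is a single interval $\{i,\dots,i+K-1\}$ it locates an index $j$ with $\lambda_j$ minimal, peels off $\lambda_j\,\alpha_{i,i+K-1}$ using Corollary~\ref{cor:easymultiplication}, and checks that $\xi_l(\lambda_j\alpha_{i,i+K-1})$ and $\xi_l(\lambda')$ never oppose in sign, where $\lambda'=\lambda-\lambda_j\alpha_{i,i+K-1}$ has strictly smaller support. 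In Young-diagram language, the paper removes the largest full-width rectangle at the bottom in one step, while you remove one row at a time. Your version is a touch more explicit and folds the connected/disconnected case split into the ``split each slice'' step; the paper's is a slightly cleaner induction. Both rest equally on the previous lemma establishing $r_{\alpha_{i,j}}\in\mathscr{B}$, and your sign checks for the two reductions are correct.
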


\begin{proof}
By Proposition~\ref{prop:associatedgradedbasis}, we know that $\mathscr{G}$ is spanned by elements of the form $fr_\lambda$ for $\lambda\in\Lambda^+$ and $f\in\mathbb{C}[T\times T_F\times\mathbb{C}^*]^{W_\lambda}$. Thus it suffices to show that each of these elements is contained in $\mathscr{B}\subset\mathscr{G}$. In fact, by Lemma~\ref{lem:conditional} it suffices to show that $r_\lambda\in\mathscr{B}$ for every $\lambda\in\Lambda^+$. We will prove this by induction on $k=|\supp(\lambda)|$. We first note that if $k=1$ then $\lambda=m\alpha_i$ for some $m\in\mathbb{Z}_{\geq0}$ and some $i=1,\dots,n-1$, and therefore $r_\lambda=r_{\alpha_i}^m\in\mathscr{B}$ by Corollary~\ref{cor:easymultiplication}.

Let us assume that the desired statement is true for $k<K$, and suppose $\lambda\in\Lambda^+$ is a dominant coweight satisfying $|\supp(\lambda)|=K$. If $\supp(\lambda)$ is not an interval, that is, if there exists $i$ such that $\supp(\lambda)\subset[1,i-1]\cup[i+1,n-1]$ where $\supp(\lambda)\cap[1,i-1]\neq\emptyset$ and $\supp(\lambda)\cap[i+1,n-1]\neq\emptyset$, then we can write 
\[
\lambda=\lambda'+\lambda''
\]
where 
\[
\supp(\lambda')\subset[1,i-1], \quad \supp(\lambda'')\subset[i+1,n-1].
\]
Then for every $\epsilon\in\wt(N)$ we must have $\epsilon(\lambda')=0$ or $\epsilon(\lambda'')=0$, and Proposition~\ref{prop:AssociatedGradedMultiplication} implies $r_\lambda=r_{\lambda'}*r_{\lambda''}$. Since $|\supp(\lambda')|<K$ and $|\supp(\lambda'')|<K$, we have $r_{\lambda'}$,~$r_{\lambda''}\in\mathscr{B}$ by our assumption, and hence $r_\lambda\in\mathscr{B}$.

It remains to consider the case where the support $\supp(\lambda)$ is an interval so that we have $\supp(\lambda)=\{i,i+1,\dots,i+K-1\}$ for some~$i$. In this case, take $j\in\supp(\lambda)$ such that $\lambda_j\leq\lambda_i$ for all~$i\in\supp(\lambda)$. Then we can write 
\[
\lambda=\lambda_j\alpha_{i,i+K-1}+\lambda'
\]
where 
\[
\lambda'=\sum_{l=i}^{i+K-1}(\lambda_l-\lambda_j)\alpha_l.
\]
By definition of $\xi_l$, we have 
\[
\xi_l(\lambda_j\alpha_{i,i+K-1})=
\begin{cases}
0 &\text{if $l\neq i-1, i+K-1$} \\
-\lambda_j &\text{if $l=i-1$} \\
\lambda_j &\text{if $l=i+K-1$}
\end{cases}
\]
and also $\xi_{i-1}(\lambda')=-(\lambda_i-\lambda_j)$ and $\xi_{i+K-1}(\lambda')=\lambda_{i+K-1}-\lambda_j$. It follows from our assumption on~$j$ that $\xi_l(\lambda_j\alpha_{i,i+K-1})$ and $\xi_l(\lambda')$ have the same sign for all~$l$. We have already observed that if $\epsilon\in\wt(N)$, then $\epsilon(\lambda)$ and $\epsilon(\lambda')$ can have different signs only when $\epsilon=\xi_l$ for some~$l$. Hence Proposition~\ref{prop:AssociatedGradedMultiplication} implies $r_\lambda=r_{\lambda_j\alpha_{i,i+K-1}}*r_{\lambda'}$. We know $r_{\lambda_j\alpha_{i,i+K-1}}=r_{\alpha_{i,i+K-1}}^{\lambda_j}\in\mathscr{B}$ by Corollary~\ref{cor:easymultiplication}. We also have $r_{\lambda'}\in\mathscr{B}$ by our inductive hypothesis since $|\supp(\lambda')|<K$. Hence $r_\lambda\in\mathscr{B}$ and the lemma follows by induction.
\end{proof}

\subsection{Generating the Coulomb branch}
\label{sec:GeneratingTheCoulombBranch}

Let $\mathcal{A}=K^{\widetilde{G}_{\mathcal{O}}\rtimes\mathbb{C}^*}(\mathcal{R}_{G,N})$ be the quantized Coulomb branch of the previous subsection. By the basic properties of equivariant $K$-theory discussed in Sections~5.2.1 and~6.1 of~\cite{CG97}, this quantized Coulomb branch contains the subalgebra 
\begin{align*}
K^{\widetilde{G}\times\mathbb{C}^*}(\mathrm{pt}) &\cong K^{T\times F\times\mathbb{C}^*}(\mathrm{pt})^W \\
&\cong \mathbb{C}[q^{\pm\frac{1}{2}},t_0^{\pm1},\dots,t_{n+1}^{\pm1},X_1^{\pm1},\dots,X_{n-1}^{\pm1}]^W \\
&\cong \mathbb{C}[q^{\pm\frac{1}{2}},t_0^{\pm1},\dots,t_{n+1}^{\pm1},X_i+X_i^{-1}:i=1,\dots,n-1]
\end{align*}
where $\mathrm{pt}$ denotes the variety consisting of a single point and $W$ is the Weyl group of~$G$ acting naturally on~$\mathbb{C}[T]\cong\mathbb{C}[X_1^{\pm1},\dots,X_{n-1}^{\pm1}]$. Abusing notation, we write $X_i+X_i^{-1}$ and $r_\lambda=[\mathcal{O}_{\mathcal{R}_\lambda}]$ for elements of~$\mathcal{A}$ and also for the corresponding elements of the associated graded algebra $\gr^\bullet\mathcal{A}$.

The elements $q^{\frac{1}{2}},t_0,\dots,t_{n+1}$ are central in~$\mathcal{A}$, and so $\mathcal{A}$ can be considered as an algebra over $\mathbb{C}[q^{\pm\frac{1}{2}},t_0^{\pm1},\dots,t_{n+1}^{\pm1}]$. In the following, we will write 
\[
\mathscr{A}=\mathcal{A}\otimes_{\mathbb{C}[q^{\pm\frac{1}{2}},t_0^{\pm1},\dots,t_{n+1}^{\pm1}]}\mathbb{C}(q^{\frac{1}{2}},t_0,\dots,t_{n+1})
\]
for the algebra obtained from~$\mathcal{A}$ by extending scalars to rational functions in $q^{\frac{1}{2}},t_0,\dots,t_{n+1}$.

\begin{proposition}
\label{prop:generateCoulomb}
$\mathscr{A}$ is generated as a $\mathbb{C}(q^{\frac{1}{2}},t_0,\dots,t_{n+1})$-algebra by the elements $X_i+X_i^{-1}$ and~$r_{\alpha_i}$ for $i=1,\dots,n-1$.
\end{proposition}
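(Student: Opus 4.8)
The plan is to bootstrap from Lemma~\ref{lem:generateassociatedgraded}, which identifies algebra generators of the associated graded $\mathscr{G}=\gr^\bullet\mathscr{A}$, by the standard principle that a filtered algebra is generated by any lifts of algebra generators of its associated graded, once one knows the filtration is exhaustive and the grading on the associated graded is multiplicative. Concretely, let $\mathcal{B}\subset\mathscr{A}$ be the $\mathbb{C}(q^{\frac{1}{2}},t_0,\dots,t_{n+1})$-subalgebra generated by the elements $X_i+X_i^{-1}$ and~$r_{\alpha_i}$ for $i=1,\dots,n-1$; the goal is $\mathcal{B}=\mathscr{A}$. For $\lambda\in\Lambda^+$ write $\mathscr{A}_{\leq\lambda}$ and $\mathscr{A}_{<\lambda}$ for the submodules of~$\mathscr{A}$ obtained by extending scalars in $\mathcal{A}_{\leq\lambda}$ and $\mathcal{A}_{<\lambda}$ from Section~\ref{sec:TheAssociatedGradedAlgebra}, so that $\mathscr{A}_{<\lambda}=\sum_{\mu<\lambda}\mathscr{A}_{\leq\mu}$ and $\mathscr{G}^\lambda\coloneqq\mathscr{A}_{\leq\lambda}/\mathscr{A}_{<\lambda}$ is the degree-$\lambda$ homogeneous component of~$\mathscr{G}$. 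Since $\mathcal{R}=\varinjlim_\lambda\mathcal{R}_{\leq\lambda}$ and equivariant $K$-theory carries this colimit to a colimit (Proposition~\ref{prop:abeliancategory}), we have $\mathscr{A}=\bigcup_\lambda\mathscr{A}_{\leq\lambda}$, so it suffices to prove $\mathscr{A}_{\leq\lambda}\subset\mathcal{B}$ for every $\lambda\in\Lambda^+$. The set $\{\mu\in\Lambda^+:\mu\leq\lambda\}$ is finite, so the order on~$\Lambda^+$ is well-founded, and I would prove this by Noetherian induction on~$\lambda$.

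For the inductive step, fix~$\lambda$ and assume $\mathscr{A}_{<\lambda}\subset\mathcal{B}$. Take $a\in\mathscr{A}_{\leq\lambda}$ and let $\bar a\in\mathscr{G}^\lambda$ be its image. By Lemma~\ref{lem:generateassociatedgraded}, $\bar a$ is a polynomial in the $X_i+X_i^{-1}$ and $r_{\alpha_i}$; as $\mathscr{G}$ is graded by~$\Lambda^+$ with $X_i+X_i^{-1}$ of degree~$0$ and $r_{\alpha_i}$ of degree~$\alpha_i$, I may discard all monomials whose $r$-factors have degrees summing to something other than~$\lambda$ and still recover~$\bar a$. Replacing each generator in the resulting polynomial by the element of the same name in~$\mathscr{A}$ produces an element $\tilde a\in\mathcal{B}$; each of its monomials is an ordered product of elements of $\mathscr{A}_{\leq 0}$ (powers of the $X_i+X_i^{-1}$, which lie in $\mathcal{A}_{\leq 0}$ by Lemma~\ref{lem:KtheoryRlambda} with $\lambda=0$) with factors $r_{\alpha_{i_1}},\dots,r_{\alpha_{i_m}}$ satisfying $\alpha_{i_1}+\dots+\alpha_{i_m}=\lambda$, so by the compatibility $\mathcal{A}_{\leq\mu}*\mathcal{A}_{\leq\nu}\subset\mathcal{A}_{\leq\mu+\nu}$ of the convolution product with the filtration (Section~\ref{sec:TheAssociatedGradedAlgebra}) such a product lies in~$\mathscr{A}_{\leq\lambda}$. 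By construction $a$ and $\tilde a$ have the same image~$\bar a$ in~$\mathscr{G}^\lambda$, hence $a-\tilde a\in\mathscr{A}_{<\lambda}\subset\mathcal{B}$, and therefore $a=\tilde a+(a-\tilde a)\in\mathcal{B}$. This completes the induction, the case $\lambda=0$ being subsumed since then $\mathscr{A}_{<0}=0$ and $\mathscr{A}_{\leq 0}\cong\mathbb{C}(q^{\frac{1}{2}},t_0,\dots,t_{n+1})[X_i+X_i^{-1}:i=1,\dots,n-1]$ by Lemma~\ref{lem:KtheoryRlambda} with $W_0=W$.

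The point that genuinely needs care, and the only one I would flag, is that the filtration is indexed by the partially ordered set~$\Lambda^+$ rather than by~$\mathbb{Z}_{\geq0}$: I must check that Noetherian induction is valid, i.e.\ that each interval $\{\mu\in\Lambda^+:\mu\leq\lambda\}$ is finite (it is, since $\lambda-\mu$ then ranges over a bounded set of non-negative integral combinations of positive coroots), and that the grading on~$\mathscr{G}$ is multiplicative, so that extracting the degree-$\lambda$ part of a polynomial in the generators is meaningful and its lift lands in~$\mathscr{A}_{\leq\lambda}$; the latter is precisely the assertion that $r_\lambda*r_\mu$ is a scalar multiple of $r_{\lambda+\mu}$, which is Proposition~\ref{prop:AssociatedGradedMultiplication}. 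With these two observations in place the argument is formal, so I anticipate no real difficulty beyond careful bookkeeping with the $\Lambda^+$-indexing.
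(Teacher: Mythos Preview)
Your proposal is correct and follows essentially the same approach as the paper: both deduce the result from Lemma~\ref{lem:generateassociatedgraded} via the standard principle that lifts of generators of the associated graded generate the filtered algebra. The paper compresses this into a single sentence, while you spell out the Noetherian induction on~$\Lambda^+$; one small over-citation is that the multiplicativity of the grading on~$\mathscr{G}$ follows directly from the filtration compatibility $\mathcal{A}_{\leq\lambda}*\mathcal{A}_{\leq\mu}\subset\mathcal{A}_{\leq\lambda+\mu}$ and does not require the explicit formula of Proposition~\ref{prop:AssociatedGradedMultiplication}.
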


\begin{proof}
The algebra $\mathscr{A}$ is a filtered algebra with filtration induced by the one on~$\mathcal{A}$. By Lemma~\ref{lem:generateassociatedgraded}, the associated graded algebra $\gr^\bullet\mathscr{A}=\gr^\bullet\mathcal{A}\otimes\mathbb{C}(q^{\frac{1}{2}},t_0,\dots,t_{n+1})$ is generated by $X_i+X_i^{-1}$ and~$r_{\alpha_i}$ for $i=1,\dots,n-1$. This implies the desired statement.
\end{proof}

\section{A representation of the quantized Coulomb branch}
\label{sec:ARepresentationOfTheQuantizedCoulombBranch}

In this section, we describe a class of quantized Coulomb branches associated to quivers, and we explain how to obtain a faithful representation of such a quantized Coulomb branch.

\subsection{Quiver gauge theories}
\label{sec:QuiverGaugeTheories}

Let $Q=(Q_0,Q_1,s,t)$ be a finite quiver where $Q_0$ is the set of vertices, $Q_1$ is the set of arrows, and $s:Q_1\rightarrow Q_0$ and $t:Q_1\rightarrow Q_0$ are the maps sending an arrow to its source and target, respectively. In the following, we consider the vector space 
\begin{equation}
\label{eqn:quiverrep}
N=\bigoplus_{a\in Q_1}\Hom(\mathbb{C}^2,\mathbb{C}^2).
\end{equation}
We also fix a decomposition $Q_0=\mathcal{G}\sqcup\mathcal{F}$ of the set of vertices into a set~$\mathcal{G}$ of \emph{gauge nodes} and a set~$\mathcal{F}$ of \emph{framing nodes}. We define the gauge group 
\[
\mathbf{G}=\prod_{i\in\mathcal{G}}\mathrm{GL}_2,
\]
which acts naturally on the vector space~$N$. We write $\mathbf{F}_1\subset\prod_{i\in\mathcal{F}}\mathrm{GL}_2$ for the diagonal torus, which also acts naturally on~$N$. The quiver $Q$ determines a second torus~$\mathbf{F}_2=(\mathbb{C}^*)^{Q_1}$, which acts on~$N$ in such a way that the $\mathbb{C}^*$-factor corresponding to an arrow~$a\in Q_1$ acts by rescaling the corresponding hom-set in the direct sum~\eqref{eqn:quiverrep}. We define the flavor symmetry group to be the product 
\[
\mathbf{F}=\mathbf{F}_1\times\mathbf{F}_2.
\]
The actions of~$\mathbf{F}$ and~$\mathbf{G}$ on the vector space~$N$ commute, and therefore we get an action of the product $\mathbf{G}\times\mathbf{F}$ on this vector space.

In the following, we will consider the case where $Q_0=\{i_0,\dots,i_n\}$ and where $Q_1=\{a_1,\dots,a_n\}$ consists of an arrow $a_j:i_{j-1}\rightarrow i_j$ for $j$~odd and an arrow $a_j:i_j\rightarrow i_{j-1}$ for $j$~even. The underlying graph for the resulting quiver $Q$ is illustrated in Figure~\ref{fig:typeAquiver}. In such a picture, we represent gauge nodes by circles and represent framing nodes by squares. From this quiver, we get groups $\mathbf{G}$ and~$\mathbf{F}$ and a representation~$N$ of $\mathbf{G}\times\mathbf{F}$ as above. We note that the groups $G$ and~$F$ considered in Section~\ref{sec:TheGaugeAndFlavorGroups} are naturally subgroups of~$\mathbf{G}$ and~$\mathbf{F}$, respectively, and the restriction of the representation~$N$ to~$G\times F$ is the same as the representation in Section~\ref{sec:TheGaugeAndFlavorGroups} because $\Hom(\mathbb{C}^2,\mathbb{C}^2)\cong\mathbb{C}^2\otimes\mathbb{C}^2$ as $\mathrm{SL}_2$-representations.

\begin{figure}[ht]
\begin{center}
\begin{tikzpicture}
\clip(-1,-0.5) rectangle (11,1);
\node[label={\tiny $i_0$}] at (0,0) [minimum size=0.5cm, draw] (0) {};
\node[label={\tiny $i_1$}] at (2,0) [circle, minimum size=0.5cm, draw] (1) {};
\node[label={\tiny $i_2$}] at (4,0) [circle, minimum size=0.5cm, draw] (2) {};
\node at (6,0) [circle, minimum size=0.5cm] (3) {$\dots$};
\node[label={\tiny $i_{n-1}$}] at (8,0) [circle, minimum size=0.5cm, draw] (4) {};
\node[label={\tiny $i_n$}] at (10,0) [minimum size=0.5cm, draw] (5) {};
\draw (0)--(1) node [midway, above, fill=white] {\tiny $a_1$};
\draw (1)--(2) node [midway, above, fill=white] {\tiny $a_2$};
\draw (2)--(3) node [midway, above, fill=white] {\tiny $a_3$};
\draw (3)--(4) node [midway, above, fill=white] {\tiny $a_{n-1}$};
\draw (4)--(5) node [midway, above, fill=white] {\tiny $a_n$};
\end{tikzpicture}
\end{center}
\caption{Form of the quiver~$Q$.\label{fig:typeAquiver}}
\end{figure}
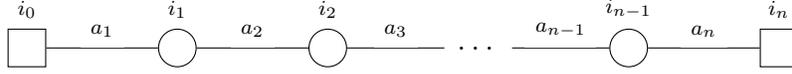

\subsection{Localization}
\label{sec:Localization}

We can construct a faithful polynomial representation of the quantized Coulomb~branch associated to a quiver $Q$ as above. Write $\mathbf{z}$ for a tuple consisting of a formal variable $z_{k,l}$ for each framing node $k\in\mathcal{F}$ and each $l\in\mathbb{Z}/2\mathbb{Z}$ and a formal variable~$z_a$ for each arrow $a\in Q_1$. Let $\mathcal{D}_{q,\mathbf{z}}^0$ be the $\mathbb{C}[q^{\pm\frac{1}{2}},z_{k,l}^{\pm1},z_a^{\pm1}]$-algebra generated by variables $D_{i,r}$, $w_{i,r}$ and their inverses for $i\in\mathcal{G}$ and $r\in\mathbb{Z}/2\mathbb{Z}$ subject to the commutation relations 
\[
[D_{i,r},D_{j,s}]=[w_{i,r},w_{j,s}]=0, \quad D_{i,r}w_{j,s}=q^{2\delta_{ij}\delta_{rs}}w_{j,s}D_{i,r}.
\]
Finally, let $\mathcal{D}_{q,\mathbf{z}}$ be the localization of~$\mathcal{D}_{q,\mathbf{z}}^0$ at the multiplicative set generated by the expressions $w_{i,r}-q^\mu w_{i,s}$ for $\mu\in\mathbb{Z}$ and $r\neq s$ and the expressions $1-q^\mu$ for $\mu\in\mathbb{Z}\setminus\{0\}$.

As explained in Section~3.3 of~\cite{AS24a}, the algebra $\mathcal{D}_{q,\mathbf{z}}^0$ is isomorphic to the quantized Coulomb~branch $K^{(\mathbf{T}\times\mathbf{F})_{\mathcal{O}}\rtimes\mathbb{C}^*}(\mathcal{R}_{\mathbf{T},0})$ associated to a maximal torus $\mathbf{T}\subset\mathbf{G}$ acting on the zero vector space, and the localization theorem in equivariant $K$-theory provides a $\mathbb{C}$-algebra embedding 
\begin{equation}
\label{eqn:localization}
K^{(\mathbf{G}\times\mathbf{F})_{\mathcal{O}}\rtimes\mathbb{C}^*}(\mathcal{R}_{\mathbf{G},N})\hookrightarrow\mathcal{D}_{q,\mathbf{z}}
\end{equation}
into the localization of this algebra. In the following, we will use this embedding to define a faithful representation of the quantized Coulomb branch from the previous section and study the operators corresponding to the generators of Section~\ref{sec:GeneratingTheCoulombBranch} in this representation.

\subsection{Monopole operators}
\label{sec:MonopoleOperators}

Suppose that $\mathbf{G}$ and $N$ are the gauge group and representation associated to a quiver~$Q$ as in Section~\ref{sec:QuiverGaugeTheories}. Then we have $\mathbf{G}=\prod_{i\in\mathcal{G}}\mathrm{GL}_2$. Let $\varpi_{i,1}$ be the first fundamental coweight of the $i$th $\mathrm{GL}_2$ factor so that $\varpi_{i,1}$ corresponds to the cocharacter $t\mapsto\left(\begin{smallmatrix}t & 0 \\ 0 & 1\end{smallmatrix}\right)$. It is known~\cite{FT19} that $\Gr_{\mathrm{GL}_2}^{\varpi_{i,1}}$ is closed and naturally identified with the variety $\mathbb{P}^1$ of one-dimensional quotients of~$\mathbb{C}^2$. The latter has a tautological line bundle, and we write $\mathcal{E}_i$ for the pullback of this line bundle along the projection~$\mathcal{R}_{\varpi_{i,1}}\rightarrow\Gr_{\mathrm{GL}_2}^{\varpi_{i,1}}$. Similarly, let $\varpi_{i,1}^*$ be the cocharacter $t\mapsto\left(\begin{smallmatrix}1 & 0 \\ 0 & t^{-1}\end{smallmatrix}\right)$ of the $i$th $\mathrm{GL}_2$ factor. Then $\Gr_{\mathrm{GL}_2}^{\varpi_{i,1}^*}$ is closed and naturally identified with the variety $\mathbb{P}^1$ of one-dimensional subspaces of~$\mathbb{C}^2$. We write $\mathcal{F}_i$ for the pullback of the tautological line bundle along~$\mathcal{R}_{\varpi_{i,1}^*}\rightarrow\Gr_{\mathrm{GL}_2}^{\varpi_{i,1}^*}$.

For any Laurent polynomial~$f\in\mathbb{C}[x^{\pm1}]$, we get a vector bundle $f(\mathcal{E}_i)$ on $\mathcal{R}_{\varpi_{i,1}}$ in a natural way. Similarly, we get a vector bundle $f(\mathcal{F}_i)$ on~$\mathcal{R}_{\varpi_{i,1}^*}$. Then we get vector bundles 
\begin{equation}
\label{eqn:vectorbundles}
f(\mathcal{E}_i)\otimes\mathcal{O}_{\mathcal{R}_{\varpi_{i,1}}}, \quad f(\mathcal{F}_i)\otimes\mathcal{O}_{\mathcal{R}_{\varpi_{i,1}^*}},
\end{equation}
which we can view as coherent sheaves on~$\mathcal{R}_{\mathbf{G},N}$. Their classes in $K^{(\mathbf{G}\times\mathbf{F})_{\mathcal{O}}\rtimes\mathbb{C}^*}(\mathcal{R}_{\mathbf{G},N})$ are called \emph{dressed minuscule monopole operators}, and the Laurent polynomial $f$ is called a \emph{dressing}. Here we will recall the formulas for these elements from~\cite{FT19} in the case where the Coulomb branch is defined by the quiver illustrated in Figure~\ref{fig:typeAquiver}. To simplify notation, we denote the generators of the algebra $\mathcal{D}_{q,\mathbf{z}}$ by $D_{j,\varepsilon}\coloneqq D_{i_j,\varepsilon}$, $w_{j,\varepsilon}\coloneqq w_{i_j,\varepsilon}$, $z_j\coloneqq z_{a_j}$, $z_{0,\varepsilon}\coloneqq z_{i_0,\varepsilon}$, and~$z_{n+1,\varepsilon}\coloneqq z_{i_n,\varepsilon}$. We also abbreviate $+\coloneqq +1$ and $-\coloneqq -1$. For any $m\in\mathbb{Z}$, we consider elements $E_{i,1}[x^m]$ and~$F_{i,1}[x^m]$ of~$\mathcal{D}_{q,\mathbf{z}}$ defined as follows: 
\begin{enumerate}
\item For any even integer $1<i<n-1$, we let 
\[
E_{i,1}[x^m] = \sum_{\varepsilon\in\{\pm1\}}w_{i,\varepsilon}^m\mathcal{P}_{i,\varepsilon}D_{i,\varepsilon}, \qquad
F_{i,1}[x^m] = \sum_{\varepsilon\in\{\pm1\}}q^{-2m}w_{i,\varepsilon}^m\mathcal{Q}_{i,\varepsilon}D_{i,\varepsilon}^{-1}
\]
where 
\begin{align*}
\mathcal{P}_{i,\varepsilon} &= \frac{(1-qz_iw_{i,\varepsilon}w_{i-1,+}^{-1})(1-qz_iw_{i,\varepsilon}w_{i-1,-}^{-1})(1-qz_{i+1}w_{i,\varepsilon}w_{i+1,+}^{-1})(1-qz_{i+1}w_{i,\varepsilon}w_{i+1,-}^{-1})}{1-w_{i,-\varepsilon}w_{i,\varepsilon}^{-1}}, \\
\mathcal{Q}_{i,\varepsilon} &= \frac{1}{1-w_{i,\varepsilon}w_{i,-\varepsilon}^{-1}}.
\end{align*}
\item For any odd integer $1<i<n-1$, we let 
\[
E_{i,1}[x^m] = \sum_{\varepsilon\in\{\pm1\}}w_{i,\varepsilon}^m\mathcal{P}_{i,\varepsilon}D_{i,\varepsilon}, \qquad
F_{i,1}[x^m] = \sum_{\varepsilon\in\{\pm1\}}q^{-2m}w_{i,\varepsilon}^m\mathcal{Q}_{i,\varepsilon}D_{i,\varepsilon}^{-1}
\]
where 
\begin{align*}
\mathcal{P}_{i,\varepsilon} &= \frac{1}{1-w_{i,-\varepsilon}w_{i,\varepsilon}^{-1}}, \\
\mathcal{Q}_{i,\varepsilon} &= \frac{(1-qz_iw_{i-1,+}w_{i,\varepsilon}^{-1})(1-qz_iw_{i-1,-}w_{i,\varepsilon}^{-1})(1-qz_{i+1}w_{i+1,+}w_{i,\varepsilon}^{-1})(1-qz_{i+1}w_{i+1,-}w_{i,\varepsilon}^{-1})}{1-w_{i,\varepsilon}w_{i,-\varepsilon}^{-1}}.
\end{align*}
\item For $i=1$ we let $E_{i,1}[x^m]$ and $F_{i,1}[x^m]$ be given by the expressions from~(2) with $w_{0,+}$ and~$w_{0,-}$ replaced by~$z_{0,+}$ and~$z_{0,-}$, respectively. For $i=n-1$ these operators are given by the expressions from~(1) if $i$~is even or by the expressions from~(2) if $i$~is odd with $w_{n,+}$ and~$w_{n,-}$ replaced by~$z_{n+1,+}$ and~$z_{n+1,-}$, respectively.
\end{enumerate}
It is known~\cite{FT19} that the embedding~\eqref{eqn:localization} maps the class of $\mathcal{E}_i^m\otimes\mathcal{O}_{\mathcal{R}_{\varpi_{i,1}}}$ to~$E_{i,1}[x^m]$ and maps the class of $\mathcal{F}_i^m\otimes\mathcal{O}_{\mathcal{R}_{\varpi_{i,1}^*}}$ to $F_{i,1}[x^m]$.

\subsection{Generators from monopole operators}
\label{sec:GeneratorsFromMonopoleOperators}

We now write $\mathbf{G}$, $\mathbf{F}$, and $N$ for the gauge and flavor symmetry groups and representation constructed in Section~\ref{sec:QuiverGaugeTheories} using the quiver in Figure~\ref{fig:typeAquiver}. We write $G$ and $F$ for the gauge and flavor symmetry groups from Section~\ref{sec:TheGaugeAndFlavorGroups}, which act on the same vector space~$N$. We write 
\[
\mathbb{A}=K^{(\mathbf{G}\times\mathbf{F})_{\mathcal{O}}\rtimes\mathbb{C}^*}(\mathcal{R}_{\mathbf{G},N}), \quad \mathcal{A}=K^{(G\times F)_{\mathcal{O}}\rtimes\mathbb{C}^*}(\mathcal{R}_{G,N})
\]
for the associated quantized Coulomb branches.

\begin{lemma}
\label{lem:subquotient}
If we regard $\mathbb{A}$ as a subalgebra of $\mathcal{D}_{q,\mathbf{z}}$ using the embedding~\eqref{eqn:localization}, then there is an isomorphism 
\[
\mathcal{A}\cong\mathbb{A}^{(\mathbb{C}^*)^{n-1}}/(w_{i,-}w_{i,+}-1,z_{k,-}z_{k,+}-1)
\]
where the $i$th factor of $(\mathbb{C}^*)^{n-1}$ acts by simultaneously rescaling the variables $D_{i,-}$ and $D_{i,+}$. For~$i=1,\dots,n-1$, this isomorphism maps $X_i+X_i^{-1}$ to $w_{i,+}+w_{i,-}$ and maps $r_{\alpha_i}$ to $E_{i,1}[1]F_{i,1}[1]+C_i$ where $C_i$ is a polynomial in the variables $q^{\pm1}$, $z_{k,\pm}^{\pm1}$, $z_a^{\pm1}$, $w_{j,\pm}$, symmetric under $w_{j,\pm}\mapsto w_{j,\mp}$ for every~$j$.
\end{lemma}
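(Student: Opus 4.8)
The plan is to obtain the isomorphism as an instance of the behaviour of $K$-theoretic Coulomb branches under ungauging a central torus and restricting the flavor symmetry, and then to identify the images of the two families of generators by a direct computation with the monopole operator formulas recalled in Section~\ref{sec:MonopoleOperators}.

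For the isomorphism I would set $Z\cong(\mathbb{C}^*)^{n-1}$ equal to the product of the centres of the $\mathrm{GL}_2$-factors of $\mathbf G$, so that $\mathbf G$ is a quotient of $G\times Z$ by a finite central subgroup, the maximal torus $\mathbf T$ of $\mathbf G$ is correspondingly finitely covered by $T\times Z$, the subtorus $T_i=\{(t,t^{-1})\}\subset\mathbf T_i$ is cut out by $w_{i,+}w_{i,-}=1$, and $F_0\times F_{n+1}\subset\mathbf F_1$ is cut out by $z_{k,+}z_{k,-}=1$. Under the localization embedding~\eqref{eqn:localization}, the decomposition $\mathcal R_{\mathbf G,N}=\bigsqcup_\nu\mathcal R_{\mathbf G,N}^{(\nu)}$ along $\Gr_Z=X_*(Z)\cong\mathbb Z^{n-1}$ induces a $\mathbb Z^{n-1}$-grading on $\mathbb A\subset\mathcal D_{q,\mathbf z}$; inspecting the shift operators shows that $D_{i,+}$ and $D_{i,-}$ both have degree the $i$th basis vector, so this grading is exactly the one whose degree-zero part is the $(\mathbb C^*)^{n-1}$-invariant subalgebra in the statement. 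The degree-zero component $\mathcal R_{\mathbf G,N}^{(0)}$ is $\mathbf G_{\mathcal O}$-equivariantly identified with $\mathcal R_{G,N}$ (the $Z$-direction of $\Gr_{\mathbf G}$ contributes nothing to the orbit, while $Z_{\mathcal O}$ still acts on $N_{\mathcal O}$), whence $\mathbb A^{(\mathbb C^*)^{n-1}}\cong K^{(G\times Z\times\mathbf F)_{\mathcal O}\rtimes\mathbb C^*}(\mathcal R_{G,N})$. Imposing $w_{i,+}w_{i,-}=1$ and $z_{k,+}z_{k,-}=1$ then restricts the flavor group from $Z\times\mathbf F$ down to $F=F_0\times\dots\times F_{n+1}$, and that this restriction of the flavor symmetry is realised on $K$-theory by the corresponding base change of the ring of equivariant parameters is the restriction property of Coulomb branches already used in Section~3 of~\cite{AS24a} (going back to the flavor-deformation discussion of~\cite{BFN18}). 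Chaining these identifications gives $\mathbb A^{(\mathbb C^*)^{n-1}}/(w_{i,+}w_{i,-}-1,\,z_{k,+}z_{k,-}-1)\cong K^{(G\times F)_{\mathcal O}\rtimes\mathbb C^*}(\mathcal R_{G,N})=\mathcal A$.

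With the isomorphism available, the two generator statements become computations inside $\mathcal D_{q,\mathbf z}$. For $X_i+X_i^{-1}$ the identification is immediate: this element of $K^{G\times F\times\mathbb C^*}(\mathrm{pt})\subset\mathcal A$ is the image under the parameter restriction of the Weyl-symmetric class $w_{i,+}+w_{i,-}\in K^{\mathbf G\times\mathbf F\times\mathbb C^*}(\mathrm{pt})\subset\mathbb A$, since $w_{i,\pm}$ restrict to $X_i^{\pm1}$ on $T_i$, and this is compatible with $w_{i,+}w_{i,-}\mapsto 1$. For $r_{\alpha_i}$, recall that $\alpha_i$ generates $X_*(T_i)$ and is the image in $X_*(T)$ of $\varpi_{i,1}+\varpi_{i,1}^*\in X_*(\mathbf T)$, a cocharacter of $Z$-degree $0$; hence $\Gr_{\mathbf G}^{\varpi_{i,1}+\varpi_{i,1}^*}$ lies in $\mathcal R_{\mathbf G,N}^{(0)}$ and restricts to $\Gr_G^{\alpha_i}$. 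The operators $E_{i,1}[1]$ and $F_{i,1}[1]$ are not individually $(\mathbb C^*)^{n-1}$-invariant, but their product is, and since the localization embedding is an algebra map, $E_{i,1}[1]\,F_{i,1}[1]$ is the image of the convolution $[\mathcal E_i\otimes\mathcal O_{\mathcal R_{\varpi_{i,1}}}]*[\mathcal F_i\otimes\mathcal O_{\mathcal R_{\varpi_{i,1}^*}}]\in\mathbb A$, which is supported on $\overline{\Gr}_{\mathbf G}^{\varpi_{i,1}+\varpi_{i,1}^*}=\Gr_{\mathbf G}^{\varpi_{i,1}+\varpi_{i,1}^*}\sqcup\Gr_{\mathbf G}^{0}$; its leading term in this stratification is the structure-sheaf class $r_{\varpi_{i,1}+\varpi_{i,1}^*}$, while the remainder is supported on the point-stratum $\Gr_{\mathbf G}^0$, hence lies in $K^{\mathbf G\times\mathbf F\times\mathbb C^*}(\mathrm{pt})$, and, being a difference of $W$-invariant classes, is $W$-invariant. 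Taking $C_i$ to be minus this remainder, and using that $r_{\varpi_{i,1}+\varpi_{i,1}^*}$ restricts to $r_{\alpha_i}$ — up to the ambiguity in the meaning of $r_{\alpha_i}$, which in any case lies in $K^{G\times F\times\mathbb C^*}(\mathrm{pt})$ and can be absorbed into $C_i$ — yields the asserted formula; the cases $i=1$ and $i=n-1$ follow from the same computation after the substitutions indicated in Section~\ref{sec:MonopoleOperators}.

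The main obstacle is twofold. Conceptually, one must pin down the ungauging/restriction isomorphism of the second paragraph precisely enough to keep track of the $\mathrm{GL}_2$-versus-$\mathrm{SL}_2$ bookkeeping and of the behaviour of equivariant $K$-theory when the (nontrivially acting) flavor torus is cut down; here I would lean heavily on the corresponding statements in~\cite{AS24a}, where the $S_{0,3}$ and $S_{0,4}$ cases already required this comparison. Computationally — and this is where the real work of the lemma lies — one must verify directly from the explicit expressions for $\mathcal P_{i,\varepsilon}$ and $\mathcal Q_{i,\varepsilon}$ that, after normal-ordering the product of the two monopole operators and summing over $\varepsilon$, the apparent poles along $w_{i,+}=w_{i,-}$ cancel, so that $E_{i,1}[1]F_{i,1}[1]$ is the image of $r_{\varpi_{i,1}+\varpi_{i,1}^*}$ plus a term Laurent in $q^{\pm1}$ and the $z$'s, polynomial in the $w$'s, and symmetric under $w_{j,\pm}\mapsto w_{j,\mp}$; matching this against the fixed-point (localization) formula for the class $[\mathcal O_{\mathcal R_{\varpi_{i,1}+\varpi_{i,1}^*}}]$ (cf.~\cite{FT19}) is what produces the explicit $C_i$.
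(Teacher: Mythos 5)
Your skeleton is essentially the paper's: the isomorphism itself and the identification $X_i+X_i^{-1}\mapsto w_{i,+}+w_{i,-}$ are imported from Lemma~3.3 of \cite{AS24a} (the paper simply cites it; your ungauging/flavor-restriction sketch, with the $\mathrm{GL}_2$-versus-$\mathrm{SL}_2$ bookkeeping you flag, is a reconstruction of that cited argument), and for $r_{\alpha_i}$ both you and the paper compare the product of the two \emph{undressed} minuscule monopole classes with $[\mathcal{O}_{\mathcal{R}_{\varpi_{i,1}+\varpi_{i,1}^*}}]$ and argue that the discrepancy is supported over the lower stratum of $\overline{\Gr}^{\varpi_{i,1}+\varpi_{i,1}^*}$, hence is a Weyl-invariant Laurent polynomial, which is $C_i$.

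The difference, and the one point you leave open, is the crux: the claim that the top-degree part of $[\mathcal{O}_{\mathcal{R}_{\varpi_{i,1}}}]*[\mathcal{O}_{\mathcal{R}_{\varpi_{i,1}^*}}]$ is exactly $r_{\varpi_{i,1}+\varpi_{i,1}^*}$ with no extra $K$-theoretic Euler factor. This is not automatic (a nontrivial factor would destroy the asserted form $E_{i,1}[1]F_{i,1}[1]+C_i$ with $C_i$ symmetric), and you propose to establish it by normal-ordering the explicit $\mathcal{P}_{i,\varepsilon}$, $\mathcal{Q}_{i,\varepsilon}$ expressions and matching against a fixed-point formula for $[\mathcal{O}_{\mathcal{R}_{\varpi_{i,1}+\varpi_{i,1}^*}}]$ as in \cite{FT19}; that would work but is left unexecuted and is far heavier than needed. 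The paper gets it in two lines from Proposition~\ref{prop:AssociatedGradedMultiplication}: since $d(\xi_{i,1}(\varpi_{i,1}),\xi_{i,1}(\varpi_{i,1}^*))=d(1,0)=0$ and $d(\xi_{i,2}(\varpi_{i,1}),\xi_{i,2}(\varpi_{i,1}^*))=d(0,-1)=0$, every factor $A^j_{\varpi_{i,1},\varpi_{i,1}^*}$ equals $1$, so $r_{\varpi_{i,1}}*r_{\varpi_{i,1}^*}=r_{\varpi_{i,1}+\varpi_{i,1}^*}$ already in $\gr^\bullet\mathbb{A}$, and no explicit formula for $C_i$ is ever required. (The paper also allows a priori cross-terms proportional to $E_{i,1}[1]$ and $F_{i,1}[1]$ and kills them by $(\mathbb{C}^*)^{n-1}$-invariance; your support analysis makes these unnecessary, which is fine.) Two small corrections: $E_{i,1}[1]=E_{i,1}[x^0]$ is the image of the undressed class $[\mathcal{O}_{\mathcal{R}_{\varpi_{i,1}}}]$, not of $\mathcal{E}_i\otimes\mathcal{O}_{\mathcal{R}_{\varpi_{i,1}}}$ (with the dressed classes the leading term would pick up dressing factors and would no longer match $E_{i,1}[1]F_{i,1}[1]$); and the fact that the isomorphism sends $r_{\alpha_i}$ to the class of $\mathcal{O}_{\mathcal{R}_{\varpi_{i,1}+\varpi_{i,1}^*}}$ is again taken from the proof of Lemma~3.3 of \cite{AS24a}, though your alternative of absorbing any discrepancy into $C_i$ is harmless since such a discrepancy would sit in the Weyl-invariant $K$-theory of the point stratum.
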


\begin{proof}
The isomorphism is proved in Lemma~3.3 of~\cite{AS24a}, and the proof given there shows that $X_i+X_i^{-1}$ maps to $w_{i,+}+w_{i,-}$ for $i=1,\dots,n-1$. To prove the remaining assertion, let $\mathbf{T}=\prod_i\mathbf{T}_i\subset\mathbf{G}$ where $\mathbf{T}_i\subset\mathrm{GL}_2$ is the diagonal torus in the $i$th $\mathrm{GL}_2$ factor of~$\mathbf{G}$. Let us write $X^*(\mathbf{T}_i)=\mathbb{Z}\xi_{i,1}\oplus\mathbb{Z}\xi_{i,2}$ where $\xi_{i,j}$ corresponds to the character $\left(\begin{smallmatrix}t_1 & 0 \\ 0 & t_2\end{smallmatrix}\right)\mapsto t_j$ of~$\mathbf{T}_i$. Since $\varpi_{i,1}$ and $\varpi_{i,1}^*$ correspond to the cocharacters $t\mapsto\left(\begin{smallmatrix}t & 0 \\ 0 & 1\end{smallmatrix}\right)$ and $t\mapsto\left(\begin{smallmatrix}1 & 0 \\ 0 & t^{-1}\end{smallmatrix}\right)$, respectively, we have 
\begin{gather*}
d(\xi_{i,1}(\varpi_{i,1}),\xi_{i,1}(\varpi_{i,1}^*)) = d(1,0) = 0, \\
d(\xi_{i,2}(\varpi_{i,1}),\xi_{i,2}(\varpi_{i,1}^*)) = d(0,-1) = 0,
\end{gather*}
where $d(a,b)$ is the function defined in Section~\ref{sec:TheAssociatedGradedAlgebra}. Then Proposition~\ref{prop:AssociatedGradedMultiplication} implies 
\[
[\mathcal{O}_{\mathcal{R}_{\varpi_{i,1}+\varpi_{i,1}^*}}]=[\mathcal{O}_{\mathcal{R}_{\varpi_{i,1}}}]*[\mathcal{O}_{\mathcal{R}_{\varpi_{i,1}^*}}]
\]
in the associated graded algebra $\gr^\bullet\mathbb{A}$. It follows that 
\[
[\mathcal{O}_{\mathcal{R}_{\varpi_{i,1}+\varpi_{i,1}^*}}]=[\mathcal{O}_{\mathcal{R}_{\varpi_{i,1}}}]*[\mathcal{O}_{\mathcal{R}_{\varpi_{i,1}^*}}]+k_i[\mathcal{O}_{\mathcal{R}_{\varpi_{i,1}}}]+l_i[\mathcal{O}_{\mathcal{R}_{\varpi_{i,1}^*}}]+c_i
\]
in~$\mathbb{A}$ for some coefficients $k_i$,~$l_i$,~$c_i\in\mathbb{C}[\mathbf{T}\times\mathbf{F}\times\mathbb{C}^*]$ with $c_i$ invariant under the Weyl group action. It follows from the last sentence of Section~\ref{sec:MonopoleOperators} that the embedding~\eqref{eqn:localization} maps the right hand side to an expression of the form 
\[
E_{i,1}[1]F_{i,1}[1]+K_iE_{i,1}[1]+L_iF_{i,1}[1]+C_i
\]
for some coefficients $K_i$,~$L_i$,~$C_i\in\mathbb{C}[q^{\pm\frac{1}{2}},z_{k,\pm}^{\pm1},z_a^{\pm1},w_{j,\pm}^{\pm1}]$ where $C_i$ is invariant under~$w_{j,\pm}\mapsto w_{j,\mp}$ for every~$j$. The proof of Lemma~3.3 in~\cite{AS24a} shows that the isomorphism maps $r_{\alpha_i}\in\mathcal{A}$ for the $\alpha_i$ defined in Section~\ref{sec:TheGaugeAndFlavorGroups} to the class of~$\mathcal{O}_{\mathcal{R}_{\varpi_{i,1}+\varpi_{i,1}^*}}$. Since the image consists of $(\mathbb{C}^*)^{n-1}$-invariant elements, we see from the formulas for $E_{i,1}[1]$ and $F_{i,1}[1]$ that $K_i=L_i=0$. This completes the proof.
\end{proof}

\section{Proof of the main result}
\label{sec:ProofOfTheMainResult}

In this section, we prove our main result, Theorem~\ref{thm:intromain}, relating the relative skein algebra of~$S_{0,n+2}$ to a quantized $K$-theoretic Coulomb branch.

\subsection{Two algebra embeddings}
\label{sec:TwoAlgebraEmbeddings}

Recall that in Section~\ref{sec:ThePolynomialRepresentation} we defined a $\mathbb{C}(q^{\frac{1}{2}},t_0^{\frac{1}{2}},\dots,t_{n+1}^{\frac{1}{2}})$-algebra $\mathscr{X}_{q,\mathbf{t}}$ with generators $X_i^{\pm\frac{1}{2}}$, $\varpi_i^{\pm1}$ for $i=1,\dots,n-1$. On the other hand, in Section~\ref{sec:Localization} we defined a $\mathbb{C}[q^{\pm\frac{1}{2}},z_1^{\pm1},\dots,z_n^{\pm1},z_{0,\pm}^{\pm1},z_{n+1,\pm}^{\pm1}]$-algebra $\mathcal{D}_{q,\mathbf{z}}$ with generators $D_{i,\pm}$, $w_{i,\pm}$ for $i=1,\dots,n-1$. There is an action of the torus $(\mathbb{C}^*)^{n-1}$ on the latter algebra where the $i$th $\mathbb{C}^*$-factor acts by simultaneously rescaling the variables $D_{i,+}$ and~$D_{i,-}$.

\begin{lemma}
\label{lem:relateambientrings}
The assignments 
\[
q^{\frac{1}{2}}\mapsto q^{\frac{1}{2}}, \quad z_i\mapsto t_i, \quad z_{0,\pm}\mapsto t_0^{\pm1}, \quad z_{n+1,\pm}\mapsto t_{n+1}^{\pm1} \quad w_{i,\pm}\mapsto X_i^{\pm1}, \quad D_{i,+}D_{i,-}^{-1}\mapsto q^{-4}X_i^{-4}\varpi_i^2
\]
determine a well defined injective $\mathbb{C}$-algebra homomorphism 
\[
\mathcal{D}_{q,\mathbf{z}}^{(\mathbb{C}^*)^{n-1}}/(z_{i,+}z_{i,-}-1,w_{i,+}w_{i,-}-1)\hookrightarrow\mathscr{X}_{q,\mathbf{t}}.
\]
\end{lemma}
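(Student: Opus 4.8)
The plan is to make the structure of both algebras explicit enough that the map becomes a morphism of (localized) quantum tori, then to verify a short finite list of relations for well-definedness and use a monomial basis for injectivity.

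First I would unwind the left-hand side. Since the $i$th factor of $(\mathbb{C}^*)^{n-1}$ rescales only $D_{i,+}$ and $D_{i,-}$ and fixes the $w$'s and $z$'s, the invariant subalgebra $\mathcal{D}_{q,\mathbf{z}}^{(\mathbb{C}^*)^{n-1}}$ is the sub-(localized-)quantum-torus spanned by the monomials of total $D$-degree zero at each node; concretely it is generated over $\mathbb{C}[q^{\pm\frac{1}{2}},z_1^{\pm1},\dots,z_n^{\pm1},z_{0,\pm}^{\pm1},z_{n+1,\pm}^{\pm1}]$ by the $w_{i,\pm}^{\pm1}$ together with $\mathcal{D}_i\coloneqq D_{i,+}D_{i,-}^{-1}$, where the $w$'s and $z$'s are central, the $\mathcal{D}_i$ commute pairwise, and $\mathcal{D}_iw_{j,\pm}=q^{\pm2\delta_{ij}}w_{j,\pm}\mathcal{D}_i$. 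A direct commutation check shows that $w_{i,+}w_{i,-}$ is central in this invariant subalgebra (although it is \emph{not} central in $\mathcal{D}_{q,\mathbf{z}}$ itself), that $z_{0,+}z_{0,-}$ and $z_{n+1,+}z_{n+1,-}$ are obviously central, and that conjugation by $\mathcal{D}_i$ fixes these elements. Hence the ideal in the statement is generated by central elements stable under the crossed-product structure, and the quotient is again a localized quantum torus: it is free, with basis $\{\prod_i\mathcal{D}_i^{k_i}:k\in\mathbb{Z}^{n-1}\}$, over the commutative ring $\bar R$ generated by $q^{\pm\frac{1}{2}}$, the $z_i$, $z_{0,+}^{\pm1}$, $z_{n+1,+}^{\pm1}$, and the $w_{i,+}^{\pm1}$, localized at $w_{i,+}^2-q^\mu$ and $1-q^\mu$ (using $w_{i,-}=w_{i,+}^{-1}$, $z_{0,-}=z_{0,+}^{-1}$, $z_{n+1,-}=z_{n+1,+}^{-1}$).

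With this description, well-definedness reduces to checking that the assignment respects the presentation. The images $X_i=\phi(w_{i,+})$ commute; the images $q^{-4}X_i^{-4}\varpi_i^2=\phi(\mathcal{D}_i)$ commute pairwise, since in $\mathscr{X}_{q,\mathbf{t}}$ the only nontrivial commutation is $\varpi_iX_i^{\frac{1}{2}}=q^{\frac{1}{2}}X_i^{\frac{1}{2}}\varpi_i$, so $\varpi_i^2$ commutes with every $X_j^{-4}$ and every $\varpi_j^2$; and $q^{-4}X_i^{-4}\varpi_i^2\cdot X_j=q^{2\delta_{ij}}X_j\cdot q^{-4}X_i^{-4}\varpi_i^2$ reproduces $\mathcal{D}_iw_{j,+}=q^{2\delta_{ij}}w_{j,+}\mathcal{D}_i$. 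The element $q^{-4}X_i^{-4}\varpi_i^2$ is a product of units, hence invertible, consistent with $\mathcal{D}_i^{\pm1}$; the relation $w_{i,+}w_{i,-}\mapsto X_iX_i^{-1}=1$ holds; and the localizing elements map to units, because $X_i^2-q^\mu=q^{\mu/2}X_i(q^{-\mu/2}X_i-q^{\mu/2}X_i^{-1})$ is inverted in $\mathscr{X}_{q,\mathbf{t}}$ by definition while $1-q^\mu$ is invertible since the scalar ring of $\mathscr{X}_{q,\mathbf{t}}$ is the field $\mathbb{C}(q^{\frac{1}{2}},t_0^{\frac{1}{2}},\dots,t_{n+1}^{\frac{1}{2}})$. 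Thus $\phi$ is a well-defined $\mathbb{C}$-algebra homomorphism. For injectivity I would use the monomial basis: on scalars $\phi$ sends the $2n+2$ algebraically independent generators $q^{\frac{1}{2}},z_1,\dots,z_n,z_{0,+},z_{n+1,+},w_{1,+},\dots,w_{n-1,+}$ of $\bar R$ to the algebraically independent elements $q^{\frac{1}{2}},t_1,\dots,t_n,t_0,t_{n+1},X_1,\dots,X_{n-1}$ inside the commutative subalgebra of $\mathscr{X}_{q,\mathbf{t}}$ generated by the $X_i^{\pm\frac{1}{2}}$ over the scalar field, and the images of $w_{i,+}^2-q^\mu$ and $1-q^\mu$ are nonzero there, so $\bar R$ embeds into $\mathscr{X}_{q,\mathbf{t}}$. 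Writing a general element as $\Phi=\sum_{k\in\mathbb{Z}^{n-1}}c_k\prod_i\mathcal{D}_i^{k_i}$ with $c_k\in\bar R$ almost all zero, one has $\phi(\Phi)=\sum_k\phi(c_k)\,u_k\prod_iX_i^{-4k_i}\varpi_i^{2k_i}$ with each $u_k$ a power of $q$, hence a unit; since $\mathscr{X}_{q,\mathbf{t}}$ is free over that commutative subalgebra with basis $\{\prod_i\varpi_i^{b_i}:b\in\mathbb{Z}^{n-1}\}$, the $k$th summand lies in the rank-one free submodule on $\prod_i\varpi_i^{2k_i}$, these submodules are distinct for distinct $k$, and $\phi(c_k)\neq0$ whenever $c_k\neq0$, so $\phi(\Phi)=0$ forces every $c_k=0$. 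Hence $\phi$ is injective.

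The only genuinely delicate point is the first step — getting the structure of the quotient right, and in particular seeing that $w_{i,+}w_{i,-}$ becomes central after passing to the $(\mathbb{C}^*)^{n-1}$-invariants so that the quotient is a well-behaved (localized) quantum torus — together with the bookkeeping of powers of $q$ when expanding $\phi(\mathcal{D}_i^{k_i})$; once that is in place, Steps~2 and~3 are routine.
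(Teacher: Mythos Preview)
Your proof is correct and follows essentially the same route as the paper's: identify the invariant subalgebra as generated by the $w_{i,\pm}$, the $z$'s, and $\mathcal{D}_i=D_{i,+}D_{i,-}^{-1}$, verify the commutation relations map correctly, and then argue injectivity by noting that the images of the generators $q^{\frac{1}{2}},z_i,z_{0,+},z_{n+1,+},w_{i,+},\mathcal{D}_i$ satisfy no relations in $\mathscr{X}_{q,\mathbf{t}}$. The paper's proof is terser (it simply asserts the images ``satisfy no relations'' and passes to the localization at the end), whereas you spell out the quantum-torus structure, the centrality of $w_{i,+}w_{i,-}$ in the invariant subalgebra, and the monomial-basis argument in detail; but the underlying argument is the same.
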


\begin{proof}
It is easy to check that these assignments map the commutation relations in $(\mathcal{D}_{q,\mathbf{z}}^0)^{(\mathbb{C}^*)^{n-1}}$ to relations in~$\mathscr{X}_{q,\mathbf{t}}$ and therefore provide a well defined $\mathbb{C}$-algebra homomorphism $(\mathcal{D}_{q,\mathbf{z}}^0)^{(\mathbb{C}^*)^{n-1}}\rightarrow\mathscr{X}_{q,\mathbf{t}}$. Moreover, the elements $z_{i,+}z_{i,-}-1$ and $w_{i,+}w_{i,-}-1$ map to zero, so this induces a well defined $\mathbb{C}$-algebra homomorphism 
\begin{equation}
\label{eqn:relateambientrings}
(\mathcal{D}_{q,\mathbf{z}}^0)^{(\mathbb{C}^*)^{n-1}}/(z_{i,+}z_{i,-}-1,w_{i,+}w_{i,-}-1)\rightarrow\mathscr{X}_{q,\mathbf{t}}.
\end{equation}
Suppose $\bar{P}$ is an element of the kernel of this map. Then $\bar{P}$ is the class of some $P\in(\mathcal{D}_{q,\mathbf{z}}^0)^{(\mathbb{C}^*)^{n-1}}$, which we can take to be a Laurent polynomial in the variables $q^{\frac{1}{2}}$, $z_i$, $z_{0,+}$, $z_{n+1,+}$, $w_{i,+}$, and~$D_{i,+}D_{i,-}^{-1}$. The images of these elements under the map~\eqref{eqn:relateambientrings} satisfy no relations, so we necessarily have $P=0$ in~$(\mathcal{D}_{q,\mathbf{z}}^0)^{(\mathbb{C}^*)^n}$. Hence the map~\eqref{eqn:relateambientrings} is injective. It induces a $\mathbb{C}$-algebra homomorphism on the localization $\mathcal{D}_{q,\mathbf{z}}^{(\mathbb{C}^*)^{n-1}}/(z_{i,+}z_{i,-}-1,w_{i,+}w_{i,-}-1)$, which is injective since a fraction maps to zero if and only if its numerator maps to zero.
\end{proof}

In Section~\ref{sec:ThePolynomialRepresentation}, we constructed an injective $\mathbb{C}$-algebra homomorphism 
\[
\Phi:\Sk_{A,\boldsymbol{\lambda}}(S_{0,n+2})\hookrightarrow\mathscr{X}_{q,\mathbf{t}},
\]
which we called the polynomial representation. On the other hand, let us write $\mathbb{M}_{q,\mathbf{z}}$ for the $\mathbb{C}[q^{\pm\frac{1}{2}},z_1^{\pm1},\dots,z_n^{\pm1},z_{0,\pm}^{\pm1},z_{n+1,\pm}^{\pm1}]$-subalgebra of $\mathcal{D}_{q,\mathbf{z}}$ generated by all dressed minuscule monopole operators and symmetric polynomials in the $w_{i,\pm}$ for each index~$i$. Write $\mathcal{M}_{q,\mathbf{z}}\coloneqq\mathbb{M}_{q,\mathbf{z}}^{(\mathbb{C}^*)^{n-1}}/(z_{i,+}z_{i,-}-1,w_{i,+}w_{i,-}-1)$. Then we get an injective $\mathbb{C}$-algebra homomorphism 
\[
\Psi:\mathcal{M}_{q,\mathbf{z}}\hookrightarrow\mathscr{X}_{q,\mathbf{t}}
\]
by restricting the map in Lemma~\ref{lem:relateambientrings}.

\subsection{Comparing the generators}

Recall that in Section~\ref{sec:GeneratorsForSkeinAlgebras} we introduced the skein algebra generators $\gamma_i$ and $\sigma_{i,i+1}$ illustrated in Figure~\ref{fig:generators}. These curves lie in an embedded four-holed sphere $\Sigma_i\subset S_{0,n+2}$. For $1<i<n-1$ this subsurface $\Sigma_i$ is cut out by the curves~$\gamma_{i-1}$ and~$\gamma_{i+1}$. For $i=1$ it is cut out by the curves~$\delta_0$ and~$\gamma_2$, and for $i=n-1$ it is cut out by curves~$\gamma_{n-2}$ and~$\delta_{n+1}$. For each index $i=1,\dots,n-1$, we consider a family of additional generators $\theta_{i,m}$ for $m\in\mathbb{Z}$ represented by curves in~$\Sigma_i$. These curves are illustrated in Figure~\ref{fig:familycurves} where $\theta_{i,0}=\sigma_{i,i+1}$. Note that $\theta_{i,m}$ is obtained from $\theta_{i,0}$ by applying the $m$th power of the Dehn twist around~$\gamma_i$.

\begin{figure}[ht]
\begin{tikzpicture}[scale=1.25]
\clip(-1.75,-0.75) rectangle (1.2,1.2);
\draw[black, thin] plot [smooth, tension=1] coordinates { (0,0.25) (0.2,0.3) (0.25,0.5)};
\draw[black, thin] plot [smooth, tension=1] coordinates { (1,0.25) (0.8,0.3) (0.75,0.5)};
\draw[black, thin] plot [smooth, tension=1] coordinates { (0,0.25) (-0.2,0.3) (-0.25,0.5)};
\draw[black, thin] plot [smooth, tension=1] coordinates { (-1,0.25) (-0.8,0.3) (-0.75,0.5)};
\draw (0,-0.25) -- (-1.1,-0.25);
\draw (0,-0.25) -- (1.1,-0.25);
\draw (-1,0.25) -- (-1.1,0.25);
\draw (1,0.25) -- (1.1,0.25);
\draw[black, thin] plot [smooth, tension=1] coordinates { (-0.25,0.5) (-0.375,0.45) (-0.625,0.45) (-0.75,0.5)};
\draw[black, thin] plot [smooth, tension=1] coordinates { (-0.25,0.5) (-0.375,0.55) (-0.625,0.55) (-0.75,0.5)};
\draw[black, thin] plot [smooth, tension=1] coordinates { (0.25,0.5) (0.375,0.45) (0.625,0.45) (0.75,0.5)};
\draw[black, thin] plot [smooth, tension=1] coordinates { (0.25,0.5) (0.375,0.55) (0.625,0.55) (0.75,0.5)};
\draw[black, thin, dotted] plot [smooth, tension=1] coordinates { (-1,-0.25) (-1.05,-0.125) (-1.05,0.125) (-1,0.25)};
\draw[black, thin] plot [smooth, tension=1] coordinates { (-1,-0.25) (-0.95,-0.125) (-0.95,0.125) (-1,0.25)};
\draw[black, thick] plot [smooth, tension=1] coordinates { (-0.9,0.25) (-0.5,0.15) (-0.1,0.25)};
\draw[black, thick, dotted] plot [smooth, tension=1] coordinates { (0.9,0.25) (0.5,0.15) (0.1,0.25)};
\draw[black, thick, dotted] plot [smooth, tension=1] coordinates { (-0.25,-0.25) (-0.65,-0.05) (-0.9,0.25)};
\draw[black, thick] plot [smooth, tension=1] coordinates { (-0.25,-0.25) (0,-0.05) (0.1,0.25)};
\draw[black, thick] plot [smooth, tension=1] coordinates { (0.25,-0.25) (0.65,-0.05) (0.9,0.25)};
\draw[black, thick, dotted] plot [smooth, tension=1] coordinates { (0.25,-0.25) (0,-0.05) (-0.1,0.25)};
\draw[black, thin, dotted] plot [smooth, tension=1] coordinates { (1,-0.25) (0.95,-0.125) (0.95,0.125) (1,0.25)};
\draw[black, thin] plot [smooth, tension=1] coordinates { (1,-0.25) (1.05,-0.125) (1.05,0.125) (1,0.25)};
\node[below] at (0,-0.25) {\tiny $\theta_{i,-2}$};
\node at (-1.5,0) {\tiny $\dots$};
\end{tikzpicture}
\begin{tikzpicture}[scale=1.25]
\clip(-1.2,-0.75) rectangle (1.2,1.2);
\draw[black, thin] plot [smooth, tension=1] coordinates { (0,0.25) (0.2,0.3) (0.25,0.5)};
\draw[black, thin] plot [smooth, tension=1] coordinates { (1,0.25) (0.8,0.3) (0.75,0.5)};
\draw[black, thin] plot [smooth, tension=1] coordinates { (0,0.25) (-0.2,0.3) (-0.25,0.5)};
\draw[black, thin] plot [smooth, tension=1] coordinates { (-1,0.25) (-0.8,0.3) (-0.75,0.5)};
\draw (0,-0.25) -- (-1.1,-0.25);
\draw (0,-0.25) -- (1.1,-0.25);
\draw (-1,0.25) -- (-1.1,0.25);
\draw (1,0.25) -- (1.1,0.25);
\draw[black, thin] plot [smooth, tension=1] coordinates { (-0.25,0.5) (-0.375,0.45) (-0.625,0.45) (-0.75,0.5)};
\draw[black, thin] plot [smooth, tension=1] coordinates { (-0.25,0.5) (-0.375,0.55) (-0.625,0.55) (-0.75,0.5)};
\draw[black, thin] plot [smooth, tension=1] coordinates { (0.25,0.5) (0.375,0.45) (0.625,0.45) (0.75,0.5)};
\draw[black, thin] plot [smooth, tension=1] coordinates { (0.25,0.5) (0.375,0.55) (0.625,0.55) (0.75,0.5)};
\draw[black, thin, dotted] plot [smooth, tension=1] coordinates { (-1,-0.25) (-1.05,-0.125) (-1.05,0.125) (-1,0.25)};
\draw[black, thin] plot [smooth, tension=1] coordinates { (-1,-0.25) (-0.95,-0.125) (-0.95,0.125) (-1,0.25)};
\draw[black, thick] plot [smooth, tension=1] coordinates { (0,-0.25) (0.65,-0.1) (0.9,0.25)};
\draw[black, thick, dotted] plot [smooth, tension=1] coordinates { (0,-0.25) (-0.65,-0.1) (-0.9,0.25)};
\draw[black, thick] plot [smooth, tension=1] coordinates { (-0.9,0.25) (-0.45,0.15) (0,0.25)};
\draw[black, thick, dotted] plot [smooth, tension=1] coordinates { (0.9,0.25) (0.45,0.15) (0,0.25)};
\draw[black, thin, dotted] plot [smooth, tension=1] coordinates { (1,-0.25) (0.95,-0.125) (0.95,0.125) (1,0.25)};
\draw[black, thin] plot [smooth, tension=1] coordinates { (1,-0.25) (1.05,-0.125) (1.05,0.125) (1,0.25)};
\node[below] at (0,-0.25) {\tiny $\theta_{i,-1}$};
\end{tikzpicture}
\begin{tikzpicture}[scale=1.25]
\clip(-1.2,-0.75) rectangle (1.2,1.2);
\draw[black, thin] plot [smooth, tension=1] coordinates { (0,0.25) (0.2,0.3) (0.25,0.5)};
\draw[black, thin] plot [smooth, tension=1] coordinates { (1,0.25) (0.8,0.3) (0.75,0.5)};
\draw[black, thin] plot [smooth, tension=1] coordinates { (0,0.25) (-0.2,0.3) (-0.25,0.5)};
\draw[black, thin] plot [smooth, tension=1] coordinates { (-1,0.25) (-0.8,0.3) (-0.75,0.5)};
\draw (0,-0.25) -- (-1.1,-0.25);
\draw (0,-0.25) -- (1.1,-0.25);
\draw (-1,0.25) -- (-1.1,0.25);
\draw (1,0.25) -- (1.1,0.25);
\draw[black, thin] plot [smooth, tension=1] coordinates { (-0.25,0.5) (-0.375,0.45) (-0.625,0.45) (-0.75,0.5)};
\draw[black, thin] plot [smooth, tension=1] coordinates { (-0.25,0.5) (-0.375,0.55) (-0.625,0.55) (-0.75,0.5)};
\draw[black, thin] plot [smooth, tension=1] coordinates { (0.25,0.5) (0.375,0.45) (0.625,0.45) (0.75,0.5)};
\draw[black, thin] plot [smooth, tension=1] coordinates { (0.25,0.5) (0.375,0.55) (0.625,0.55) (0.75,0.5)};
\draw[black, thin, dotted] plot [smooth, tension=1] coordinates { (-1,-0.25) (-1.05,-0.125) (-1.05,0.125) (-1,0.25)};
\draw[black, thin] plot [smooth, tension=1] coordinates { (-1,-0.25) (-0.95,-0.125) (-0.95,0.125) (-1,0.25)};
\draw[black, thick] plot [smooth, tension=1] coordinates { (0.9,0.25) (0,-0.1) (-0.9,0.25)};
\draw[black, thick, dotted] plot [smooth, tension=1] coordinates { (-0.9,0.25) (0,0.05) (0.9,0.25)};
\draw[black, thin, dotted] plot [smooth, tension=1] coordinates { (1,-0.25) (0.95,-0.125) (0.95,0.125) (1,0.25)};
\draw[black, thin] plot [smooth, tension=1] coordinates { (1,-0.25) (1.05,-0.125) (1.05,0.125) (1,0.25)};
\node[below] at (0,-0.25) {\tiny $\theta_{i,0}$};
\end{tikzpicture}
\begin{tikzpicture}[scale=1.25]
\clip(-1.2,-0.75) rectangle (1.2,1.2);
\draw[black, thin] plot [smooth, tension=1] coordinates { (0,0.25) (0.2,0.3) (0.25,0.5)};
\draw[black, thin] plot [smooth, tension=1] coordinates { (1,0.25) (0.8,0.3) (0.75,0.5)};
\draw[black, thin] plot [smooth, tension=1] coordinates { (0,0.25) (-0.2,0.3) (-0.25,0.5)};
\draw[black, thin] plot [smooth, tension=1] coordinates { (-1,0.25) (-0.8,0.3) (-0.75,0.5)};
\draw (0,-0.25) -- (-1.1,-0.25);
\draw (0,-0.25) -- (1.1,-0.25);
\draw (-1,0.25) -- (-1.1,0.25);
\draw (1,0.25) -- (1.1,0.25);
\draw[black, thin] plot [smooth, tension=1] coordinates { (-0.25,0.5) (-0.375,0.45) (-0.625,0.45) (-0.75,0.5)};
\draw[black, thin] plot [smooth, tension=1] coordinates { (-0.25,0.5) (-0.375,0.55) (-0.625,0.55) (-0.75,0.5)};
\draw[black, thin] plot [smooth, tension=1] coordinates { (0.25,0.5) (0.375,0.45) (0.625,0.45) (0.75,0.5)};
\draw[black, thin] plot [smooth, tension=1] coordinates { (0.25,0.5) (0.375,0.55) (0.625,0.55) (0.75,0.5)};
\draw[black, thin, dotted] plot [smooth, tension=1] coordinates { (-1,-0.25) (-1.05,-0.125) (-1.05,0.125) (-1,0.25)};
\draw[black, thin] plot [smooth, tension=1] coordinates { (-1,-0.25) (-0.95,-0.125) (-0.95,0.125) (-1,0.25)};
\draw[black, thick, dotted] plot [smooth, tension=1] coordinates { (0,-0.25) (0.65,-0.1) (0.9,0.25)};
\draw[black, thick] plot [smooth, tension=1] coordinates { (0,-0.25) (-0.65,-0.1) (-0.9,0.25)};
\draw[black, thick, dotted] plot [smooth, tension=1] coordinates { (-0.9,0.25) (-0.45,0.15) (0,0.25)};
\draw[black, thick] plot [smooth, tension=1] coordinates { (0.9,0.25) (0.45,0.15) (0,0.25)};
\draw[black, thin, dotted] plot [smooth, tension=1] coordinates { (1,-0.25) (0.95,-0.125) (0.95,0.125) (1,0.25)};
\draw[black, thin] plot [smooth, tension=1] coordinates { (1,-0.25) (1.05,-0.125) (1.05,0.125) (1,0.25)};
\node[below] at (0,-0.25) {\tiny $\theta_{i,1}$};
\end{tikzpicture}
\begin{tikzpicture}[scale=1.25]
\clip(-1.2,-0.75) rectangle (1.75,1.2);
\draw[black, thin] plot [smooth, tension=1] coordinates { (0,0.25) (0.2,0.3) (0.25,0.5)};
\draw[black, thin] plot [smooth, tension=1] coordinates { (1,0.25) (0.8,0.3) (0.75,0.5)};
\draw[black, thin] plot [smooth, tension=1] coordinates { (0,0.25) (-0.2,0.3) (-0.25,0.5)};
\draw[black, thin] plot [smooth, tension=1] coordinates { (-1,0.25) (-0.8,0.3) (-0.75,0.5)};
\draw (0,-0.25) -- (-1.1,-0.25);
\draw (0,-0.25) -- (1.1,-0.25);
\draw (-1,0.25) -- (-1.1,0.25);
\draw (1,0.25) -- (1.1,0.25);
\draw[black, thin] plot [smooth, tension=1] coordinates { (-0.25,0.5) (-0.375,0.45) (-0.625,0.45) (-0.75,0.5)};
\draw[black, thin] plot [smooth, tension=1] coordinates { (-0.25,0.5) (-0.375,0.55) (-0.625,0.55) (-0.75,0.5)};
\draw[black, thin] plot [smooth, tension=1] coordinates { (0.25,0.5) (0.375,0.45) (0.625,0.45) (0.75,0.5)};
\draw[black, thin] plot [smooth, tension=1] coordinates { (0.25,0.5) (0.375,0.55) (0.625,0.55) (0.75,0.5)};
\draw[black, thin, dotted] plot [smooth, tension=1] coordinates { (-1,-0.25) (-1.05,-0.125) (-1.05,0.125) (-1,0.25)};
\draw[black, thin] plot [smooth, tension=1] coordinates { (-1,-0.25) (-0.95,-0.125) (-0.95,0.125) (-1,0.25)};
\draw[black, thick, dotted] plot [smooth, tension=1] coordinates { (-0.9,0.25) (-0.5,0.15) (-0.1,0.25)};
\draw[black, thick] plot [smooth, tension=1] coordinates { (0.9,0.25) (0.5,0.15) (0.1,0.25)};
\draw[black, thick] plot [smooth, tension=1] coordinates { (-0.25,-0.25) (-0.65,-0.05) (-0.9,0.25)};
\draw[black, thick, dotted] plot [smooth, tension=1] coordinates { (-0.25,-0.25) (0,-0.05) (0.1,0.25)};
\draw[black, thick, dotted] plot [smooth, tension=1] coordinates { (0.25,-0.25) (0.65,-0.05) (0.9,0.25)};
\draw[black, thick] plot [smooth, tension=1] coordinates { (0.25,-0.25) (0,-0.05) (-0.1,0.25)};
\draw[black, thin, dotted] plot [smooth, tension=1] coordinates { (1,-0.25) (0.95,-0.125) (0.95,0.125) (1,0.25)};
\draw[black, thin] plot [smooth, tension=1] coordinates { (1,-0.25) (1.05,-0.125) (1.05,0.125) (1,0.25)};
\node[below] at (0,-0.25) {\tiny $\theta_{i,2}$};
\node at (1.5,0) {\tiny $\dots$};
\end{tikzpicture}
\caption{A family of curves on~$S_{0,n+2}$.\label{fig:familycurves}}
\end{figure}
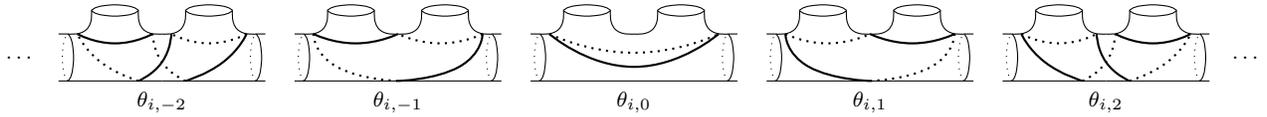

\begin{lemma}
\label{lem:imagefamily}
The polynomial representation $\Phi$ maps 
\[
\theta_{i,m}\mapsto q^mX_i^mT_i(X_i)(\tau_i-1)+q^mX_i^{-m}T_i(X_i^{-1})(\tau_i^{-1}-1)+f_m
\]
where $T_i$ and $\tau_i$ are the operators defined in Proposition~\ref{prop:skeinoperatorvalues} and $f_m\in\mathbb{C}[q^{\pm\frac{1}{2}},t_k^{\pm1},X_j^{\pm1}]$ is invariant under $X_j\mapsto X_j^{-1}$.
\end{lemma}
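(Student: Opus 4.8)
The plan is to exploit that $\theta_{i,m}$ is the image of $\theta_{i,0}=\sigma_{i,i+1}$ under the $m$-th power of the Dehn twist $t_{\gamma_i}$ and to track how this Dehn twist interacts with the quantum trace $\Upsilon$. Since $\theta_{i,m}$ is a simple closed curve, $\dagger(\theta_{i,m})=\theta_{i,m}$, so by definition of $\Phi$ it suffices to compute $\Upsilon(\theta_{i,m})$ and apply the anti-isomorphism $*$ of Lemma~\ref{lem:antihomoms}. The key geometric input concerns the handlebody $M$ used to define $\Upsilon$: the ribbon graph of Figure~\ref{fig:ribbongraph} is a tree, so $M$ is a ball and $\gamma_i$ is a meridian of the spine edge $e_i$, bounding an embedded disk $D_i\subset M$ that meets $\Gamma$ transversally in a single interior point of $e_i$. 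Hence $t_{\gamma_i}$ extends over $M$ to the twist about $D_i$, and since $\varphi_{\Gamma,c}$ runs along $e_i$ as the strand coloured by $c(e_i)$ cabled by a Jones--Wenzl idempotent, this twist scales $\varphi_{\Gamma,c}$ by the ribbon twist coefficient $\mu_{c(e_i)}$ of that colour (an explicit monomial in $A$ up to sign).

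From this, arguing as in the derivation of Proposition~\ref{prop:skeinalgebraaction} while keeping track of the extra twisting, one obtains for every loop $\gamma$ on $S$ the identity
\[
F_k^{\,t_{\gamma_i}^{\,m}\gamma}\!\big(A^{c(i)}\big)=\Big(\tfrac{\mu_{\,c(e_i)+k(e_i)}}{\mu_{\,c(e_i)}}\Big)^{m}\,F_k^{\,\gamma}\!\big(A^{c(i)}\big).
\]
For $\gamma=\sigma_{i,i+1}$ only the shifts $k(e_i)\in\{-2,0,2\}$ occur, by Proposition~\ref{prop:tracesigma}, and the ratios $\mu_{c\pm 2}/\mu_{c}$ are explicit Laurent monomials in $A^{c}$, hence become Laurent monomials in $Q_i$ after the substitution $x_{e_i}=Q_i$. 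Therefore $\Upsilon(\theta_{i,m})$ is obtained from $\Upsilon(\sigma_{i,i+1})=E_i^2H_2+H_0+E_i^{-2}H_{-2}$ by multiplying $H_2$ and $H_{-2}$ by the $m$-th powers of these monomials and leaving $H_0$ unchanged (the shift $k(e_i)=0$ gives the trivial ratio). Pushing this through $*$, using $*(E_i^{\pm 2})=\tau_i^{\pm 1}$, $*(H_2)=T_i(X_i)$, $*(H_{-2})=T_i(X_i^{-1})$, and the value of $*(H_0)$ read off by comparing $\Phi(\sigma_{i,i+1})=*(\Upsilon(\sigma_{i,i+1}))$ with Proposition~\ref{prop:skeinoperatorvalues}, we find that the coefficient of $\tau_i$ gets multiplied by $q^{m}X_i^{m}$, the coefficient of $\tau_i^{-1}$ by $q^{m}X_i^{-m}$, and the part carrying no $\tau_i^{\pm 1}$ is unchanged.

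Regrouping into the form displayed in the lemma then identifies $f_m=(q^{m}X_i^{m}-1)T_i(X_i)+(q^{m}X_i^{-m}-1)T_i(X_i^{-1})-qt_it_{i+1}-q^{-1}t_i^{-1}t_{i+1}^{-1}$, which is symmetric under $X_j\mapsto X_j^{-1}$ (the two $T_i$-terms are interchanged, and $T_i$ is already symmetric in $X_{i-1}$ and $X_{i+1}$) and, as one checks using the explicit form of $T_i$, has the denominators of $T_i(X_i^{\pm1})$ cancelled by the factors $q^{m}X_i^{\pm m}-1$, so that $f_m\in\mathbb{C}[q^{\pm\frac{1}{2}},t_k^{\pm1},X_j^{\pm1}]$. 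The cases $i=1$ and $i=n-1$ follow verbatim after the substitutions $X_0\mapsto t_0$ and $X_n\mapsto t_{n+1}$, as in Proposition~\ref{prop:skeinoperatorvalues}.

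The main obstacle is the geometric input of the first paragraph: one must check carefully that $t_{\gamma_i}$ extends over the ball $M$ as the meridian twist about $D_i$ and acts on the basis $\{\varphi_{\Gamma,c}\}$ by the scalar $\mu_{c(e_i)}$ --- equivalently, that the surplus twisting of $\theta_{i,m}$ relative to $\sigma_{i,i+1}$, being localized near $e_i$, contributes exactly $\mu_{c(e_i)}^{m}$ per basis vector. This can instead be carried out combinatorially, absorbing the extra half-twists of $\theta_{i,m}$ near $e_i$ using the fusion rules of Figure~\ref{fig:halftwist}, so that the computation of $\Upsilon(\theta_{i,m})$ becomes the one in the proof of Proposition~\ref{prop:tracesigma} with the appropriate colour-dependent twist factors inserted. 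A more self-contained alternative is to resolve the two crossings of $\gamma_i\cdot\theta_{i,m}$ by the Kauffman skein relation, obtaining a second-order recursion expressing $\gamma_i\cdot\theta_{i,m}$ in terms of $\theta_{i,m+1}$, $\theta_{i,m-1}$, and terms in the subalgebra generated by $\gamma_i$ and the boundary curves, and then to induct starting from Proposition~\ref{prop:skeinoperatorvalues}; the remaining care there is in pinning down the lower-order terms and supplying a second base case from the symmetry exchanging $\theta_{i,m}$ and $\theta_{i,-m}$.
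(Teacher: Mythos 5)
Your main argument rests on the identification $\theta_{i,m}=t_{\gamma_i}^m(\theta_{i,0})$ together with the claim that the disk twist scales $\varphi_{\Gamma,c}$ by the twist eigenvalue $\mu_{c}=(-1)^{c}A^{c^2+2c}$, and this combination fails quantitatively. The twist mechanism itself is fine: a homeomorphism of the handlebody restricting to $t_{\gamma_i}$ on $S$ does act on $\varphi_c$ by $\mu_{c(e_i)}^{\pm1}$, and your formula $F_k^{t_{\gamma_i}^m\gamma}=(\mu_{c+k}/\mu_c)^m F_k^\gamma$ is correct. But $\mu_{c+2}/\mu_c=A^{4c+8}$, which under the substitution $Q_i=A^{c}$ and the map $*$ becomes $q^2X_i^2$ per twist, whereas the lemma (and the paper's explicit $m=1$ computation, which gives $\tau_i$-coefficient $qX_iT_i(X_i)$) requires the per-step factor $qX_i$. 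The resolution is that one unit of $m$ is only ``half'' a Dehn twist in Dehn--Thurston terms: $i(\theta_{i,0},\gamma_i)=2$, so $t_{\gamma_i}$ shifts the twist coordinate by $2$ and sends $\theta_{i,m}$ to $\theta_{i,m+2}$, consistent with the skein recursion $\gamma_i\theta_{i,M}=A^2\theta_{i,M+1}+A^{-2}\theta_{i,M-1}+\cdots$ and with the figures. In fact $\theta_{i,1}$ separates a different partition of the boundary components than $\theta_{i,0}$ (it is the ``third'' curve of the four-holed sphere $\Sigma_i$, with $i(\theta_{i,1},\theta_{i,0})=2$, whereas $i(t_{\gamma_i}^{\pm1}(\theta_{i,0}),\theta_{i,0})=4$), so no power of $t_{\gamma_i}$ produces the odd-$m$ curves at all. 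The paper's prose remark that $\theta_{i,m}$ is the $m$-th Dehn twist image is loose and is never used in its proof; taken literally, as you do, it leads to multipliers $q^{2m}X_i^{\pm2m}$, i.e.\ your first argument computes $\Phi(\theta_{i,2m})$ and contradicts the statement for odd $m$. The closed formula you extract for $f_m$ inherits this problem and is also not manifestly a Laurent polynomial (for $m=1$ the factor $qX_i-1$ cancels only one factor of $1-q^2X_i^2$, leaving $(1-X_i^2)(1+qX_i)$ in the denominator of that term), so it would need independent verification in any case.

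Your ``more self-contained alternative'' is essentially the paper's actual proof, but as sketched it omits the step that carries the real content. The paper first pins down $m=1$ from the commutator identity $A^2\gamma_i\theta_{i,0}-A^{-2}\theta_{i,0}\gamma_i=(A^4-A^{-4})\theta_{i,1}-(A^2-A^{-2})L_i$ (with $L_i$ an explicit combination of $\gamma$'s and $\delta$'s), reading off the $\tau_i^{\pm1}$-coefficients $qX_i^{\pm1}T_i(X_i^{\pm1})$ via Proposition~\ref{prop:skeinoperatorvalues}, and only then runs the second-order recursion $\gamma_i\theta_{i,M}=A^2\theta_{i,M+1}+A^{-2}\theta_{i,M-1}+B_{i,M}$ (with $B_{i,M}$ a $\mathbb{C}[\lambda^{\pm1}]$-combination of the $\gamma_j$) upward, and a parallel induction for $m\le0$. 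In your version the second base case is left to ``the symmetry exchanging $\theta_{i,m}$ and $\theta_{i,-m}$,'' which is not constructed and which by itself cannot determine $\Phi(\theta_{i,\pm1})$: a symmetry only relates the two unknowns, and one still has to eliminate $\theta_{i,-1}$ between $\gamma_i\theta_{i,0}$ and $\theta_{i,0}\gamma_i$ (the commutator trick) or perform an equivalent explicit computation. So the fallback is workable, but to be a proof you must (i) justify the form of the lower-order terms $L_i$ and $B_{i,M}$ from the resolutions of the two crossings, and (ii) actually carry out the $m=\pm1$ computation rather than appeal to an unproved symmetry or to the Dehn-twist scaling discussed above.
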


\begin{proof}
It follows from the skein relation that we have an identity of the form 
\[
A^2\gamma_i\theta_{i,0}-A^{-2}\theta_{i,0}\gamma_i=(A^4-A^{-4})\theta_{i,1}-(A^2-A^{-2})L_i
\]
in $\Sk_A(S_{0,n+2})$. For $1<i<n-1$, the factor $L_i$ is given by $L_i=\gamma_{i-1}\delta_{i+1}+\gamma_{i+1}\delta_i$. For $i=1$ it is given by the same expression with $\gamma_{i-1}$ replaced by $\delta_0$, and for $i=n-1$ it is given by the same expression with $\gamma_{i+1}$ replaced by $\delta_{n+1}$. The polynomial representation extends to an injective $\mathbb{C}$-algebra homomorphism $\Phi:\mathscr{S}_{A,\boldsymbol{\lambda}}(S_{0,n+2})\rightarrow\mathscr{X}_{q,\mathbf{t}}$. Let us write $\widehat{g}\coloneqq\Phi(g)$ for any $g\in\mathscr{S}_{A,\boldsymbol{\lambda}}(S_{0,n+2})$. Then the above identity implies 
\begin{equation}
\label{eqn:m=1operator}
\widehat{\theta}_{i,1}=\frac{1}{q^{-2}-q^2}\left(q^{-1}\widehat{\gamma}_i\widehat{\theta}_{i,0}-q\widehat{\theta}_{i,0}\widehat{\gamma}_i+(q^{-1}-q)\widehat{L}_i\right).
\end{equation}
By applying Proposition~\ref{prop:skeinoperatorvalues} to the right hand side of this expression, we see that $\widehat{\theta}_{i,1}$ lies in the $\mathbb{C}(q^{\frac{1}{2}},t_0^{\frac{1}{2}},\dots,t_{n+1}^{\frac{1}{2}},X_1,\dots,X_{n-1})$-vector space spanned by 1,~$\tau_i$ and~$\tau_i^{-1}$. Thus we can write 
\[
\widehat{\theta}_{i,1}=C_1(\tau_i-1)+C_2(\tau_i^{-1}-1)+C_3
\]
for unique coefficients $C_1$,~$C_2$,~$C_3\in\mathbb{C}(q^{\frac{1}{2}},t_0^{\frac{1}{2}},\dots,t_{n+1}^{\frac{1}{2}},X_1,\dots,X_{n-1})$. From the expressions for $\widehat{\gamma}_i$ and $\widehat{\theta}_{i,0}=\widehat{\sigma}_{i,i+1}$ given in Proposition~\ref{prop:skeinoperatorvalues}, one sees that $C_3=\widehat{\theta}_{i,1}|_{\tau_i=1}$ is a Laurent polynomial in the~$X_j$, invariant under $X_j\mapsto X_j^{-1}$. Using Proposition~\ref{prop:skeinoperatorvalues}, one can compute the coefficients of~$\tau_i$ and~$\tau_i^{-1}$ on the right hand side of~\eqref{eqn:m=1operator}, and one finds that $C_1=qX_iT_i(X_i)$ and $C_2=qX_i^{-1}T_i(X_i^{-1})$. Thus the statement in the lemma is true for $m=1$. It is true for $m=0$ by Proposition~\ref{prop:skeinoperatorvalues}.

Fix an integer $M\geq1$ and assume the statement is true for $0\leq m\leq M$. Using the skein relations, one can show that $\gamma_i\theta_{i,M}=A^2\theta_{i,M+1}+A^{-2}\theta_{i,M-1}+B_{i,M}$ where $B_{i,M}$ is a $\mathbb{C}[\lambda_0^{\pm1},\dots,\lambda_{n+1}^{\pm1}]$-linear combination of the~$\gamma_j$. It follows from our assumption that $\Phi$ maps 
\begin{align*}
\theta_{i,M+1} &= A^{-2}\gamma_i\theta_{i,M}-A^{-4}\theta_{i,M-1}-A^{-2}B_{i,M} \\
&\mapsto q^{M+1}X_i^{M+1}T_i(X_i)(\tau_i-1)+q^{M+1}X_i^{-M-1}T_i(X_i^{-1})(\tau_i^{-1}-1)+f_{M+1}
\end{align*}
where $f_{M+1}=q(X_i+X_i^{-1})f_M-q^2f_{M-1}-q\widehat{B}_{i,M}$. By induction, the statement in the lemma is true for all integers $m\geq0$. A similar inductive argument shows that the statement is true for all integers $m\leq0$.
\end{proof}

We now compare the images of the skein algebra generators under the embedding~$\Phi$ with the images of elements of the Coulomb branch under the embedding~$\Psi$.

\begin{lemma}
\label{lem:compareembeddings}
The maps $\Phi$ and $\Psi$ satisfy
\begin{gather*}
\Phi(A)=\Psi(q^{-\frac{1}{2}}), \quad \Phi(\lambda_0)=\Psi(z_{0,+}), \quad \Phi(\lambda_{n+1})=\Psi(z_{n+1,+}), \\
\Phi(\lambda_i)=\Psi(z_i) \quad\text{for $i=1,\dots,n$}.
\end{gather*}
If $\gamma_i$ is defined as in Figure~\ref{fig:generators} and $\theta_{i,m}$ is defined as in Figure~\ref{fig:familycurves}, then we have 
\begin{gather*}
\Phi(\gamma_i)=\Psi(-w_{i,+}-w_{i,-}) \quad\text{for $i=1,\dots,n-1$}, \\
\Phi(\theta_{i,m})=\Psi(q^mB_iE_{i,1}[x^{m-2}]F_{i,1}[1]+C_{i,m}) \quad\text{for $1\leq i\leq n-1$ even}, \\
\Phi(\theta_{i,m})=\Psi(q^{4-m}B_iF_{i,1}[x^{2-m}]E_{i,1}[1]+C_{i,m}) \quad\text{for $1\leq i\leq n-1$ odd},
\end{gather*}
where $B_i=-q^{-1}z_i^{-1}z_{i+1}^{-1}$ and $C_{i,m}\in\mathbb{C}[q^{\pm\frac{1}{2}},z_k^{\pm1},z_{0,\pm}^{\pm1},z_{n+1,\pm}^{\pm1},w
_{j,\pm}^{\pm1}]$ is invariant under $w_{j,\pm}\mapsto w_{j,\mp}$.
\end{lemma}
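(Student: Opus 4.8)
The plan is to verify the listed equalities one family at a time, always comparing inside the common target $\mathscr{X}_{q,\mathbf{t}}$. The identities for $A$, the $\lambda_i$, and the $\gamma_i$ are immediate: Proposition~\ref{prop:skeinoperatorvalues} gives $\Phi(A)=q^{-\frac12}$, $\Phi(\lambda_i)=t_i$ and $\Phi(\gamma_i)=-X_i-X_i^{-1}$, while $\Psi$ is by definition the restriction of the homomorphism of Lemma~\ref{lem:relateambientrings}, which sends $q^{-\frac12}\mapsto q^{-\frac12}$, $z_i\mapsto t_i$, $z_{0,\pm}\mapsto t_0^{\pm1}$, $z_{n+1,\pm}\mapsto t_{n+1}^{\pm1}$ and $w_{i,\pm}\mapsto X_i^{\pm1}$; matching the two lists gives these formulas.

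For the curves $\theta_{i,m}$ I would compute both sides explicitly and compare. The left side is supplied by Lemma~\ref{lem:imagefamily}, which expresses $\Phi(\theta_{i,m})$ as $q^{m}X_i^{m}T_i(X_i)(\tau_i-1)+q^{m}X_i^{-m}T_i(X_i^{-1})(\tau_i^{-1}-1)+f_m$, with $T_i,\tau_i$ as in Proposition~\ref{prop:skeinoperatorvalues} and $f_m$ a Laurent polynomial invariant under $X_j\mapsto X_j^{-1}$. For the right side I would substitute the monopole formulas of Section~\ref{sec:MonopoleOperators} into the product $E_{i,1}[x^{m-2}]F_{i,1}[1]$ (and into $F_{i,1}[x^{2-m}]E_{i,1}[1]$ when $i$ is odd), normal-order the divided-power operators $D_{i,\pm}^{\pm1}$ to the right using $D_{i,\varepsilon}w_{j,\delta}=q^{2\delta_{ij}\delta_{\varepsilon\delta}}w_{j,\delta}D_{i,\varepsilon}$, and then apply $\Psi$. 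The only surviving $D$-monomials are $1$, $D_{i,+}D_{i,-}^{-1}$ and $D_{i,-}D_{i,+}^{-1}$, sent by $\Psi$ to $1$, to $q^{-2}X_i^{-2}\tau_i$, and to $q^{-2}X_i^{2}\tau_i^{-1}$ respectively (using $q^{-4}X_i^{-4}\varpi_i^2=q^{-2}X_i^{-2}\tau_i$ and $\varpi_iX_i=qX_i\varpi_i$); the dressing $x^{m-2}$ only contributes a factor $w_{i,\varepsilon}^{m-2}$ standing to the left of all the $D$'s, so the computation is uniform in $m$. Hence $\Psi\bigl(q^mB_iE_{i,1}[x^{m-2}]F_{i,1}[1]\bigr)$ also takes the uniquely determined form $P_+\tau_i+P_-\tau_i^{-1}+P_0$ with $P_\pm,P_0$ rational in the $X_j$: the two off-diagonal $D$-monomials produce $P_\pm$ and the two diagonal terms $\varepsilon=\varepsilon'$ produce $P_0$. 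The crux is the identity $P_+=q^mX_i^mT_i(X_i)$ and $P_-=q^mX_i^{-m}T_i(X_i^{-1})$: after transporting $\mathcal{P}_{i,\varepsilon}$ past the relevant $D_{i,\varepsilon'}$ and substituting the $w$'s and $z$'s by $X$'s and $t$'s, $\mathcal{P}_{i,\varepsilon}|_\Psi$ furnishes precisely the numerator of $T_i(X_i^{\varepsilon})$, while the $q$-shifted denominator coming from $D_{i,\varepsilon}\mathcal{Q}_{i,-\varepsilon}D_{i,\varepsilon}^{-1}$ together with the conversion factor $q^{-2}X_i^{\mp2}$ and the prefactor $q^mB_i$ rebuild the full rational function $q^mX_i^{\pm m}T_i(X_i^{\pm1})$.

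Granting this, $\Phi(\theta_{i,m})-\Psi\bigl(q^mB_iE_{i,1}[x^{m-2}]F_{i,1}[1]\bigr)=P_0^{\Phi}-P_0^{\Psi}$ is a multiplication operator carrying no $\varpi_i$; it is invariant under each $X_j\mapsto X_j^{-1}$ (the combination $q^mX_i^mT_i(X_i)+q^mX_i^{-m}T_i(X_i^{-1})$ and the term $f_m$ are symmetric, and $X_i\mapsto X_i^{-1}$ exchanges the two diagonal monopole terms), and it is in fact a Laurent polynomial — either because $\Phi(\theta_{i,m})$ and $\Psi(\cdot)$ both preserve the Laurent-polynomial lattice underlying the two representations, or by a direct residue check at the only possible poles $X_i^2\in\{1,q^{\pm2}\}$ showing that the principal parts cancel. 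Since $\Psi$ carries the symmetric Laurent polynomials in the $w_{j,\pm}$ and $z$-variables (modulo $w_{j,+}w_{j,-}=1$, $z_{k,+}z_{k,-}=1$) onto the symmetric Laurent polynomials in the $X_j,t_k$, this difference equals $\Psi(C_{i,m})$ for a unique $C_{i,m}$ in the stated ring invariant under $w_{j,\pm}\mapsto w_{j,\mp}$, which is the $C_{i,m}$ of the statement. The odd-$i$ case is the same calculation with $E_{i,1}$ and $F_{i,1}$ interchanged — this is the source of the two displayed formulas, the arrow orientations in Figure~\ref{fig:typeAquiver} being opposite for even and odd indices — and the boundary cases $i=1,n-1$ only replace certain $w_{j,\pm}$ by $z$-variables. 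I expect the main obstacle to be precisely the identification of $P_\pm$ with $q^mX_i^{\pm m}T_i(X_i^{\pm1})$: this is a genuine, multi-step (if ultimately routine) computation, and the various case distinctions add bookkeeping without contributing new ideas.
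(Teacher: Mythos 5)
Your proposal is correct and follows essentially the same route as the paper: match Proposition~\ref{prop:skeinoperatorvalues} against the definition of $\Psi$ in Lemma~\ref{lem:relateambientrings} for the scalar and $\gamma_i$ identities, expand $E_{i,1}[x^{m-2}]F_{i,1}[1]$ (resp.\ $F_{i,1}[x^{2-m}]E_{i,1}[1]$) via the monopole formulas, apply $\Psi$, identify the $\tau_i^{\pm1}$-coefficients with $q^mX_i^{\pm m}T_i(X_i^{\pm1})$, and take $C_{i,m}$ to be the unique symmetric preimage of the leftover compared with Lemma~\ref{lem:imagefamily}. The only (cosmetic) difference is that the paper also evaluates the diagonal $\varepsilon_1=\varepsilon_2$ terms explicitly, so the remainder is exactly the Laurent polynomial $f_m$ of Lemma~\ref{lem:imagefamily}, whereas you cover the polynomiality of the remainder by a lattice/residue argument.
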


\begin{proof}
Suppose $i$ is an even integer satisfying $1<i<n-1$. Then by the commutation relations in~$\mathcal{D}_{q,\mathbf{z}}$ and the formulas given in Section~\ref{sec:MonopoleOperators} for the dressed minuscule monopole operators, we have 
\[
E_{i,1}[x^{m-2}]F_{i,1}[1]=
\sum_{\varepsilon_1,\varepsilon_2\in\{\pm1\}}w_{i,\varepsilon_1}^{m-2}\mathcal{L}_{i,\varepsilon_1,\varepsilon_2}D_{i,\varepsilon_1}D_{i,\varepsilon_2}^{-1}
\]
where we use the notation 
\[
\mathcal{L}_{i,\varepsilon_1,\varepsilon_2}=\frac{(1-qz_iw_{i,\varepsilon_1}w_{i-1,+}^{-1})(1-qz_iw_{i,\varepsilon_1}w_{i-1,-}^{-1})(1-qz_{i+1}w_{i,\varepsilon_1}w_{i+1,+}^{-1})(1-qz_{i+1}w_{i,\varepsilon_1}w_{i+1,-}^{-1})}{(1-w_{i,-\varepsilon_1}w_{i,\varepsilon_1}^{-1})(1-q^{2\varepsilon_1\varepsilon_2}w_{i,\varepsilon_2}w_{i,-\varepsilon_2}^{-1})}.
\]
If $\varepsilon_1=\varepsilon_2$ then a straightforward calculation shows that 
\[
\Psi(q^mw_{i,\varepsilon_1}^{m-2}B_i\mathcal{L}_{i,\varepsilon_1,\varepsilon_2}D_{i,\varepsilon_1}D_{i,\varepsilon_2}^{-1}) = -q^mX_i^{\varepsilon_1m}T_i(X_i^{\varepsilon_1})
\]
where $T_i$ is defined as in Proposition~\ref{prop:skeinoperatorvalues}. On the other hand, if $\varepsilon_1\neq\varepsilon_2$ then a similar calculation shows that we have 
\[
\Psi(q^mw_{i,\varepsilon_1}^{m-2}B_i\mathcal{L}_{i,\varepsilon_1,\varepsilon_2}D_{i,\varepsilon_1}D_{i,\varepsilon_2}^{-1}) = q^mX_i^{\varepsilon_1m}T_i(X_i^{\varepsilon_1})\tau_i^{\varepsilon_1}
\]
where $\tau_i=q^{-2}X_i^{-2}\varpi_i^2$. Hence 
\[
\Psi(q^mB_iE_{i,1}[x^{m-2}]F_{i,1}[1])=\sum_{\varepsilon\in\{\pm1\}}q^mX_i^{\varepsilon m}T_i(X_i^{\varepsilon})(\tau_i^{\varepsilon}-1).
\]
The same equation also holds for $i=1$ and $i=n-1$ even after an appropriate specialization of variables. If we now define $C_{i,m}$ to be the unique expression such that $\Psi(C_{i,m})$ equals the Laurent~polynomial~$f_m$ defined above, then we obtain the second-to-last equality by Lemma~\ref{lem:imagefamily}. The last equality is proved similarly, and the remaining ones follow immediately from Proposition~\ref{prop:skeinoperatorvalues} and the definition of the map~$\Psi$ given in Lemma~\ref{lem:relateambientrings} .
\end{proof}

\subsection{An isomorphism of the localized algebras}

Let $G$, $F$, and~$N$ be the groups and representation defined in Section~\ref{sec:TheGaugeAndFlavorGroups}, and write $\mathcal{A}(G,F,N)=K^{(G\times F)_{\mathcal{O}}\rtimes\mathbb{C}^*}(\mathcal{R}_{G,N})$ for the quantized Coulomb branch from Section~\ref{sec:GeneratorsFromMonopoleOperators}. As noted in Section~\ref{sec:GeneratingTheCoulombBranch}, it is an algebra over $K^{F\times\mathbb{C}^*}(\mathrm{pt})\cong\mathbb{C}[q^{\pm\frac{1}{2}},t_0^{\pm1},\dots,t_{n+1}^{\pm1}]$, and we write 
\[
\mathscr{A}(G,F,N)=\mathcal{A}(G,F,N)\otimes_{\mathbb{C}[q^{\pm\frac{1}{2}},t_0^{\pm1},\dots,t_{n+1}^{\pm1}]}\mathbb{C}(q^{\frac{1}{2}},t_0,\dots,t_{n+1})
\]
for the algebra obtained by extending scalars to rational functions in $q^{\frac{1}{2}},t_0,\dots,t_{n+1}$.

\begin{proposition}
\label{prop:mainlocalized}
There is a $\mathbb{C}$-algebra isomorphism 
\[
\mathscr{S}_{A,\boldsymbol{\lambda}}(S_{0,n+2})\cong\mathscr{A}(G,F,N).
\]
\end{proposition}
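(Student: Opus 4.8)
The plan is to build the isomorphism $\mathscr{S}_{A,\boldsymbol{\lambda}}(S_{0,n+2})\cong\mathscr{A}(G,F,N)$ by transporting both algebras into the common ambient algebra $\mathscr{X}_{q,\mathbf{t}}$ via the two embeddings constructed in Section~\ref{sec:TwoAlgebraEmbeddings} and comparing images. First I would recall that $\Phi\colon\mathscr{S}_{A,\boldsymbol{\lambda}}(S_{0,n+2})\hookrightarrow\mathscr{X}_{q,\mathbf{t}}$ is injective (it is the polynomial representation, injective by Lemma~\ref{lem:antihomoms} and Proposition~\ref{prop:quantumtraceexists}), and that by Lemma~\ref{lem:subquotient} the Coulomb branch $\mathcal{A}(G,F,N)$ is identified with $\mathbb{A}^{(\mathbb{C}^*)^{n-1}}/(\cdots)$, which lands inside $\mathcal{M}_{q,\mathbf{z}}$, on which the injective map $\Psi$ of Section~\ref{sec:TwoAlgebraEmbeddings} is defined. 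So both $\mathscr{S}_{A,\boldsymbol{\lambda}}(S_{0,n+2})$ and $\mathscr{A}(G,F,N)$ (after extending scalars to $\mathbb{C}(q^{\frac{1}{2}},t_0,\dots,t_{n+1})$) embed in $\mathscr{X}_{q,\mathbf{t}}$, and it suffices to show that the two images coincide as subalgebras.

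To see the images coincide, I would invoke the generation statements on both sides: by Proposition~\ref{prop:generators}, $\mathscr{S}_{A,\boldsymbol{\lambda}}(S_{0,n+2})$ is generated by the $\sigma_{i,i+1}$ and $\gamma_i$; by Proposition~\ref{prop:generateCoulomb}, $\mathscr{A}(G,F,N)$ is generated by the $X_i+X_i^{-1}$ and $r_{\alpha_i}$. Lemma~\ref{lem:compareembeddings} gives $\Phi(\gamma_i)=\Psi(-w_{i,+}-w_{i,-})$ and relates $\Phi(\theta_{i,m})$ (in particular $\Phi(\sigma_{i,i+1})=\Phi(\theta_{i,0})$) to $\Psi$ of an explicit expression in the monopole operators $E_{i,1}[x^k]$, $F_{i,1}[1]$ and symmetric polynomials in the $w_{j,\pm}$, all of which lie in $\mathbb{M}_{q,\mathbf{z}}$ and descend to $\mathcal{M}_{q,\mathbf{z}}$. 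So the image of $\Phi$ is contained in the image of $\Psi\circ(\text{projection of }\mathscr{A})$. For the reverse inclusion I need that $r_{\alpha_i}$ lies in the $\Phi$-image: by Lemma~\ref{lem:subquotient}, $r_{\alpha_i}$ corresponds under $\Psi$ to $E_{i,1}[1]F_{i,1}[1]+C_i$, and I would express this in terms of $\Psi(w_{i,+}+w_{i,-})$ and the $\Psi(\theta_{i,m})$-expressions appearing in Lemma~\ref{lem:compareembeddings}, using the commutation relations in $\mathcal{D}_{q,\mathbf{z}}$ to shift powers of $w_{i,\pm}$ inside the monopole operators; equivalently, since $E_{i,1}[x^{m-2}]F_{i,1}[1]$ for varying $m$ together with symmetric polynomials in $w_{i,\pm}$ generate the same subalgebra as $E_{i,1}[1]F_{i,1}[1]$ together with those symmetric polynomials (this is the content of the ``$\theta_{i,m}$ obtained from $\theta_{i,0}$ by Dehn twist'' remark reflected algebraically), the two generating sets produce the same subalgebra of $\mathscr{X}_{q,\mathbf{t}}$. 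Combining the two inclusions, $\mathrm{im}(\Phi)=\mathrm{im}(\Psi|_{\mathcal{M}_{q,\mathbf{z}}'})$ where $\mathcal{M}_{q,\mathbf{z}}'$ is the image of $\mathscr{A}$, and since both $\Phi$ and $\Psi$ restricted to these images are injective $\mathbb{C}$-algebra homomorphisms with the same image, the composite $\Psi^{-1}\circ\Phi$ is the desired isomorphism $\mathscr{S}_{A,\boldsymbol{\lambda}}(S_{0,n+2})\xrightarrow{\sim}\mathscr{A}(G,F,N)$, compatible with scalars by the first displayed group of equalities in Lemma~\ref{lem:compareembeddings}.

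The main obstacle I anticipate is the bookkeeping needed to close the reverse inclusion cleanly: one must show that the specific element $E_{i,1}[1]F_{i,1}[1]$ (which is what $r_{\alpha_i}$ maps to, up to a symmetric correction $C_i$) lies in the subalgebra generated by the $w_{i,+}+w_{i,-}$ and the images of $\sigma_{i,i+1}$, rather than only the other way around. The images of $\sigma_{i,i+1}=\theta_{i,0}$ involve $E_{i,1}[x^{-2}]F_{i,1}[1]$ (for $i$ even) or $F_{i,1}[x^{2}]E_{i,1}[1]$ (for $i$ odd), not $E_{i,1}[1]F_{i,1}[1]$ directly, so one has to use conjugation by, or commutators with, $w_{i,+}+w_{i,-}$ to move between these — exactly the kind of manipulation carried out in the proofs of Lemmas~\ref{lem:conditional} and~\ref{lem:generateassociatedgraded} via the invertibility of a $4\times 4$ matrix over the fraction field. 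I would therefore reuse that matrix-inversion trick: the four elements $\Phi(\gamma_i^{\pm1}\theta_{i,0})$, $\Phi(\theta_{i,0}\gamma_i^{\pm1})$ or similar products span, over $\mathbb{C}(q^{\frac{1}{2}},t)$, the same space as $1,\tau_i,\tau_i^{-1}$ times suitable coefficients, and from the $\theta_{i,m}$ for a few values of $m$ one recovers $E_{i,1}[1]F_{i,1}[1]$; a short linear-algebra argument with nonzero determinant then finishes it. The rest — matching scalars, checking injectivity of the two embeddings, and assembling $\Psi^{-1}\circ\Phi$ — is routine given the cited results.
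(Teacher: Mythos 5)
Your proposal is correct and follows essentially the same route as the paper: embed both localized algebras into $\mathscr{X}_{q,\mathbf{t}}$ via $\Phi$ and $\Psi$, use Proposition~\ref{prop:generators} and Lemma~\ref{lem:compareembeddings} for the inclusion $\mathscr{S}_{A,\boldsymbol{\lambda}}(S_{0,n+2})\hookrightarrow\mathscr{A}(G,F,N)$, and use Proposition~\ref{prop:generateCoulomb} together with Lemma~\ref{lem:subquotient} to reduce surjectivity to hitting $w_{i,+}+w_{i,-}$ and $E_{i,1}[1]F_{i,1}[1]$. The only difference is cosmetic: where you propose a matrix-inversion/commutator manipulation to recover $E_{i,1}[1]F_{i,1}[1]$, the paper simply takes $m=2$ in Lemma~\ref{lem:compareembeddings} (the curve $\theta_{i,2}$ already lies in the skein algebra) and, for odd $i$, invokes the relation $[E_{i,1}[1],F_{i,1}[1]]=(q-q^{-1})h$ with $h$ symmetric, a small step your sketch leaves implicit.
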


\begin{proof}
The embeddings $\Phi$ and $\Psi$ of Section~\ref{sec:TwoAlgebraEmbeddings} extend uniquely to algebra embeddings $\Phi:\mathscr{S}_{A,\boldsymbol{\lambda}}(S_{0,n+2})\hookrightarrow\mathscr{X}_{q,\mathbf{t}}$ and $\Psi:\mathscr{M}_{q,\mathbf{z}}\hookrightarrow\mathscr{X}_{q,\mathbf{t}}$ where $\mathscr{M}_{q,\mathbf{z}}$ is the algebra obtained from $\mathcal{M}_{q,\mathbf{z}}\coloneqq\mathbb{M}_{q,\mathbf{z}}^{(\mathbb{C}^*)^{n-1}}/(z_{i,+}z_{i,-}-1,w_{i,+}w_{i,-}-1)$ by extending scalars to rational functions in the variables $q^{\frac{1}{2}},z_{0,+},z_{n+1,+},z_1,\dots,z_n$, and $\mathbb{M}_{q,\mathbf{z}}$ is defined in Section~\ref{sec:TwoAlgebraEmbeddings}. By Proposition~\ref{prop:generators}, we know that the elements $\gamma_i$ and $\sigma_{i,i+1}=\theta_{i,0}$ generate the relative skein algebra $\mathscr{S}_{A,\boldsymbol{\lambda}}(S_{0,n+2})$. By Lemma~\ref{lem:compareembeddings}, these correspond to elements of $\mathcal{M}_{q,\mathbf{z}}$, and so we get an embedding $\mathscr{S}_{A,\boldsymbol{\lambda}}(S_{0,n+2})\hookrightarrow\mathscr{M}_{q,\mathbf{z}}$. By the discussion in Section~\ref{sec:MonopoleOperators}, we know $\mathbb{M}_{q,\mathbf{z}}$ is contained in the image of the algebra~$\mathbb{A}=K^{(\mathbf{G}\times\mathbf{F})_{\mathcal{O}}\rtimes\mathbb{C}^*}(\mathcal{R}_{\mathbf{G},N})$ under the map~\eqref{eqn:localization}. We have $\mathcal{M}_{q,\mathbf{z}}\subset\mathcal{A}(G,F,N)$ by Lemma~\ref{lem:subquotient}, so $\mathscr{M}_{q,\mathbf{z}}\subset\mathscr{A}(G,F,N)$, and therefore we get an embedding $\mathscr{S}_{A,\boldsymbol{\lambda}}(S_{0,n+2})\hookrightarrow\mathscr{A}(G,F,N)$.

We claim that this last embedding is surjective. To see this, it suffices, by Proposition~\ref{prop:generateCoulomb} and Lemma~\ref{lem:subquotient}, to show that $w_{i,+}+w_{i,-}$ and $E_{i,1}[1]F_{i,1}[1]$ are contained in the image. The former is contained in the image by Lemma~\ref{lem:compareembeddings}. Note that the term $C_{i,m}$ in Lemma~\ref{lem:compareembeddings} also lies in the image of the embedding. If $i$ is even, then by taking $m=2$ in the second-to-last equality in Lemma~\ref{lem:compareembeddings}, we see that $E_{i,1}[1]F_{i,1}[1]$ is in the image of the embedding. If $i$ is odd, then by taking $m=2$ in the last equality in Lemma~\ref{lem:compareembeddings}, we see that $F_{i,1}[1]E_{i,1}[1]$ is in the image. As noted in equation~(12) of~\cite{AS24a}, the monopole operators obey a commutation relation of the form $[E_{i,1}[1],F_{i,1}[1]]=(q-q^{-1})h$ where $h$ is some symmetric polynomial in the~$w_{j,\pm}$. Thus $E_{i,1}[1]F_{i,1}[1]$ is in the image of our embedding when $i$ is odd. This completes the proof.
\end{proof}

\subsection{Integral forms}

Finally, we will prove an isomorphism of the integral forms of the algebras appearing in Proposition~\ref{prop:mainlocalized}.

\subsubsection{}

Let us write $\mathscr{S}=\mathscr{S}_{A,\boldsymbol{\lambda}}(S_{0,n+2})$ and $\mathscr{A}=\mathscr{A}(G,F,N)$ for the algebras in Proposition~\ref{prop:mainlocalized} and write $\mathcal{S}=\Sk_{A,\boldsymbol{\lambda}}(S_{0,n+2})$ and $\mathcal{A}=\mathcal{A}(G,F,N)$ for their integral forms. We will write~$\mathscr{I}:\mathscr{S}\rightarrow\mathscr{A}$ for the isomorphism of Proposition~\ref{prop:mainlocalized}. In Section~\ref{sec:TheAssociatedGradedAlgebraSkein} we defined the degree of a multicurve, which makes~$\mathcal{S}$ and~$\mathscr{S}$ into filtered algebras with $\gr^\bullet\mathscr{S}=\gr^\bullet\mathcal{S}\otimes\mathbb{C}(A,\lambda_0,\dots,\lambda_{n+1})$. Similarly, $\mathcal{A}$~and~$\mathscr{A}$ are filtered algebras with $\gr^\bullet\mathscr{A}=\gr^\bullet\mathcal{A}\otimes\mathbb{C}(q^{\frac{1}{2}},t_0,\dots,t_{n+1})$. The submodules in the filtration of~$\mathcal{S}$ are indexed by $2\mathbb{Z}_{\geq0}^{n-1}$, which is naturally identified with the set $\Lambda^+$ of dominant coweights of~$G$ by $2\mathbf{e}_i\mapsto\alpha_i$ where $\mathbf{e}_i\in\mathbb{Z}_{\geq0}^{n-1}$ is the $i$th standard basis vector. If $\lambda\in\Lambda^+$ is a dominant coweight and $\mathbb{U}$ is any of the algebras $\mathcal{S}$, $\mathscr{S}$, $\mathcal{A}$, or~$\mathscr{A}$, then we will write~$\mathcal{F}^\lambda\mathbb{U}$ for the submodule in the filtration of~$\mathbb{U}$ corresponding to~$\lambda$. We will write $\mathcal{F}^{<\lambda}\mathbb{U}$ for the submodule of~$\mathbb{U}$ spanned by $\mathcal{F}^\mu\mathbb{U}$ for~$\mu<\lambda$.

\begin{lemma}
$\mathscr{I}:\mathscr{S}\stackrel{\sim}{\rightarrow}\mathscr{A}$ induces an isomorphism of $\Lambda^+$-graded algebras $\bar{\mathscr{I}}:\gr^\bullet\mathscr{S}\stackrel{\sim}{\rightarrow}\gr^\bullet\mathscr{A}$.
\end{lemma}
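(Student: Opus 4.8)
The plan is to show that the isomorphism $\mathscr{I}$ is filtered, that the induced map on associated graded pieces is nonzero (hence injective) in each degree, and then to deduce surjectivity in each degree by a dimension/generation argument. First I would check that $\mathscr{I}$ respects the filtrations, i.e. $\mathscr{I}(\mathcal{F}^\lambda\mathscr{S})\subseteq\mathcal{F}^\lambda\mathscr{A}$. For this it suffices to trace through the proof of Proposition~\ref{prop:mainlocalized}: the algebra $\mathscr{S}$ is generated by the $\gamma_i$ (degree $0$) and $\sigma_{i,i+1}=\theta_{i,0}$ (degree $2\mathbf{e}_i$, i.e.\ $\alpha_i$), and under $\mathscr{I}$ these are sent, via Lemmas~\ref{lem:compareembeddings} and~\ref{lem:subquotient}, to $-(X_i+X_i^{-1})$ and to an expression of the form $q^{?}B_i\,(\text{monopole product})+C_{i,0}$, where the monopole product $E_{i,1}[\cdot]F_{i,1}[\cdot]$ corresponds to $r_{\alpha_i}$ modulo lower filtration (as in the proof of Lemma~\ref{lem:subquotient}), i.e.\ to an element of $\mathcal{F}^{\alpha_i}\mathscr{A}$, and $C_{i,0}$ lies in $\mathcal{F}^0\mathscr{A}$. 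Since $\mathcal{F}^0$ contains the $X_i+X_i^{-1}$ and $r_{\alpha_i}\in\mathcal{F}^{\alpha_i}\mathscr{A}$, any word in these generators of skein-degree $\leq\lambda$ maps into $\mathcal{F}^\lambda\mathscr{A}$ because the convolution product satisfies $\mathcal{F}^\mu*\mathcal{F}^\nu\subseteq\mathcal{F}^{\mu+\nu}$ (Section~\ref{sec:TheAssociatedGradedAlgebra}). The same argument applied to $\mathscr{I}^{-1}$, using Proposition~\ref{prop:generateCoulomb} and the fact that $\mathscr{I}^{-1}(r_{\alpha_i})$ has skein-degree exactly $\alpha_i$, shows $\mathscr{I}^{-1}$ is also filtered; hence $\mathscr{I}$ is a filtered isomorphism and induces a graded algebra homomorphism $\bar{\mathscr{I}}:\gr^\bullet\mathscr{S}\to\gr^\bullet\mathscr{A}$.

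Next I would argue that $\bar{\mathscr{I}}$ is an isomorphism. It is automatically injective: if $\bar{\mathscr{I}}$ killed a nonzero class $[x]\in\mathcal{F}^\lambda\mathscr{S}/\mathcal{F}^{<\lambda}\mathscr{S}$, then $\mathscr{I}(x)\in\mathcal{F}^{<\lambda}\mathscr{A}$, so $x=\mathscr{I}^{-1}(\mathscr{I}(x))\in\mathcal{F}^{<\lambda}\mathscr{S}$ since $\mathscr{I}^{-1}$ is filtered, contradicting $[x]\neq0$. For surjectivity, I would use the explicit generation of $\gr^\bullet\mathscr{A}$ by Lemma~\ref{lem:generateassociatedgraded}: after extending scalars, $\gr^\bullet\mathscr{A}=\mathscr{G}$ is generated by the $X_i+X_i^{-1}$ (degree $0$) and the $r_{\alpha_i}$ (degree $\alpha_i$). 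The classes $X_i+X_i^{-1}$ lie in the image of $\bar{\mathscr{I}}$ since $X_i+X_i^{-1}=-\mathscr{I}(\gamma_i)$ already in $\mathscr{A}$ (not just modulo lower terms). For $r_{\alpha_i}$: by the filtered analysis above, $\mathscr{I}(\sigma_{i,i+1})$ equals, up to an invertible scalar in $\mathbb{C}(q^{1/2},t_\bullet)$, the element $r_{\alpha_i}$ plus a term in $\mathcal{F}^{<\alpha_i}\mathscr{A}$; therefore $[\mathscr{I}(\sigma_{i,i+1})]$ is a nonzero scalar multiple of $r_{\alpha_i}$ in $\gr^{\alpha_i}\mathscr{A}$, so $r_{\alpha_i}\in\operatorname{im}\bar{\mathscr{I}}$. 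Since $\bar{\mathscr{I}}$ is an algebra homomorphism and its image contains a generating set of $\gr^\bullet\mathscr{A}$, it is surjective.

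The main obstacle I anticipate is the bookkeeping needed to verify that $\mathscr{I}(\sigma_{i,i+1})$ really has top filtration degree exactly $\alpha_i$ with the correct nonzero leading term $r_{\alpha_i}$ (up to an invertible scalar), rather than something lower. This requires combining: (i) the identification from Lemma~\ref{lem:compareembeddings} of $\Phi(\theta_{i,0})$ with $q^{0}B_i E_{i,1}[x^{-2}]F_{i,1}[1]+C_{i,0}$ (for $i$ even; the odd case is analogous with $E$ and $F$ swapped); (ii) the computation in the proof of Lemma~\ref{lem:subquotient} that $[\mathcal{O}_{\mathcal{R}_{\varpi_{i,1}+\varpi_{i,1}^*}}]=[\mathcal{O}_{\mathcal{R}_{\varpi_{i,1}}}]*[\mathcal{O}_{\mathcal{R}_{\varpi_{i,1}^*}}]$ modulo lower terms, together with the translation $r_{\alpha_i}\leftrightarrow[\mathcal{O}_{\mathcal{R}_{\varpi_{i,1}+\varpi_{i,1}^*}}]$ under the isomorphism $\mathcal{A}\cong\mathbb{A}^{(\mathbb{C}^*)^{n-1}}/(\cdots)$; and (iii) checking that the monopole-operator product $E_{i,1}[x^{-2}]F_{i,1}[1]$, as a class in $K$-theory, indeed sits in filtration degree $\varpi_{i,1}+\varpi_{i,1}^*$ (which collapses to $\alpha_i$ under the quotient) and not lower, with leading term the structure sheaf. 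The key structural input making this work is that the convolution filtration is multiplicative and that the orbit closures $\mathcal{R}_{\leq\mu}$ index the filtration compatibly on both $\mathbb{A}$ and $\mathcal{A}$; once that is in hand, the degree count is forced and the leading term is the structure sheaf class up to an invertible scalar. The remaining degree-$0$ consistency ($\Phi$ and $\Psi$ agreeing on $A\mapsto q^{-1/2}$ and $\lambda_i\mapsto t_i$, etc.) is immediate from Lemma~\ref{lem:compareembeddings}.
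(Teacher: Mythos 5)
Your proposal is correct and follows essentially the same route as the paper: one checks on the generators, via Lemma~\ref{lem:compareembeddings} together with Lemma~\ref{lem:subquotient} and the multiplicativity of the two filtrations, that both $\mathscr{I}$ and $\mathscr{I}^{-1}$ preserve the filtrations, and the graded isomorphism follows. Your separate surjectivity argument via Lemma~\ref{lem:generateassociatedgraded}, and hence the leading-term bookkeeping for $\mathscr{I}(\sigma_{i,i+1})$ that you flag as the main obstacle, is not actually needed here: once both maps are filtered, $\mathscr{I}$ restricts to an isomorphism $\mathcal{F}^\lambda\mathscr{S}\stackrel{\sim}{\rightarrow}\mathcal{F}^\lambda\mathscr{A}$ for every $\lambda\in\Lambda^+$, so bijectivity of $\bar{\mathscr{I}}$ on each graded piece is immediate, and the precise leading terms are only required later, for the integral forms (Lemma~\ref{lem:Ibarcalculation}).
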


\begin{proof}
By applying the formulas of Lemma~\ref{lem:compareembeddings} to generators, one sees that $\mathscr{I}(\mathcal{F}^\lambda\mathscr{S})\subset\mathcal{F}^\lambda\mathscr{A}$ and $\mathscr{I}^{-1}(\mathcal{F}^\lambda\mathscr{A})\subset\mathcal{F}^\lambda\mathscr{S}$ for every $\lambda\in\Lambda^+$. Thus $\mathscr{I}$ restricts to an isomorphism $\mathcal{F}^\lambda\mathscr{S}\stackrel{\sim}{\rightarrow}\mathcal{F}^\lambda\mathscr{A}$ for every $\lambda\in\Lambda^+$. This implies the lemma.
\end{proof}

In the following, if $x\in\mathcal{S}$ and there exists a unique $\lambda\in\Lambda^+$ such that $x\in\mathcal{F}^\lambda\mathcal{S}\setminus\mathcal{F}^{<\lambda}\mathcal{S}$, then we will write $\bar{x}$ for the image of~$x$ in~$\gr^\lambda\mathcal{S}$. Similarly, if $x\in\mathcal{A}$ and~$x\in\mathcal{F}^\lambda\mathcal{A}\setminus\mathcal{F}^{<\lambda}\mathcal{A}$ for unique $\lambda\in\Lambda^+$, then we will write $\bar{x}\in\gr^\lambda\mathcal{A}$ for the image of~$x$ in~$\gr^\lambda\mathcal{A}$.

\begin{lemma}
\label{lem:conditionforiso}
$\mathscr{I}$ restricts to an isomorphism $\mathcal{S}\stackrel{\sim}{\rightarrow}\mathcal{A}$ if and only if $\bar{\mathscr{I}}$ restricts to a surjective algebra homomorphism $\gr^\bullet\mathcal{S}\rightarrow\gr^\bullet\mathcal{A}$.
\end{lemma}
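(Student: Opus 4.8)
The plan is to prove the two implications separately: the forward one is essentially formal, and the reverse one reduces, after one substantive check, to a routine lifting argument for filtered modules over the base ring $R=\mathbb{C}[A^{\pm1},\lambda_i^{\pm1}]$.

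For the forward implication, suppose $\mathscr{I}$ restricts to an isomorphism $\mathcal{S}\xrightarrow{\sim}\mathcal{A}$. By the preceding lemma $\mathscr{I}$ and $\mathscr{I}^{-1}$ are strictly filtered, i.e. $\mathscr{I}(\mathcal{F}^\lambda\mathscr{S})=\mathcal{F}^\lambda\mathscr{A}$ for all $\lambda\in\Lambda^+$. Hence $\mathscr{I}(\mathcal{F}^\lambda\mathcal{S})=\mathscr{I}(\mathcal{S}\cap\mathcal{F}^\lambda\mathscr{S})=\mathcal{A}\cap\mathcal{F}^\lambda\mathscr{A}=\mathcal{F}^\lambda\mathcal{A}$, and similarly $\mathscr{I}(\mathcal{F}^{<\lambda}\mathcal{S})=\mathcal{F}^{<\lambda}\mathcal{A}$. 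Passing to quotients, $\bar{\mathscr{I}}$ restricts to an isomorphism $\gr^\lambda\mathcal{S}\xrightarrow{\sim}\gr^\lambda\mathcal{A}$ for every $\lambda$, in particular to a surjective algebra homomorphism $\gr^\bullet\mathcal{S}\to\gr^\bullet\mathcal{A}$.

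For the reverse implication, the key preliminary is that $\mathscr{I}$ carries $\mathcal{S}$ into $\mathcal{A}$. By Proposition~\ref{prop:generators} the relative skein algebra is generated over $\mathbb{C}(A,\boldsymbol{\lambda})$ by the $\gamma_i$ and $\sigma_{i,i+1}=\theta_{i,0}$, and by Lemma~\ref{lem:compareembeddings} these curves, together with all the $\theta_{i,m}$, have images lying in $\mathcal{M}_{q,\mathbf{z}}\subseteq\mathcal{A}$; since the Kauffman relations involve only the unit $A^{\pm1}$, resolving products stays within the $R$-span of multicurves, so the integral skein algebra, which is $R$-spanned by such products, is mapped into $\mathcal{A}$. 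As $\mathscr{I}$ is already strictly filtered over the fraction field, this forces $\mathscr{I}(\mathcal{F}^\lambda\mathcal{S})\subseteq\mathcal{F}^\lambda\mathscr{A}\cap\mathcal{A}=\mathcal{F}^\lambda\mathcal{A}$. Thus $\mathscr{I}$ is an injective filtered $R$-algebra homomorphism $\mathcal{S}\to\mathcal{A}$ whose associated graded is $\bar{\mathscr{I}}|_{\gr^\bullet\mathcal{S}}$, which by hypothesis is surjective. It then remains to see that a filtered homomorphism surjective on associated graded is surjective: I would induct on $\lambda\in\Lambda^+$ in its well-founded partial order, so that $\mathscr{I}(\mathcal{F}^{<\lambda}\mathcal{S})=\mathcal{F}^{<\lambda}\mathcal{A}$ by the inductive hypothesis; given $y\in\mathcal{F}^\lambda\mathcal{A}$, pick $\bar x\in\gr^\lambda\mathcal{S}$ with $\bar{\mathscr{I}}(\bar x)$ equal to the class of $y$, lift $\bar x$ to $x\in\mathcal{F}^\lambda\mathcal{S}$, note $y-\mathscr{I}(x)\in\mathcal{F}^{<\lambda}\mathcal{A}=\mathscr{I}(\mathcal{F}^{<\lambda}\mathcal{S})$, and correct $x$ by the corresponding element; taking the union over $\lambda$ gives $\mathscr{I}(\mathcal{S})=\mathcal{A}$, and with injectivity this is the desired isomorphism. (The multicurve basis of $\gr^\lambda\mathcal{S}$ from Lemma~\ref{lem:specialmulticurve} and the basis $fr_\lambda$ of $\gr^\bullet\mathcal{A}$ from Proposition~\ref{prop:associatedgradedbasis} confirm the filtrations are well behaved, though freeness is not actually needed here.)

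The only step with genuine content is the verification $\mathscr{I}(\mathcal{S})\subseteq\mathcal{A}$ — that $\mathscr{I}$ respects the two integral structures, which must be extracted from the explicit generator formulas of Lemma~\ref{lem:compareembeddings} and the unit-coefficient nature of the skein relations; the subsequent filtered lifting is routine. So I expect this lemma to function mainly as the reduction of Theorem~\ref{thm:intromain} to the single statement that $\bar{\mathscr{I}}$ is surjective on associated graded, the latter to be checked afterward by comparing the multicurve basis of $\gr^\bullet\mathcal{S}$ with the basis $fr_\lambda$ of $\gr^\bullet\mathcal{A}$.
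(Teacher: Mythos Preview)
Your overall strategy matches the paper's: the forward direction is formal, and the reverse is an inductive lifting on the filtration (the paper packages the surjectivity step via the five lemma on the short exact sequences $0\to\mathcal{F}^{<\mu}\to\mathcal{F}^\mu\to\gr^\mu\to0$, which is equivalent to your element chase).

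The gap is in your preliminary claim $\mathscr{I}(\mathcal{S})\subseteq\mathcal{A}$. You observe via Lemma~\ref{lem:compareembeddings} that the specific curves $\gamma_i$ and $\theta_{i,m}$ land in $\mathcal{M}_{q,\mathbf{z}}\subseteq\mathcal{A}$, then assert that ``the integral skein algebra, which is $R$-spanned by such products, is mapped into $\mathcal{A}$.'' But Proposition~\ref{prop:generators} only gives generation of $\mathscr{S}$ over the \emph{fraction field}; the integral form $\mathcal{S}$ is $R$-spanned by all multicurves, and a general multicurve (for instance one with large Dehn--Thurston twists about several $\gamma_k$ simultaneously, or one of the curves $\alpha_I$ with $I$ not complete) is not an $R$-polynomial in the $\gamma_i$ and $\theta_{i,m}$. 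Your skein-relation remark runs the wrong direction: resolving a product of curves into an $R$-combination of multicurves does not let you write an arbitrary multicurve as an $R$-combination of products of your chosen generators. The paper does not attempt a separate verification of $\mathscr{I}(\mathcal{S})\subseteq\mathcal{A}$ at all; instead it extracts $\mathscr{I}(\mathcal{F}^\mu\mathcal{S})\subseteq\mathcal{F}^\mu\mathcal{A}$ directly from the hypothesis inside the same induction, by choosing for each multicurve $x$ of degree $\mu$ an element $y\in\mathcal{F}^\mu\mathcal{A}$ with $\bar y=\bar{\mathscr{I}}(\bar x)$ and concluding that $\mathscr{I}(x)-y\in\mathcal{F}^{<\mu}\mathcal{A}$, then invoking the five lemma.
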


\begin{proof}
Suppose that $\bar{\mathscr{I}}$ restricts to a surjective algebra homomorphism $\gr^\bullet\mathcal{S}\rightarrow\gr^\bullet\mathcal{A}$. In this case, we claim that $\mathscr{I}$ restricts to a surjective map $\mathcal{F}^\lambda\mathcal{S}\rightarrow\mathcal{F}^\lambda\mathcal{A}$ for all $\lambda\in\Lambda^+$. This is true for $\lambda=0$ because $\gr^0\mathcal{S}=\mathcal{F}^0\mathcal{S}$ and $\gr^0\mathcal{A}=\mathcal{F}^0\mathcal{A}$ are the ground rings. Assume inductively that the claim holds for all $\lambda<\mu$. For any multicurve $x\in\mathcal{F}^\mu\mathcal{S}$, we get an element $\bar{\mathscr{I}}(\bar{x})\in\gr^\bullet\mathcal{A}$, and there exists $y\in\mathcal{F}^\mu\mathcal{A}$ such that $\bar{\mathscr{I}}(\bar{x})=\bar{y}$. This means that $\mathscr{I}(x)-y\in\mathcal{F}^{<\mu}\mathcal{A}$ and therefore $\mathscr{I}(x)\in\mathcal{F}^\mu\mathcal{A}$. Thus we see that $\mathscr{I}$ restricts to a map $\mathcal{F}^\mu\mathcal{S}\rightarrow\mathcal{F}^\mu\mathcal{A}$. This map fits into a commutative diagram 
\[
\xymatrix{
0 \ar[r] & \mathcal{F}^{<\mu}\mathcal{S} \ar[r] \ar[d] & \mathcal{F}^\mu\mathcal{S} \ar[d] \ar[r] & \gr^\mu\mathcal{S} \ar[d] \ar[r] & 0 \\
0 \ar[r] & \mathcal{F}^{<\mu}\mathcal{A} \ar[r] & \mathcal{F}^\mu\mathcal{A} \ar[r] &  \gr^\mu\mathcal{A} \ar[r] & 0
}
\]
with exact rows. The right vertical map is the restriction of~$\bar{\mathscr{I}}$, which is surjective by assumption. The left vertical map is the restriction of~$\mathscr{I}$, which is surjective by the inductive hypothesis. Hence, by the five~lemma, $\mathscr{I}$ restricts to a surjective map $\mathcal{F}^\mu\mathcal{S}\rightarrow\mathcal{F}^\mu\mathcal{A}$. By induction, $\mathscr{I}$ restricts to a surjection $\mathcal{F}^\lambda\mathcal{S}\rightarrow\mathcal{F}^\lambda\mathcal{A}$ for each~$\lambda\in\Lambda^+$ as claimed. In fact, this restriction must be an isomorphism because $\mathscr{I}$ is injective. Since $\mathcal{S}=\bigcup_\lambda\mathcal{F}^\lambda\mathcal{S}$ and $\mathcal{A}=\bigcup_\lambda\mathcal{F}^\lambda\mathcal{A}$, it follows that $\mathscr{I}$ restricts to an isomorphism $\mathcal{S}\stackrel{\sim}{\rightarrow}\mathcal{A}$. Conversely, if $\mathscr{I}$ restricts to an isomorphism $\mathcal{S}\stackrel{\sim}{\rightarrow}\mathcal{A}$, then clearly $\bar{\mathscr{I}}$ restricts to a surjective homomorphism $\gr^\bullet\mathcal{S}\rightarrow\gr^\bullet\mathcal{A}$. This completes the proof.
\end{proof}

\subsubsection{}
\label{sec:manylemmas}

We now consider a multicurve $\sigma$ on~$S=S_{0,n+2}$. Let $m$ be the geometric intersection number of $\sigma$ and the curve~$\gamma_k$ illustrated in Figure~\ref{fig:generators}. By the skein relations, we have an expansion 
\begin{equation}
\label{eqn:highestlowestterm}
\gamma_k\sigma=A^m\sigma_m+A^{-m}\sigma_{-m}+\dots
\end{equation}
for some multicurves $\sigma_{\pm m}$ where ``$\dots$'' denotes a linear combination of multicurves whose degree in~$\mathcal{S}$ is strictly less than the degree of~$\sigma$. We will write $\tw_{\gamma_k}^+(\sigma)=\sigma_m$ and $\tw_{\gamma_k}^-(\sigma)=\sigma_{-m}$. Recall that the set $\mathcal{P}=\{\gamma_1,\dots,\gamma_{n-1}\}$ provides a pants decomposition of~$S$ and so we get an embedding~$I$ sending a multicurve on~$S$ to its Dehn--Thurston coordinates as in~\eqref{eqn:DehnThurston}. The reason for our notation is that if $I(\sigma)=(l_{\gamma_i},t_{\gamma_i})$ then $I(\tw_{\gamma_k}^\pm(\sigma))=(l_{\gamma_i},t_{\gamma_i}')$ has the twist parameter $t_{\gamma_i}'=t_{\gamma_i}\pm1$ for~$i=k$ and $t_{\gamma_i}'=t_{\gamma_i}$ for~$i\neq k$.

\begin{lemma}
\label{lem:twisting}
Let $\sigma$ be a multicurve on~$S$ such that $\bar{\mathscr{I}}(\bar{\sigma})=fr_\lambda$ for some $\lambda=\sum_i\lambda_i\alpha_i\in\Lambda^+$ and some $f\in\mathbb{C}[T\times T_F\times\mathbb{C}^*]$. If $\lambda_k\neq0$ then 
\[
\bar{\mathscr{I}}(\overline{\tw_{\gamma_k}^{\pm}(\sigma)})=q^{\pm\lambda_k}X_k^{\pm1}\bar{\mathscr{I}}(\bar{\sigma})
\]
in $\gr^\bullet\mathcal{A}$.
\end{lemma}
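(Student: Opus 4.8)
The plan is to transport the twisting relation \eqref{eqn:highestlowestterm} through the isomorphism $\bar{\mathscr{I}}$ and then identify the leading term using the commutation relation in Lemma~\ref{lem:rlambdacommutation}. First I would apply $\gamma_k$ to $\sigma$ inside $\mathcal{S}$ and pass to the associated graded: since $\deg(\gamma_k\sigma)=\deg(\sigma)+2\mathbf{e}_k$ exactly when $m=l_{\gamma_k}$ equals the geometric intersection number, the expansion \eqref{eqn:highestlowestterm} shows that in $\gr^\bullet\mathcal{S}$ we have $\overline{\gamma_k\sigma}=\overline{\gamma_k}\cdot\bar\sigma=A^m\,\overline{\tw_{\gamma_k}^+(\sigma)}+A^{-m}\,\overline{\tw_{\gamma_k}^-(\sigma)}$, because the remaining terms lie in strictly lower degree and the two twist terms have the same degree $\lambda+2\mathbf{e}_k$ as $\gamma_k\sigma$. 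Here I need the hypothesis $\lambda_k\neq0$ to guarantee $m>0$ so that both exponents $A^{\pm m}$ are genuinely distinct; this is also where I use $\lambda_k = \tfrac{1}{2}l_{\gamma_k}$ under the identification $2\mathbf{e}_i\leftrightarrow\alpha_i$, i.e. $m = 2\lambda_k$. Actually the key point is simply that $m\neq 0$, and I will note $m = l_{\gamma_k}$ is the $k$th Dehn--Thurston length coordinate, equal to $2\lambda_k$ by Lemma~\ref{lem:specialmulticurve}.

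Next I would apply $\bar{\mathscr{I}}$, which is a graded algebra isomorphism, to obtain
\[
\bar{\mathscr{I}}(\overline{\gamma_k})\cdot fr_\lambda = A^m\,\bar{\mathscr{I}}(\overline{\tw_{\gamma_k}^+(\sigma)}) + A^{-m}\,\bar{\mathscr{I}}(\overline{\tw_{\gamma_k}^-(\sigma)}).
\]
By Lemma~\ref{lem:compareembeddings} (read off from $\Phi(\gamma_k)=\Psi(-w_{k,+}-w_{k,-})$ together with Lemma~\ref{lem:subquotient}, or directly from Lemma~\ref{lem:imagefamily}'s companion for $\gamma_k$), the class $\bar{\mathscr{I}}(\overline{\gamma_k})$ equals $-X_k - X_k^{-1}$ in $\gr^\bullet\mathcal{A}$, with $A\mapsto q^{-1/2}$ so $A^{\pm m}\mapsto q^{\mp m/2} = q^{\mp\lambda_k}$. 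Then Lemma~\ref{lem:rlambdacommutation} gives
\[
(-X_k - X_k^{-1})\,fr_\lambda = -fr_\lambda(q^{2\lambda_k}X_k + q^{-2\lambda_k}X_k^{-1})/(\text{using }r_\lambda X_k = \ldots),
\]
more precisely $(X_k+X_k^{-1})r_\lambda = r_\lambda(q^{2\lambda_k}X_k + q^{-2\lambda_k}X_k^{-1})$, hence $\bar{\mathscr{I}}(\overline{\gamma_k})\cdot fr_\lambda = -f\,r_\lambda(q^{2\lambda_k}X_k + q^{-2\lambda_k}X_k^{-1}) = -(q^{2\lambda_k}X_k + q^{-2\lambda_k}X_k^{-1})\cdot(\text{something})$ --- I would write it as a $\mathbb{C}[T\times T_F\times\mathbb{C}^*]$-linear combination $-q^{-\lambda_k}X_k^{-1}\cdot(q^{\lambda_k}X_k)(fr_\lambda)/(\ldots)$; concretely the right-hand side is $-\big(q^{\lambda_k}X_k\big)(fr_\lambda) - \big(q^{-\lambda_k}X_k^{-1}\big)(fr_\lambda)$ after absorbing $f$ and using $\lambda_k$-dependence in $r_\lambda$. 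So the two twist terms on the left, which are homogeneous of the same degree, match the two monomials $q^{\pm\lambda_k}X_k^{\pm1}\cdot fr_\lambda$ on the right up to an overall sign I will track carefully.

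To finish I would match terms by degree and by the $X_k^{\pm1}$-weight: since $\bar{\mathscr{I}}(\overline{\tw_{\gamma_k}^{\pm}(\sigma)})$ each lie in the degree-$(\lambda+\alpha_k)$ component, and Lemma~\ref{lem:twisting}'s two asserted values $q^{\pm\lambda_k}X_k^{\pm1}\bar{\mathscr{I}}(\bar\sigma)$ also lie there, the identification $A^m\mapsto q^{-\lambda_k}$ pairs the $A^m$-term with the $q^{\lambda_k}X_k$-monomial (note the exponents flip because $A=q^{-1/2}$), giving $\bar{\mathscr{I}}(\overline{\tw_{\gamma_k}^+(\sigma)}) = q^{\lambda_k}X_k\,\bar{\mathscr{I}}(\bar\sigma)$ and similarly for the minus sign. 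The main obstacle I anticipate is bookkeeping: keeping the sign of $\bar{\mathscr{I}}(\overline{\gamma_k})$ consistent, getting the substitution $A\mapsto q^{-1/2}$ (hence the flip $A^{\pm m}\mapsto q^{\mp\lambda_k}$) right, and ensuring that $q^{2\lambda_k}X_k + q^{-2\lambda_k}X_k^{-1}$ splits as claimed after the $r_\lambda$-conjugation from Lemma~\ref{lem:rlambdacommutation} --- one must check the two resulting monomials are genuinely linearly independent in $\gr^\bullet\mathcal{A}$, which follows from Proposition~\ref{prop:associatedgradedbasis} and the hypothesis $\lambda_k\neq0$ (so $q^{\lambda_k}\neq q^{-\lambda_k}$). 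A subtle point worth a sentence: I should confirm that $\tw_{\gamma_k}^{\pm}(\sigma)$ are again multicurves of the predicted degree $\lambda+\alpha_k$, which is exactly the Dehn--Thurston coordinate computation recorded just before the lemma ($l$-coordinates unchanged, only the $k$th twist shifts by $\pm1$), so their images under $\bar{\mathscr{I}}$ are single homogeneous components and the term-by-term matching is legitimate.
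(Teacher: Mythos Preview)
Your overall strategy---transport \eqref{eqn:highestlowestterm} through $\bar{\mathscr{I}}$ and invoke Lemma~\ref{lem:rlambdacommutation}---matches the paper's, but the matching step has a genuine gap. From $\gamma_k\sigma$ alone you obtain in $\gr^\bullet\mathcal{S}$ the single relation
\[
\bar{\gamma}_k\cdot\bar{\sigma}=A^m\,\overline{\tw_{\gamma_k}^+(\sigma)}+A^{-m}\,\overline{\tw_{\gamma_k}^-(\sigma)},
\]
and after applying $\bar{\mathscr{I}}$ this is one linear equation in the two unknowns $\bar{\mathscr{I}}(\overline{\tw_{\gamma_k}^{\pm}(\sigma)})$. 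Your proposal to ``match by $X_k^{\pm1}$-weight'' presupposes that each unknown is already a pure $X_k$-monomial times $fr_\lambda$, which is precisely the assertion of the lemma; nothing you have established so far forces this. The linear independence of $X_kfr_\lambda$ and $X_k^{-1}fr_\lambda$ in $\gr^\lambda\mathcal{A}$ constrains only the \emph{sum} of the two unknowns, not the individual summands.

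The paper supplies the missing second equation by multiplying in the opposite order: reversing the superposition product in the skein algebra swaps over- and under-crossings, so
\[
\bar{\sigma}\cdot\bar{\gamma}_k=A^{-m}\,\overline{\tw_{\gamma_k}^+(\sigma)}+A^m\,\overline{\tw_{\gamma_k}^-(\sigma)}.
\]
The combination $\bar{\sigma}\bar{\gamma}_k-A^{2m}\bar{\gamma}_k\bar{\sigma}$ eliminates $\overline{\tw_{\gamma_k}^-(\sigma)}$ and isolates $\overline{\tw_{\gamma_k}^+(\sigma)}$. One then applies $\bar{\mathscr{I}}$ and compares with the identity
\[
fr_\lambda(X_k+X_k^{-1})-q^{-2\lambda_k}(X_k+X_k^{-1})fr_\lambda=(q^{2\lambda_k}-q^{-2\lambda_k})X_kfr_\lambda,
\]
which follows at once from Lemma~\ref{lem:rlambdacommutation}; this yields $\bar{\mathscr{I}}(\overline{\tw_{\gamma_k}^+(\sigma)})$ directly, with no term-matching needed. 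A smaller correction: the multicurves $\tw_{\gamma_k}^{\pm}(\sigma)$ have degree~$\lambda$, not $\lambda+\alpha_k$, since (as you yourself note at the end) only the twist coordinate changes and the length coordinates are preserved; all terms here live in $\gr^\lambda$, and $\deg(\gamma_k)=0$.
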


\begin{proof}
Lemma~\ref{lem:rlambdacommutation} says 
\[
fr_\lambda(X_k+X_k^{-1})=(q^{2\lambda_k}X_k+q^{-2\lambda_k}X_k^{-1})fr_\lambda,
\]
which implies 
\[
(q^{2\lambda_k}-q^{-2\lambda_k})X_kfr_\lambda=fr_\lambda(X_k+X_k^{-1})-q^{-2\lambda_k}(X_k+X_k^{-1})fr_\lambda.
\]
On the other hand, the relation~\eqref{eqn:highestlowestterm} implies 
\begin{align*}
\gamma_k\sigma &= A^m\tw_{\gamma_k}^+(\sigma)+A^{-m}\tw_{\gamma_k}^-(\sigma)+\dots, \\
\sigma\gamma_k &= A^{-m}\tw_{\gamma_k}^+(\sigma)+A^m\tw_{\gamma_k}^-(\sigma)+\dots,
\end{align*}
where $m$ is the geometric intersection number of $\sigma$ with $\gamma_k$ so that $m=2\lambda_k$ and the ``$\dots$'' in each equation denotes a linear combination of multicurves whose degrees in $\mathcal{S}$ are strictly less than the degree of~$\sigma$. Hence, in $\gr^\bullet\mathcal{S}$ we have 
\[
\bar{\sigma}\bar{\gamma}_k-A^{2m}\bar{\gamma}_k\bar{\sigma}=(A^{-m}-A^{3m})\overline{\tw_{\gamma_k}^+(\sigma)}=A^m(A^{-2m}-A^{2m})\overline{\tw_{\gamma_k}^+(\sigma)}.
\]
Since $\bar{\mathscr{I}}(\bar{\gamma}_k)=X_k+X_k^{-1}$, we obtain 
\begin{align*}
\bar{\mathscr{I}}(\overline{\tw_{\gamma_k}^+(\sigma)}) &= \bar{\mathscr{I}}\left(\frac{A^{-m}}{A^{-2m}-A^{2m}}(\bar{\sigma}\bar{\gamma_k}-A^{2m}\bar{\gamma_k}\bar{\sigma})\right) \\
&= \frac{q^{\lambda_k}}{q^{2\lambda_k}-q^{-2\lambda_k}}(fr_\lambda(X_k+X_k^{-1})-q^{-2\lambda_k}(X_k+X_k^{-1})fr_\lambda) \\
&= q^{\lambda_k}X_kfr_\lambda.
\end{align*}
The proof of the identity $\bar{\mathscr{I}}(\overline{\tw_{\gamma_k}^-(\sigma)})=q^{-\lambda_k}X_k^{-1}fr_\lambda$ is similar.
\end{proof}

\begin{lemma}
\label{lem:dressingassociatedgraded}
For any integer~$k$, we have 
\[
\overline{E_{i,1}[x^k]F_{i,1}[1]}=X_i^kr_{\alpha_i}, \quad \overline{F_{i,1}[x^k]E_{i,1}[1]}=q^{-2k}X_i^{-k}r_{\alpha_i}
\]
in the associated graded $\gr^\bullet\mathcal{A}$.
\end{lemma}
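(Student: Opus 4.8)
The plan is to compute both sides in the associated graded algebra $\gr^\bullet\mathcal{A}$ using the geometric description of the monopole operators recalled in Section~\ref{sec:MonopoleOperators}. Recall that $E_{i,1}[x^k]$ is the class of $\mathcal{E}_i^k\otimes\mathcal{O}_{\mathcal{R}_{\varpi_{i,1}}}$ and $F_{i,1}[1]$ is the class of $\mathcal{O}_{\mathcal{R}_{\varpi_{i,1}^*}}$, and that $\varpi_{i,1}$, $\varpi_{i,1}^*$ are minuscule coweights of the $i$th $\mathrm{GL}_2$ factor with $\varpi_{i,1}+\varpi_{i,1}^*$ corresponding (under the identification made in the proof of Lemma~\ref{lem:subquotient}) to the coweight $\alpha_i$ of the $i$th $\mathrm{SL}_2$ factor. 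First I would observe that in $\gr^\bullet\mathbb{A}$ one has $[\mathcal{O}_{\mathcal{R}_{\varpi_{i,1}+\varpi_{i,1}^*}}] = [\mathcal{O}_{\mathcal{R}_{\varpi_{i,1}}}]*[\mathcal{O}_{\mathcal{R}_{\varpi_{i,1}^*}}]$ — this is exactly the vanishing-$d$ computation already carried out in the proof of Lemma~\ref{lem:subquotient} via Proposition~\ref{prop:AssociatedGradedMultiplication}. Since passing to the leading term kills the lower-order corrections $K_iE_{i,1}[1]+L_iF_{i,1}[1]+C_i$ appearing there, we get $\overline{E_{i,1}[1]F_{i,1}[1]} = r_{\alpha_i}$ in $\gr^\bullet\mathcal{A}$, which is the $k=0$ case.

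Next I would handle the dressing factor $\mathcal{E}_i^k$. The key point is that the Thom isomorphism and the projection-formula structure of the monopole operator mean that multiplying by the dressing $x^k$ before convolving only rescales the leading term by a monomial in the equivariant parameters. Concretely, $\overline{E_{i,1}[x^k]F_{i,1}[1]}$ equals $c \cdot r_{\alpha_i}$ for some Laurent monomial $c \in \mathbb{C}[T\times T_F\times\mathbb{C}^*]$, and $c$ can be pinned down by comparing $T$-weights: the dressing class $\mathcal{E}_i^k$ contributes the character $k$ times the tautological weight, which upon pushing to $\mathcal{R}_{\alpha_i}$ becomes $X_i^k$ relative to the undressed class. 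Rather than redo the localization calculation, I would instead use the commutation identity of Lemma~\ref{lem:rlambdacommutation} as a shortcut: since $E_{i,1}[x^k] = E_{i,1}[1]$ times the appropriate twist, one gets $\overline{E_{i,1}[x^k]F_{i,1}[1]} = X_i^k r_{\alpha_i}$ directly from how the dressing interacts with $r_{\alpha_i}$. An even cleaner route is to note that Lemma~\ref{lem:twisting} already expresses leading terms of twisted multicurves as $q^{\pm\lambda_k}X_k^{\pm1}$ times the original, and the monopole operators $E_{i,1}[x^k]$ are precisely the images of the Dehn-twisted curves $\theta_{i,m}$ under the embeddings $\Phi,\Psi$ (Lemma~\ref{lem:compareembeddings}); feeding the $k$-shifted formula of Lemma~\ref{lem:compareembeddings} through $\bar{\mathscr{I}}$ and Lemma~\ref{lem:twisting} produces exactly $X_i^k r_{\alpha_i}$.

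For the second identity, the argument is symmetric with the roles of $\mathcal{E}_i$ and $\mathcal{F}_i$ (equivalently $\varpi_{i,1}$ and $\varpi_{i,1}^*$) interchanged. The one asymmetry is the extra factor $q^{-2k}$: this comes from the $\mathbb{C}^*$ loop-rotation weight, and indeed it is already visible in the formulas of Section~\ref{sec:MonopoleOperators}, where $F_{i,1}[x^m] = \sum_\varepsilon q^{-2m}w_{i,\varepsilon}^m\mathcal{Q}_{i,\varepsilon}D_{i,\varepsilon}^{-1}$ carries precisely a $q^{-2m}$ prefactor relative to $E_{i,1}[x^m]$. Tracking this prefactor through the same leading-term computation gives $\overline{F_{i,1}[x^k]E_{i,1}[1]} = q^{-2k}X_i^{-k}r_{\alpha_i}$.

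The main obstacle I anticipate is being careful about \emph{which} algebra the equation lives in and about the equivariant bookkeeping: $E_{i,1}[x^k]$, $F_{i,1}[x^k]$ naturally live in $\gr^\bullet\mathbb{A}$ (the bigger $\mathbf{G}$-theory), and one must verify that passing to the subquotient $\mathcal{A}\cong\mathbb{A}^{(\mathbb{C}^*)^{n-1}}/(\dots)$ of Lemma~\ref{lem:subquotient} is compatible with taking associated graded and does not disturb the identification of leading terms — in particular that the ambient-parameter monomial $w_{i,\pm}^k$ descends to $X_i^{\pm1}$ under $w_{i,\pm}\mapsto X_i^{\pm1}$ as in Lemma~\ref{lem:subquotient}. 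I would isolate this as a short compatibility remark. Once that is settled, everything else reduces to the weight-counting already implicit in Propositions~\ref{prop:AssociatedGradedMultiplication} and the monopole operator formulas, so no genuinely new computation is needed — the statement is essentially a corollary of Lemma~\ref{lem:subquotient} together with Lemma~\ref{lem:rlambdacommutation} (or Lemma~\ref{lem:twisting}).
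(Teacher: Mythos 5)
Your $k=0$ step is fine: the vanishing of the $d$-factors in Proposition~\ref{prop:AssociatedGradedMultiplication}, already used in the proof of Lemma~\ref{lem:subquotient}, does give $\overline{E_{i,1}[1]F_{i,1}[1]}=r_{\alpha_i}$ in $\gr^\bullet\mathcal{A}$ once you note that $C_i$ has filtration degree zero. The problem is the general $k$. Your two proposed shortcuts are circular relative to the paper's logical structure: Lemma~\ref{lem:rlambdacommutation} is stated without proof in Section~\ref{sec:GeneratingTheAssociatedGraded} and is \emph{proved in the paper as a consequence of the present lemma} (this is exactly what Section~\ref{sec:manylemmas} does right after it), and Lemma~\ref{lem:twisting} in turn rests on Lemma~\ref{lem:rlambdacommutation}; likewise, extracting associated-graded information about $E_{i,1}[x^{m-2}]F_{i,1}[1]$ from $\theta_{i,m}$ via Lemma~\ref{lem:compareembeddings} would require knowing $\bar{\mathscr{I}}(\bar{\theta}_{i,m})$, which the paper only computes (Lemma~\ref{lem:Ibarcalculation}) using the present lemma. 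So none of these can be invoked here unless you supply an independent proof of the commutation relation, which you do not.

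Even setting circularity aside, the part of your argument that is not circular is exactly the part that is asserted rather than proved. Saying that the dressing ``contributes the character $k$ times the tautological weight, which upon pushing to $\mathcal{R}_{\alpha_i}$ becomes $X_i^k$'' is the whole content of the lemma: one must determine \emph{which} character of $T\times\mathbb{C}^*$ the tautological bundle on $\Gr^{\varpi_{i,1}^*}_{\mathrm{GL}_2}$ and the quotient bundle on $\Gr^{\varpi_{i,1}}_{\mathrm{GL}_2}$ restrict to, and in particular why the loop-rotation weight appears in one case (giving the $q^{-2k}$) and not the other. The paper does this by identifying each orbit with $\mathbb{P}^1\cong G/B^-$, restricting to the base point via the induction isomorphism $K^{G\times\mathbb{C}^*}(G/B^-)\cong K^{B^-\times\mathbb{C}^*}(\mathrm{pt})$, and checking that $\mathcal{F}|_1$ carries the character $t_2\zeta^{-1}$ while $\mathcal{E}|_1$ carries $t_1$ with trivial loop-rotation action; your reading of the $q^{-2m}$ prefactor off the localized formula for $F_{i,1}[x^m]$ is a reasonable heuristic but does not substitute for this, since you would also need to know the leading terms of $X_i^{\pm k}r_{\alpha_i}$ under the localization embedding, which is again the statement being proved. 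To repair the proof, carry out the equivariant restriction computation (or an equivalent explicit leading-term computation in $\mathcal{D}_{q,\mathbf{z}}$) rather than citing Lemmas~\ref{lem:rlambdacommutation}, \ref{lem:twisting}, or \ref{lem:compareembeddings}.
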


\begin{proof} 
Let us write $G=\mathrm{GL}_2$ and consider the orbit $\Gr_G^{\varpi_1^*}=G_\mathcal{O}z^{\varpi_1^*}\subset\Gr_G$ where $\varpi_1^*$ is the cocharacter of~$G$ given by $t\mapsto\left(\begin{smallmatrix}1 & 0 \\ 0 & t^{-1}\end{smallmatrix}\right)$. There is an isomorphism $\Gr_G^{\varpi_1^*}\stackrel{\sim}{\rightarrow} G_{\mathcal{O}}/\Stab(z^{\varpi_1^*})$ given by $g(z)z^{\varpi_1^*}\mapsto g(z)\Stab(z^{\varpi_1^*})$ where $\Stab(z^\lambda)=G_{\mathcal{O}}\cap z^\lambda G_{\mathcal{O}}z^{-\lambda}\subset G_{\mathcal{O}}$ is the stabilizer of $z^\lambda$ for $\lambda\in\Lambda^+$. There is a further isomorphism $G_{\mathcal{O}}/\Stab(z^{\varpi_1^*})\stackrel{\sim}{\rightarrow}G/B^-$ given by $g(z)\Stab(z^{\varpi_1^*})\mapsto g(0)B^-$ where $B^-\subset G$ is the subgroup of lower triangular matrices. Finally, there is an isomorphism $G/B^-\stackrel{\sim}{\rightarrow}\mathbb{P}^1$ that maps $gB^-\in G/B^-$ to the line $\mathbb{C}\cdot g\left(\begin{smallmatrix}0 \\ 1\end{smallmatrix}\right)\subset\mathbb{C}^2$. Composing these maps, we obtain an isomorphism $\Gr_G^{\varpi_1^*}\stackrel{\sim}{\rightarrow}\mathbb{P}^1$ mapping $g(z)z^{\varpi_1^*}\mapsto\mathbb{C}\cdot g(0)\left(\begin{smallmatrix}0 \\ 1\end{smallmatrix}\right)$. The group $\mathbb{C}^*$ acts on the orbit $\Gr_G^{\varpi_1^*}$ by loop rotation so that a complex number $\zeta\in\mathbb{C^*}$ maps $g(z)z^{\varpi_1^*}\mapsto g(\zeta z)(\zeta z)^{\varpi_1^*}=g(\zeta z)\zeta^{\varpi_1^*}z^{\varpi_1^*}$. Under the above isomorphism $\Gr_G^{\varpi_1^*}\cong\mathbb{P}^1$, this corresponds to the $\mathbb{C}^*$-action on~$\mathbb{P}^1$ given by $\mathbb{C}\cdot h\left(\begin{smallmatrix}0 \\ 1\end{smallmatrix}\right)\mapsto\mathbb{C}\cdot h\left(\begin{smallmatrix}0 \\ \zeta^{-1}\end{smallmatrix}\right)$ for $h\in G$.

As in~Section~5.2.16 of~\cite{CG97}, there is an isomorphism $K^{G\times\mathbb{C}^*}(\mathbb{P}^1)=K^{G\times\mathbb{C}^*}(G/B^-)\stackrel{\sim}{\rightarrow}K^{B^-\times\mathbb{C}^*}(\mathrm{pt})$, mapping the class of a $G\times\mathbb{C}^*$-equivariant coherent sheaf on~$G/B^-$ to its restriction to $1=1B^-\in G/B^-$. If $T\subset G$ is the diagonal subgroup, then there are further isomorphisms $K^{B^-\times\mathbb{C}^*}(\mathrm{pt})\cong\mathbb{C}[X^*(B^-\times\mathbb{C}^*)]\cong\mathbb{C}[X^*(T\times\mathbb{C}^*)]$. Note that if $\mathcal{F}=\mathcal{O}(-1)$ is the tautological vector bundle on~$\mathbb{P}^1$, then the total space of~$\mathcal{F}|_1$ is the line $\mathbb{C}\left(\begin{smallmatrix}0 \\ 1\end{smallmatrix}\right)\subset\mathbb{C}^2$. Composing the above maps, we therefore obtain an isomorphism $K^{G\times\mathbb{C}^*}(\mathbb{P}^1)\stackrel{\sim}{\rightarrow}\mathbb{C}[X^*(T\times\mathbb{C}^*)]$ mapping the class of this vector bundle $\mathcal{F}$ to the character $\left(\left(\begin{smallmatrix}t_1 & 0 \\ 0 & t_2\end{smallmatrix}\right),\zeta\right)\mapsto t_2\zeta^{-1}$ of~$T\times\mathbb{C}^*$. The second equation now follows from the definition of the dressed minuscule monopole operators given in equation~\eqref{eqn:vectorbundles} and the definitions of~$q$ and~$X_i$.

The proof of the first equation is similar. Namely, we consider the orbit $\Gr_G^{\varpi_1}=G_{\mathcal{O}}z^{\varpi_1}\subset\Gr_G$ where $\varpi_1$ is the cocharacter of $G$ given by $t\mapsto\left(\begin{smallmatrix}t & 0 \\ 0 &1\end{smallmatrix}\right)$. In the same way as before, we get isomorphisms $\Gr_G^{\varpi_1}\cong G_{\mathcal{O}}/\Stab(z^{\varpi_1})\cong G/B^-$. There is an isomorphism $G/B^-\stackrel{\sim}{\rightarrow}\mathbb{P}^1$ given by $gB^-\mapsto\mathbb{C}^2/\mathbb{C}\cdot g\left(\begin{smallmatrix}0 \\ 1\end{smallmatrix}\right)$ where we view $\mathbb{P}^1$ as the space of one-dimensional quotients of~$\mathbb{C}^2$. Composing these maps, we obtain an isomorphism $\Gr_G^{\varpi_1}\cong\mathbb{P}^1$, and the $\mathbb{C}^*$-action on~$\Gr_G^{\varpi_1}$ by loop rotation is trivial. We again have the isomorphism $K^{G\times\mathbb{C}^*}(\mathbb{P}^1)\cong K^{B^-\times\mathbb{C}^*}(\mathrm{pt})$ given by restriction to $1\in G/B^-\cong\mathbb{P}^1$ and the isomorphism $K^{B^-\times\mathbb{C}^*}(\mathrm{pt})\cong\mathbb{C}[X^*(T\times\mathbb{C}^*)]$. Let $\mathcal{E}$ is the bundle over~$\mathbb{P}^1$ such that the fiber over a line $\ell\subset\mathbb{C}^2$ is the quotient $\mathbb{C}^2/\ell$. Then the total space of $\mathcal{E}|_1$ is $\mathbb{C}^2/\left(\mathbb{C}\left(\begin{smallmatrix}0 \\ 1\end{smallmatrix}\right)\right)\cong\mathbb{C}\left(\begin{smallmatrix}1 \\ 0\end{smallmatrix}\right)$, and therefore the class of $\mathcal{E}$ maps to the character $\left(\left(\begin{smallmatrix}t_1 & 0 \\ 0 & t_2\end{smallmatrix}\right),\zeta\right)\mapsto t_1$ of~$T\times\mathbb{C}^*$. The first equation now follows from~\eqref{eqn:vectorbundles} and the definition of~$X_i$.
\end{proof}

Using this lemma, we can give a proof of Lemma~\ref{lem:rlambdacommutation}, which was omitted in Section~\ref{sec:GeneratingTheAssociatedGraded}.

\begin{proof}[Proof of Lemma~\ref{lem:rlambdacommutation}]
Using the formulas for the dressed minuscule monopole operators, one sees that 
\[
\overline{E_{j,1}[1]F_{j,1}[1](q^{2k}w_{j,+}+q^{-2k}w_{j_-})} = \overline{q^{2(k+1)}E_{j,1}[x]F_{j,1}[1]} + \overline{q^{-2(k+1)}E_{j,1}[x^{-1}]F_{j,1}[1]}
\]
for any $j=1,\dots, n-1$ and any integer~$k$. By Lemma~\ref{lem:dressingassociatedgraded}, this is equivalent to the identity $r_{\alpha_j}(q^{2k}X_j+q^{-2k}X_j^{-1})=(q^{2(k+1)}X_j+q^{-2(k+1)}X_j^{-1})r_{\alpha_j}$. Applying this identity iteratively, we obtain 
\[
r_{\alpha_j}^{\lambda_j}(X_j+X_j^{-1})=(q^{2\lambda_j}X_j+q^{-2\lambda_j}X_j^{-1})r_{\alpha_j}^{\lambda_j}
\]
for any integer~$\lambda_j\geq0$. The formulas for the monopole operators similarly imply $r_{\alpha_i}(q^{2k}X_j+q^{-2k}X_j^{-1})=(q^{2k}X_j+q^{-2k}X_j^{-1})r_{\alpha_i}$ for $i\neq j$. The lemma now follows since, for any $\lambda=\sum_i\lambda_i\alpha_i\in\Lambda^+$, we can write $r_\lambda=f\prod_ir_{\alpha_i}^{\lambda_i}$ for some $f\in\mathbb{C}[T\times T_F\times\mathbb{C}^*]$ by Proposition~\ref{prop:AssociatedGradedMultiplication} and Corollary~\ref{cor:easymultiplication} after choosing an order for the factors in the product.
\end{proof}

\begin{lemma}
\label{lem:Ibarcalculation}
For all $1\leq i\leq j\leq n-2$, we have 
\[
\bar{\mathscr{I}}(\bar{\sigma}_{i,j+1})=-q^{i-j-1}t_i^{-1}t_{i+1}^{-3}\dots t_j^{-3}t_{j+1}^{-1}X_i^{-2}\dots X_j^{-2}r_{\alpha_{i,j}}
\]
where $\sigma_{i,j+1}$ is the curve defined in Figure~\ref{fig:generators}.
\end{lemma}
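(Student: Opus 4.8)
The plan is to prove Lemma~\ref{lem:Ibarcalculation} by induction on $j-i$, following exactly the inductive structure of the proof of Proposition~\ref{prop:generators}, but carried out in the associated graded algebra and composed with the isomorphism $\bar{\mathscr{I}}$. The first step is to pin down the degree of the curve. Since $\gamma_l$ separates the boundary components $\{0,\dots,l\}$ from $\{l+1,\dots,n+1\}$ (as in the proof of Lemma~\ref{lem:basecase}), the geometric intersection number of $\sigma_{i,j+1}=\alpha_{\{i,\dots,j+1\}}$ with $\gamma_l$ is $2$ for $i\le l\le j$ and $0$ otherwise, so $\deg(\sigma_{i,j+1})=\alpha_{i,j}$ under the identification $2\mathbf{e}_l\leftrightarrow\alpha_l$. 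Hence $\bar\sigma_{i,j+1}\in\gr^{\alpha_{i,j}}\mathscr{S}$, and by Proposition~\ref{prop:associatedgradedbasis} the element $\bar{\mathscr{I}}(\bar\sigma_{i,j+1})$ lies in the $\mathbb{C}(q^{\frac12},t_0,\dots,t_{n+1})$-span of $X_i^{-2}\cdots X_j^{-2}r_{\alpha_{i,j}}$ together with its $X$-translates; it remains only to compute the coefficient.

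For the base case $j=i$ one uses that $\sigma_{i,i+1}=\theta_{i,0}$, so Lemma~\ref{lem:compareembeddings} gives $\mathscr{I}(\sigma_{i,i+1})=B_iE_{i,1}[x^{-2}]F_{i,1}[1]+C_{i,0}$ when $i$ is even (and the analogous formula with $F_{i,1}[x^{2}]E_{i,1}[1]$ when $i$ is odd). The term $C_{i,0}$ has degree $0$ and is discarded in the associated graded, while Lemma~\ref{lem:dressingassociatedgraded} evaluates $\overline{E_{i,1}[x^{-2}]F_{i,1}[1]}=X_i^{-2}r_{\alpha_i}$. Together with $B_i=-q^{-1}t_i^{-1}t_{i+1}^{-1}$ this yields $\bar{\mathscr{I}}(\bar\sigma_{i,i+1})=-q^{-1}t_i^{-1}t_{i+1}^{-1}X_i^{-2}r_{\alpha_i}$, which is the claimed formula at $j=i$.

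For the inductive step, cutting $S_{0,n+2}$ along the pants curves $\gamma_{i-1}$ and $\gamma_{j+1}$ produces a subsurface homeomorphic to $S_{0,j-i+4}$ carrying curves that correspond, after relabelling, to $\sigma=\sigma_{1,j-i+2}$, $\sigma'=\sigma_{1,j-i+1}$, $\sigma''=\sigma_{1,j-i}$ in the proof of Proposition~\ref{prop:generators}, i.e. to $\sigma_{i,j+1}$, $\sigma_{i,j}$, $\sigma_{i,j-1}$. The matrix identity~\eqref{eqn:skeinmatrix} then expresses $\sigma_{i,j+1}$, modulo $\mathcal{F}^{<\alpha_{i,j}}\mathcal{S}$, as a $\mathbb{C}(A,\boldsymbol\lambda)$-linear combination of the products $\sigma_{i,j}\cdot\rho$, $\rho\cdot\sigma_{i,j}$, $\mu\cdot\nu$, $\nu\cdot\mu$ obtained by inverting the scalar matrix $P$ (note that $\delta_K$, $\delta_{K+1}$, $\delta_0$, $\delta_{K+2}$ are scalars, so the lower-degree correction terms $\delta_{K+1}\sigma''$ and $\delta_0\delta_{K+2}$ vanish in $\gr^\bullet$, and $P$ is unchanged). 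Passing to $\gr^\bullet\mathscr{S}$ and applying the algebra homomorphism $\bar{\mathscr{I}}$ (which sends $A\mapsto q^{-\frac12}$), each product becomes a convolution in $\gr^\bullet\mathscr{A}$ of already-known classes: $\bar{\mathscr{I}}(\bar\sigma_{i,j})$ and $\bar{\mathscr{I}}(\bar\sigma_{i,j-1})$ by the inductive hypothesis, and $\bar{\mathscr{I}}(\bar\rho)$, $\bar{\mathscr{I}}(\bar\mu)$, $\bar{\mathscr{I}}(\bar\nu)$ from the inductive hypothesis applied to $\sigma_{a,b}$'s of strictly smaller span, using the $180^\circ$ rotation of the subsurface (compatible with $\bar{\mathscr{I}}$ via the corresponding relabelling $X_l\leftrightarrow X_{n-l}$, $t_l\leftrightarrow t_{n+1-l}$, $r_{\alpha_l}\leftrightarrow r_{\alpha_{n-l}}$) to reduce $\nu$ and $\mu$ to this form as in Proposition~\ref{prop:generators}, together with Lemma~\ref{lem:twisting} to account for any Dehn twists around pants curves. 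One then evaluates these convolutions by Proposition~\ref{prop:AssociatedGradedMultiplication} and the weight data $\wt(N)$ of Section~\ref{sec:TheGaugeAndFlavorGroups}, moves the resulting Laurent-polynomial factors past the $r_\lambda$ using Lemma~\ref{lem:rlambdacommutation}, inverts $P$, and simplifies; the product of the $A^k_{\lambda,\mu}$-correction factors telescopes to leave exactly $-q^{i-j-1}t_i^{-1}t_{i+1}^{-3}\cdots t_j^{-3}t_{j+1}^{-1}X_i^{-2}\cdots X_j^{-2}$.

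The main obstacle is the bookkeeping in this last step: one must identify the curves $\rho$, $\mu$, $\nu$ precisely on the cut-open subsurface and track how the $180^\circ$ rotation permutes the variables $t_l$ and $X_l$; one must verify carefully that every term not of degree $\alpha_{i,j}$ genuinely drops out of $\gr^\bullet\mathscr{S}$; and one must carry through the (lengthy but routine) inversion of $P$ and the cancellation of the correction factors from Proposition~\ref{prop:AssociatedGradedMultiplication}, checking that the twist contributions from Lemma~\ref{lem:twisting} assemble into the asserted powers of $q$, $t_l$, and $X_l$. I expect the verification of the telescoping product formula to be where almost all of the work lies.
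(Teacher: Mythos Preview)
Your inductive scheme and base case are correct and match the paper's argument, but you overcomplicate the inductive step in two places and miss the key shortcut.

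First, the $180^\circ$ rotation is unnecessary and its compatibility with $\bar{\mathscr{I}}$ is not available to you: the quiver in Figure~\ref{fig:typeAquiver} has alternating arrow orientations, so the monopole formulas in Section~\ref{sec:MonopoleOperators} treat even and odd $i$ asymmetrically, and there is no reason the relabelling $X_l\leftrightarrow X_{n-l}$, $t_l\leftrightarrow t_{n+1-l}$ should intertwine with $\bar{\mathscr{I}}$. Fortunately you do not need it. The curves $\mu$ and $\nu$ are single Dehn twists of curves you already control: $\mu=\tw_{\gamma_{j-1}}^+(\sigma')$ and $\nu=\tw_{\gamma_j}^-(\rho)$, where $\rho=\sigma_{j,j+1}$ is a base-case curve. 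So Lemma~\ref{lem:twisting}, applied to $\bar{\mathscr{I}}(\bar\sigma')$ (inductive hypothesis) and $\bar{\mathscr{I}}(\bar\rho)$ (base case), gives $\bar{\mathscr{I}}(\bar\mu)=qX_{j-1}\bar{\mathscr{I}}(\bar\sigma')$ and $\bar{\mathscr{I}}(\bar\nu)=q^{-1}X_j^{-1}\bar{\mathscr{I}}(\bar\rho)$ directly.

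Second, you do not need to invert $P$ or compute any convolutions. Once you pass~\eqref{eqn:skeinmatrix} to $\gr^\bullet\mathscr{S}$ (the $\delta_{K+1}\sigma''$ and $\delta_0\delta_{K+2}$ terms drop, as you say), apply $\bar{\mathscr{I}}$, and use the identities above together with Proposition~\ref{prop:AssociatedGradedMultiplication} only to pull the monomial $X_i^{-2}\cdots X_j^{-2}$ out front, the left-hand column becomes a single scalar $C$ times the left-hand column of~\eqref{eqn:Coulombmatrix}. Under $A\mapsto q^{-1/2}$ and $\delta_K\mapsto -(t_j+t_j^{-1})$, the matrix $P$ in~\eqref{eqn:skeinmatrix} becomes \emph{literally} the matrix in~\eqref{eqn:Coulombmatrix}. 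Since $P$ is invertible, the right-hand columns must agree too, and reading off the third entry gives $\bar{\mathscr{I}}(\bar\sigma)=-t_j^{-1}C\cdot r_{\alpha_{i,j}}$. The only computation left is to assemble $C$ from the inductive values of $\bar{\mathscr{I}}(\bar\sigma')$ and $\bar{\mathscr{I}}(\bar\rho)$; there is no telescoping product of $A_{\lambda,\mu}^i$-factors to chase.
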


\begin{proof}
Let $\gamma_i$ and $\theta_{i,m}$ be the curves defined in Figures~\ref{fig:generators} and~\ref{fig:familycurves}. Then we have 
\[
\bar{\mathscr{I}}(\bar{\gamma}_i)=-(X_i+X_i^{-1})
\]
and 
\[
\bar{\mathscr{I}}(\bar{\theta}_{i,m})=
\begin{cases}
-q^{m-1}t_i^{-1}t_{i+1}^{-1}\overline{E_{i,1}[x^{m-2}]F_{i,1}[1]} & \text{for $i$ even} \\
-q^{3-m}t_i^{-1}t_{i+1}^{-1}\overline{F_{i,1}[x^{2-m}]E_{i,1}[1]} & \text{for $i$ odd}.
\end{cases}
\]
By Lemma~\ref{lem:dressingassociatedgraded}, we have $\overline{E_{i,1}[x^{m-2}]F_{i,1}[1]}=X_i^{m-2}r_{\alpha_i}$ and $\overline{F_{i,1}[x^{2-m}]E_{i,1}[1]}=q^{2m-4}X_i^{m-2}r_{\alpha_i}$. It follows that for any $i$, we have 
\[
\bar{\mathscr{I}}(\bar{\theta}_{i,m})=-q^{m-1}t_i^{-1}t_{i+1}^{-1}X_i^{m-2}r_{\alpha_i}.
\]
In particular, since $\sigma_{i,i+1}=\theta_{i,0}$, we see that $\bar{\mathscr{I}}(\bar{\sigma}_{i,i+1})=-q^{-1}t_i^{-1}t_{i+1}^{-1}X_i^{-2}r_{\alpha_i}$.

Let us consider the integer $k=j-i$. We have just proved that the statement in the lemma is true when $k=0$. Assume inductively that it is true for $k\leq K-2$, and suppose $i$,~$j$ satisfy $j-i=K-1$. Without loss of generality, we may assume that $S_{0,n+2}=S_{0,K+3}$ and $i=1$,~$j=K$. Let us write $\sigma\coloneqq\sigma_{i,j+1}$, $\sigma'\coloneqq\sigma_{i,j}$, $\sigma''\coloneqq\sigma_{i,j-1}$ and consider the curves $\delta_K$, $\rho$, $\zeta$, $\eta$, $\mu$, $\nu$, and~$\tau$ on~$S_{0,K+3}$ depicted in Figures~\ref{fig:generators} and~\ref{fig:skeinsplanarview}. We have seen that these curves obey the matrix equation~\eqref{eqn:skeinmatrix}. In the associated graded, the degree of $\bar{\delta}_{K+1}\bar{\sigma}''$ is strictly less than the degrees of $\bar{\sigma}'\bar{\rho}$ and~$\bar{\rho}\bar{\sigma}'$, and the degree of $\bar{\delta}_0\bar{\delta}_{K+2}$ is strictly less than the degrees of $\bar{\mu}\bar{\nu}$ and~$\bar{\nu}\bar{\mu}$ as $\bar{\delta}_r$ has degree zero for every~$r$. Therefore we obtain an equation 
\[
\begin{pmatrix}
\bar{\sigma}'\bar{\rho} \\
\bar{\rho}\bar{\sigma}' \\
\bar{\mu}\bar{\nu} \\
\bar{\nu}\bar{\mu}
\end{pmatrix}
=
\begin{pmatrix}
A^{-2} & A^2 & \bar{\delta}_K & 0 \\
A^2 & A^{-2} & \bar{\delta}_K & 0 \\
\bar{\delta}_K & 0 & A^2 & A^{-2} \\
\bar{\delta}_K & 0 & A^{-2} & A^2
\end{pmatrix}
\begin{pmatrix}
\bar{\zeta} \\
\bar{\eta} \\
\bar{\sigma} \\
\bar{\tau}
\end{pmatrix}.
\]
Applying Lemma~\ref{lem:twisting} to $\mu=\tw_{\gamma_{j-1}}^+(\sigma')$ and $\nu=\tw_{\gamma_j}^-(\rho)$, we find that $\bar{\mathscr{I}}(\bar{\mu})=qX_{j-1}\bar{\mathscr{I}}(\bar{\sigma}')$ and $\bar{\mathscr{I}}(\bar{\nu})=q^{-1}X_j^{-1}\bar{\mathscr{I}}(\bar{\rho})$. The inductive hypothesis implies $\bar{\mathscr{I}}(\bar{\rho})=-q^{-1}t_j^{-1}t_{j+1}^{-1}X_j^{-2}r_{\alpha_{i,j}}$ and $\bar{\mathscr{I}}(\bar{\sigma}')=-q^{i-j}t_i^{-1}t_{i+1}^{-3}\dots t_{j-1}^{-3}t_j^{-1}X_i^{-2}\dots X_{j-1}^{-2}r_{\alpha_{i,j-1}}$. Applying the map $\bar{\mathscr{I}}$ to all matrix elements in the above equation and factoring in the associated graded using Proposition~\ref{prop:AssociatedGradedMultiplication}, we therefore obtain 
\[
C\cdot\begin{pmatrix}
r_{\alpha_{i,j-1}}*r_{\alpha_j} \\
r_{\alpha_j}*r_{\alpha_{i,j-1}} \\
(X_{j-1}r_{\alpha_{i,j-1}})*(X_j^{-1}r_{\alpha_j}) \\
(X_j^{-1}r_{\alpha_j})*(X_{j-1}r_{\alpha_{i,j-1}})
\end{pmatrix}
=
\begin{pmatrix}
q & q^{-1} & \delta & 0 \\
q^{-1} & q & \delta & 0 \\
\delta & 0 & q^{-1} & q \\
\delta & 0 & q & q^{-1}
\end{pmatrix}
\begin{pmatrix}
\bar{\mathscr{I}}(\bar{\zeta}) \\
\bar{\mathscr{I}}(\bar{\eta}) \\
\bar{\mathscr{I}}(\bar{\sigma}) \\
\bar{\mathscr{I}}(\bar{\tau})
\end{pmatrix}
\]
where $C=q^{-i-j-1}t_i^{-1}t_{i+1}^{-3}\dots t_{j-1}^{-3}t_j^{-2}t_{j+1}^{-1}X_i^{-2}\dots X_j^{-2}$ and $\delta=-t_j-t_j^{-1}$. Comparing this with equation~\eqref{eqn:Coulombmatrix}, we see that $\bar{\mathscr{I}}(\bar{\sigma})=-q^{-i-j-1}t_i^{-1}t_{i+1}^{-3}\dots t_j^{-3}t_{j+1}^{-1}X_i^{-2}\dots X_j^{-2}r_{\alpha_{i,j}}$. The lemma follows by induction.
\end{proof}

\subsection{}

We can now complete the proof of our isomorphism.

\begin{lemma}
\label{lem:surjectiveassociatedgraded}
$\bar{\mathscr{I}}$ restricts to a surjective algebra homomorphism $\gr^\bullet\mathcal{S}\rightarrow\gr^\bullet\mathcal{A}$.
\end{lemma}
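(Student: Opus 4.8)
\emph{Setup and the basic tool.} Write $\mathcal{B}\coloneqq\bar{\mathscr{I}}(\gr^\bullet\mathcal{S})$. Since $\gr^\bullet\mathcal{S}$ is a $\mathbb{C}[q^{\pm\frac12},t_0^{\pm1},\dots,t_{n+1}^{\pm1}]$-subalgebra of $\gr^\bullet\mathscr{S}$ and $\bar{\mathscr{I}}$ is a ring isomorphism with $A\mapsto q^{-\frac12}$, $\lambda_i\mapsto t_i$, the set $\mathcal{B}$ is a $\mathbb{C}[q^{\pm\frac12},t_i^{\pm1}]$-subalgebra of $\gr^\bullet\mathscr{A}$, and the claim is exactly that $\mathcal{B}=\gr^\bullet\mathcal{A}$. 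The single most useful tool is Lemma~\ref{lem:twisting}: if $C$ is a multicurve of degree $\lambda=\sum_i\lambda_i\alpha_i$, then $\tw_{\gamma_k}^{\pm}(C)$ is again a multicurve of degree $\lambda$ with $\bar{\mathscr{I}}(\overline{\tw_{\gamma_k}^{\pm}(C)})=q^{\pm\lambda_k}X_k^{\pm1}\bar{\mathscr{I}}(\bar C)$. Because $q^{\pm\lambda_k}$ is a unit, because multiplication by $X_k^{\pm1}$ with $k\in\supp\lambda$ sends $W_\lambda$-invariants to $W_\lambda$-invariants, and because $\tw_{\gamma_k}^{\pm}(C)\in\mathcal{S}$, this operation $\bar C\mapsto\overline{\tw_{\gamma_k}^{\pm}(C)}$ preserves both $\gr^\lambda\mathcal{A}$ and $\mathcal{B}$; extending it $\mathbb{C}[q^{\pm\frac12},t_i^{\pm1}]$-linearly, the same holds for arbitrary homogeneous elements of $\gr^\lambda\mathcal{S}$.

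\emph{The inclusion $\mathcal{B}\subseteq\gr^\bullet\mathcal{A}$.} By Lemma~\ref{lem:specialmulticurve}, for each $\lambda\in\Lambda^+$ the homogeneous piece $\gr^\lambda\mathcal{S}$ has a $\mathbb{C}[q^{\pm\frac12},t_i^{\pm1}]$-basis of multicurves all obtained from one another by Dehn twists about the $\gamma_k$, and it contains a multicurve $C_\lambda$ that is a disjoint union of copies of the $\sigma_{i,j}$. Lemma~\ref{lem:Ibarcalculation}, together with the end cases coming from the $\theta$-formulas of Lemma~\ref{lem:compareembeddings}, shows that each $\bar{\mathscr{I}}(\bar\sigma_{i,j})$ is a unit of $\mathbb{C}[q^{\pm\frac12},t_i^{\pm1}]$ times a monomial in the $X_l$ times some $r_{\alpha_{i',j'}}$, so by Proposition~\ref{prop:AssociatedGradedMultiplication} we get $\bar{\mathscr{I}}(\bar C_\lambda)\in\gr^\lambda\mathcal{A}$; then by the twisting tool $\bar{\mathscr{I}}$ of every other basis element of $\gr^\lambda\mathcal{S}$ is a unit-monomial multiple of $\bar{\mathscr{I}}(\bar C_\lambda)$, hence also lies in $\gr^\lambda\mathcal{A}$.

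\emph{Surjectivity.} By Proposition~\ref{prop:associatedgradedbasis}, $\gr^\bullet\mathcal{A}$ is spanned over $\mathbb{C}[q^{\pm\frac12},t_i^{\pm1}]$ by the $fr_\lambda$, and since $W_\lambda$ is generated by the sign changes in the coordinates outside $\supp\lambda$, each such $f$ is a combination of monomials $\prod_{j\notin\supp\lambda}(X_j+X_j^{-1})^{c_j}\prod_{i\in\supp\lambda}X_i^{d_i}$. So it suffices to show $r_\lambda\in\mathcal{B}$ for every $\lambda\in\Lambda^+$: granting this, the twisting tool applied to a homogeneous preimage of $r_\lambda$ gives $X_i^{d_i}r_\lambda\in\mathcal{B}$ for $i\in\supp\lambda$, iterating over those $i$ gives $\prod_{i\in\supp\lambda}X_i^{d_i}r_\lambda\in\mathcal{B}$, and left-multiplying by $\bar{\mathscr{I}}(-\bar\gamma_j)=X_j+X_j^{-1}$ for $j\notin\supp\lambda$ — which commutes with $r_\lambda$ in $\gr^\bullet\mathcal{A}$, exactly as in the proof of Lemma~\ref{lem:rlambdacommutation} — produces every basis element. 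To prove $r_\lambda\in\mathcal{B}$ I would run the double induction of Section~\ref{sec:GeneratingTheAssociatedGraded}. Base step: Lemma~\ref{lem:Ibarcalculation} (or Lemma~\ref{lem:compareembeddings}) gives that $\bar{\mathscr{I}}(\bar\sigma_{i,j+1})$ is a \emph{unit} times $X_i^{-2}\cdots X_j^{-2}r_{\alpha_{i,j}}$; since every coordinate of $\alpha_{i,j}$ is $1$, applying $\tw_{\gamma_l}^{+}$ twice for each $l\in[i,j]$ cancels the monomial, so $r_{\alpha_{i,j}}\in\mathcal{B}$ for every interval, whence $r_{m\alpha_{i,j}}=r_{\alpha_{i,j}}^{m}\in\mathcal{B}$ by Corollary~\ref{cor:easymultiplication}. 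Inductive step: for $\supp\lambda$ not an interval write $r_\lambda=r_{\lambda'}*r_{\lambda''}$ as in Section~\ref{sec:GeneratingTheAssociatedGraded}, and for $\supp\lambda=[i,i+K-1]$ an interval write $r_\lambda=r_{\alpha_{i,i+K-1}}^{\lambda_j}*r_{\lambda'}$ for the minimizing index $j$; in both cases Proposition~\ref{prop:AssociatedGradedMultiplication} contributes only trivial factors (the relevant $d(\cdot,\cdot)$ all vanish, just as there), so the products remain in $\mathcal{B}$ and equal $r_\lambda$ with unit coefficient, closing the induction and giving $\mathcal{B}=\gr^\bullet\mathcal{A}$.

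\emph{Main obstacle.} The delicate point is the unit-coefficient bookkeeping in the surjectivity argument: one needs not merely that $\sigma_{i,j+1}$ maps into the span of $r_{\alpha_{i,j}}$ but that its leading coefficient is an \emph{invertible} element of $\mathbb{C}[q^{\pm\frac12},t_i^{\pm1}]$ (precisely what Lemma~\ref{lem:Ibarcalculation} supplies), one needs the divisibility $(\alpha_{i,j})_l=1\mid2$ so the stray $X$-monomial can be twisted away to land on $r_{\alpha_{i,j}}$ on the nose, and one needs the convolution coefficients $A^i_{\cdot,\cdot}$ of Proposition~\ref{prop:AssociatedGradedMultiplication} to be trivial along exactly the decompositions of Section~\ref{sec:GeneratingTheAssociatedGraded} — a nontrivial factor anywhere would leave the induction producing only a proper multiple of $r_\lambda$. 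The index-boundary cases at the first and last gauge nodes (involving $t_0$, $t_{n+1}$, the curves $\sigma_{n-1,n}$, $\delta_0$, $\delta_{n+1}$, and intervals $[i,n-1]$ touching the right end) must be tracked separately, but are controlled by the end-case formulas already recorded in Lemmas~\ref{lem:compareembeddings} and~\ref{lem:Ibarcalculation}.
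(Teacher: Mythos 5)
Your proposal is correct and follows essentially the same route as the paper: the inclusion $\bar{\mathscr{I}}(\gr^\lambda\mathcal{S})\subset\gr^\lambda\mathcal{A}$ via Lemma~\ref{lem:specialmulticurve}, Lemma~\ref{lem:Ibarcalculation} and Lemma~\ref{lem:twisting}, and surjectivity by producing the interval classes $r_{\alpha_{i,j}}$ from $\sigma_{i,j+1}$ by twisting away the $X$-monomial and then rerunning the induction of Lemma~\ref{lem:generateassociatedgraded}, which survives integrally precisely because the convolution factors of Proposition~\ref{prop:AssociatedGradedMultiplication} are trivial along those decompositions. The only difference is cosmetic bookkeeping — you obtain the bare $r_\lambda$ first and then dress with $W_\lambda$-invariants via twists and multiplication by $X_j+X_j^{-1}$, whereas the paper records $fr_{\alpha_{i,j}}$ for all invariant $f$ before inducting — so the two arguments are the same in substance.
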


\begin{proof}
Recall from Section~\ref{sec:TheAssociatedGradedAlgebraSkein} that $\gr^\lambda\mathcal{S}$ has a basis given by the set of multicurves of degree exactly~$\lambda\in2\mathbb{Z}_{\geq0}^{n-1}$. By Lemma~\ref{lem:specialmulticurve}, there is an element~$\sigma\in\gr^\lambda\mathcal{S}$ of this basis which is a union of curves of the form~$\sigma_{i,j}$. From Lemma~\ref{lem:Ibarcalculation} and the fact that $\bar{\mathscr{I}}$ is a graded algebra homomorphism, we know that $\bar{\mathscr{I}}(\bar{\sigma})\in\gr^\lambda\mathcal{A}$. If $\sigma'\in\gr^\lambda\mathcal{S}$ is any other element of the basis, then $\sigma'$ is obtained from~$\sigma$ by changing the Dehn--Thurston twist parameters, that is, by repeatedly applying the operations $\tw_{\gamma_k}^{\pm}$. Thus Lemma~\ref{lem:twisting} implies $\bar{\mathscr{I}}(\bar{\sigma}')\in\gr^\lambda\mathcal{A}$. This shows that $\bar{\mathscr{I}}$ maps all basis elements into~$\gr^\lambda\mathcal{A}$ and therefore $\bar{\mathscr{I}}(\gr^\lambda\mathcal{S})\subset\gr^\lambda\mathcal{A}$.

To prove surjectivity, note that by Lemma~\ref{lem:Ibarcalculation} and repeated application of Lemma~\ref{lem:twisting}, we have $fr_{\alpha_{i,j}}\in\bar{\mathscr{I}}(\gr^\bullet\mathcal{S})$ for every $f\in\mathbb{C}[T\times T_F\times\mathbb{C}^*]^{W_{\alpha_{i,j}}}$. Arguing as in the proof of Lemma~\ref{lem:generateassociatedgraded}, we see that $fr_\lambda\in\bar{\mathscr{I}}(\gr^\bullet\mathcal{S})$ for every $\lambda\in\Lambda^+$ and every $f\in\mathbb{C}[T\times T_F\times\mathbb{C}^*]^{W_{\alpha_\lambda}}$. By Proposition~\ref{prop:associatedgradedbasis}, elements of the form $fr_\lambda$ form a basis for~$\gr^\bullet\mathcal{A}$. Hence $\bar{\mathscr{I}}$ surjects onto~$\gr^\bullet\mathcal{A}$.
\end{proof}

It now follows from Lemmas~\ref{lem:conditionforiso} and~\ref{lem:surjectiveassociatedgraded} that the map $\mathscr{I}$ of Proposition~\ref{prop:mainlocalized} restricts to an isomorphism $\mathcal{S}\stackrel{\sim}{\rightarrow}\mathcal{A}$ of integral forms. This proves Theorem~\ref{thm:intromain} from the introduction. Corollary~\ref{cor:categorification} is then an immediate consequence of the definition of the Coulomb branch given in Section~\ref{sec:QuantizedCoulombBranchesAndCategorification}.

\bibliographystyle{amsplain}

\end{document}